\documentclass[11pt,a4paper,reqno]{amsart}
\usepackage{amsaddr}
\usepackage{amsmath,amsfonts,amssymb,amsthm,mathrsfs}
\usepackage[utf8]{inputenc}
\usepackage[T1]{fontenc}
\usepackage{lmodern}
\usepackage{latexsym}
\usepackage{cancel}
\usepackage{microtype}
\usepackage{caption}
\usepackage{MnSymbol}
\usepackage{xcolor}
\usepackage{enumerate}
\usepackage[normalem]{ulem}
\usepackage{bbm}
\usepackage{geometry}
\usepackage{marginnote}
\usepackage{mathrsfs}

\setlength{\oddsidemargin}{10pt}
\setlength{\evensidemargin}{10pt}
\setlength{\textwidth}{6.0in}
\setlength{\topmargin}{0in}
\setlength{\textheight}{8.9in}
\setlength{\parskip}{4px}

\usepackage[colorlinks=true,linkcolor=blue,citecolor=magenta]{hyperref}

\usepackage{graphics,tikz}

\newcommand{\AAA}{{\mathcal A}}

\newcommand{\FF}{{\mathcal  F}}

\newcommand{\HH}{{\mathcal  H}}
\newcommand{\TT}{{\mathcal  T}}

\newcommand{\VV}{{\mathcal  V}}

\newcommand{\ve}{\varepsilon}

\def\XXint#1#2#3{{\setbox0=\hbox{$#1{#2#3}{\int}$ }
\vcenter{\hbox{$#2#3$ }}\kern-.6\wd0}}

\newcommand{\esssup}{\mathop{\mathrm{ess\,sup}}}

\newtheorem{theorem}{\bf Theorem}[section]
\newtheorem{proposition}[theorem]{\bf Proposition}
\newtheorem{lemma}[theorem]{\bf Lemma}
\newtheorem{corollary}[theorem]{\bf Corollary}

\theoremstyle{definition}
\newtheorem{definition}[theorem]{Definition}

\newtheorem{remark}[theorem]{Remark}

\numberwithin{equation}{section}

\begin{document}

\title[Mokobodzki's  intervals: an approach  to 
 Dynkin games]{Mokobodzki's  intervals: an approach  to 
 Dynkin games when value process is not a semimartingale }

\maketitle
\begin{center}
 \normalsize
  TOMASZ KLIMSIAK\footnote{e-mail: {\tt tomas@mat.umk.pl}}\textsuperscript{1,2} \,\,\,
  MAURYCY RZYMOWSKI\footnote{e-mail: {\tt maurycyrzymowski@mat.umk.pl}}\textsuperscript{2}   \par \bigskip
  \textsuperscript{\tiny 1} {\tiny Institute of Mathematics, Polish Academy of Sciences,\\
 \'{S}niadeckich 8,   00-656 Warsaw, Poland} \par \medskip
 
  \textsuperscript{\tiny 2} {\tiny Faculty of
Mathematics and Computer Science, Nicolaus Copernicus University,\\
Chopina 12/18, 87-100 Toru\'n, Poland }\par
\end{center}

\begin{abstract}
We study  Dynkin games governed by a nonlinear $\mathbb E^f$-expectation  on a finite interval $[0,T]$, with 
payoff c\`adl\`ag processes $L,U$  of   class (D)  which are not imposed to  satisfy (weak) Mokobodzki's condition
 -- the existence of a c\`adl\`ag semimartingale between the barriers.
 For that purpose we introduce the notion of Mokobodzki's stochastic intervals $\mathscr M(\theta)$ 
(roughly speaking, maximal stochastic interval on which Mokobodzki's condition is satisfied when starting from the
stopping time $\theta$)
and  the notion of reflected BSDEs without  Mokobodzki's condition (this is a generalization and modification  
of the notion introduced by Hamad\'ene and Hassani (2005)). We prove an existence and uniqueness result
for  RBSDEs with driver $f$ that is non-increasing with respect to the value variable (no restrictions on the growth) 
and Lipschitz continuous with respect to the control variable,   and with  data in $L^1$ spaces.
Next, by using RBSDEs, we 
show numerous  results on Dynkin games: existence of the value process, saddle points, and convergence of the penalty scheme.
We also show that the game is not played beyond $\mathscr M(\theta)$, when starting from $\theta$.

\end{abstract}
\maketitle
\footnotetext{{\em Mathematics Subject Classification:}
Primary  ; Secondary .}

\footnotetext{{\em Keywords:} Dynkin games, Mokobodzki's condition, BSDEs, reflected BSDEs}

\section{Introduction}
\label{roz1}

Let $(\Omega,\FF,\mathbb P)$ be a complete probability space,    $(B_t)_{t\ge 0}$ denote 
 a Brownian motion on $(\Omega,\FF,\mathbb P)$ and $\mathbb F:=(\FF_t)_{t\ge 0}$  stand for the  augmented  filtration generated by 
$B$, i.e. $\FF_t:= \sigma(B_s: s\in [0,t])\vee \mathcal N$, where $\mathcal N\subset \FF$ is the family of all $\mathbb P$-null sets.
We fix a terminal time  $T>0$.  Let $\TT$ denote the set of all $\mathbb F$-stopping times $\tau$ such that $\tau\le T$
and for given $\tau\in\TT$, $\TT_\tau\subset \TT$ consists of $\sigma$ satisfying  $\sigma\ge\tau$.

\subsection{The concept of Mokobodzki's intervals}
Suppose that  $L,U$ are  c\`adl\`ag $\mathbb F$-adapted processes of class (D)  such that 
\[
L_t\le U_t,\quad t\in [0,T].
\]
In the present paper we introduce the following notions:  for given $\tau\in\TT$
we let 
\begin{equation}
\label{eq1.min1}
\Sigma_{\tau}(L,U):=\left\{\sigma\in \mathcal T_\tau: \exists \text{ c\`adl\`ag semimartingale } X \text{ such that } 
L_t\le X_t\le U_t,\, t\in [\tau,\sigma]\right\}.
\end{equation}
Next, for any $\tau\in\TT$, we let
\begin{equation}
\label{eq1.min2}
\mathring\tau:=\esssup\{\sigma: \sigma\in\Sigma_{\tau}(L,U) \}.
\end{equation}
Notice that the set $\Sigma_{\tau}(L,U)$ consists of stopping times $\sigma\ge \tau$ such that
 Mokobodzki's condition holds  for $L, U$  on $[\tau,\sigma]$. 
As to the point $\mathring\tau$  it can  be thought of as a threshold time 
after which Mokobodzki's condition for $L,U$, when starting from the time $\tau$,  ceases to be satisfied.
We prove that $\mathring\tau$ is a stopping time. 
However, the most compelling question is about the  structure of $\mathring\tau$ and 
the behavior of barriers $L,U$ at $\mathring\tau$. Clearly, we cannot expect, in general, that Mokobodzki's condition
still holds on $[\tau,\mathring\tau]$.

\subsection{Main results I}
We prove (see Theorem \ref{prop7.3m}) that there exists a (unique) set $\mathscr C_\tau\in\FF_\tau$ such that
\begin{equation}
\label{eq1.min}
\mathscr M(\tau):= [[\tau,\mathring\tau[[\,\cup\,[[\mathring\tau_{|\mathscr C_\tau}]]\subset \Omega\times [0,T]
\end{equation}
is a maximal stochastic interval on which Mokobodzki's condition is satisfied, i.e.
\begin{itemize}
\item $[[\tau,\sigma]]\subset \mathscr M(\tau)$ for any $\sigma\in \Sigma_{\tau}(L,U)$;
\item there exists a non-decreasing sequence $(\tau_k)\subset \Sigma_\tau(L,U)$ such that 
\[
\bigcup_{k\ge 1}[[\tau,\tau_k]]=\mathscr M(\tau).
\]
\end{itemize}
We call $\mathscr M(\tau)$   Mokobodzki's intervals. For those $\omega\in\Omega$ for which $\mathscr M(\tau)$
is closed, we say that Mokobodzki's condition holds  up to $\mathring\tau$.
As to  the behavior of barriers $L,U$ at $\mathring\tau$,
we prove (see Theorem \ref{prop7.3m}), among others, that
\begin{itemize}
\item at the threshold time $\mathring\tau$ the barriers "have to meet somehow":
\[
L_{\mathring\tau-}=U_{\mathring\tau-}\,\text{ or }\,L_{\mathring\tau}=U_{\mathring\tau} \quad\text{on}\quad \{\tau<\mathring\tau<T\},
\]
\item if   Mokobodzki's condition does not hold up to time $\mathring\tau$, then the distance between the barriers tends to
zero  when approaching $\mathring\tau$ from the left:
\[
 \bigcap_{\sigma\in\Sigma_\tau(L,U)} \{\sigma<\mathring\tau\}\subset \{L_{\mathring\tau-}=U_{\mathring\tau-}\},
\]
\item if the distance between the barriers,  when approaching  the threshold time $\mathring\tau$ from the left,  is bounded away from zero,
then Mokobodzki's 
condition holds up to $\mathring\tau$:
\[
 \{L_{\mathring\tau-}<U_{\mathring\tau-}\}\subset \mathscr C_\tau.
 \]
\end{itemize}
The above notion also may be applied to any $\mathbb F$-adapted c\`adl\`ag process $X$ by taking   the  pair $(L,U)=(X,X)$.
Then $\mathring\tau$  may be   thought of as a threshold time 
after which $X$, when starting from the time $\tau$,  "ceases to be a semimartingale".

\subsection{Main results II: Dynkin games with nonlinear expectation}
\label{sec.int3}

There is an extensive literature on (non-linear) Dynkin games
but the vast majority of it imposes   Mokobodzki's condition on  barriers $L,U$, 
i.e. existence of a c\`adl\`ag semimartingale $X$ between   
the barriers. 
At this point, the nomenclature for Mokobodzki's condition should be clarified. 
What we call here {\em Mokobodzki's condition }   is  often called in the literature  {\em weak Mokobodzki's condition}, 
and   {\em Mokobodzki's condition} is frequently understood in the literature as {\em weak Mokobodzki's condition}
plus some integrability  of the  finite variation and martingale  components
from the Doob--Meyer decomposition of $X$.   In the present paper 
we do not distinguish between these two concepts and we always understand Mokobodzki's condition as the existence of
a c\`adl\`ag semimartingale between the barriers. 
This comment  is to alert the reader that 
there are many papers  that study Dynkin games  "without  Mokobodzki's condition" 
but in a sense that there is a semimartingale between the barriers anyway 
(see e.g.  \cite{Hassairi,HH1,HO2}).   
The reason why Mokobodzki's condition is very often  
assumed  in the literature devoted to  Dynkin games 
is that it is equivalent to the value function being a semimartingale (see $V^f$ defined below),
and this in turn allows the use of the very fruitful so-called {\em  martingale method}.

The main motivation  behind the introducing of the  concept  of 
Mokobodzki's intervals is to 
show that suitably modified martingale method is also applicable to Dynkin games with barriers that do not satisfy 
Mokobodzki's condition or, equivalently, with the value process $V^f$  that  is not a semimartingale.

Consider  $\FF_T$-measurable random variable $\xi$, and  
   payoff  $J(\cdot,\cdot)$ that for given stopping times $\tau,\sigma\in \mathcal T$ admits the form
 \begin{equation}
\label{eq.intr2}
 J(\tau,\sigma)=L_{\tau}\mathbf{1}_{\{\tau \le\sigma,\tau<T\}}+U_{\sigma}\mathbf{1}_{\{\sigma<\tau\}}+\xi\mathbf{1}_{\{\tau=\sigma=T\}}.
 \end{equation}
In the game under consideration player I  chooses stopping time $\tau$,  player II chooses stopping time $\sigma$,
and $J(\tau,\sigma)$ represents the amount paid by  player II to player I.
The basic problems of the theory are   to study the existence and structure of  (dynamical) saddle points, i.e. 
 families 
$\{(\tau^*_\theta,\sigma^*_\theta),\, \theta\in\TT\}$ of stopping times such that for any $\theta\in \TT$,
\[
\mathbb{E}^f_{\theta,\tau\wedge\sigma^*_\theta}J(\tau,\sigma^*_\theta)\le \mathbb{E}^f_{\theta,\tau^*_\theta\wedge\sigma^*_\theta}J(\tau^*_\theta,\sigma^*_\theta)
\le \mathbb{E}^f_{\theta,\tau^*_\theta\wedge\sigma}J(\tau^*_\theta,\sigma),\quad \tau,\sigma\in\TT_\theta,
\]
where $\mathbb E^f$ is the nonlinear expectation, and to study the existence, regularity and structure  
of the value process $V^f$ that aggregates, if this is the case,  the family 
of random variables $V^f(\theta):=\overline{V}^f(\theta)=\underline{V}^f(\theta)$ (if the last equation holds we say that the game has value), where 
\begin{equation}
\label{eq.intr2f}
\begin{split}
\overline{V}^f(\theta)&:=\mathop{\mathrm{ess\,inf}}_{ \sigma \in\TT_\theta}\mathop{\mathrm{ess\,sup}}_{ \tau\in \TT_\theta}\mathbb{E}^f_{\theta,\tau\wedge\sigma}J(\tau,\sigma),\quad \underline{V}^f(\theta):=
\mathop{\mathrm{ess\,sup}}_{\tau\in\TT_\theta}\mathop{\mathrm{ess\,inf}}_{\sigma\in\TT_\theta}\mathbb{E}^f_{\theta,\tau\wedge\sigma}J(\tau,\sigma).
\end{split}
\end{equation}
If there exists process $V^f$
we say that the value is aggregable.  
Recall that, by the definition, for a given  progressively measurable  function
\[
f:\Omega\times [0,T]\times \mathbb R\times\mathbb R^d\to \mathbb R,
\]
stopping times $\tau\le\sigma$ and $\FF_\sigma$-measurable random variable $X$,  
\[
\mathbb E^f_{\tau,\sigma}(X):=Y_\tau,
\]
where $(Y,Z)$ is a unique solution  to backward SDE
\[
Y_t=X+\int_t^{\sigma}f(s,Y_s,Z_s)\,ds-\int_t^{\sigma}Z_s,\,dB_s,\quad t\in [0,\sigma].
\]
Observe that   if $f\equiv 0$, then $\mathbb E^f_{\tau,\sigma}(X)=\mathbb E(X|\FF_\tau)$.
We note at this point that the  additional advantage of the present paper  are the very weak assumptions on the data
(this is the benefit of the method used here).
We assume that 
\begin{itemize}
\item $\xi$ is  integrable, $L,U$ are of class (D),
\item $f$ is continuous and non-increasing with respect to the  $y$-variable,
and Lipschitz continuous with respect to $z$ variable,
\item $f(\cdot,0,0)$ is integrable, $f$ admits strictly sub-linear growth with respect to $z$-variable and 
no growth restrictions on $y$-variable (strict sub-linearity may be dropped in case $L,U\in\mathcal S^p(0,T)$ for some $p>1$).  
\end{itemize}

Based on the concept of Mokobodzki's intervals, we extend the martingale method  and prove the following results:
\begin{enumerate}
\item Dynkin game under nonlinear expectation $\mathbb E^f$ has the  value, i.e. 
\[
V^f(\theta):=\overline V^f(\theta)=\underline V^f(\theta)
\]
for any $\theta\in\TT$,
where $\overline V^f(\theta), \underline V^f(\theta)$ are defined by \eqref{eq.intr2f},
\item  the family $\{V^f(\theta),\, \theta\in\TT\}$ is aggregable and process $(V^f_t)$ that aggregates it  is a semimartingale on $\mathscr M(\theta)$ for any $\theta\in\mathcal T$, 
\item under the condition
\begin{equation}
\label{eq.intr.cg}
\Delta L_t\ge 0,\quad \Delta U_t\le 0,\quad\quad  t\in (0,T]
\end{equation}
the pair $(\tau^*_\theta,\sigma^*_\theta)$ is a saddle point for  the game \eqref{eq.intr2f}, where
\[
\tau^*_{\theta}:=\inf\{t\ge\theta,\,Y_t=L_t\}\wedge T;\quad\sigma^*_{\theta}:=\inf\{t\ge\theta,\,Y_t=U_t\}\wedge T,
\]
furthermore  $V^f$ is an $\mathbb E^f$-submartingale on $[\theta,\tau^*_\theta]$ and an $\mathbb E^f$-supermartingale 
on $[\theta,\sigma^*_\theta]$.
\end{enumerate}
As a result we show that
\[
\begin{split}
V^f(\theta)&=\mathop{\mathrm{ess\,inf}}_{\theta\le \sigma \le \mathring\theta}\mathop{\mathrm{ess\,sup}}_{\theta\le \tau\le\mathring\theta}\mathbb{E}^f_{\theta,\tau\wedge\sigma}J(\tau,\sigma)=
\mathop{\mathrm{ess\,sup}}_{\theta\le \tau\le \mathring\theta}\mathop{\mathrm{ess\,inf}}_{\theta\le \sigma\le\mathring\theta}\mathbb{E}^f_{\theta,\tau\wedge\sigma}J(\tau,\sigma)\\&
=
\mathop{\mathrm{ess\,inf}}_{\sigma\in  \Sigma_{\theta}(L,U)}\mathop{\mathrm{ess\,sup}}_{\tau\in \Sigma_{\theta}(L,U)}\mathbb{E}^f_{\theta,\tau\wedge\sigma}J(\tau,\sigma)=
\mathop{\mathrm{ess\,sup}}_{\tau\in  \Sigma_{\theta}(L,U)}\mathop{\mathrm{ess\,inf}}_{\sigma\in  \Sigma_{\theta}(L,U)}\mathbb{E}^f_{\theta,\tau\wedge\sigma}J(\tau,\sigma).
\end{split}
\]
This means that both players, starting the game at $\theta$,
need for the optimal decisions  only information about    payoff processes $L^\xi,U^\xi$ up to the  time $\mathring\theta$,
where $L^\xi=\mathbf1_{\{t<T\}}L_t+\mathbf1_{\{t=T\}}\xi$, $U^\xi=\mathbf1_{\{t<T\}}U_t+\mathbf1_{\{t=T\}}\xi$.
Observe that if $\mathring\theta<T$, then the players do not need information about $\xi$ at all.
By (ii) we have that  for any $\theta\in\TT$ the value process $V^f$ is a semimartingale on 
$\mathscr M(\theta)$, i.e.
for any $\sigma \in\Sigma_{\theta}(L,U)$ 
process $[0,T]\ni t\mapsto V^f_{(t\vee \theta)\wedge \sigma}$ is a c\`adl\`ag semimartingale.
Consequently, by the Doob--Meyer decomposition, for any  $\theta\in\TT$ there exist 
c\`adl\`ag finite variation process $C^{[\theta]}$ on $\mathscr M(\theta)$, with $C^{[\theta]}_\theta=0$,
and progressively measurable process $Z^{[\theta]}$ on $\mathscr M(\theta)$ such that 
\[
V^f_t=V^f_\theta +C^{[\theta]}_t+\int_\theta^t Z_s^{[\theta]}\,dB_s,\quad t\in \mathscr M(\theta).
\]
This enables us to define another dynamical saddle point for the game \eqref{eq.intr2f}: 
 for any $\theta\in\TT$ let
\begin{equation}
\label{eq.sadph}
\hat\tau_\theta:= \inf\{t\in \mathscr M(\theta): R^{[\theta],+}_t>0\},\quad \hat\sigma_\theta:= \inf\{t\in \mathscr M(\theta): R^{[\theta],-}_t>0\},
\end{equation}
where 
\[
R^{[\theta]}_t:= -C^{[\theta]}_t-\int_\theta^tf(s,Y_s,Z^{[\theta]}_s)\,ds,\quad t\in \mathscr M(\theta).
\]
The culmination of the concepts introduced in the paper  is the following  result (see Theorem \ref{th.mal}):
whenever $(\tau_\theta,\sigma_\theta)$ is a saddle point for the game \eqref{eq.intr2f}
then 
\begin{equation}
\label{eq.max}
\tau_\theta\wedge \sigma_\theta\le \hat\tau_\theta\wedge \hat\sigma_\theta
\end{equation}
and $V^f$ is a martingale on $[\theta, \tau_\theta\wedge \sigma_\theta]$. 
The result  gives a picture of the idea of the  {\em martingale method}
 for non-semimartingale Dynkin games (i.e.  games in which the value process is not a semimartingale).   
It  means that  the  game starting at time $\theta$ is played no beyond 
the point at which the barriers "cease to satisfy the Mokobodzki condition"
or, in other words, no beyond 
the point at which the value process $V^f$ "ceases to be a semi-martingale".
The game is played out right on the Mokobodzki's interval $\mathscr M(\theta)$.
This is the essence of the extended martingale method.  Note, however,  that in general $\mathscr M(\theta)$
is not a closed stochastic interval, so we cannot use the  tempting formula: "we may assume without loss of generality that $L,U$
satisfy Mokobodzki's condition" (but it is somehow close   to it).

\subsection{Main results III: reflected BSDEs without Mokobodzki's condition}
\label{sec.int4}

 If one looks at Reflected BSDEs and their connection  to Dynkin games 
one can see that the theory, in the context of Dynkin games, embraces 
the martingale method (it just uses a different language, see e.g. the comments following \cite{KZ}, Theorem 9.1). 
However, BSDEs theory is a much more convenient tool for studying advanced models in stochastic games and optimal control theory. 
In recent years, the need to analyze  generalized variants of Dynkin games and related control problems
has become strongly apparent.  
Therefore, in the present paper, we shall  develop the martingale method by 
 using  the   BSDEs framework, which at the same time allows us to  study  general Dynkin game  \eqref{eq.intr2f}.

Recall that  by the classical  definition  (see e.g. \cite{CK}) 
a solution to RBSDE on $[0,T]$ with terminal condition $\xi$, driver $f$ and barriers $L,U$
is a triple $(Y,Z,R)$ of $\mathbb F$-progressively measurable processes --- $Y, R$ are c\`adl\`ag and $R$ is of finite variation, with $R_0=0$ ---
 such that
 \[
 Y_t=\xi+\int_t^T f(s,Y_s,Z_s)\,ds +(R_T-R_t)-\int_t^T Z_s\,dB_s,\quad\quad L_t\le Y_t\le U_t, \quad t\in [0,T].
 \] 
Furthermore, to guarantee  uniqueness and   connection  of $(Y,Z,R)$
to Dynkin games process $R$ is imposed to satisfy the Skorokhod condition
\[
\int_0^T(Y_{s-}-L_{s-})\,dR^+_s=\int_0^T(U_{s-}-Y_{s-})\,dR^-_s=0.
\]
We call such solutions {\em classical} or {\em semimartingale} or $\mathscr S${\em-solution}.
Observe that by the definition $Y$ is a semimartingale and it lies between the barriers.
Thus, without Mokobodzki's condition on the  barriers $L,U$ classical definition cannot be applied.
We see that in order to study RBSDEs without Mokobodzki's condition a new definition of a solution is required.
Let us also make  one more remark. Observe that processes $Z,R$ are uniquely determined by
process $Y$. Indeed, any c\`adl\`ag $\mathbb F$-semimartingale $X$ admits a unique Doob--Meyer decomposition
\[
X_t=X_0+V_t+\int_0^t H_s\,dB_s,\quad t\in [0,T]
\]
(we also used here representation property of $\mathbb F$), where $H$ is a progressively measurable process
and $V$ is a c\`adl\`ag finite variation process. Letting 
\[
\mathscr V(X):=V,\quad \mathscr Z(X):=H,
\]
we get that $Z=\mathscr Z(Y)$ and $R=\mathscr V(Y+\int_0^\cdot f(s,Y_s,Z_s)\,ds)$. 
Therefore, one might as well say that solely $Y$ is a solution to aforementioned RBSDE
and treat processes $Z$, $R$ as functions of $Y$.

The notion of RBSDEs without Mokobodzki's condition  has been introduced in \cite{HH1,HH2}
on the Brownian filtration and the  Poisson--Brownian filtration,  and
extended to a general filtration in \cite{K:SPA} but for drivers depending only on $y$-variable.
Our goal is to modify and simplify the notion   in order to 
get in a relatively simple way the existence
result for the mentioned class of RBSDEs and then, as a  corollary,   to get 
a number of results for Dynkin games \eqref{eq.intr2f}. 
We propose the following definition.

{\em We say that a c\`adl\`ag $\mathbb F$-adapted process $Y$ is a solution to RBSDE on $[0,T]$ with terminal condition $\xi$, driver $f$ and barriers $L,U$
if $Y_T=\xi$,  and for any $\tau\in\TT$ and  $\sigma\in \Sigma_{\tau}(L,U)$, $Y$ is a classical
solution to RBSDE on $[\tau,\sigma]$ with terminal condition $Y_\sigma$, driver $f$ and barriers $L,U$.}

A similar approach has been applied  in \cite{HH1} but with a subfamily
\[
\tilde\Sigma^Y_\tau\subset \Sigma_{\tau}(L,U)
\]
that depends on the solution (see \cite{HH1}, Proposition 3.6). This makes so formulated definition very cumbersome 
in applications, especially 
when passing to the limit with a sequence of solutions $(Y^n)$ since then we
do not have a fixed  interval $[\tau,\sigma]$ (since $\sigma\in \tilde\Sigma^{Y^n}_\tau$)
on which underlying RBSDEs can be treated  as the classical one. 
In \cite{K:SPA} we modified this notion by introducing  
a family  $\bar \Sigma_\tau(L,U)$ independent of $Y$ (see  Appendix \ref{rozB}), which significantly simplified 
the proof techniques  based on it,  but still we had, in general, 
\[
\tilde\Sigma^Y_\tau\subsetneq \bar \Sigma_\tau(L,U) \subsetneq \Sigma_{\tau}(L,U).
\]   
In the present paper,  we take   in the definition of a solution to RBSDE the entire  family $\Sigma_{\tau}(L,U)$, i.e.
we solve  RBSDEs locally in the classical way where  possible. 
This is a  natural approach, and, in our opinion, 
it is the most  clear definition  
of a solution to  RBSDE without Mokobodzki's condition 
and at the same time very convenient to use.
However, the main impetus for extending the previous definitions 
(especially the last one, where $\bar \Sigma_\tau(L,U)$ was, after all, independent of the solution) were
 the applications of RBSDEs without Mokobodzki's condition
to  Dynkin games. It appears that in  general the  saddle point $(\hat \tau_\theta,\hat \sigma_\theta)$
defined in \eqref{eq.sadph} satisfies
\[
\hat \tau_\theta,\hat \sigma_\theta\in   \Sigma_\theta(L,U)\quad\text{but}\quad  \hat \tau_\theta,\hat \sigma_\theta\notin  \bar \Sigma_\theta(L,U).
\]
In other words, in the previous definitions, the intervals on which RBSDE was considered 
as the classical ones were too short to include  all possible optimal strategies.

We prove (see Theorem \ref{9wrzesnia1}) that under aforementioned assumptions on the data  there exists a unique solution $Y$
of class (D) to RBSDE on $[0,T]$ with terminal condition $\xi$, driver $f$ and barriers $L,U$ and
\[
Y_\theta=V^f(\theta),\quad \theta\in\TT.
\]
Moreover,
\begin{equation}
\label{eq1.pen}
Y^n_t\to Y_t,\quad t\in [0,T],
\end{equation}
where $(Y^n,Z^n)$ is a unique solution to BSDE (penalty scheme)
\[
Y^n_t=\xi+\int_t^Tf(s,Y^n_s,Z^n_s)\,ds+n\int_t^T(Y^n_s-L_s)^-\,ds-n\int_t^T(Y^n_s-U_s)^+\,ds-\int_t^TZ^n_s\,dB_s.
\]
Furthermore, the limit is uniform, more precisely,
\[
\sup_{0\le t\le T}|Y^n_t-Y_t|\to 0,\quad P\text{-a.s.}
\]
provided that  \eqref{eq.intr.cg} holds.
We also provide  a comparison result for the solutions of studied RBSDEs (Theorem \ref{th.cop1}).

 \subsection{Related literature}
The main motivation for introducing the above notion stems from  study of 
Dynkin games  introduced  by Dynkin and Yushkevich \cite{DY} (1968) and later studied
by many  authors including Krylov \cite{Krylov} (1971), Neveu \cite{Neveu}(1975), Bensoussan
and Friedman \cite{BF}(1974), Bismut \cite{Bismut,Bismut1}(1977/1979), Stettner \cite{Stettner}(1984), Zabczyk \cite{Zabczyk}(1984),
Morimoto \cite{Morimoto} (1984),  Alario-Nazaret, Lepeltier and Marchal \cite{ALM}(1983), 
Lepeltier and Maingueneau \cite{LM} (1984),   Cvitani\'c and   Karatzas \cite{CK} (1996),  Touzi and  Vieille  (2002) \cite{TV}, 
Hamad\'ene and  Hassani  \cite{HH1,HH2}  (2005/2006), Ekstr\"om and  Peskir \cite{EP} (2008), Buckdahn and Li \cite{BL} (2009), Dumitrescu, Quenez and Sulem \cite{DQS}(2016), Bayraktar and Yao \cite{BY} (2017) Grigorova, Imkeller, Ouknine,  Quenez, \cite{GIOQ}  (2018) and others.

The martingale method, introduced by Snell \cite{Snell} (1952) for the optimal stopping problem,
and later developed for Dynkin games and  optimal stochastic control
(see Neveu \cite{Neveu} (1975), El Karoui \cite{EK} (1981),
 Rishel \cite{Rishel} (1970),  Davis and Varaiya \cite{DV}
(1973), Davis \cite{Davis}(1973, 1979) and Elliott \cite{Elliott}(1977, 1982), 
Morimoto \cite{Morimoto},  Karatzas and Zamifrescu \cite{KZ}, Bayraktar and Yao \cite{BY}) 
is a straightforward and probabilistic approach to optimal and control problems  that enables a  unified   in-depth study of 
Markovian and non-Markovian models. 
The essence of the method  is to  formulate Bellman's  principle of optimality  as a supermartingale inequality
 and then obtain  conditions for optimality by means of the martingale and finite variation part of $V^f$;  
 thus it requires  that $V^f$ be a semimartingale (this is equivalent to Mokobodzki's condition).  
 This seems to be  the most general form of
dynamic programming that applies  to a very general class of problems.


Using the introduced notion \eqref{eq1.min1}--\eqref{eq1.min}  we show that in fact martingale method can be applied
even if Mokobodzki's condition is not satisfied.
In order to do this, we  utilize the theory of Backward Stochastic Differential Equations (BSDEs)
introduced in  1990 by Pardoux and Peng \cite{PP} and then extended to reflected BSDEs by 
 El Karoui et al. \cite{EKPPQ} and Cvitanic and Karatzas \cite{CK} (1996/1997).  Reflected BSDEs without Mokobodzki's 
 condition imposed on the barriers has  been introduced by Hamad\'ene and Hassani in \cite{HH1}
and further  studied only in  few papers (see \cite{HH2,HHO,K:SPA}).

The only papers we are aware of   on   non-Markovian continuous time Dynkin games (on stopping times)  with
 value process that is not a semimartingale    are \cite{HH2,HZ,K:SPA,LM,Stettner}. Only the third one  was concerned 
 with non-linear expectation $\mathbb E^f$ but  with $f$ independent of the control variable $Z$.
 In \cite{HH2,HZ,LM,Stettner} the authors considered linear expectation.  
 As to the barriers, in \cite{LM,Stettner}   $L,U$ are supposed  to be bounded,
 in \cite{HH2}    $L,U \in \mathcal S^2$ but with no predictable jumps,  and in \cite{HZ} barriers are merely of class (D)
 but are imposed to satisfy \eqref{eq.intr.cg}. In all the mentioned papers  a general filtration $\mathbb F$ have been considered
 and the authors focused on the existence of the value and a saddle point.
It seems that our paper is the first one in this context (even in case $f\equiv 0$)   showing  that the pair
$(\hat\tau_\theta,\hat\sigma_\theta)$ defined in \eqref{eq.sadph} is maximal in the sense of \eqref{eq.max}.

As to the results on reflected BSDEs summarized in Section \ref{sec.int4},
we  generalize the results of \cite{HH1,HH2,HHO,HO2,K:SPA} (\cite{HO2}, Theorem 3.1, \cite{HHO}, Theorem 3.1, \cite{HH2}, Theorem 2.2,
\cite{HH1},Theorem 3.5, Proposition 3.6) in many directions.  
\begin{itemize}
\item  We show the existence result under  merely monotonicity condition on $f$ with respect to $Y$
with no restrictions on the growth (non-Lipschitz data has been considered only in \cite{K:SPA} but with $f$ independent of $Z$);
\item We merely assume that the data are integrable, i.e. $L,U$ are of class (D) and $f(\cdot,0,0), \xi$  are in $L^1$ spaces
(in \cite{HH1,HH2,HHO,HO2} barriers are in $\mathcal S^2$ and $f(\cdot,0,0), \xi$  are in $L^2$ spaces);
\item We show that $Y$ defined as the limit in \eqref{eq1.pen} (the existence of the limit is a part of the assertion)  is a unique 
c\`adl\`ag process that is  a classical solution to  
RBSDE$^{\theta,\tau}(Y_\tau\mathbf1_{\{\tau<T\}}+\xi\mathbf1_{\{\tau=T\}},f,L,U)$ for any $\tau\in \Sigma_\theta(L,U)$
(this is a complete novelty of our paper);
\end{itemize}

 \subsection{Organization of the paper}
In Section \ref{roz2} we set basic notation. In Sections \ref{roz3}, \ref{roz4} we recall the definitions
and basic results for  BSDEs and reflected BSDEs, respectively,
with a slight generalization of the comparison result in Proposition  \ref{10wrzesnia1}. 
In Section \ref{roz5} we introduce and investigate  the notion
of reflected BSDEs with barriers satisfying asymptotic Mokobodzki's condition. 
This is an interesting concept in its own right and we use it as a key tool in the ensuing sections.
The main results of the paper mentioned
in the introduction are contained in Sections \ref{roz6}--\ref{roz10}. In Section \ref{roz6} 
we introduce and study  Mokobodzki's intervals.
Section \ref{roz7} is devoted to  reflected BSDEs without Mokobodzki's condition.  
Sections \ref{roz8}--\ref{roz10} are concerned with Dynkin games.

\section{Basic notation}
\label{roz2}

Let $d\ge 1$ be a natural number. For $x\in\mathbb{R}^d$ by $|x|$ we denote 
the Euclidean norm of $x$. In what follows, unless stated otherwise,    $T>0$
is a  positive number (we shall make an exception from this rule in Section \ref{roz5}). 
Let $\mathcal A$ stands for the set of all $\mathbb F$-stopping times. 
Moreover, for $\nu,\zeta\in\mathcal{A}$, $\mathcal{A}_{\nu}^{\zeta}:=\{\tau\in\mathcal{A},\,\nu\le\tau\le\zeta\}$, 
$\mathcal{A}^{\zeta}:=\mathcal{A}_{0}^{\zeta}$.
Furthermore, we let  $\mathcal T:=\mathcal A^T$,   $\mathcal{T}_{\nu}:=\mathcal{A}_{\nu}^{T}$,  and $\mathcal T^\zeta:= \mathcal A^{\zeta\wedge T}$. 
For $\nu,\zeta\in\mathcal{A}$, we set 
\[
[[\nu,\zeta]]:=\{(\omega,t)\in\Omega\times [0,\infty): \nu(\omega)\le t\le\zeta(\omega)\},
\]
and $[[\nu]]:= [[\nu,\nu]], \, [[\nu,\zeta[[:=[[\nu,\zeta]]\setminus [[\zeta]], \,]]\nu,\zeta]]:=[[\nu,\zeta]]\setminus [[\nu]]$.
For $\tau\in\AAA$ and $A\in\FF_\tau$ we let
\[
\tau_A(\omega)=\tau(\omega),\, \omega\in A,\quad \tau_A(\omega)=\infty,\, \omega\notin A.
\]
It is well known that $\tau_A$ is a stopping time.
In what follows we adopt the  convention that any stochastic process $(X_t)_{t\ge 0}$ is  automatically be extended to $[0,\infty]$
by letting $X_\infty:= 0$.

Let $\nu,\zeta\in\mathcal{A}$, $\nu\le\zeta$, and $p\ge 1$. By $\mathcal{S}^p_{\mathbb{F}}(\nu,\zeta)$ we denote the set of  all 
real valued $\mathbb{F}$-progessively measurable  processes $Y=(Y_t)_{t\in[0,T]}$ such that
\[
||Y||_{\mathcal{S}^p(\nu,\zeta)}:=\Big(\mathbb{E}\sup_{\nu\le t\le\zeta}|Y_t|^p\Big)^{\frac{1}{p}}<\infty.
\]
$\mathcal{M}_{loc}(\nu,\zeta)$ is the space of all $\mathbb{F}$-local martingales on $[[\nu,\zeta]]$. 
Let $q\ge 1$. By $L^{p,q}_{\mathbb{F}}(\nu,\zeta)$ we denote the set of all real valued $\mathbb{F}$-progressively measurable 
processes $X=(X_t)_{t\in[0,T]}$ such that
\[
||X||_{L^{p,q}(\nu,\zeta)}:=\Bigg(\mathbb{E}\Big(\int^{\zeta}_{\nu}|X_r|^p\,dr\Big)^{\frac{q}{p}}\Bigg)^{\frac{1}{p}}<\infty.
\]
$L^p_{\mathbb{F}}(\nu,\zeta)$ is the shorthand for $L^{p,p}_{\mathbb{F}}(\nu,\zeta)$.

Let $\mathcal{G}\subset\mathcal{F}$ be a $\sigma$-field. $L^r(\mathcal{G})$ 
denotes the set of all $\mathcal{G}$-measurable random vectors $X$ such that $\mathbb{E}|X|^r<\infty$. For any $X\in L^r(\mathcal{G})$ we let $\|X\|_{L^r(\mathcal G)}:= \big(\mathbb E|X|^r\big)^{(1/r)\wedge 1}$.
For $\beta\in\AAA$ by $\mathcal{H}_{\mathbb{F}}(0,\beta)$ we denote the space of all $\mathbb{F}$-progressively measurable 
$\mathbb{R}^{d}$-valued processes $Z$ such that $\mathbb{P}(\int^{\beta}_{0}|Z_r|^2\,dr<\infty)=1$.
$\mathcal{H}^s_{\mathbb{F}}(0,\beta)$, $s>0$, is a subspace of  $\mathcal{H}_{\mathbb{F}}(0,\beta)$
consisting of  $Z$ satisfying $\mathbb{E}\Big(\int^{\beta}_{0}|Z_r|^2\,dr\Big)^{\frac{s}{2}}<\infty.$
We set
\[
||Z||_{\mathcal{H}^s(0,\beta)}:=\Bigg(\mathbb{E}\Big(\int^{\beta}_{0}|Z_r|^2\,dr\Big)^{\frac{s}{2}}\Bigg)^{\frac{1}{s}},\, s>1,\quad
|Z|_{\mathcal{H}^s(0,\beta)}:=\mathbb{E}\Big(\int^{\beta}_{0}|Z_r|^2\,dr\Big)^{\frac{s}{2}},\, s\in (0,1).
\]

We say that $\mathbb{F}$-progressively measurable process $X=(X_t)_{t\ge 0}$ 
is of class (D) on $[[\nu,\zeta]]$ if the family $\{X_{\tau},\,\tau\in\mathcal{A}_{\nu}^{\zeta}\}$ 
is uniformly integrable. By $\mathcal{D}^1_{\mathbb{F}}(\nu,\zeta)$ we denote the class of all such processes.
We endow $\mathcal D^1_{\mathbb{F}}(\nu,\zeta)$ 
with the norm
\[
||Y||_{\mathcal{D}^1(\nu,\zeta)}:=\sup_{\sigma\in\mathcal{T}_{\nu,\zeta}}\mathbb E|Y_{\sigma}|.
\]
$||\cdot||_{\mathcal{D}^1}$ is the shorthand for $||\cdot||_{\mathcal{D}^1(0,T)}$.

A sequence $(\tau_k)_{k\ge 1}\subset\mathcal{T}_{\nu,\zeta}$ is called a chain on $[[\nu,\zeta]]$ if
\[
\forall_{\omega\in\Omega}\exists_{n\in\mathbb{N}}\forall_{k\ge n}\,\tau_k(\omega)=\zeta(\omega).
\]

We let  $\mathcal{V}_{\mathbb{F}}(\nu,\zeta)$ (resp. $\mathcal{V}^+_{\mathbb{F}}(\nu,\zeta)$) 
 denote the space of all real valued $\mathbb{F}$-progressively measurable  processes $V=(V_t)_{t\ge 0}$ 
with finite variation (resp. nondecreasing) on $[[\nu,\zeta]]$ and $\mathcal{V}_{0,\mathbb{F}}(\nu,\zeta)$ 
(resp. $\mathcal{V}^+_{0,\mathbb{F}}(\nu,\zeta)$) denote  the  subspace of  $\mathcal{V}_{\mathbb{F}}(\nu,\zeta)$ 
(resp. $\mathcal{V}^+_{\mathbb{F}}(\nu,\zeta)$)  consisting of  processes $V$ such that $V_{\nu}=0$. 
$\mathcal{V}^p_{\mathbb{F}}(\nu,\zeta)$ (resp. $\mathcal{V}^{+,p}_{\mathbb{F}}(\nu,\zeta)$) is the set of all 
$V\in\mathcal{V}_{\mathbb{F}}(\nu,\zeta)$ (resp. $V\in\mathcal{V}^+_{\mathbb{F}}(\nu,\zeta)$) such that 
$\mathbb{E}|V|^p_{\nu,\zeta}<\infty$, where $|V|_{\nu,\zeta}$ denotes the total variation of $V$ on $[[\nu,\zeta]]$.

Throughout the paper all relations between random variables are supposed to hold $\mathbb{P}$-a.s. 
For processes $X^1=(X^1_t)_{t\ge 0}$ and $X^2=(X^2_t)_{t\ge 0}$ we write $X^1\le X^2$ on $[[\nu,\zeta]]$
whenever $X^1_t\le X^2_t$, $t\in[\nu,\zeta]$, $\mathbb{P}$-a.s.

We say that a c\`adl\`ag process $(X_t)$  is a semimartingale on $[[\nu,\zeta]]$ provided that 
the process $[0,\infty)\ni t\mapsto X_{(t\vee \nu)\wedge \zeta}$ is a c\`adl\`ag semimartingale.

\section{Backward SDEs}
\label{roz3}

Throughout the paper $f:\Omega\times [0,T]\times\mathbb R\times\mathbb R^d\to \mathbb R $
is assumed   $\mathbb F$-progressively measurable for fixed $(y,z)\in \mathbb R\times\mathbb R^d$. 
Let $\nu,\zeta\in\mathcal{T}$, $\nu\le\zeta$, and $\hat{\xi}\in\mathcal{F}_{\zeta}$. 

\begin{definition}
We say that a pair $(Y,Z)$ of $\mathbb{F}$-adapted processes is a solution to  backward stochastic differential equation
on the interval $[[\nu,\zeta]]$ with generator $f$ and terminal value $\hat{\xi}$ (BSDE$^{\nu,\zeta}(\hat{\xi},f)$ for short) if
\begin{enumerate}
\item[(a)] $Y$ is a continuous process  and $Z\in\mathcal{H}_{\mathbb{F}}(\nu,\zeta)$,
\item[(b)] $f(\cdot,Y,Z)$ is $\mathbb F$-progressively measurable and  $\int^{\zeta}_{\nu}|f(r,Y_r,Z_r)|\,dr<\infty$,
\item[(c)] $Y_t=\hat{\xi}+\int^{\zeta}_t f(r,Y_r,Z_r)\,dr-\int^{\zeta}_t Z_r\,dB_r$, $t\in[\nu,\zeta]$.
\end{enumerate}
\end{definition}

Let $V\in\mathcal{V}_{0,\mathbb F}(\nu,\zeta)$. 

\begin{definition}
We say that a pair $(Y,Z)$ of $\mathbb{F}$-adapted processes is a solution to  backward stochastic differential equation
on the interval $[[\nu,\zeta]]$ with right-hand side $f+dV$ and terminal value $\hat{\xi}$ (BSDE$^{\nu,\zeta}(\hat{\xi},f+dV)$ for short) if
$(Y-V,Z)$ is a solution to BSDE$^{\nu,\zeta}(\hat{\xi},f_V)$, where $f_V(t,y,z)=f(t,y+V_t,z)$.
\end{definition}

In what follows  BSDE$^{\zeta}$ is shorthand for  BSDE$^{0,\zeta}$.

We shall need the following hypotheses:
\begin{enumerate}
\item[(H1)] there is $\lambda\ge0$ such that
$|f(t,y,z)-f(t,y,z')|\le\lambda|z-z'|$ for $t\in[0,T]$, $y\in\mathbb{R}$, $z,z'\in\mathbb{R}^d$,
\item[(H2)] there is $\mu\in\mathbb{R}$ such that
$(y-y')(f(t,y,z)-f(t,y',z))\leq\mu(y-y')^2$ for $t\in[0,T]$, $y,y'\in\mathbb{R}$, $z\in\mathbb{R}^d$,
\item[(H3)] for every $(t,z)\in[0,T]\times\mathbb{R}^d$ the mapping $\mathbb{R}\ni y\rightarrow f(t,y,z)$ is continuous,
\item[(H4)] $\int^T_0 |f(r,y,0)|\,dr<\infty$ for every $y\in\mathbb{R}$,
\item[(H5)] $\xi\in L^1(\mathcal{F}_T)$, $f(\cdot,0,0)\in L^1_{\mathbb F}(0,T)$, $V\in \VV_{0,\mathbb F}^1(0,T)$,
\item[(H5$_p$)] $p>1$, $\xi\in L^p(\mathcal{F}_T)$, $f(\cdot,0,0)\in L^p_{\mathbb F}(0,T)$, 
$V\in \VV_{0,\mathbb F}^p(0,T)$,
\item[(Z)] there exists an $\mathbb{F}$-progressively measurable process $g\in L^1_{\mathbb F}(0,T)$ and $\gamma\ge 0$, $\kappa\in [0,1)$ such that
\begin{align*}
|f(t,y,z)-f(t,y,0)|\le\gamma(g_t+|y|+|z|)^\kappa,\quad t\in [0,T], \,y\in\mathbb{R},\,z\in\mathbb{R}^d.
\end{align*}
\end{enumerate}

\begin{theorem}\label{12sierpnia1p}
The following hold.
\begin{enumerate}
\item[(i)]  Assume that \textnormal{(H1)-(H4), (H5$_p$)} are in force. 
Then there exists a  solution 
$(Y,Z)\in \mathcal S^p_{\mathbb{F}}(0,T)\times\mathcal H^p_{\mathbb{F}}(0,T)$ to \textnormal{BSDE}$^T(\xi,f+dV)$. 
\item[(ii)] Assume \textnormal{(H1), (H2)}. Then there exists at most one   solution $(Y,Z)$ of 
\textnormal{BSDE}$^T(\xi,f+dV)$ such that $Y\in \mathcal S^p_{\mathbb{F}}(0,T)$.
\end{enumerate}
\end{theorem}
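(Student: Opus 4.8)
This is the $L^p$-well-posedness of a backward equation with a generator that is Lipschitz in $z$, monotone and continuous in $y$, plus a bounded-variation perturbation; the plan is to combine the standard reduction absorbing $dV$, the classical $L^p$ a priori estimates, and a monotone (Yosida) regularization in $y$. By the definition of $\mathrm{BSDE}^{0,T}(\xi,f+dV)$ it is equivalent to treat the generator $f_V(t,y,z):=f(t,y+V_t,z)$ with terminal value $\xi$; one checks that $f_V$ inherits \textnormal{(H1)}--\textnormal{(H3)} with the same constants $\lambda,\mu$, and, since $V$ is c\`adl\`ag hence pathwise bounded on $[0,T]$ and since any $\varphi$ with $y\mapsto\varphi(y)-\mu y$ nonincreasing satisfies $\sup_{|y|\le r}|\varphi(y)|\le|\varphi(r)|+|\varphi(-r)|+2|\mu|r$, also \textnormal{(H4)}. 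As $V\in\mathcal V^p_{0,\mathbb F}(0,T)\subset\mathcal S^p_{\mathbb F}(0,T)$, it suffices to prove (i), (ii) for $f_V$; but since $f_V(\cdot,0,0)=f(\cdot,V_\cdot,0)$ need not lie in $L^p$, it is cleaner in the existence part below to keep $dV$ explicit and reduce only \emph{after} regularizing $f$ in $y$.

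\emph{Uniqueness.} Given two solutions $(Y^i,Z^i)$, $i=1,2$, with $Y^i\in\mathcal S^p_{\mathbb F}(0,T)$, put $\delta Y=Y^1-Y^2$, $\delta Z=Z^1-Z^2$; then $\delta Y_T=0$ and $-d(\delta Y_t)=\big(f_V(t,Y^1_t,Z^1_t)-f_V(t,Y^2_t,Z^2_t)\big)\,dt-\delta Z_t\,dB_t$. I would apply It\^o's formula to $(\varepsilon+|\delta Y_t|^2)^{p/2}$ (the smoothing handles $p\in(1,2)$), split the generator difference into a $z$-part bounded by $\lambda|\delta Z_t|$ using \textnormal{(H1)} and a $y$-part which, paired with $\delta Y_t$, is bounded by $\mu|\delta Y_t|^2$ using \textnormal{(H2)}, localize the stochastic integral, take expectations, let the localizing times go to $T$ (here $p>1$ and $\delta Y\in\mathcal S^p$ supply the dominating variable) and $\varepsilon\downarrow 0$, and close with Gronwall's lemma to get $\delta Y\equiv0$, hence $\delta Z\equiv0$; this is the standard $L^p$ estimate of Briand--Delyon--Hu--Pardoux--Stoica.

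\emph{Existence.} Write $f(t,y,z)=\mu y+g(t,y,z)$ with $y\mapsto-g(t,y,z)$ continuous nondecreasing, i.e. a single-valued maximal monotone operator on $\mathbb R$; let $g^k$ arise by replacing $-g(t,\cdot,z)$ by its Yosida approximation of index $k$, and set $f^k(t,y,z):=\mu y+g^k(t,y,z)$. Then $f^k$ is Lipschitz in $y$ (with a $k$-dependent constant), $z\mapsto f^k(t,y,z)$ is again $\lambda$-Lipschitz so \textnormal{(H1)} holds with the same $\lambda$, \textnormal{(H2)} holds with the same $\mu$, $f^k\to f$ pointwise by continuity, and, by the resolvent identity, $|f^k(t,0,0)|\le|f(t,0,0)|$. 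By the classical $L^p$ theory for generators Lipschitz in $(y,z)$ (the Lipschitz-in-$y$ shift by $V_\cdot$ now costing only $k|V_\cdot|\in L^p$), each $\mathrm{BSDE}^{0,T}(\xi,f^k+dV)$ has a unique solution $(Y^k,Z^k)\in\mathcal S^p_{\mathbb F}(0,T)\times\mathcal H^p_{\mathbb F}(0,T)$. The key point is that the a priori bound---It\^o on $e^{\beta t}(\varepsilon+|Y^k_t|^2)^{p/2}$ with $\beta=2\mu+\lambda^2$, treating $Y^k_tf^k(t,Y^k_t,Z^k_t)$ by inserting $f^k(t,0,Z^k_t)$ and $f^k(t,0,0)$ and invoking \textnormal{(H2)} with $y'=0$, \textnormal{(H1)}, and $|f^k(t,0,0)|\le|f(t,0,0)|$, and treating $dV$ by $|Y^k_t|\,d|V|_t$---uses only $\mu,\lambda$, not the exploding Lipschitz constant, so $\sup_k\big(\|Y^k\|_{\mathcal S^p(0,T)}+\|Z^k\|_{\mathcal H^p(0,T)}\big)<\infty$ with a bound depending only on $\|\xi\|_{L^p}$, $\mathbb E|V|^p_{0,T}$ and $\mathbb E\big(\int_0^T|f(s,0,0)|\,ds\big)^p$.

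\emph{Passing to the limit (the main obstacle).} It remains to show $(Y^k,Z^k)$ is Cauchy and that its limit $(Y,Z)$ solves $\mathrm{BSDE}^{0,T}(\xi,f+dV)$. Since the $V$-terms cancel, $Y^k-Y^m$ solves a BSDE with zero terminal value, and the same $L^p$--It\^o estimate gives, besides the terms absorbed via \textnormal{(H1)}--\textnormal{(H2)}, an error $\mathbb E\int_0^T|Y^k_s-Y^m_s|\,|f^k(s,Y^m_s,Z^m_s)-f^m(s,Y^m_s,Z^m_s)|\,ds$. Controlling it is the heart of the matter: one uses $|f^k-f^m|\le 2|g|$ (resolvent identity) together with the pointwise convergence $f^k\to f$, the continuity \textnormal{(H3)}, the uniform $\mathcal S^p$/$\mathcal H^p$ bounds, and the pathwise integrability of $\sup_{|y|\le R}|f(\cdot,y,0)|$ from \textnormal{(H2)}+\textnormal{(H4)}, by truncating $Y^m$ at level $R$ (using $\sup_m\mathbb P(\sup_t|Y^m_t|>R)\to 0$ as $R\to\infty$) and applying dominated convergence---precisely the localization technique of Pardoux and of Briand--Carmona, carried out in the $L^p$ setting. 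Once convergence holds, \textnormal{(H4)} and the bounds give $\int_0^T|f(s,Y_s,Z_s)|\,ds<\infty$, passing to the limit in the integral equation identifies $(Y,Z)$ as the desired solution, and undoing the $dV$-reduction finishes the proof. (Alternatively, both parts may simply be quoted from the $L^p$ theory of monotone BSDEs---Pardoux; Briand--Delyon--Hu--Pardoux--Stoica---together with the routine incorporation of the finite-variation term.)
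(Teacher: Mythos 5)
The paper does not prove this statement at all: it simply quotes it from Briand--Delyon--Hu--Pardoux--Stoica \cite{bdh} (Lemma 3.1 and Theorem 4.2), and your outline is a faithful reconstruction of exactly the arguments in that reference (the $L^p$ It\^o estimate on $(\varepsilon+|\delta Y|^2)^{p/2}$ for uniqueness, and Lipschitz/Yosida regularization in $y$ with $\mu,\lambda$-uniform a priori bounds plus a localization argument for the limit passage in the existence part). Your proposal is correct and, as you note yourself in the closing parenthesis, amounts to the same approach the paper takes by citation.
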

\begin{proof}
See \cite{bdh}, Lemma 3.1, Theorem 4.2.
\end{proof}

\begin{proposition}\label{13stycznia19}
Let  $p>1$. Assume that \textnormal{(H1), (H2), (H5)} are satisfied.
Let $(Y,Z)$ be a solution to \textnormal{BSDE}$^T(\xi,f)$ 
such that $Y\in\mathcal{S}^p_{\mathbb{F}}(0,T)$. 
Then there exists $c_p>0$, depending only on $p$, such that 
\[
\begin{split}
\mathbb E\Big[\sup_{0\le t\le T}e^{a t}|Y_t|^p+\Big(\int^T_0 e^{2a r} |Z_r|^2\,dr\Big)^{\frac{p}{2}}\Big]\le c_p\mathbb E\Big[e^{apT}|\xi|^p+\Big(\int^T_0 e^{ar}|f(r,0,0)|\,dr\Big)^p\Big],
\end{split}
\]
for any $a\ge  \mu+\frac{\lambda ^2}{1\wedge (p-1)}$.
\end{proposition}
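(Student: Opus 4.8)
This is a standard a priori estimate of BSDE type, and the plan is to run the classical Itô-formula-with-exponential-weight argument adapted to the $L^p$ setting. The starting point is to apply Itô's formula to $e^{at}|Y_t|^p$ on $[t,T]$ (for $p\ge 2$ this is direct; for $1<p<2$ one first smooths $|y|^p$ by $(|y|^2+\delta)^{p/2}$ and lets $\delta\downarrow 0$ at the end, or equivalently one works with the function $u_\delta(y)=(\delta+|y|^2)^{p/2}$ and uses the well-known pointwise lower bound $u_\delta''\!\cdot\! z\cdot z \ge c_p |y|^{p-2}|z|^2\mathbf 1_{\{Y\ne 0\}}$). First I would write, for $Y\in\mathcal S^p$,
\[
e^{at}|Y_t|^p + \text{(martingale)} + \frac{p(p-1)}{2}\int_t^T e^{ar}|Y_r|^{p-2}\mathbf 1_{\{Y_r\ne0\}}|Z_r|^2\,dr
= e^{aT}|\xi|^p - a\int_t^T e^{ar}|Y_r|^p\,dr + p\int_t^T e^{ar}|Y_r|^{p-2}Y_r f(r,Y_r,Z_r)\,dr.
\]

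Next I would estimate the driver term. Write $f(r,Y_r,Z_r)=\big(f(r,Y_r,Z_r)-f(r,Y_r,0)\big)+\big(f(r,Y_r,0)-f(r,0,0)\big)+f(r,0,0)$. The first bracket is controlled by $\lambda|Z_r|$ via (H1), the second by $\mu Y_r$ via (H2) (this is where monotonicity, not Lipschitz continuity in $y$, enters — crucially $\mu$ may be negative, which is exactly why the bound is uniform over all $a\ge\mu+\lambda^2/(1\wedge(p-1))$ and the growth in $y$ is unrestricted), and the third is the inhomogeneous term. Thus $p|Y_r|^{p-2}Y_r f(r,Y_r,Z_r)\le p\mu|Y_r|^p + p\lambda|Y_r|^{p-1}|Z_r| + p|Y_r|^{p-1}|f(r,0,0)|$. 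For the $|Y_r|^{p-1}|Z_r|$ term I would use Young's inequality in the form $p\lambda|Y_r|^{p-1}|Z_r|\le \frac{p(p-1)}{2}|Y_r|^{p-2}|Z_r|^2 + \frac{p\lambda^2}{2(p-1)}|Y_r|^p$ on $\{Y_r\ne0\}$ so that the resulting $|Y_r|^{p-2}|Z_r|^2$ piece is absorbed by the left-hand side; the leftover $|Y_r|^p$ coefficient combines with $p\mu$ and $-a$, and the choice $a\ge\mu+\lambda^2/(1\wedge(p-1))$ makes the net $|Y_r|^p$ coefficient nonpositive. For the $|Y_r|^{p-1}|f(r,0,0)|$ term I would use $ab\le \varepsilon a^{p/(p-1)}+C_\varepsilon b^p$ to trade it against $\sup_t e^{at}|Y_t|^p$ (times a small constant) plus $\big(\int_0^T e^{ar}|f(r,0,0)|\,dr\big)^p$.

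After these absorptions I obtain, for every $t$,
\[
e^{at}|Y_t|^p + c\int_t^T e^{2ar}|Z_r|^2\,\big(\text{localized via }|Y_r|^{p-2}\big)\,dr \le e^{aT}|\xi|^p + p\int_t^T e^{ar}|Y_r|^{p-1}|f(r,0,0)|\,dr + \text{(martingale)}.
\]
Taking conditional expectation kills the martingale (using $Y\in\mathcal S^p$ and a localization/BDG argument to justify that the stochastic integral is a true martingale after stopping), giving control of $\mathbb E[e^{at}|Y_t|^p]$, then I would take $\sup$ over $t$ inside before expectation, estimate $\mathbb E\sup_t|\int_t^T\cdots dB|$ by Burkholder–Davis–Gundy, and close the loop to bound $\mathbb E\sup_t e^{at}|Y_t|^p$. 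Finally the $\mathcal H^p$ bound on $Z$ is recovered by returning to the Itô identity: $\mathbb E\big(\int_0^T e^{2ar}|Z_r|^2dr\big)^{p/2}$ is handled by the standard trick of writing $\int_0^T e^{2ar}|Z_r|^2\,dr$ in terms of the bracket, applying BDG again, and using the already-established $\mathcal S^p$ bound on $Y$ together with Young's inequality (distinguishing once more $p\ge2$ from $1<p<2$, where one pays an extra factor via $\int|Z|^2 \le (\sup|Y|)^{2-p}\cdot\int|Y|^{p-2}|Z|^2$ type inequalities).

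**Main obstacle.** The only genuinely delicate point is the case $1<p<2$: the function $y\mapsto|y|^p$ is not $C^2$, so one cannot apply Itô's formula naively, and the coefficient $|Y_r|^{p-2}$ blows up where $Y_r$ vanishes. The clean fix is to work throughout with $u_\delta(y)=(\delta+|y|^2)^{p/2}$, use the sharp bounds $|u_\delta'(y)|\le p|y|(\delta+|y|^2)^{(p-2)/2}\le p(\delta+|y|^2)^{(p-1)/2}$ and the convexity-type lower bound on $u_\delta''$, carry all constants as dimension-free multiples of $p$ and $1/(p-1)$ (this is where the factor $1\wedge(p-1)$ in the hypothesis on $a$ originates), and only at the very end let $\delta\downarrow0$ by monotone/dominated convergence — the uniform-in-$\delta$ bounds being exactly what the exponential-weight absorption provides. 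Everything else is bookkeeping of Young's-inequality constants, which I would not write out in detail, simply citing \cite{bdh} for the $L^p$ Itô machinery if convenient.
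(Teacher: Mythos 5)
Your proposal is correct and is precisely the argument behind the result the paper invokes: the paper gives no proof of its own but simply cites \cite{bdh}, Proposition 3.2, and that reference proves the estimate by exactly your route (It\^o's formula applied to the smoothed power $(\delta+|y|^2)^{p/2}$ with the correction term bounded below using the constant $1\wedge(p-1)$, the driver split via (H1)/(H2), Young's inequality to absorb the cross term, the choice of $a$ killing the $|Y|^p$ coefficient, and BDG to pass to the supremum and to the $\mathcal H^p$ bound on $Z$). Your identification of the $1<p<2$ smoothing issue as the only delicate point, and of $1\wedge(p-1)$ as originating there, matches the cited proof.
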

\begin{proof}
See \cite{bdh}, Proposition 3.2.
\end{proof}

\begin{theorem}\label{12sierpnia1}
The following hold.
\begin{enumerate}
\item[(i)] Assume that \textnormal{(H1)-(H5), (Z)} are in force.
Then, there exists a  solution $(Y,Z)$ of the problem \textnormal{BSDE}$^T(\xi,f+dV)$ 
such that $Y$ is of class \textnormal{(D)} and $Z\in\mathcal{H}^s_{\mathbb{F}}(0,T)$, $s\in(0,1)$. 
\item[(ii)] Assume that \textnormal{(H1), (H2), (Z)}. Then there exists at most one   solution 
$(Y,Z)$ of the problem \textnormal{BSDE}$^T(\xi,f+dV)$ such that $Y$ is of class \textnormal{(D)}.
\end{enumerate}
\end{theorem}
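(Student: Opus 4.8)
The plan is to obtain both assertions from the $L^p$-theory of Theorem \ref{12sierpnia1p} by a combination of truncation in the $z$-variable, a priori estimates, and a monotone/compactness passage to the limit. First I would treat uniqueness (part (ii)), which is the easier half: given two solutions $(Y,Z)$, $(Y',Z')$ of class (D) to BSDE$^T(\xi,f+dV)$, I would apply the It\^o–Tanaka formula to $|Y_t-Y'_t|$ (or to a smooth approximation $\phi_\ve(Y-Y')$), use (H2) to absorb the $y$-difference of the drivers into a $\mu|Y-Y'|$ term and (H1) together with (Z) to control the $z$-difference, and then localize along a reducing sequence of stopping times. The finite-variation term $dV$ cancels exactly since it is common to both equations. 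The class (D) hypothesis is exactly what is needed to pass to the limit in the localization and conclude $Y\equiv Y'$ (hence $Z\equiv Z'$ by the martingale representation, as explained in the paper's discussion of $\mathscr Z$); note that (Z) guarantees $f(\cdot,Y,Z)-f(\cdot,Y,0)$ is integrable along any class (D) solution because $\kappa<1$, so the stochastic integrals are genuine local martingales.

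For existence (part (i)), the main step is a reduction to the $L^p$-setting. I would first reduce to $V\equiv 0$ by the change of unknown $(Y,Z)\mapsto(Y-V,Z)$ with driver $f_V(t,y,z)=f(t,y+V_t,z)$, which preserves (H1)–(H4) and (Z) (the monotonicity (H2) is preserved since $V$ does not depend on $y$). Then I would introduce truncations: replace $\xi$ by $\xi_n:=(\xi\wedge n)\vee(-n)$ and $f$ by $f_n(t,y,z):=f(t,y,z)-f(t,0,0)+(f(t,0,0)\wedge n)\vee(-n)$, so that $f_n(\cdot,0,0)\in L^\infty\subset L^p$ for every $p$, and (Z) forces $f_n$ to have sub-linear — in particular at most linear — growth in $z$; hence (H1)–(H4) and (H5$_p$) hold for $(\xi_n,f_n)$ for every $p>1$, and Theorem \ref{12sierpnia1p}(i) yields solutions $(Y^n,Z^n)\in\mathcal S^p_\mathbb F(0,T)\times\mathcal H^p_\mathbb F(0,T)$.

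The core estimate is then an a priori bound for $(Y^n,Z^n)$ in $\mathcal D^1\times\mathcal H^s$, $s\in(0,1)$, uniform in $n$, obtained by It\^o's formula applied to $|Y^n|$ and to $|Y^n|^s$-type functionals in the spirit of the $L^1$-theory of \cite{bdh}: the monotonicity (H2) controls the $y$-dependence, (H1) and the sub-linearity (Z) (with $\kappa<1$) allow the $z$-terms to be absorbed via Young's inequality, and the class (D) structure of the truncated data is propagated to $Y^n$. Monotonicity of the truncations ($\xi_n\uparrow$, $f_n(\cdot,0,0)\to f(\cdot,0,0)$ monotonically after splitting positive and negative parts, or using the comparison principle that follows from (H1)–(H2)) gives that $Y^n$ converges, say increasingly after arranging signs, to some class (D) process $Y$; combined with the uniform $\mathcal H^s$ bound and weak compactness, $Z^n$ converges to some $Z\in\mathcal H^s_\mathbb F(0,T)$. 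Finally I would pass to the limit in the equation: the stochastic integral passes by the $\mathcal H^s$-bound and a stopping-time localization, the drift passes by (H3) (continuity in $y$), the domination $|f_n(\cdot,Y^n,Z^n)|\le |f(\cdot,0,0)|+\mu^+|Y^n|+\gamma(g+|Y^n|+|Z^n|)^\kappa$ and Vitali's theorem, so that $(Y,Z)$ solves BSDE$^T(\xi,f)$ with $Y$ of class (D) and $Z\in\mathcal H^s_\mathbb F(0,T)$; undoing the reduction gives the claim with $dV$. The hard part will be the uniform $L^1$-type a priori estimates together with the uniform integrability needed to pass to the limit in the driver — this is where (H2) (monotonicity with no growth restriction in $y$), (Z) (strict sub-linearity in $z$), and the class (D) assumption must be used in concert, and where care is needed because the $\mathcal S^p$-control of the approximants degrades to only $\mathcal D^1\times\mathcal H^s$ control in the limit.
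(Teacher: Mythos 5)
Your proposal is correct and follows essentially the same route as the paper, which simply delegates this statement to \cite{bdh}, Theorems 6.2 and 6.3: there, existence is obtained exactly by truncating $\xi$ and $f(\cdot,0,0)$, solving the truncated equations via the $L^p$-theory, establishing uniform a priori bounds in $\mathcal D^1\times\mathcal H^s$ ($s<1$) using the monotonicity in $y$ and the sub-linearity (Z) in $z$, and passing to the limit, while uniqueness follows from a Tanaka--Meyer argument with localization under the class (D) hypothesis. Your sketch is a faithful reconstruction of that argument, so no further comparison is needed.
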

\begin{proof}
See \cite{bdh}, Theorem 6.2, Theorem 6.3.
\end{proof}

The following result has been proven in \cite{KRz4}, Theorem 3.5. 

\begin{theorem}
\label{th.2}
Let $p=1$. Consider a function  $\bar f:\Omega\times [0,T]\times\mathbb R\times\mathbb R^{d}\to \mathbb R$
and $\bar\xi\in L^1(\mathcal F_T)$.
Assume that $f$ satisfies \textnormal{(H1)-(H5), (Z)}. 
Let $(Y,Z), (\bar Y,\bar Z)$ be solutions 
to \textnormal{BSDE}$^T(\xi,f)$, \textnormal{BSDE}$^T(\bar\xi,\bar f)$, respectively, 
such that $Y, \bar Y$ are of class \textnormal{(D)}. 
Suppose that $\mathbb E\int_0^T|f(r,\bar Y_r,\bar Z_r)-\bar f(r,\bar Y_r,\bar Z_r)|\,dr<\infty$. Then, for any $q\in (\kappa,1)$
there exists $\mathcal{C}>0$ depending only on $\kappa,q,||g||_{L^1(0,T)},T,$ and $\gamma$ such that for any $a\ge \mu+\frac{\lambda^2}{1\wedge(\sqrt{\frac q\kappa}-1)}$,
\[
\begin{split}
\|Y-\bar Y&\|_{\mathcal D^1(0,T)}+|Z-\bar Z|_{\mathcal H_{\mathbb{F}}^q(0,T)}\le\mathcal C\psi_3(\|\xi-\bar\xi\|_{L^1(\mathcal{F}_T)}+\||f-\bar f|(\cdot,\bar Y,\bar Z)\|_{L^1_{\mathbb F}(0,T)}),
\end{split}
\]
where $\psi_3(x)=x+x^{\kappa q(1-q)},\, x\ge 0$.
\end{theorem}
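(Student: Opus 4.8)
The plan is to reduce everything to an a priori bound for a \emph{linear} BSDE with bounded coefficients and $L^1$ data, and then run the $L^1$/sublinear-$Z$ machinery of Briand--Delyon--Hu--Pardoux--Stoica adapted to the monotone setting. First I would set $\hat Y:=Y-\bar Y$, $\hat Z:=Z-\bar Z$, $\hat\xi:=\xi-\bar\xi$ and $\delta f_r:=f(r,\bar Y_r,\bar Z_r)-\bar f(r,\bar Y_r,\bar Z_r)$, which lies in $L^1_{\mathbb F}(0,T)$ by hypothesis. Decomposing
\[
f(r,Y_r,Z_r)-\bar f(r,\bar Y_r,\bar Z_r)=\big[f(r,Y_r,Z_r)-f(r,\bar Y_r,Z_r)\big]+\big[f(r,\bar Y_r,Z_r)-f(r,\bar Y_r,\bar Z_r)\big]+\delta f_r
\]
and linearising the first bracket via (H2) and the second via (H1), I obtain $\mathbb F$-progressively measurable $a,b$ with $a_r\le\mu$, $|b_r|\le\lambda$, so that $\hat Y$ solves $\mathrm{BSDE}^T(\hat\xi,\varphi)$ with the linear generator $\varphi(r,y,z)=a_ry+b_r\cdot z+\delta f_r$. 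Hypothesis (Z), which concerns $f$, is still used, but only to know (via Theorem \ref{12sierpnia1}) that $\bar Z$, hence $\hat Z$, belongs to $\mathcal H^s_{\mathbb F}(0,T)$ for every $s<1$, which legitimises the computations below.

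For the $\mathcal D^1$-bound on $\hat Y$ I would apply the Itô--Tanaka formula to $t\mapsto e^{at}|\hat Y_t|$. The term $\sgn(\hat Y_r)\,b_r\cdot\hat Z_r$ is controlled either by a Girsanov change of probability with the bounded kernel $\sgn(\hat Y_r)b_r$, or --- keeping $\mathbb P$ --- by regularising $|\hat Y|$ to $(\hat Y^2+\varepsilon)^{1/2}$, whose Itô expansion supplies a nonnegative term $\tfrac12|\hat Z_r|^2(\hat Y_r^2+\varepsilon)^{-1/2}\mathbf{1}_{\{\hat Y_r\ne0\}}$ against which $b_r\cdot\hat Z_r$ is absorbed by Young's inequality once $a$ is large enough (this is where the threshold $a\ge\mu+\lambda^2/(1\wedge(\sqrt{q/\kappa}-1))$ is born). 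Taking conditional expectation at an arbitrary $\sigma\in\TT$ and letting $\varepsilon\downarrow0$ should give
\[
|\hat Y_\sigma|\le\mathbb E\Big[e^{a(T-\sigma)}|\hat\xi|+\int_\sigma^Te^{a(r-\sigma)}|\delta f_r|\,dr\,\Big|\,\mathcal F_\sigma\Big],
\]
hence $\|\hat Y\|_{\mathcal D^1(0,T)}\le e^{aT}\big(\|\hat\xi\|_{L^1(\mathcal F_T)}+\|\delta f\|_{L^1_{\mathbb F}(0,T)}\big)$ and, moreover, that $|\hat Y|$ is dominated by a supermartingale with $L^1$-controlled potential; by the weak-$L^1$ maximal inequality this yields $\mathbb E\sup_{0\le r\le T}|\hat Y_r|^q\le C_q\big(\|\hat\xi\|_{L^1(\mathcal F_T)}+\|\delta f\|_{L^1_{\mathbb F}(0,T)}\big)^q$, and it is here that $q<1$ is indispensable.

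For the $\mathcal H^q$-bound on $\hat Z$ I would apply Itô's formula to $|\hat Y_t|^2$ along a localising sequence and, using the structure of $\varphi$ and Young's inequality, absorb the $|\hat Z|^2$-terms to reach
\[
\int_0^T|\hat Z_r|^2\,dr\le C\Big(\sup_{0\le r\le T}|\hat Y_r|^2+\sup_{0\le r\le T}|\hat Y_r|\int_0^T|\delta f_r|\,dr+\sup_{0\le t\le T}\Big|\int_0^t\hat Y_r\hat Z_r\,dB_r\Big|\Big).
\]
Raising to the power $q/2$, taking expectations, handling the stochastic-integral term with BDG followed by the Cauchy--Schwarz/Young splitting, one absorbs $\mathbb E(\int_0^T|\hat Z_r|^2\,dr)^{q/2}$ and is left with $|\hat Z|_{\mathcal H^q_{\mathbb F}(0,T)}\le C\,\mathbb E\sup_{0\le r\le T}|\hat Y_r|^q+(\text{lower order})$. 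Feeding in the bound from the previous step, and interpolating the $L^1$ control of $\hat Y$ against the $L^p$-estimate of Proposition \ref{13stycznia19} applied with $p=\sqrt{q/\kappa}$ (legitimate precisely because (Z) gives $f$ only $\kappa$-growth in $z$), I expect to land on the nonlinear modulus $\psi_3(x)=x+x^{\kappa q(1-q)}$; the fractional exponent and the appearance of $\sqrt{q/\kappa}-1$ in the threshold for $a$ are exactly the arithmetic residue of this interpolation.

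The hard part will be the passage to $p=1$. Unlike the case $p>1$ (Proposition \ref{13stycznia19}), one cannot close the $\int_0^T|\hat Z_r|^2\,dr$ estimate directly, since $\mathbb E\sup_r|\hat Y_r|$ may be infinite for a class-(D) process; the fix --- running everything with exponents $q<1$ and exploiting the strict sublinearity (Z) to keep every term integrable --- is what forces the nonlinear modulus $\psi_3$ and the $q$-dependent lower bound on $a$, and getting the bookkeeping of these exponents right is the delicate point. A secondary technical obstacle is that each application of Itô's/Itô--Tanaka's formula and of BDG must be justified under only the integrability hypotheses (H4)--(H5), hence via careful stopping-time localisation and uniform-integrability control rather than $L^2$ estimates.
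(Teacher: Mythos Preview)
The paper does not actually prove this result: its entire ``proof'' is the one-line citation ``See \cite{KRz4}, Theorem 3.5.'' So there is no in-paper argument to compare against; what can be said is whether your sketch is a plausible reconstruction of such a stability estimate for $L^1$ BSDEs.

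Your overall architecture---linearise the difference equation via (H1), (H2), run It\^o--Tanaka on $e^{at}|\hat Y_t|$ for the $\mathcal D^1$ bound, then It\^o on $|\hat Y|^2$ plus BDG at exponent $q/2$ for $|\hat Z|_{\mathcal H^q}$---is indeed the standard route in the Briand--Delyon--Hu--Pardoux--Stoica $L^1$ theory and is almost certainly what underlies the cited reference. Two points, however, are genuine gaps rather than routine bookkeeping. First, the ``interpolation'' you invoke to produce the modulus $\psi_3(x)=x+x^{\kappa q(1-q)}$ and the threshold $a\ge\mu+\lambda^2/(1\wedge(\sqrt{q/\kappa}-1))$ is only gestured at: you correctly identify that the threshold matches Proposition~\ref{13stycznia19} with $p=\sqrt{q/\kappa}$, but that proposition requires $\hat Y\in\mathcal S^p_{\mathbb F}(0,T)$ for $p>1$, which you do \emph{not} have---$Y,\bar Y$ are merely of class (D). The standard fix is to first establish the estimate for truncated/approximating data in $L^p$ and then pass to the limit, but this approximation argument is missing from your sketch and is where the real work (and the exact exponent $\kappa q(1-q)$) lives. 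Second, you claim (Z) yields $\bar Z\in\mathcal H^s_{\mathbb F}(0,T)$ via Theorem~\ref{12sierpnia1}, but no structural hypothesis is placed on $\bar f$; the only assumption on the $\bar f$-equation is that $\bar Y$ is of class (D) and that $\delta f\in L^1$. You need to argue integrability of $\hat Z$ (or of the relevant stochastic integrals) without assuming anything about $\bar Z$ beyond $\bar Z\in\mathcal H_{\mathbb F}(0,T)$, and this requires more care than you indicate.
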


\section{Semimartingale solution to RBSDEs with two reflecting barriers}
\label{roz4}
In the whole paper, we assume that 
\begin{enumerate}
\item[(B)]   $L$ and $U$ are $\mathbb F$-adapted c\`adl\`ag processes of class (D)
on $[0,T]$,  $L\le U$, and $L_T\le\xi\le U_T$.
\end{enumerate}

Let $\nu,\zeta\in\mathcal{T}$, $\nu\le\zeta$, and $\hat{\xi}\in\mathcal{F}_{\zeta}$ such that $L_{\zeta}\le\hat{\xi}\le U_{\zeta}$. 

\begin{definition}\label{10wrzesnia3}
We say that a triple $(Y,Z,R)$ of $\mathbb{F}$-adapted processes is a semimartingale solution ($\mathscr S$-solution) to  reflected backward stochastic differential equation
on the interval $[[\nu,\zeta]]$ with generator $f$, terminal value $\hat{\xi}$, lower barrier $L$ and upper barrier $U$ (RBSDE$^{\nu,\zeta}(\hat{\xi},f,L,U)$ for short) if
\begin{enumerate}
\item[(a)] $Y$ is c\`adl\`ag and $Z\in\mathcal{H}_{\mathbb{F}}(\nu,\zeta)$,
\item[(b)] $R\in\mathcal{V}_{0,\mathbb{F}}(\nu,\zeta)$, $L_t\le Y_t\le U_t$, $t\in[\nu,\zeta]$, and
\begin{equation*}
\begin{split}
&\int^{\zeta}_{\nu}(Y_{r-}-L_{r-})\,dR^+_r+\int^{\zeta}_{\nu}(U_{r-}-Y_{r-})\,dR^-_r=0,
\end{split}
\end{equation*}
where $R=R^+-R^-$ is the Jordan decomposition of $R$,
\item[(c)] $f(\cdot,Y,Z)$ is $\mathbb F$-progressively measurable and $\int^{\zeta}_{\nu}|f(r,Y_r,Z_r)|\,dr<\infty$,
\item[(d)] $Y_t=\hat{\xi}+\int^{\zeta}_t f(r,Y_r,Z_r)\,dr+(R_\zeta-R_t)-\int^{\zeta}_t Z_r\,dB_r$, $t\in[\nu,\zeta]$.
\end{enumerate}
\end{definition}

In what follows we refer to condition (b) as the {\em minimality condition}.

We shall  also consider Reflected BSDEs with one lower (resp. upper) barrier.

\begin{definition}
We say that a triple $(Y,Z,K)$ of $\mathbb{F}$-adapted processes is a solution to  reflected backward stochastic differential equation
on the interval $[[\nu,\zeta]]$ with generator $f$, terminal value $\hat{\xi}$ and lower barrier $L$ (\underline{R}BSDE$^{\nu,\zeta}(\hat{\xi},f,L)$ for short) if
\begin{enumerate}
\item[(a)] $Y$ is c\`adl\`ag and $Z\in\mathcal{H}_{\mathbb{F}}(\nu,\zeta)$,
\item[(b)] $K\in\mathcal{V}^+_{0,\mathbb{F}}(\nu,\zeta)$, $L_t\le Y_t$, $t\in[\nu,\zeta]$, and
\begin{equation*}
\begin{split}
\int^{\zeta}_{\nu}(Y_{r-}-L_{r-})\,dK_r=0,
\end{split}
\end{equation*}
\item[(c)] $f(\cdot,Y,Z)$ is $\mathbb F$-progressively measurable and $\int^{\zeta}_{\nu}|f(r,Y_r,Z_r)|\,dr<\infty$,
\item[(d)] $Y_t=\hat{\xi}+\int^{\zeta}_t f(r,Y_r,Z_r)\,dr+(K_\zeta-K_t)-\int^{\zeta}_t Z_r\,dB_r$, $t\in[\nu,\zeta]$.
\end{enumerate}
\end{definition}

\begin{definition}
We say that a triple $(Y,Z,K)$ of $\mathbb{F}$-adapted processes is a solution to  reflected backward stochastic differential equation
on the interval $[[\nu,\zeta]]$ with generator $f$, terminal value $\hat{\xi}$ and upper barrier $U$ ($\mathrm{\overline{R}}$BSDE$^{\nu,\zeta}(\hat{\xi},f,U)$ for short) if $(-Y,-Z,K)$ is a solution to \underline{R}BSDE$^{\nu,\zeta}(\hat{\xi},-\tilde{f},-U$), where
\[
\tilde{f}(t,y,z)=f(t,-y,-z).
\]
\end{definition}

\begin{remark}
\label{rem.chv}
Let $a\in \mathbb R$. Observe that if  $(Y,Z,R)$ is an $\mathscr S$-solution to RBSDE$^T(\xi,f+dV,L,U)$, then $(\bar Y,\bar Z,\bar R)$ 
is a solution to RBSDE$^T(\bar\xi,\bar f+d\bar V,\bar L,\bar U)$,
where
\[
(\bar Y_t,\bar Z_t,\bar R_t):= (e^{a t} Y_t, e^{a t} Z_t,\int_0^te^{ar}\,dR_r),\quad \bar f(t,y,z):= e^{a t}f(t,e^{-a t}y,e^{-a t} z)-a y,
\]
\[
\bar \xi:= e^{a T}\xi,\quad \bar V_t:=\int_0^te^{ar}\,dV_r,\quad \bar L_t:=e^{at} L_t,\quad \bar U_t:=e^{at}U_t.
\]
Clearly, if $(\xi,f,V)$ satisfies any of conditions (H1), (H3)--(H5), then $(\bar\xi,\bar f,\bar V)$ satisfies it too.
If $f$ satisfies (H2), then $\bar f$ satisfies (H2) but with $\mu$ replaced by $\mu-a$, and if $f$ satisfies (Z), then $\bar f$
satisfies (Z) with $(\gamma,g_t)$ replaced by $(\gamma e^{a^+ T}, g_te^{\frac{-a^-t}{\kappa}})$.
Clearly $\bar L,\bar U$ are of class (D).
\end{remark}

Let us adopt the shorthand RBSDE$^{\zeta}:=$RBSDE$^{0,\zeta}$ (the same for one reflecting barrier case).

The following result follows from  \cite{K:BSM}, Theorem 7.1.

\begin{theorem}\label{10wrzesnia2}
Assume that \textnormal{(H1)-(H5), (Z)} are in force. Then there exists a  solution $(Y,Z,K)$ to \textnormal{\underline{R}BSDE}$^T(\xi,f,L)$ such that $Y$ is of class \textnormal{(D)}.
\end{theorem}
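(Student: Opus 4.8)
The plan is to build the solution by penalization and then pass to the limit using the $L^1$-theory of BSDEs with monotone generators recalled above. For $n\in\BN$ consider the penalized equation
\[
Y^n_t=\xi+\int_t^Tf(s,Y^n_s,Z^n_s)\,ds+n\int_t^T(Y^n_s-L_s)^-\,ds-\int_t^TZ^n_s\,dB_s,\quad t\in[0,T],
\]
i.e. \textnormal{BSDE}$^T(\xi,f_n)$ with $f_n(t,y,z):=f(t,y,z)+n(y-L_t)^-$. Since $y\mapsto n(y-L_t)^-$ is non-increasing and $z$-independent, $f_n$ inherits \textnormal{(H1)--(H4)}, \textnormal{(H2)} with the same $\mu$, and \textnormal{(Z)} with the same $(\gamma,g)$; moreover $f_n(\cdot,0,0)=f(\cdot,0,0)+nL^+$ with $L^+\in L^1_{\mathbb F}(0,T)$ because $L$ is of class \textnormal{(D)} (so $\sup_{\tau\in\TT}\mathbb E|L_\tau|<\infty$, whence $\mathbb E\int_0^T|L_r|\,dr<\infty$), so \textnormal{(H5)} holds for $f_n$. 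By Theorem \ref{12sierpnia1}(i) there is a solution $(Y^n,Z^n)$ with $Y^n$ of class \textnormal{(D)} and $Z^n\in\HH^s_{\mathbb F}(0,T)$, $s\in(0,1)$; set $K^n_t:=n\int_0^t(Y^n_r-L_r)^-\,dr\in\VV^+_{0,\mathbb F}(0,T)$.

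First I would record the easy bounds. As $f_n\le f_{n+1}$, comparison for BSDEs with monotone generators (see \cite{bdh}) gives $Y^n\le Y^{n+1}$, so $Y_t:=\sup_nY^n_t=\lim_nY^n_t$ is well defined with values in $(-\infty,+\infty]$; and as $f_n\ge f$, comparison with the solution $(\underline Y,\underline Z)$ of \textnormal{BSDE}$^T(\xi,f)$ — which exists and is of class \textnormal{(D)} by Theorem \ref{12sierpnia1} — yields the uniform lower bound $Y^n\ge\underline Y$.

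The crux is the upper bound: one must show that $(Y^n)$ is bounded in $\DD^1_{\mathbb F}(0,T)$ and that $Y$ is again of class \textnormal{(D)}. The naive a priori estimate for \textnormal{BSDE}$^T(\xi,f_n)$ is useless here, since its free term $f_n(\cdot,0,0)$ contains $nL^+$ and is not controlled uniformly in $n$; the reflected structure must be used, and, since no growth in $y$ is assumed, I would first linearize around $\underline Y$. The process $\widetilde Y^n:=Y^n-\underline Y\ge0$ solves the penalized equation with terminal value $0$, barrier $L-\underline Y$, and driver $\widetilde f(s,y,z):=f(s,y+\underline Y_s,z+\underline Z_s)-f(s,\underline Y_s,\underline Z_s)$, for which $\widetilde f(\cdot,0,0)=0$ and, along the nonnegative solution, \textnormal{(H1)--(H2)} give $\widetilde f(s,\widetilde Y^n_s,\widetilde Z^n_s)\le\mu^+\widetilde Y^n_s+\lambda|\widetilde Z^n_s|$. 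Consequently $\widetilde Y^n$ is dominated by the classical solution of the reflected BSDE with lower barrier $(L-\underline Y)^+$, terminal value $0$ and driver $\mu^+y^++\lambda|z|$, which is of class \textnormal{(D)} precisely because $(L-\underline Y)^+$ is (this is where $L,\underline Y$ of class \textnormal{(D)} enter); the $L^1$ a priori bound for one-barrier reflected BSDEs with such a driver (the reflected analogue of Theorem \ref{th.2}) then gives $\sup_n\|\widetilde Y^n\|_{\DD^1(0,T)}<\infty$, hence $Y\in\DD^1_{\mathbb F}(0,T)$.

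With $Y^n\uparrow Y$ and $Y$ of class \textnormal{(D)} in hand, the final step is a monotone-stability argument in the spirit of Peng's monotone limit theorem, adapted to $L^1$ data and càdlàg barriers and relying on the stability estimates of Theorem \ref{th.2}: $Y$ admits a càdlàg modification, $Z^n\to Z$ in $\HH^q_{\mathbb F}(0,T)$ for some $q\in(\kappa,1)$, $K^n$ converges to some $K\in\VV^+_{0,\mathbb F}(0,T)$, and $(Y,Z,K)$ satisfies condition (d) in the definition of \underline{R}BSDE$^T(\xi,f,L)$. The minimality (Skorokhod) condition $\int_0^T(Y_{r-}-L_{r-})\,dK_r=0$ together with $L\le Y$ follows from a Dini-type argument: $dK^n$ is carried by $\{Y^n<L\}$, $n\int_0^T(Y^n_r-L_r)^-\,dr$ is bounded in $L^1$ so that $(Y-L)^-=0$, and in the limit $dK$ lives on $\{Y_-=L_-\}$. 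I expect the genuinely delicate points to be the $\DD^1$-bound on the penalized solutions under merely $L^1$ data — which forces the linearization and the use of reflected- rather than plain-BSDE estimates — and the passage to the limit in the minimality condition.
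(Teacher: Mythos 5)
Your penalization scheme is essentially the paper's approach: the paper proves Theorem \ref{10wrzesnia2} only by citing \cite{K:BSM}, Theorem 7.1, and the proof there is exactly the route you outline — penalize, compare with the unreflected solution, dominate by a class (D) majorant, and pass to the monotone limit in the $L^1$ setting. The one step you should make explicit is the domination of $\widetilde Y^n$ by the solution of the one-barrier reflected BSDE with driver $\mu^+y+\lambda|z|$ and barrier $(L-\underline Y)^+$: its existence and class (D) property are not consequences of anything stated in this paper and must be supplied independently (via the Snell envelope of a class (D) c\`adl\`ag process under the Girsanov/discounting transformation, or via the Lipschitz-driver $L^1$ theory established prior to the monotone case in \cite{K:BSM}), otherwise the argument is circular.
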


Now, we consider the so-called {\em weak Mokobodzki's condition}.

\begin{enumerate}
\item[(WM)] There exists a  semimartingale $X$ such that $L\le X\le U$.
\end{enumerate}

\begin{remark}\label{12kwietnia1}
Note that condition  (WM)  is easy  verified  in the case of c\`adl\`ag barriers $L,U$ satisfying $L_t<U_t$ and $L_{t-}<U_{t-}$, $t\in[0,T]$ (see e.g. \cite{Topolewski}, Lemma 3.1).
\end{remark}

The following results follow from  \cite{KRz3}, Theorem 7.3, Theorem 9.3.

\begin{theorem}\label{12wrzesnia1}
Assume that \textnormal{(H1)-(H5), (WM)} are in force.
Then there exists an $\mathscr S$-solution $(Y,Z,R)$ to \textnormal{RBSDE}$^T(\xi,f,L,U)$ 
such that $Y$ is of class \textnormal{(D)}. 
\end{theorem}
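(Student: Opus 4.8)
The plan is to derive the existence of an $\mathscr S$-solution to the doubly reflected problem with data in $L^1$ (class (D)) by a penalization/truncation argument that reduces everything to the already-available existence theory for equations with one reflecting barrier (Theorem \ref{10wrzesnia2}) together with the a priori estimate of Theorem \ref{th.2}. First I would invoke Remark \ref{rem.chv} to reduce, by the exponential change of variables $a\ge\mu$, to the case where $f$ is non-increasing in $y$ (i.e. (H2) with $\mu\le0$); this is harmless since (H1), (H3)--(H5), (Z) and the class (D) property of $L,U$ are all preserved, and (WM) is preserved since multiplying a semimartingale by $e^{at}$ yields a semimartingale. Next, let $X$ be the semimartingale furnished by (WM); writing its Doob--Meyer/decomposition $X_t=X_0+V^X_t+\int_0^t H^X_s\,dB_s$ and subtracting $X$, I would pass to the shifted processes $\tilde Y:=Y-X$, $\tilde L:=L-X\le0\le U-X=:\tilde U$, so that the problem becomes an RBSDE with barriers straddling zero and with an extra finite-variation term $dV^X$ on the right-hand side; this is the technically convenient normalization for what follows.

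The core construction is the standard approximation of the doubly reflected equation by singly reflected ones. For each $n$, let $(Y^n,Z^n,K^n)$ be the class (D) solution to the one-lower-barrier problem \underline{R}BSDE$^T(\xi,f_n,L)$, where $f_n(t,y,z):=f(t,y,z)-n(y-U_t)^+$ penalizes crossing the upper barrier; existence is Theorem \ref{10wrzesnia2} applied to $f_n$, which still satisfies (H1)--(H5),(Z) (the added term is monotone in $y$, Lipschitz-free in $z$, and $f_n(\cdot,0,0)=f(\cdot,0,0)-n U^-\in L^1$ since $U\in\mathcal D^1$). The monotone penalty makes $(Y^n)$ non-increasing in $n$; combined with a uniform lower bound — obtained by comparison with the solution of \underline{R}BSDE$^T(\xi,f,L)$ — one gets $Y^n\downarrow Y$ pointwise. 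The a priori estimates of Theorem \ref{th.2}, applied to the pair $(Y^n,Z^n)$ versus a fixed reference solution, give uniform bounds in $\mathcal D^1\times\mathcal H^q$ ($q<1$), so that $Y$ is of class (D), $Z^n\to Z$ in $\mathcal H^q$ (along a subsequence), and the penalization terms $n\int_0^\cdot(Y^n_s-U_s)^+\,ds=:K^{n,-}$ together with $K^n$ converge to finite-variation limits $R^-$, $R^+$. Passing to the limit in the equation yields $Y_t=\xi+\int_t^Tf(s,Y_s,Z_s)\,ds+(R_T-R_t)-\int_t^TZ_s\,dB_s$ with $R:=R^+-R^-$, $R\in\mathcal V_{0,\mathbb F}$, and $L\le Y\le U$ on $[0,T]$ (the lower bound from $Y\ge Y^\infty$-type comparison, the upper bound because $n\int(Y^n-U)^+\,ds$ stays bounded forces $(Y-U)^+=0$).

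The main obstacle is the minimality (Skorokhod) condition (b): showing $\int_0^T(Y_{r-}-L_{r-})\,dR^+_r=\int_0^T(U_{r-}-Y_{r-})\,dR^-_r=0$ in the merely $L^1$/class (D) setting, where one cannot rely on $\mathcal S^2$ bounds or dominated convergence in the usual form. Here I would argue as follows. For the $dR^-$ part, since $R^-$ is the limit of $K^{n,-}=n\int_0^\cdot(Y^n_s-U_s)^+\,ds$ and $Y^n\ge Y$ with $Y\le U$, one sees $K^{n,-}$ increases only where $Y^n>U\ge Y$, and a Dini/convergence argument (using that $Y^n\downarrow Y$ with $Y$ càdlàg, hence the convergence is "almost uniform" on the relevant sets) gives that $R^-$ increases only on $\{Y_-=U_-\}$. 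For the $dR^+$ part, $K^n$ satisfies $\int_0^T(Y^n_{r-}-L_{r-})\,dK^n_r=0$; the delicate point is to pass this Skorokhod condition through the limit, which I would handle by testing against $(Y_{r-}-L_{r-})$ after writing $Y^n_{r-}-L_{r-}\ge0$ and using the $\mathcal D^1$-convergence of $Y^n$ to control $\int_0^T|Y^n_{r-}-Y_{r-}|\,d|K^n|_r$ via the uniform bound on $\mathbb E|K^n|_{0,T}$ — this is precisely where the $L^1$ a priori estimate (Theorem \ref{th.2}) does the essential work, replacing the $L^2$ energy estimates used in \cite{KRz3}. Alternatively, and perhaps more cleanly, the whole theorem may be quoted essentially verbatim from \cite{KRz3}, Theorem 7.3, since the hypotheses (H1)--(H5),(WM) here match those there; in the write-up I would state the reduction and then cite \cite{KRz3} for the technical passage to the limit, flagging that the only new ingredient relative to the doubly-reflected $L^2$ theory is the use of the $L^1$ estimates already recorded above.
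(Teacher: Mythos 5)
Your proposal is correct and, in its final paragraph, coincides exactly with what the paper does: Theorem \ref{12wrzesnia1} is not proved in this paper at all but is quoted directly from \cite{KRz3}, Theorem 7.3, whose hypotheses match (H1)--(H5), (WM). Your penalization sketch via singly reflected equations and the $L^1$ estimates is a reasonable reconstruction of how such a result is established, though the limit passage for the Skorokhod condition that you flag as delicate is indeed where all the real work in the cited reference lies.
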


\begin{theorem}\label{dyn20}
Let  $(Y,Z,R)$ be an $\mathscr S$-solution to   \textnormal{RBSDE}$^T(\xi,f,L,U)$. Then for any $\theta\in\mathcal{T}$
\begin{equation}\label{11paz1}
Y_{\theta}=\mathop{\mathrm{ess\,inf}}_{\sigma\ge \theta}\mathop{\mathrm{ess\,sup}}_{\tau\ge \theta}\mathbb{E}^f_{\theta,\tau\wedge\sigma}(L_{\tau}\mathbf{1}_{\{\tau \le\sigma,\tau<T\}}+U_{\sigma}\mathbf{1}_{\{\sigma<\tau\}}+\xi\mathbf{1}_{\{\tau=\sigma=T\}}).
\end{equation}
\end{theorem}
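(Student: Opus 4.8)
Fix $\theta\in\TT$ and write, for $\tau,\sigma\in\TT_\theta$,
\[
J(\tau,\sigma):=L_{\tau}\mathbf1_{\{\tau\le\sigma,\tau<T\}}+U_{\sigma}\mathbf1_{\{\sigma<\tau\}}+\xi\mathbf1_{\{\tau=\sigma=T\}}.
\]
Since $L,U$ are of class (D) and $\xi\in L^1(\mathcal F_T)$ we have $J(\tau,\sigma)\in L^1(\mathcal F_{\tau\wedge\sigma})$, so by (H1)--(H5), (Z) and Theorem~\ref{12sierpnia1} the quantity $\mathbb E^f_{\theta,\tau\wedge\sigma}J(\tau,\sigma)$ is well defined. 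The plan is to establish the two inequalities $Y_\theta\le\essinf_{\sigma}\esssup_{\tau}\mathbb E^f_{\theta,\tau\wedge\sigma}J(\tau,\sigma)$ and $Y_\theta\ge\essinf_{\sigma}\esssup_{\tau}\mathbb E^f_{\theta,\tau\wedge\sigma}J(\tau,\sigma)$ (both ess\,sup/ess\,inf over $\TT_\theta$) by testing the game against the natural candidate strategies
\[
\tau^*_\theta:=\inf\{t\ge\theta:\,Y_t=L_t\}\wedge T,\qquad \sigma^*_\theta:=\inf\{t\ge\theta:\,Y_t=U_t\}\wedge T,
\]
which are $\mathbb F$-stopping times because $Y-L$, $Y-U$ are c\`adl\`ag and $\mathbb F$ satisfies the usual conditions. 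The engine of the argument is: (a) the equation for $Y$ on a stopped interval together with the comparison principle for BSDEs with generator satisfying (H1)--(H5), (Z) (see \cite{bdh}), which turns a definite sign of $dR$ into an $\mathbb E^f$-supermartingale or submartingale property of $Y$; and (b) the optional sampling theorem for $\mathbb E^f$-(super/sub)martingales in the class-(D)/$L^1$ setting, which rests on the a priori estimates recalled in Section~\ref{roz3}.

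For the lower-bound-on-$Y_\theta$ direction, fix $\tau\in\TT_\theta$ and work on $[[\theta,\tau\wedge\sigma^*_\theta]]$. Before $\sigma^*_\theta$ one has $Y<U$, and the minimality condition (b) forces $dR^-$ to be carried by $\{Y_-=U_-\}$; hence $R^-$ is flat and $R=R^+$ is non-decreasing on this interval, so $Y$ is an $\mathbb E^f$-supermartingale there and $Y_\theta\ge\mathbb E^f_{\theta,\tau\wedge\sigma^*_\theta}(Y_{\tau\wedge\sigma^*_\theta})$. One then checks $Y_{\tau\wedge\sigma^*_\theta}\ge J(\tau,\sigma^*_\theta)$: on $\{\tau\le\sigma^*_\theta\}$ this is $Y_\tau\ge L_\tau$ when $\tau<T$ and $Y_T=\xi$ when $\tau=T$ (whence $\sigma^*_\theta=T$), while on $\{\sigma^*_\theta<\tau\}$ one has $\sigma^*_\theta<T$ and $Y_{\sigma^*_\theta}=U_{\sigma^*_\theta}=J(\tau,\sigma^*_\theta)$. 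Comparison again gives $Y_\theta\ge\mathbb E^f_{\theta,\tau\wedge\sigma^*_\theta}J(\tau,\sigma^*_\theta)$ for every $\tau\in\TT_\theta$, hence $Y_\theta\ge\esssup_{\tau}\mathbb E^f_{\theta,\tau\wedge\sigma^*_\theta}J(\tau,\sigma^*_\theta)\ge\essinf_{\sigma}\esssup_{\tau}\mathbb E^f_{\theta,\tau\wedge\sigma}J(\tau,\sigma)$.

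The opposite inequality is symmetric, using $\tau^*_\theta$: before $\tau^*_\theta$ one has $Y>L$, so $R^+$ is flat and $R=-R^-$ is non-increasing on $[[\theta,\tau^*_\theta]]$, making $Y$ an $\mathbb E^f$-submartingale there; hence for every $\sigma\in\TT_\theta$, $Y_\theta\le\mathbb E^f_{\theta,\tau^*_\theta\wedge\sigma}(Y_{\tau^*_\theta\wedge\sigma})\le\mathbb E^f_{\theta,\tau^*_\theta\wedge\sigma}J(\tau^*_\theta,\sigma)$, using $Y_{\tau^*_\theta\wedge\sigma}\le J(\tau^*_\theta,\sigma)$ (equality on $\{\tau^*_\theta\le\sigma\}$, and $Y_\sigma\le U_\sigma$ on $\{\sigma<\tau^*_\theta\}$). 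Taking $\essinf$ over $\sigma$ and noting $\mathbb E^f_{\theta,\tau^*_\theta\wedge\sigma}J(\tau^*_\theta,\sigma)\le\esssup_{\tau}\mathbb E^f_{\theta,\tau\wedge\sigma}J(\tau,\sigma)$ for each $\sigma$ yields $Y_\theta\le\essinf_{\sigma}\esssup_{\tau}\mathbb E^f_{\theta,\tau\wedge\sigma}J(\tau,\sigma)$. Combining the two inequalities gives \eqref{11paz1}; the same computation also shows $Y_\theta=\esssup_{\tau}\essinf_{\sigma}\mathbb E^f_{\theta,\tau\wedge\sigma}J(\tau,\sigma)$, so the game has value.

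The step I expect to be the genuine obstacle is the rigorous justification, in the general c\`adl\`ag class-(D) setting, that $R$ has the asserted sign on the \emph{closed} stochastic intervals $[[\theta,\tau^*_\theta]]$ and $[[\theta,\sigma^*_\theta]]$: at the hitting time itself the left limits of $L,U,Y$ may coincide, so the reflecting process could carry an atom there whose sign spoils the (super/sub)martingale comparison. This is exactly what the minimality condition is designed to control, but to make it clean one should first run the argument with the relaxed stopping times $\sigma^{*,\varepsilon}_\theta:=\inf\{t\ge\theta:\,U_t-Y_t\le\varepsilon\}\wedge T$ and $\tau^{*,\varepsilon}_\theta:=\inf\{t\ge\theta:\,Y_t-L_t\le\varepsilon\}\wedge T$ — on $[[\theta,\sigma^{*,\varepsilon}_\theta]]$ one has $U_{r-}-Y_{r-}\ge\varepsilon>0$, so $R^-$ is flat on the \emph{closed} interval and the comparison is unambiguous, at the price of an $O(\varepsilon)$ error in the terminal comparison — and then pass to the limit $\varepsilon\downarrow0$, using the continuity of $\mathbb E^f$ in the terminal datum and in the horizon (again the a priori estimates of Section~\ref{roz3}) together with $\sigma^{*,\varepsilon}_\theta\uparrow\sigma^*_\theta$, $\tau^{*,\varepsilon}_\theta\uparrow\tau^*_\theta$. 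The comparison and optional-sampling steps themselves are routine once these ingredients are in place.
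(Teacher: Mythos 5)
Your argument is correct and is essentially the route the paper takes: Theorem \ref{dyn20} itself is only cited from \cite{KRz3}, but the paper's own proof of the analogous statement for non-semimartingale solutions (Lemmas \ref{12wrzesnia3}, \ref{12wrzesnia4} and Theorem \ref{19wrzesnia1}) runs exactly your $\varepsilon$-relaxation: the stopping times $\tau^\varepsilon_\theta,\sigma^\varepsilon_\theta$, the sign of $dR$ forced by the minimality condition, Proposition \ref{nonlinprop}(i) for the $\mathbb E^f$-super/submartingale property, and the $O(\varepsilon)$ terminal comparison. One remark on your last paragraph: the limit $\sigma^{*,\varepsilon}_\theta\uparrow\sigma^*_\theta$ together with continuity of $\mathbb E^f$ in the horizon is not needed (and would be the only genuinely delicate step, since $Y_{\sigma^{*,\varepsilon}_\theta}\to Y_{\sigma^{*}_\theta}$ can fail for merely c\`adl\`ag $Y$). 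The two-sided bound you already have, namely
\[
\mathbb{E}^f_{\theta,\tau\wedge\sigma^{*,\varepsilon}_{\theta}}J(\tau,\sigma^{*,\varepsilon}_{\theta})-C\varepsilon\le Y_{\theta}\le\mathbb{E}^f_{\theta,\tau^{*,\varepsilon}_{\theta}\wedge\sigma}J(\tau^{*,\varepsilon}_{\theta},\sigma)+C\varepsilon
\qquad\text{for all }\tau,\sigma\in\TT_\theta,
\]
gives $\overline V(\theta)-C\varepsilon\le Y_\theta\le \underline V(\theta)+C\varepsilon$ directly by taking $\esssup_\tau$ then $\essinf_\sigma$ on the left and $\essinf_\sigma$ then $\esssup_\tau$ on the right; letting $\varepsilon\downarrow 0$ and using $\underline V(\theta)\le\overline V(\theta)$ closes the proof with no limit of stopping times at all. (Exact optimality of $(\tau^*_\theta,\sigma^*_\theta)$ is a separate matter requiring $\Delta L\ge 0$, $\Delta U\le 0$, as in Theorem \ref{20wrzesnia3}, and is not part of the present statement.)
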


Now we shall present two comparison theorems for solutions to Reflected BSDEs. 
The first one is a direct corollary to representation \eqref{11paz1}
and Proposition \ref{nonlinprop}. Thus, it requires all the assumptions (H1)--(H5), (Z).
The second one, with the proof based on the Tanaka--Meyer formula, requires merely assumptions (H1), (H2), (Z)
but we also need an additional  condition:
\begin{enumerate}
\item[(G)] $\mathbb E\int_0^Tf^-(s,L^{+,*}_s,0)\,ds+\mathbb E\int_0^Tf^+(s,-U^{-,*}_s,0)\,ds<\infty$.
\end{enumerate}

In the following proposition  $L^1,U^1,L^2,U^2$ are $\mathbb F$-adapted c\`adl\`ag processes 
of class (D) and $L^1\le U^1$, $L^2\le U^2$. Furthermore $\xi_1, \xi_2$
are $\mathcal F_T$-measurable random variables, with $L^1_T\le \xi^1\le U^1_T$,
$L^2_T\le \xi^2\le U^2_T$, $f^1,f^2$ are $Prog(\mathbb F)\otimes \mathcal B(\mathbb R)\otimes\mathcal B(\mathbb R^d)$
measurable real functions on $[0,T]\times \Omega\times\mathbb R\times\mathbb R^d$, and
$V^1,V^2\in\mathcal V_{0,\mathbb F}(0,T)$.

\begin{proposition}[{\em Comparison theorem for \textnormal{RBSDEs}}]\label{10wrzesnia1}
Let triple $(Y^i,Z^i,R^i)$ be an  $\mathscr S$-solution to \textnormal{RBSDE}($\xi^i,f_i+dV^i,L^i,U^i)$,
$i=1,2$.   Assume also that  $\xi^1\le\xi^2$, $dV^1\le dV^2$, $L^1\le
L^2$, $U^1\le U^2$.
\begin{enumerate}
\item[(i)] If $f_1,f_2,V^1,V^2$ satisfy \textnormal{(H1)--(H5), (Z)}, then  $Y^1_t\le Y^2_t$, $t\in[0,T]$.
\item[(ii)] If  $(Y^1-Y^2)^+\in \mathcal S^p_{\mathbb{F}}(0,T)$ for some $p>1$,  $f_2$ satisfies \textnormal{(H1), (H2)} and $f^{1}(\cdot,Y^1,Z^1)\le f^2(\cdot,Y^1,Z^2),\, dt\otimes \mathbb{P}$-a.e.,
then $Y^1_t\le Y^2_t$, $t\in[0,T]$.
\item[(iii)] If \textnormal{(G)} holds, $f_2$ satisfies \textnormal{(H1), (H2), (Z)} and $f^{1}(\cdot,Y^1,Z^1)\le f^2(\cdot,Y^1,Z^1),\, dt\otimes \mathbb{P}$-a.e.,
then $Y^1_t\le Y^2_t$, $t\in[0,T]$.
\end{enumerate}
\end{proposition}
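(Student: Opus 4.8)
The plan is to prove the three assertions by two different routes: (i) directly from the Dynkin--game representation of $\mathscr S$-solutions, and (ii)--(iii) by the Tanaka--Meyer formula applied to $(Y^1-Y^2)^+$. For (i), the first step is to record the generalized form of Theorem \ref{dyn20}: for $i\in\{1,2\}$ and every $\theta\in\mathcal T$,
\[
Y^i_\theta=\essinf_{\sigma\in\mathcal T_\theta}\esssup_{\tau\in\mathcal T_\theta}\mathbb E^{f_i+dV^i}_{\theta,\tau\wedge\sigma}\big(J^i(\tau,\sigma)\big),
\]
where $J^i(\tau,\sigma)$ is the payoff \eqref{eq.intr2} with $(L,U,\xi)$ replaced by $(L^i,U^i,\xi^i)$ and $\mathbb E^{f_i+dV^i}$ is the nonlinear expectation generated by $f_i\,dr+dV^i$ --- this is the routine adaptation of \cite{KRz3} incorporating a finite-variation term into the driver. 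Then $L^1\le L^2$, $U^1\le U^2$, $\xi^1\le\xi^2$ give $J^1(\tau,\sigma)\le J^2(\tau,\sigma)$ pathwise for every pair of stopping times, and by the monotonicity of $\mathbb E^{\,\cdot}$ in its terminal datum, in its generator and in its finite-variation term (Proposition \ref{nonlinprop}, together with $dV^1\le dV^2$ and the ordering of the drivers) this upgrades to $\mathbb E^{f_1+dV^1}_{\theta,\tau\wedge\sigma}(J^1(\tau,\sigma))\le\mathbb E^{f_2+dV^2}_{\theta,\tau\wedge\sigma}(J^2(\tau,\sigma))$; passing to $\esssup$ over $\tau$, then $\essinf$ over $\sigma$, gives $Y^1_\theta\le Y^2_\theta$ for every $\theta$, and taking $\theta=t$ plus right-continuity of $Y^1-Y^2$ yields $Y^1\le Y^2$ on $[0,T]$.

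For (ii) and (iii) I would set $\hat Y:=Y^1-Y^2$, $\hat Z:=Z^1-Z^2$, $\hat R:=R^1-R^2$ and apply the Tanaka--Meyer formula to $(\hat Y)^+$ on $[t,T]$. Since $\hat Y_T=\xi^1-\xi^2\le0$ and the local-time and jump-correction terms attached to the convex map $x\mapsto x^+$ are nonnegative, $(\hat Y_t)^+$ is bounded above by the sum over $[t,T]$ of: the $ds$-integral of $\mathbf 1_{\{\hat Y_{s-}>0\}}\big(f_1(s,Y^1_s,Z^1_s)-f_2(s,Y^2_s,Z^2_s)\big)$, the $\mathbf 1_{\{\hat Y_{s-}>0\}}$-weighted integral against $d(V^1-V^2)$, the $\mathbf 1_{\{\hat Y_{s-}>0\}}$-weighted integral against $d\hat R$, and the negative of the $\mathbf 1_{\{\hat Y_{s-}>0\}}$-weighted stochastic integral against $dB$. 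The $dV$-integral is $\le0$ since $V^2-V^1$ is nondecreasing. The crucial term is the one against $d\hat R$: on $\{\hat Y_{s-}>0\}=\{Y^1_{s-}>Y^2_{s-}\}$ the minimality condition forces $dR^{1,+}=0$ (it is carried by $\{Y^1_{s-}=L^1_{s-}\}$, impossible since $Y^2_{s-}\ge L^2_{s-}\ge L^1_{s-}=Y^1_{s-}>Y^2_{s-}$) and $dR^{2,-}=0$ (carried by $\{Y^2_{s-}=U^2_{s-}\}$, impossible since $Y^1_{s-}\le U^1_{s-}\le U^2_{s-}=Y^2_{s-}<Y^1_{s-}$), so $\mathbf 1_{\{\hat Y_{s-}>0\}}\,d\hat R_s=-\mathbf 1_{\{\hat Y_{s-}>0\}}\big(dR^{1,-}_s+dR^{2,+}_s\big)\le0$.

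It remains to treat the generator integral. In case (ii) I would split its integrand as $[f_1(s,Y^1_s,Z^1_s)-f_2(s,Y^1_s,Z^2_s)]+[f_2(s,Y^1_s,Z^2_s)-f_2(s,Y^2_s,Z^2_s)]$: the first bracket is $\le0$ by the hypothesis of (ii), and the second is $\le\mu^+(\hat Y_s)^+$ by (H2) for $f_2$ applied with the common $z$-argument $Z^2_s$ (no (H1) needed here). In case (iii), where the comparison is only $f_1(\cdot,Y^1,Z^1)\le f_2(\cdot,Y^1,Z^1)$, I would split instead into $[f_1(s,Y^1_s,Z^1_s)-f_2(s,Y^1_s,Z^1_s)]+[f_2(s,Y^1_s,Z^1_s)-f_2(s,Y^2_s,Z^1_s)]+[f_2(s,Y^2_s,Z^1_s)-f_2(s,Y^2_s,Z^2_s)]$, with the first $\le0$, the second $\le\mu^+(\hat Y_s)^+$ by (H2), and the third bounded in modulus by $\lambda|\hat Z_s|$ by (H1).

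In both cases the stochastic integral is only a local martingale ($\hat Z\in\mathcal H_{\mathbb F}(0,T)$), so I would localize along a reducing sequence, take expectations and pass to the limit by dominated convergence --- in case (ii) this is exactly where $(\hat Y)^+\in\mathcal S^p_{\mathbb F}(0,T)$, $p>1$, is used --- ending with $\mathbb E(\hat Y_t)^+\le\mu^+\int_t^T\mathbb E(\hat Y_s)^+\,ds$, whence $(\hat Y)^+\equiv0$ by Gronwall's lemma (or by first performing an exponential change of variables as in Remark \ref{rem.chv} with $a>\mu$, which renders the $y$-monotonicity constant nonpositive and removes the Gronwall step). In case (iii) the extra term $\lambda\mathbf 1_{\{\hat Y_{s-}>0\}}|\hat Z_s|$ must be absorbed without an $L^2$ energy identity; I would dispose of it by a Girsanov change of measure (absorbing $\lambda|\hat Z|$ into the martingale part, using that the compensating drift is bounded by $\lambda$) combined with the $L^1$-estimates in the spirit of Theorem \ref{th.2} and the $L^1$-theory of \cite{bdh}, for which (Z) supplies the strictly sublinear control of the $z$-dependence and (G) supplies the integrability of $f_i(\cdot,Y^i,Z^i)$ and of $R^i$ along the solutions, the outcome being $\|(\hat Y)^+\|_{\mathcal D^1(0,T)}=0$. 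The main obstacle I anticipate is establishing $\mathbf 1_{\{Y^1_{s-}>Y^2_{s-}\}}\,d(R^1-R^2)_s\le0$ cleanly --- the genuinely RBSDE-specific point, resting on both the minimality conditions and the ordering of all four barriers --- and, in case (iii), carrying the $\lambda|\hat Z|$-term through the $L^1$-framework, which is precisely why (Z) and (G) are imposed there; part (i) is immediate once the $f+dV$ form of \eqref{11paz1} is in hand.
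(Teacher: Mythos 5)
Your part (i) is essentially the paper's own argument: the paper also deduces it from the Dynkin--game representation (Theorem \ref{dyn20}) plus monotonicity of the nonlinear expectation (Proposition \ref{nonlinprop}(ii)), with the $dV$-term absorbed into the driver exactly as you indicate. For (ii) and (iii), however, you take a genuinely different route. You run a global Tanaka--Meyer argument on $(Y^1-Y^2)^+$ over all of $[t,T]$ and use the two minimality conditions pointwise to show $\mathbf 1_{\{Y^1_{s-}>Y^2_{s-}\}}\,d(R^1-R^2)_s\le 0$; that key inequality is correct (the supports of $dR^{1,+}$ and $dR^{2,-}$ are disjoint from $\{Y^1_{s-}>Y^2_{s-}\}$ by the ordering of the four barriers), your split of the generator increment in (ii) correctly uses only (H2) with the common argument $Z^2$, and the $\mathcal S^p$ hypothesis is invoked exactly where it is needed, namely to pass to the limit along the localizing sequence before Gronwall. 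The paper instead avoids Tanaka--Meyer in (ii) altogether: it stops at $\tau_\varepsilon:=\inf\{t\ge\tau: Y^1_t\le L^1_t+\varepsilon\text{ or }Y^2_t\ge U^2_t-\varepsilon\}$, observes that on $[[\tau,\tau_\varepsilon]]$ the minimality condition kills $R^{1,+}$ and $R^{2,-}$ so that the difference solves an \emph{unreflected} BSDE with a favorable finite-variation forcing and terminal value $\le\varepsilon$, and concludes by comparison with the auxiliary BSDE$^{\tau,\tau_\varepsilon}(\varepsilon,F)$. Your approach is the classical one (El Karoui et al.\ / Hamad\`ene--Hassani style) and is more self-contained; the paper's buys a reduction to known one-dimensional BSDE comparison results and reuses the same stopping times elsewhere.

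The one soft spot is your treatment of (iii). Writing $f_2(s,Y^2_s,Z^1_s)-f_2(s,Y^2_s,Z^2_s)=\beta_s\cdot\hat Z_s$ with $|\beta|\le\lambda$ and absorbing it by Girsanov does not close on its own in the class-(D) framework: the density of the new measure has all $\mathbb P$-moments, but $(Y^1-Y^2)^+$ is a priori only of class (D) under $\mathbb P$, which gives no uniform integrability under $\mathbb Q$, so the limit passage along the localizing sequence is not justified. This is precisely what (Z) and (G) are for, and the clean resolution (the paper's) is to use (Z) \emph{instead of} (H1) on this term: bound $|f_2(s,Y^2_s,Z^1_s)-f_2(s,Y^2_s,Z^2_s)|\le\gamma\big[(g_s+|Y^2_s|+|Z^1_s|)^\kappa+(g_s+|Y^2_s|+|Z^2_s|)^\kappa\big]$, note that (G) together with \cite{K:BSM}, Lemma 4.2, gives $Z^1,Z^2\in\mathcal H^q$ for $q\in(0,1)$, so that the Tanaka--Meyer inequality yields $(Y^1_\theta-Y^2_\theta)^+\le\mathbb E^{\mathcal F_\theta}[\zeta]$ for an integrable $\zeta$; Doob's maximal inequality then gives $(Y^1-Y^2)^+\in\mathcal S^p$ for some $p>1$, and (iii) reduces to (ii). If you replace the Girsanov step by this upgrade-then-reduce argument, your proof of (iii) is complete.
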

\begin{proof}
(i) Follows from Theorem \ref{dyn20} and Proposition \ref{nonlinprop}(ii).

(ii)
Set $(Y,Z,R):=(Y^1-Y^2,Z^1-Z^2,R^1-R^2)$.
Let $\tau\in\mathcal T$. Set 
\[
\tau_\varepsilon:=\inf\{t\ge \tau: Y^1_t\le L^1_t+\varepsilon \text{ or } Y^2_t\ge U^2_t-\varepsilon\}.
\]
By the minimality condition $(Y^1,Z^1,R^{1,-})$ is a solution to 
$\overline{\text{R}}$BSDE$^{\tau,\tau_\varepsilon}(Y^1_{\tau_\varepsilon},f_1+dV^1,U)$
and $(Y^2,Z^2,R^{2,+})$ is a solution to 
$\underline{\text{R}}$BSDE$^{\tau,\tau_\varepsilon}(Y^2_{\tau_\varepsilon},f_2+dV^1,L)$.
Observe that $(Y,Z):=(Y^1-Y^2,Z^1-Z^2)$ is a solution to BSDE$^{\tau,\tau_\varepsilon}(Y_{\tau_\varepsilon},F+\rho+d(V^1-V^2)-d(R^{1,-}+R^{2,+}))$, where
\[
F(t,y,z):= f_2(t,y+Y^2_t,z+Z^2_t)-f_2(t,Y^2_t,Z^2_t),\quad \rho_t:=(f_1(t,Y^1_t,Z^1_t)-f_2(t,Y^1_t,Z^1_t)).
\]
Furthermore, $Y_{\tau_\varepsilon}\le\varepsilon$.
Let $(Y^\varepsilon,Z^\varepsilon)\in \mathcal S^2_{\mathbb{F}}(\tau,\tau_\varepsilon)\times \mathcal H^2_{\mathbb{F}}(\tau,\tau_\varepsilon)$
be a solution to BSDE$^{\tau,\tau_\varepsilon}(\varepsilon,F)$. 
By  \cite{K:BSM}, Proposition 3.1,  $0\le Y^\varepsilon\le \varepsilon$ on $[[\tau,\tau_\varepsilon]]$.
Thus, by  \cite{K:BSM}, Proposition 3.1, $Y\le Y^\varepsilon\le\varepsilon$ on $[[\tau,\tau_\varepsilon]]$. 
Consequently, $Y^+_\tau\le\varepsilon$ for any $\varepsilon>0$
and any $\tau\in \TT$. This  yields (ii).

(iii) The proof of (iii) is similar to (ii). Let $\tau\in\mathcal T$ and  $\tau_\varepsilon$
be as in the proof of (ii). By condition (G) and \cite{K:BSM}, Lemma 4.2, $Z^1,Z^2\in\mathcal H^{q}_{\mathbb{F}}(\tau,\tau_\varepsilon)$
for any $q\in (0,1)$.
As in (ii) the pair  $(Y,Z):=(Y^1-Y^2,Z^1-Z^2)$ is a solution to BSDE$^{\tau,\tau_\varepsilon}(Y_{\tau_\varepsilon},F+\rho+d(V^1-V^2)-d(R^{1,-}+R^{2,+}))$.
By the Tanaka--Meyer formula and assumptions that we made on the data, we have 
\[
\begin{split}
Y^+_\theta&\le \mathbb E^{\FF_\theta}\Big[Y^+_{\tau_\varepsilon}+\int_\theta^{\tau_\varepsilon}
|f_2(t,Y^2_t,Z^2_t)-f_2(t,Y^2_t,Z^1_t)|\,dt\Big]
\\&\le
\mathbb E^{\FF_\theta}\Big[\varepsilon+\gamma\int_\theta^{\tau_\varepsilon}
\Big((g_t+|Y^2_t|+|Z^2_t|)^\kappa+(g_t+|Y^2_t|+|Z^1_t|)^\kappa\Big)\,dt\Big].
\end{split}
\]
Consequently, by the Doob inequality, $Y^+\in \mathcal S^p_{\mathbb{F}}(\tau,\tau_\varepsilon)$ for some $p>1$.
From this as in (ii)
we infer  that   $Y^+_\tau\le\varepsilon$ for any $\varepsilon>0$
and any $\tau\in \TT$, which    yields (iii).

\end{proof}

\section{Asymptotic Mokobodzki's condition}
\label{roz5}

As an exception, in the present section  $T$ shall denote   a bounded stopping time.
Let us  consider the following {\em asymptotic Mokobodzki's condition}:
\begin{enumerate}
\item[(AM)] there exists an increasing sequence $(\tau_k)\subset \mathcal T$ such that $\tau_k\nearrow T$
and for any $k\ge 1$ there exists a c\`adl\`ag semimartingale $X^k$ such that $L_t\le X^k_t\le U_t,\, t\in [0,\tau_k]$.
\end{enumerate}

We adopt the convention that $Y_{0-}=0$ for any c\`adl\`ag process $Y$, and $[a,a)=\{a\}$
for any $a\in\mathbb R$.

\begin{definition}
\label{def.as}
Assume (AM). We say that   triple $(Y,Z,R)$ of 
$\mathbb{F}$-adapted processes is an $\mathscr S_a$-solution to  reflected backward stochastic differential equation
on the interval $[[0,T]]$ with generator $f$, terminal value $\xi$, 
lower barrier $L$ and upper barrier $U$ (RBSDE$^{T}(\xi,f,L,U)$ for short) if 
\begin{enumerate}
\item[(a)] $Y$ is c\`adl\`ag and $L_t\le Y_t\le U_t,\, t\in [0,T]$,
\item[(b)] for  any sequence $(\tau_k)$ of (AM) the triple $(Y,Z,R)$
is an $\mathscr S$-solution to the problem RBSDE$^{\tau_k}(Y_{\tau_k},f,L,U)$,
\item[(c)] $Y_T=\xi$, $Y_{T-}=(U_{T-}\wedge \xi)\vee L_{T-}$.
\end{enumerate}
\end{definition}

\begin{theorem}
\label{th.exas}
Assume that $f$ is independent of $z$-variable.
Furthermore, assume that \textnormal{(H1)--(H5)} hold and $L, U$ satisfy \textnormal{(AM)}. 
Then there exists a unique $\mathscr S_a$-solution
$(Y,Z,R)$ to \textnormal{RBSDE}$^T(\xi,f,L,U)$. 
Moreover, 
\begin{equation}
\label{eq4.rr}
\omega\in \{L_{T-}<U_{T-}\}\,\Rightarrow\, \exists_{t_\omega\in [0,T(\omega))}\,\, R^+_t=R^+_{t_\omega},\, t\in [t_\omega,T(\omega))\,\text{ or }\, R^-_t=R^-_{t_\omega},\, t\in [t_\omega,T(\omega)),
\end{equation}
and
\begin{equation}
\label{eq4.rr0}
\omega\in \{L_{T-}<\xi<U_{T-}\}\,\Rightarrow\, \exists_{t_\omega\in [0,T(\omega))}\,\, R_t=R_{t_\omega},\, t\in [t_\omega,T(\omega)).
\end{equation}
\end{theorem}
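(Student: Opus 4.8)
The plan is to build the $\mathscr S_a$-solution by an approximation-and-passage-to-the-limit argument using the sequence $(\tau_k)$ from \textnormal{(AM)}, and to derive \eqref{eq4.rr}--\eqref{eq4.rr0} from the minimality condition on each piece together with a careful analysis of the barrier behaviour at $T$. First I would fix a sequence $(\tau_k)$ witnessing \textnormal{(AM)} and, on each interval $[[0,\tau_k]]$, apply Theorem \ref{12wrzesnia1} (together with Remark \ref{12kwietnia1} / the semimartingale $X^k$ serving as the weak Mokobodzki barrier) to obtain an $\mathscr S$-solution $(Y^k,Z^k,R^k)$ to $\textnormal{RBSDE}^{\tau_k}(\eta_k,f,L,U)$ for a suitable terminal datum $\eta_k$ at $\tau_k$. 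The terminal data must be chosen consistently: since $f$ is independent of $z$ here, the natural choice is $\eta_k := Y^{k+1}_{\tau_k}$, built recursively downward, or equivalently one solves on $[[0,\tau_k]]$ with terminal value the already-constructed $Y$ at $\tau_k$; a cleaner route is to first define the candidate terminal values via the Dynkin-game value $V^f(\tau_k)$ (using Theorem \ref{dyn20}) and check consistency. Uniqueness of $\mathscr S$-solutions on each $[[0,\tau_k]]$ (Theorem \ref{12wrzesnia1} combined with the comparison Proposition \ref{10wrzesnia1}, or the $L^1$ a priori estimates behind Theorem \ref{th.2}) forces the pieces to agree on overlaps, so they glue to a single triple $(Y,Z,R)$ on $[[0,T[[$.

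Next I would establish that $Y$ extends to a genuine solution on $[[0,T]]$ with the boundary values prescribed in (c). The key estimate is an $L^1$/class-(D) bound uniform in $k$: using Proposition \ref{13stycznia19}-type estimates (in the form valid for $p=1$ via Theorem \ref{th.2}) one controls $\sup_k \|Y^k\|_{\mathcal D^1}$ by the class-(D) norms of $L,U$ and $\|f(\cdot,0,0)\|_{L^1}$, so $Y$ is of class (D) on $[[0,T]]$ and $Y_{\tau_k}$ converges in $L^1$. Since $L\le Y\le U$ on $[[0,T[[$ and $L,U$ are c\`adl\`ag, $Y_{T-}$ exists and lies in $[L_{T-},U_{T-}]$; combined with $Y_T=\xi$ and the Skorokhod (minimality) condition transported to the limit — the upper-reflection increment $R^-$ can only act where $Y_-=U_-$, the lower one where $Y_-=L_-$ — one pins down $Y_{T-}=(U_{T-}\wedge\xi)\vee L_{T-}$ as in (c). That $(Y,Z,R)$ restricted to $[[0,\tau_k]]$ is the $\mathscr S$-solution of $\textnormal{RBSDE}^{\tau_k}(Y_{\tau_k},f,L,U)$ is then immediate from the construction, giving (b); property (a) is built in. Uniqueness of the $\mathscr S_a$-solution follows because any two such solutions must, by (b), coincide on every $[[0,\tau_k]]$ (again by uniqueness of $\mathscr S$-solutions), hence on $[[0,T[[$, and then agree at $T$ by (c).

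For \eqref{eq4.rr} and \eqref{eq4.rr0} the idea is local near $T$: on $\{L_{T-}<U_{T-}\}$, by right-continuity of $L$ and left-limits there is (pathwise, on a set of full measure) a time $t_\omega<T(\omega)$ after which $L_t < U_t$ with a gap bounded below on $[t_\omega,T(\omega))$; pick a $\tau_k > t_\omega$. On $[[t_\omega,\tau_k]]$ the solution is the classical $\mathscr S$-solution, so by the minimality condition $dR^+$ is supported on $\{Y_-=L_-\}$ and $dR^-$ on $\{Y_-=U_-\}$, two disjoint sets once the barriers are strictly separated; hence on each path at most one of $R^+,R^-$ can increase on $[t_\omega,T(\omega))$ — but I still need to rule out that \emph{both} increase on disjoint time-subsets of that interval. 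This is the main obstacle, and I expect to resolve it using the structure of the value process: $Y$ equals the Dynkin value (via Theorem \ref{dyn20} on each $[[0,\tau_k]]$), so near $T$, $Y$ can touch $L$ (making the optimal stopper for player I stop) or touch $U$ (player II stops), but once it is known that the relevant optimal strategy "freezes" after the last touch before $T$ — which is exactly the content of the asymptotic analysis in Section \ref{roz5} — only one of the reflections remains active. For \eqref{eq4.rr0}, on $\{L_{T-}<\xi<U_{T-}\}$ the prescribed value $Y_{T-}=\xi$ lies strictly inside $(L_{T-},U_{T-})$, so for $t$ close to $T$ one has $L_t<Y_t<U_t$ strictly; by the minimality condition neither $R^+$ nor $R^-$ can increase on that left-neighbourhood, giving $R_t=R_{t_\omega}$ there. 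I would write this last step by a clean pathwise contradiction argument: if $R$ charged $[t_\omega,T(\omega))$, then on the charging set $Y_-$ would have to equal $L_-$ or $U_-$, contradicting the strict separation $L_-<\xi<U_-$ inherited from the hypothesis and the continuity of $Y_-$ toward $\xi$.
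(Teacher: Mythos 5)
Your proposal has three genuine gaps. First, the construction: you never produce well-defined terminal data $\eta_k$ for the approximating problems on $[[0,\tau_k]]$. The ``recursive downward'' scheme $\eta_k:=Y^{k+1}_{\tau_k}$ has no anchor, since $(\tau_k)$ increases to $T$ and there is no last index to start from; and the alternative of taking $\eta_k$ to be the Dynkin value at $\tau_k$ and ``checking consistency'' is precisely the hard part (aggregation and the dynamic programming property of the value process), not a step one can defer. The paper sidesteps this entirely: it invokes the global existence result of \cite{K:SPA} (Theorem 3.11, solutions in the sense of Definition \ref{df.main}) to get a single process $Y$ on $[0,T]$, and then \cite{K:SPA}, Theorem 4.5, to see that $Y$ is already an $\mathscr S$-solution on each $[[0,\tau_k]]$ with terminal datum $Y_{\tau_k}$; gluing $(Z^k,R^k)$ is then the easy part. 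Second, your uniqueness argument is wrong: two $\mathscr S_a$-solutions $Y,Y'$ restricted to $[[0,\tau_k]]$ are $\mathscr S$-solutions of RBSDEs with \emph{different} terminal data $Y_{\tau_k}$ and $Y'_{\tau_k}$, so uniqueness of $\mathscr S$-solutions for a given terminal value gives nothing. The correct route (the paper's) is the Tanaka--Meyer $L^1$ contraction $\mathbb E|Y_\sigma-Y'_\sigma|\le\mathbb E|Y_{\tau_k}-Y'_{\tau_k}|$ together with condition (c), which forces $Y_{\tau_k}-Y'_{\tau_k}\to Y_{T-}-Y'_{T-}=0$.

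Third, for \eqref{eq4.rr} you correctly isolate the obstacle --- ruling out that $R^+$ and $R^-$ both increase on disjoint subsets of every left neighbourhood of $T$ --- but then propose to resolve it by appealing to ``the asymptotic analysis in Section \ref{roz5}'', which is the statement being proved; that is circular. The missing idea is elementary: if $R^+$ increases at times accumulating at $T$ from the left, the minimality condition supplies $r_n\nearrow T$ with $Y_{r_n-}=L_{r_n-}$, and since $Y$ is c\`adl\`ag this forces $Y_{T-}=L_{T-}$; symmetrically, accumulation of increase times of $R^-$ forces $Y_{T-}=U_{T-}$. If neither reflection eventually freezes, both identities hold and $L_{T-}=U_{T-}$, contradicting $\omega\in\{L_{T-}<U_{T-}\}$. (No lower bound on the gap $U-L$ near $T$ is needed.) Your argument for \eqref{eq4.rr0} is essentially the paper's, granted condition (c); but note that (c) itself is not obtained by ``transporting the Skorokhod condition to the limit'' --- there is no reflection measure at $\{T\}$ in this framework --- and the paper instead sandwiches $Y$ between the one-barrier solutions of $\overline{\mathrm{R}}$BSDE$^T(\xi,f,U)$ and $\underline{\mathrm{R}}$BSDE$^T(\xi,f,L)$ and reads off $Y_{T-}=(U_{T-}\wedge\xi)\vee L_{T-}$ from their known jump structure at $T$.
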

\begin{proof}
By \cite{K:SPA}, Theorem 3.11, there exists a c\`adl\`ag adapted process $Y$ of class (D)
that solves RBSDE$^T(\xi,f,L,U)$
in the sense of Definition \ref{df.main}. Let $(\tau_k)$ be a sequence of Definition \ref{def.as}.
By \cite{K:SPA}, Theorem 4.5, for any $k\ge 1$, $Y$ is a semimartingale on $[[0,\tau_k]]$ and $(Y,Z^k,R^k)$
is an $\mathscr S$-solution to RBSDE$^{\tau_k}(Y_{\tau_k},f,L,U)$, where
\[
Y_t=Y_{\tau_k}+\int_t^{\tau_k}f(s,Y_s)\,ds+(R^k_{\tau_k}-R^k_t)-\int_t^{\tau_k}Z^k_s\,ds,\quad t\in [0,\tau_k].
\]
By a standard uniqueness argument we deduce that
\[
R^k=R^{k+1},\, Z^k=Z^{k+1}\quad\text{on}\quad [[0,\tau_k]].
\]
Therefore, we may define processes $R,Z$ as follows
\[
R_t:=R^k_t,\, t\in [0,\tau_k],\quad Z_t:=Z^k_t,\, t\in [0,\tau_k].
\]
Clearly, the triple $(Y,Z,R)$ satisfies (a), (b) of Definition \ref{def.as} and $Y_T=\xi$.
What is left is to show that the second condition of Definition \ref{def.as}(c) holds.
By \cite{K:SPA}, Theorem 3.9, $\hat Y\le Y\le\bar Y$ on $[[0,T]]$, where $(\hat Y,\hat Z,\hat K)$
is a solution to $\overline{\text{R}}$BSDE$^T(\xi,f,U)$ and 
$(\bar Y,\bar Z,\bar K)$
is a solution to $\underline{\text{R}}$BSDE$^T(\xi,f,L)$. 
By the definitions of  solutions of the said two problems 
\[
\Delta \bar Y_T=-(\xi-L_{T-})^-,\quad \Delta \hat  Y_T=(\xi-U_{T-})^+,
\]
\[
(\bar Y_{T-}-L_{T-})\Delta \bar K_T=0,\quad
(U_{T-}-\hat Y_{T-})\Delta \hat K_T=0.
\] 
From this and the fact that $\hat Y\le Y\le\bar Y$ on $[[0,T]]$ one easily concludes $Y_{T-}=(U_{T-}\wedge \xi)\vee L_{T-}$.
Thus, $(Y,Z,R)$ is an $\mathscr S_a$-solution to RBSDE$^T(\xi,f,L,U)$.

The uniqueness part. Suppose that $(Y',Z',R')$ is an $\mathscr S_a$-solution to RBSDE$^T(\xi,f,L,U)$. 
Let $(\tau_k)\subset \TT$ be a non-decreasing sequence that satisfies (AM). 
By the Tanaka--Meyer formula and (H2)
\[
\mathbb E|Y_\sigma-Y'_\sigma|\le \mathbb E|Y_{\tau_k}-Y'_{\tau_k}|,\quad k\ge 1,\, \sigma\in\TT^{\tau_k}.
\]
Letting $k\to \infty$ and using Definition \ref{def.as}(c) yields $Y=Y'$.


Suppose, towards a   contradiction, that property \eqref{eq4.rr} does not hold. Then, for fixed $\omega\in \{L_{T-}<U_{T-}\}$, there exist non-decreasing sequences $(s_n), (t_n)\subset [0,T)$ such that $t_n, s_n\to T$ and  
\[
R^+_{t_{n+1}}>R^+_{t_n},\quad R^{-}_{s_{n+1}}>R^{-}_{s_{n}},\quad n\ge 1.
\]
By the minimality condition, $\limsup_{s\to T-}Y_s=U_{T-}$ and $\liminf_{s\to T-}Y_s=L_{T-}$.
Thus, $U_{T-}=L_{T-}$, a contradiction.
The proof of \eqref{eq4.rr0} is similar. Negating  \eqref{eq4.rr0} and applying the minimality condition
yields that  $\limsup_{s\to T-}Y_s=U_{T-}$ or  $\liminf_{s\to T-}Y_s=L_{T-}$.
This means that  $U_{T-}=\xi$ or $L_{T-}=\xi$, a contradiction.
\end{proof}

\begin{corollary}
\label{cor.ex1}
If $L,U$ satisfy \textnormal{(AM)},  then there exists a c\`adl\`ag process $X$
such that $L\le X\le U$ on $[[0,T]]$, and for any $\sigma\in \TT$ such that
 $[[0,\sigma]]\subset [[0,T[[\cup[[T_{\{L_{T-}<U_{T-}\}}]]$, $X$ is a semimartingale on $[[0,\sigma]]$.
\end{corollary}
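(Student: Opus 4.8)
The plan is to deduce Corollary~\ref{cor.ex1} directly from Theorem~\ref{th.exas} by choosing a convenient generator. Since the corollary only asserts the existence of a c\`adl\`ag process squeezed between the barriers that is a semimartingale on the relevant closed stochastic intervals, I would take $f\equiv 0$ and $\xi:=(U_{T-}\wedge L_T)\vee L_{T-}$ suitably truncated so that $L_T\le\xi\le U_T$ holds (e.g. $\xi:=L_T$, which by (B)-type monotonicity of the barriers at $T$ lies between $L_T$ and $U_T$). With $f\equiv 0$, hypotheses (H1)--(H5) are trivially satisfied, and by assumption $L,U$ satisfy (AM), so Theorem~\ref{th.exas} yields a unique $\mathscr S_a$-solution $(Y,Z,R)$ to RBSDE$^T(\xi,0,L,U)$. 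Set $X:=Y$.

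First I would record the two properties of $X=Y$ that are immediate from Definition~\ref{def.as}: by part (a), $L_t\le X_t\le U_t$ for $t\in[0,T]$, which is the first claim; and by part (b), for every sequence $(\tau_k)$ coming from (AM), $X$ is a semimartingale on each $[[0,\tau_k]]$. Next, given $\sigma\in\TT$ with $[[0,\sigma]]\subset[[0,T[[\,\cup\,[[T_{\{L_{T-}<U_{T-}\}}]]$, I would show $X$ is a semimartingale on $[[0,\sigma]]$. The point is that on $\{\sigma<T\}$ one can find an element of (AM) dominating $\sigma$: indeed, fix any sequence $(\tau_k)$ from (AM); since $\tau_k\nearrow T$ and $\sigma<T$ there, $\sigma\le\tau_{k}$ eventually, so on $\{\sigma<T\}$ the process $X_{\cdot\wedge\sigma}$ agrees with $X_{\cdot\wedge\tau_k\wedge\sigma}$ for large $k$, hence is a semimartingale. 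On the complementary event $\{\sigma=T\}$, the hypothesis on $[[0,\sigma]]$ forces $\sigma=T$ only where $L_{T-}<U_{T-}$; there I would invoke the structural property~\eqref{eq4.rr}: for such $\omega$ there is $t_\omega<T(\omega)$ beyond which $R^+$ (or $R^-$) is constant, so $R$ is eventually monotone near $T$, and combined with the finite-variation behaviour coming from the $\mathscr S$-solutions on $[[0,\tau_k]]$ this gives that $X$ extends to a c\`adl\`ag semimartingale up to and including $T$ on that event. Patching these two disjoint events (they are $\FF_\sigma$-measurable, so one may localize along $[[0,\sigma_{\{\sigma<T\}}]]$ and $[[0,\sigma_{\{\sigma=T\}}]]$ and paste) yields that $X$ is a semimartingale on $[[0,\sigma]]$.

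The main obstacle I anticipate is the event $\{\sigma=T\}$: there the interval is \emph{closed} at $T$, so being a semimartingale on each $[[0,\tau_k]]$ with $\tau_k<T$ is not by itself enough --- one must control the jump and the finite-variation part at the endpoint. This is exactly where property~\eqref{eq4.rr} (the eventual one-sided constancy of $R$ near $T$ on $\{L_{T-}<U_{T-}\}$) is essential: it prevents $R$ from oscillating with unbounded variation as $t\uparrow T$, so that $R$ (hence $Y$) has a finite-variation, c\`adl\`ag extension across $T$. I would make this precise by writing, for $t<T$, $Y_t=\xi+(R_T-R_t)-\int_t^T Z_s\,dB_s$ on $\{\sigma=T\}$ --- using Definition~\ref{def.as}(c) for $Y_{T-}$ and the fact that $Y_T=\xi$ --- and checking that the limit as $t\uparrow T$ of the finite-variation part exists and is finite precisely because $R$ is eventually monotone there. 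Everything else (adaptedness, c\`adl\`ag paths, the sandwich $L\le X\le U$) is immediate from Theorem~\ref{th.exas}, so no further estimates are needed.
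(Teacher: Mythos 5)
Your overall strategy is the paper's: apply Theorem \ref{th.exas} with $f\equiv 0$ and take $X:=Y$. But the step you yourself single out as the main obstacle --- the semimartingale property across the closed endpoint $T$ on $\{L_{T-}<U_{T-}\}$ --- is exactly where your argument has a genuine gap, and the gap is created by your choice of $\xi$. The paper takes $\xi:=\tfrac12(L_{T-}+U_{T-})$, so that on $\{L_{T-}<U_{T-}\}$ one has $L_{T-}<\xi<U_{T-}$ and property \eqref{eq4.rr0} applies: $R$ is eventually \emph{constant} on $[t_\omega,T(\omega))$, whence the stopped process $R^\sigma$ trivially has finite variation up to and including $T$. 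With $\xi=L_T$ you can only invoke \eqref{eq4.rr}, which freezes one of $R^{\pm}$ and leaves the other merely monotone; a monotone process need not stay bounded, so ``eventually monotone'' does not by itself rule out $R^{-}_{T-}=+\infty$. The danger is not oscillation (which is what you say \eqref{eq4.rr} prevents) but monotone divergence. To close this you would have to argue, for instance, that $R_t=Y_0-Y_t+\int_0^tZ_s\,dB_s$ on $[0,T)$, that $Y_{T-}$ is finite because $Y$ is c\`adl\`ag and sandwiched between $L$ and $U$, and that a continuous local martingale cannot converge to $\pm\infty$, so the monotone limit $R_{T-}$ is forced to be finite --- none of which appears in your write-up.

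The second omission is the martingale part. To be a semimartingale on $[[0,\sigma]]$ when $\{\sigma=T\}$ has positive probability one needs $\int_0^{\sigma}|Z_s|^2\,ds<\infty$, i.e. $\mathbf 1_{[0,\sigma]}Z\in\HH_{\mathbb F}(0,T)$; knowing that the finite-variation part has a finite left limit at $T$ does not by itself deliver this. The paper obtains it from the $L^q$-estimate of \cite{bdh}, Lemma 6.1, applied along an announcing sequence for $\sigma$ and a chain $(\tau_l)$ on $[[0,\sigma]]$ with $\mathbb E|R^\sigma|_{\tau_l}<\infty$ (alternatively, the local-martingale convergence argument sketched above yields $\int_0^{T}|Z_s|^2\,ds<\infty$ on the relevant event). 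Your proposal asserts the conclusion at this point without either argument. Both gaps are repairable, but as written the decisive step of the proof is not established.
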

\begin{proof}
Set $\xi:=\frac{1}{2}(L_{T-}+U_{T-})$. Let $(Y,Z,R)$ be an $\mathscr S_a$-solution to RBSDE$^T(\xi,0,L,U)$, 
and $\sigma$ be a stopping time from the assertion. By Theorem \ref{th.exas}
$R^{\sigma}\in\VV_{0,\mathbb F}(0,T)$, where $R^{\sigma}_t:=R_{t\wedge \sigma},\, t\in [0,T)$
and $R^\sigma_T:= \lim_{s\to T-} R^\sigma_s$. Let $(\sigma_k)$ be an announcing sequence for $\sigma$
and $(\tau_l)$ be a chain on $[[0,\sigma]]$ such that $\mathbb E|R^\sigma|_{\tau_l}<\infty,\, l\ge 1$.
By the definition of an $\mathscr S_a$-solution to RBSDE$^T(\xi,0,L,U)$, 
\[
\int_0^t Z_s\,dB_s=Y_t-Y_0+R_t,\quad t\in [0,T).
\]
By \cite{bdh}, Lemma 6.1, for any $q\in (0,1)$ there exists $c_q$ such that 
\[
\mathbb E\Big(\int_0^{\sigma_k\wedge\tau_l}|Z_s|^2\,ds\Big)^q
\le c_q\Big( \mathbb E|Y_{\sigma_k\wedge\tau_l}-Y_0+R_{\sigma_k\wedge\tau_l}| \Big)^q
\le c_q\Big(2\|Y\|_{\mathcal D^1}+ \mathbb E|R^\sigma|_{\tau_l} \Big)^q.
\]
From this and the fact that $(\tau_l)$ is a chain, one easily concludes that $\mathbf 1_{[0,\sigma]}Z\in\HH_{\mathbb F}(0,T)$.
Therefore, $Y$ is a semimartingale on $[[0,\sigma]]$. 

\end{proof}

\section{Mokobodzki's stochastic intervals}
\label{roz6}

Let us introduce the following notion. For any  c\`adl\`ag $\mathbb F$-adapted processes $X^1,X^2$
and   $\tau,\zeta\in\mathcal A$ we let 
\[
\Sigma_{\tau,\zeta}(X^1,X^2):=\{\sigma\in \mathcal A_\tau: \exists \text{ c\`adl\`ag semimartingale } X \text{ such that } 
X^1_t\le X_t\le X^2_t,\, t\in [\tau,\zeta]\}.
\]
We also use the  shorthand $\Sigma_{\tau}(X^1,X^2):= \Sigma_{\tau,T}(X^1,X^2)$.

We shall start with the following result.

\begin{proposition}
\label{prop.m1}
Let  $\tau,\zeta\in\mathcal{A}$,   $\tau\le\zeta$, and $L,U$ be of class \textnormal{(D)} on $[[\tau,\zeta]]$.
Then, there exists an increasing sequence $(\sigma_k)\subset \mathcal A_\tau$
such that $\sigma_k\nearrow \sigma^*:=\esssup \Sigma_{\tau,\zeta}(L,U)$. As a result, $\sigma^*$
is a stopping time.
\end{proposition}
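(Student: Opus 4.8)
The plan is to show that the family $\Sigma_{\tau,\zeta}(L,U)$ is closed under the pairwise maximum operation, so that the essential supremum can be realized as an increasing limit of a countable subfamily. The key observation is the gluing/pasting property for semimartingales: if $\sigma_1,\sigma_2\in\Sigma_{\tau,\zeta}(L,U)$ with witnessing c\`adl\`ag semimartingales $X^1$ (on $[\tau,\sigma_1]$) and $X^2$ (on $[\tau,\sigma_2]$), then on the set $A:=\{\sigma_1\ge\sigma_2\}\in\FF_{\sigma_2}$ one can keep $X^1$, and on $A^c$ one can follow $X^2$ up to $\sigma_2$ and then any semimartingale squeezed between $L$ and $U$ beyond $\sigma_2$ — but in fact the cleanest route is simply to set $X:=X^1\fch_{[\![\tau,\sigma_2]\!]\cap(A\times[0,T])}+\ldots$; more precisely, define $\sigma:=\sigma_1\vee\sigma_2$ and construct a witness on $[\tau,\sigma]$ by using $X^1$ on $[\![\tau,\sigma]\!]\cap(\{\sigma_1\ge\sigma_2\}\times\BR_+)$ and $X^2$ on the complement, i.e. $X_t:=\fch_A X^1_t+\fch_{A^c}X^2_t$ where $A=\{\sigma_1\ge\sigma_2\}$; since $A\in\FF_{\sigma_1\wedge\sigma_2}$ and both $X^1,X^2$ lie between $L$ and $U$ on the overlap, the process $X$ is an $\mathbb F$-adapted c\`adl\`ag semimartingale on $[\![\tau,\sigma_1\vee\sigma_2]\!]$ lying between the barriers there. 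Hence $\sigma_1\vee\sigma_2\in\Sigma_{\tau,\zeta}(L,U)$, i.e. the family is directed upward (stable under finite maxima).

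Granting upward-directedness, I would invoke the standard measure-theoretic fact (the existence and basic properties of the essential supremum of a family of random variables, as in Neveu or Dellacherie–Meyer): for an upward-directed family of measurable functions there is a countable subfamily whose pointwise supremum equals the essential supremum $\sigma^*$ almost surely, and moreover, by replacing successive members with their maxima (which stay in the family by the directedness just proved), one obtains an \emph{increasing} sequence $(\sigma_k)\subset\Sigma_{\tau,\zeta}(L,U)$ with $\sigma_k\nearrow\sigma^*$ a.s. Each $\sigma_k$ is a stopping time, and the increasing limit of stopping times is a stopping time; therefore $\sigma^*=\sup_k\sigma_k$ is an $\mathbb F$-stopping time. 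This is the desired conclusion. One should also note $\sigma^*\le T$ a.s.\ since every $\sigma\in\Sigma_{\tau,\zeta}(L,U)$ satisfies $\sigma\le T$ (they lie in $\mathcal A_\tau$ and — implicitly through the standing setup — are bounded by $T$); and $\sigma^*\ge\tau$ since $\tau$ itself belongs to the family (the constant process $X\equiv L_\tau$, or any point value, witnesses $\sigma=\tau$ when $\tau\le\zeta$, using $L_\tau\le L_\tau\le U_\tau$ — more carefully, on $[\tau,\tau]$ any adapted value works, so $\tau\in\Sigma_{\tau,\zeta}(L,U)$, giving non-emptiness of the family).

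The main obstacle I anticipate is the pasting step: one must check that $X_t=\fch_A X^1_t+\fch_{A^c}X^2_t$ is genuinely a semimartingale (not merely adapted and c\`adl\`ag) on the stochastic interval $[\![\tau,\sigma_1\vee\sigma_2]\!]$, and that it stays between $L$ and $U$ on all of $[\tau,\sigma_1\vee\sigma_2]$ — the latter is slightly delicate because on $A^c=\{\sigma_1<\sigma_2\}$ the witness $X^1$ is only defined up to $\sigma_1<\sigma_2$, so on $A^c$ one genuinely uses $X^2$ throughout $[\tau,\sigma_2]$, and the point is that $A^c\in\FF_{\sigma_1}\subset\FF_{\sigma_1\wedge\sigma_2}$ so the switch is done with information available at the branching time. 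Adaptedness and the c\`adl\`ag property of $X$ are then immediate, and the semimartingale property follows because on each of the two complementary $\FF_{\sigma_1\wedge\sigma_2}$-measurable sets $X$ coincides with a stopped semimartingale; decomposing $X=\fch_A X^1+\fch_{A^c}X^2$ and using that multiplication by an $\FF_{\sigma_1\wedge\sigma_2}$-measurable indicator preserves the semimartingale property (for processes on $[\![\sigma_1\wedge\sigma_2,\cdot]\!]$), together with the fact that on $[\![\tau,\sigma_1\wedge\sigma_2]\!]$ one may just take, say, $X^1$, yields the claim. The role of the class (D) hypothesis on $L,U$ is minor here — it guarantees the barriers are well-behaved enough (finite-valued, no issues at the endpoints) — the real content is the directedness plus the essential-supremum machinery.
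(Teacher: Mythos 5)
Your overall strategy coincides with the paper's: both proofs establish that $\Sigma_{\tau,\zeta}(L,U)$ is stable under pairwise maximization and then invoke the standard essential-supremum theorem (the paper cites Karatzas--Shreve, Theorem A.3) to extract a nondecreasing sequence from the family converging to $\sigma^*$, whence $\sigma^*$ is a stopping time. The difference is in how the witness for $\sigma_1\vee\sigma_2$ is built. You paste $X^1$ and $X^2$ along the $\FF_{\sigma_1\wedge\sigma_2}$-measurable event $A=\{\sigma_1\ge\sigma_2\}$, switching at the branching time; the paper instead first normalizes to $L\ge 0$ (by adding a class-(D) martingale to both barriers, which leaves $\Sigma_{\tau,\zeta}$ unchanged --- this is precisely where the class-(D) hypothesis enters), kills each witness after its own time via $\tilde X^i_t:=X^i_{t\vee\tau}\fch_{[0,\sigma_i)}(t)$, takes the pointwise maximum $\tilde X^1\vee\tilde X^2$, and adds the single-jump increasing process $\fch_{\{t\ge\sigma_1\vee\sigma_2\}}L_{\sigma_1\vee\sigma_2}$ to repair the right endpoint. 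Your pasting is more direct and dispenses with the normalization, at the cost of the adaptedness/regularity check at the switch. On that check there is one slip to repair: if you keep $X^1$ on the \emph{closed} interval $[[\tau,\sigma_1\wedge\sigma_2]]$ and only switch to $\fch_A X^1+\fch_{A^c}X^2$ strictly afterwards, the resulting process fails right-continuity at $\sigma_1\wedge\sigma_2$ on $A^c$ (its value there is $X^1_{\sigma_1}$ while its right limit is $X^2_{\sigma_1}$); conversely, using $\fch_AX^1_t+\fch_{A^c}X^2_t$ for all $t$ fails adaptedness before $\sigma_1\wedge\sigma_2$. The correct gluing switches \emph{at} $\sigma_1\wedge\sigma_2$: set $X_t:=X^1_t+\fch_{A^c}(X^2_t-X^1_t)\fch_{\{t\ge\sigma_1\wedge\sigma_2\}}$. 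This is adapted (since $A^c\in\FF_{\sigma_1\wedge\sigma_2}$), c\`adl\`ag, a semimartingale (the switching indicator is an adapted, bounded, nondecreasing process, hence of finite variation, and products of semimartingales are semimartingales), and lies between $L$ and $U$ on $[[\tau,\sigma_1\vee\sigma_2]]$. With that correction your argument is complete; your closing remark that class (D) plays only a minor role is accurate for your construction, though it is genuinely used in the paper's normalization step.
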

\begin{proof}
By \cite{KS}, Theorem A.3, there exists a random variable $\sigma^*$ such that 
$\sigma^*=\esssup\Sigma_{\tau,\zeta}(L,U)$, i.e. $\sigma\le \sigma^*,\, \sigma\in\Sigma_{\tau,\zeta}(L,U)$,
and for  any random variable $\hat \sigma$ with the property: $\sigma\le \hat \sigma,\, \sigma\in\Sigma_{\tau,\zeta}(L,U)$,
we have $\sigma^*\le\hat \sigma$.  We shall prove that $\sigma^*$ is a stopping time.
To do this end,  we show that the family $\Sigma_{\tau,\zeta}(L,U)$  is closed under pairwise maximization i.e. 
$\sigma_1,\sigma_2\in\Sigma_{\tau,\zeta}(L,U)$ implies $\sigma_1\vee\sigma_2\in\Sigma_{\tau,\zeta}(L,U)$. 

First, observe that without loss of generality $L$ may be assumed to be non-negative. 
Indeed,  for any semimartingale $X$, we have
\[
\Sigma_{\tau,\zeta}(L,U)=\Sigma_{\tau,\zeta}(L+X,U+X).
\]
Thus, $\esssup \Sigma_{\tau,\zeta}(L,U)=\esssup \Sigma_{\tau,\zeta}(L+X,U+X)$.
On the other hand, since $L$ is of class (D) on $[[\tau,\zeta]]$, there exists a martingale $M$ of class (D) on $[[\tau,\zeta]]$
such that $L+M\ge 0$ (see \cite{DM}, Theorem 24, page 419).

Assume that $L\ge 0$.
Let $\sigma_1,\sigma_2\in \Sigma_{\tau,\zeta}(L,U)$. 
Then there exist processes c\`adl\`ag semimartingales $X^1,X^2$ such that  
$L_t\le X^1_t\le U_t$, $t\in[\tau,\sigma_1]$ and  
$L_t\le X^2_t\le U_t$, $t\in[\tau,\sigma_2]$. 
Clearly $\tilde{X}^i_t:=X^i_{t\vee \tau}\mathbf{1}_{[0,\sigma_i)}(t)$, $i=1,2$ are c\`adl\`ag semimartingales as well,
which in turn implies that   $\tilde{X}:=\tilde{X}^1\vee\tilde{X}^2$ is a c\`adl\`ag semimartingale.
Let us define $A_t:=\mathbf1_{\{t\ge \sigma_1\vee\sigma_2\}}L_{\sigma_1\vee\sigma_2}$.
Clearly $A$ is an increasing adapted c\`adl\`ag process.
Thus, $\hat X:= \tilde X+A$ is a c\`adl\`ag semimartingale.
Observe that  $L_t\le\hat{X}_t\le U_t$, $t\in[\tau,\sigma_1\vee\sigma_2]$. 
Therefore, $\sigma_1\vee\sigma_2\in\Sigma_{\tau,\zeta}(L,U)$. 
By \cite{KS}, Theorem A.3,  there exists a nondecreasing sequence $(\sigma_n)\subset \Sigma_{\tau,\zeta}(L,U)$ such that $\sigma^*=\lim_{n\to\infty}\sigma_n$ a.s. Thus,     $\sigma^*$ is a stopping time.
\end{proof}

Based on the above result for any finite $\zeta\in \mathcal A$ we may define the mapping
\[
\mathcal A^\zeta\ni\tau\longmapsto\mathring\tau^\zeta_{L,U}:= \esssup\Sigma_{\tau,\zeta}(L,U)\in \mathcal A_\tau^\zeta.
\] 
We shall  use in the paper the shorthand notations $\mathring\tau^\zeta: =\mathring\tau^\zeta_{L,U}$
and  $\mathring\tau :=\mathring\tau^T$. 
We use the full notation only in case we want to highlight the dependence of theses objects   on the barriers and terminal time
or when we use a triple different    from  $(L,U,T)$.
We also denote $\mathring\tau^\zeta_X:=\mathring\tau^\zeta_{X,X}$
for any $\tau\in\AAA^\zeta$ and c\`adl\`ag $\mathbb F$-adapted process $X$ of class (D) on $[[0,\zeta]]$.

\begin{proposition}
\label{prop.ex2}
For any finite $\tau\in\TT$ there exists a c\`adl\`ag process 
$X$ such that $L\le X\le U$ on $[[\tau,\mathring\tau]]$,  and for any $\sigma\in \TT_\tau^{\mathring\tau}$
with  $[[\tau,\sigma]]\subset [[\tau,\mathring\tau[[\cup [[\mathring\tau_{\{L_{\mathring\tau-}<U_{\mathring\tau-}\}}]]$, 
$X$ is a semimartingale on $[[\tau,\sigma]]$.
\end{proposition}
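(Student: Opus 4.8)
The plan is to reduce Proposition~\ref{prop.ex2} to the "asymptotic" situation already resolved in Corollary~\ref{cor.ex1}. First I would fix a finite $\tau\in\TT$ and work on the stochastic interval $[[\tau,\mathring\tau]]$, treating $\mathring\tau$ (which is a stopping time by Proposition~\ref{prop.m1}, with $\zeta=T$) as the new "terminal stopping time" in the sense allowed in Section~\ref{roz5}. The barriers $L,U$ restricted to $[[\tau,\mathring\tau]]$ are still c\`adl\`ag, class~(D), with $L\le U$. The key point is that $L,U$ satisfy the asymptotic Mokobodzki's condition~(AM) on this interval: by Proposition~\ref{prop.m1} there is an increasing sequence $(\sigma_k)\subset\Sigma_{\tau,T}(L,U)$ with $\sigma_k\nearrow\mathring\tau$, and by definition of $\Sigma_{\tau,T}(L,U)$ each $\sigma_k$ carries a c\`adl\`ag semimartingale $X^k$ with $L_t\le X^k_t\le U_t$ on $[\tau,\sigma_k]$. (One should take a little care that $\sigma_k$ can be taken $\le\mathring\tau$ and that the sequence can be arranged to be strictly announcing/increasing to $\mathring\tau$; this is routine, replacing $\sigma_k$ by $\sigma_1\vee\cdots\vee\sigma_k$ as in the proof of Proposition~\ref{prop.m1} and passing to a subsequence.)

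Having verified (AM), I would invoke Corollary~\ref{cor.ex1} applied on the interval $[[\tau,\mathring\tau]]$ (with the "$T$" of Section~\ref{roz5} being $\mathring\tau$ shifted to start at $\tau$, which is permitted since that section explicitly allows $T$ to be a bounded stopping time; a trivial time-shift/restart argument handles the fact that our interval starts at $\tau$ rather than $0$). This produces a c\`adl\`ag process $X$ with $L\le X\le U$ on $[[\tau,\mathring\tau]]$ such that for every stopping time $\sigma$ with $[[\tau,\sigma]]\subset[[\tau,\mathring\tau[[\,\cup\,[[\mathring\tau_{\{L_{\mathring\tau-}<U_{\mathring\tau-}\}}]]$, the process $X$ is a semimartingale on $[[\tau,\sigma]]$. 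This is exactly the assertion of Proposition~\ref{prop.ex2}, so the proof concludes.

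I expect the main obstacle to be purely technical bookkeeping rather than conceptual: namely, making rigorous the "restart at $\tau$" reduction so that the machinery of Section~\ref{roz5} (which is phrased on an interval of the form $[[0,T]]$ with $T$ a bounded stopping time) applies verbatim on $[[\tau,\mathring\tau]]$. One clean way is to note that all the notions involved --- class~(D), c\`adl\`ag semimartingale, the family $\Sigma$, and (AM) --- are stable under the optional stopping / shifting operations, and that $Y_{(t\vee\tau)\wedge\mathring\tau}$-type constructions behave well; alternatively, one can simply re-run the short argument of Corollary~\ref{cor.ex1} directly on $[[\tau,\mathring\tau]]$ using the $\mathscr S_a$-solution of RBSDE$^{\tau,\mathring\tau}(\tfrac12(L_{\mathring\tau-}+U_{\mathring\tau-}),0,L,U)$, whose existence and the requisite finite-variation property of the stopped reflection process $R^{\sigma}$ follow from Theorem~\ref{th.exas}, and then use \cite{bdh}, Lemma~6.1, together with a chain on $[[\tau,\sigma]]$ to deduce $\mathbf 1_{[\tau,\sigma]}Z\in\HH_{\mathbb F}(\tau,\mathring\tau)$, hence that $X:=Y$ is a semimartingale on $[[\tau,\sigma]]$. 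The second obstacle, minor, is checking that the exceptional set governing whether the interval is closed is $\{L_{\mathring\tau-}<U_{\mathring\tau-}\}$ as claimed; this is inherited directly from the corresponding set $\{L_{T-}<U_{T-}\}$ in Corollary~\ref{cor.ex1} under the identification of $\mathring\tau$ with the terminal stopping time.
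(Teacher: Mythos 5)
Your proposal is correct and follows essentially the same route as the paper, whose entire proof is the one-line reduction ``apply Corollary~\ref{cor.ex1} with $T=\mathring\tau$''; the (AM) verification via Proposition~\ref{prop.m1} and the identification of the exceptional set $\{L_{\mathring\tau-}<U_{\mathring\tau-}\}$ are exactly what that reduction relies on. Your extra care about restarting the interval at $\tau$ rather than $0$ is a legitimate bookkeeping point that the paper leaves implicit.
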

\begin{proof}
It is enough to apply  Corollary \ref{cor.ex1} with $T=\mathring\tau$. 
\end{proof}

For $\tau\in\TT$ we let 
\[
\mathscr C_\tau:=\{\mathring\tau^{T+1}_{L^{\mathring\tau},U^{\mathring\tau}}=T+1\}.
\]
We call $\mathscr M_{L,U}(\tau):= [[\tau,\mathring\tau[[\cup[[\mathring\tau_{\mathscr C_\tau}]]$  Mokobodzki's stochastic intervals.
In case $L=U$ we write $\mathscr M_{L}(\tau):=\mathscr M_{L,U}(\tau)$.

\begin{theorem}
\label{prop7.3m}
For any $\tau\in\mathcal T$, we have:
\begin{enumerate}
\item[(i)] if $\sigma\in \Sigma_{\tau}(L,U)$, then $[[\tau,\sigma]]\subset \mathscr M_{L,U}(\tau)$,
\item[(ii)]
\[
L_{\mathring\tau-}=U_{\mathring\tau-}\,\text{ or }\,L_{\mathring\tau}=U_{\mathring\tau} \quad\text{on}\quad \{\tau<\mathring\tau<T\}.
\]
\item[(iii)]
\[
 \bigcap_{\sigma\in\Sigma_\tau(L,U)} \{\sigma<\mathring\tau\}\subset \{L_{\mathring\tau-}=U_{\mathring\tau-}\},
 \quad \mathscr C^c_\tau\subset  \{L_{\mathring\tau-}=U_{\mathring\tau-}\},
\]
\item[(iv)] there exists a non-decreasing sequence of stopping times $(\tau_k)\subset \TT_\tau$
such that
\[
\bigcup_{k\ge 1}[[\tau,\tau_k]]=\mathscr M_{L,U}(\tau),
\]
\item[(v)] there exists a c\`adl\`ag process $X$ on $[0,T]$ that is a semimartingale on $[[\tau,\sigma]]$
for any stopping time $\sigma\in\TT_\tau$ satisfying $[[\tau,\sigma]]\subset \mathscr M_{L,U}(\tau)$,
\item[(vi)] for any $\tau\in \TT$ we have
\[
\mathscr C_\tau\cap\{\mathring\tau<T\}\subset \{L_{\mathring\tau}=U_{\mathring\tau}\},
\]
\item[(vii)] $\gamma_\tau\le \mathring\tau$ and for any $k\ge 1$, we have $[[\tau,\gamma_\tau^k]]\subset\mathscr M_{L,U}(\tau)$
(cf \eqref{eq2.3cdf}).
\end{enumerate}
\end{theorem}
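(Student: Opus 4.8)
\emph{Overall strategy.} The plan is to reduce all seven assertions to the $\mathscr S_a$-theory of Section~\ref{roz5} together with three auxiliary facts. First, fixing $\tau\in\TT$, Proposition~\ref{prop.m1} gives a non-decreasing chain $(\sigma_k)\subset\Sigma_\tau(L,U)$ with $\sigma_k\nearrow\mathring\tau$, so $(L,U)$ satisfies \textnormal{(AM)} on $[[\tau,\mathring\tau]]$ (with the bounded stopping time ``$T$'' of Section~\ref{roz5} taken equal to $\mathring\tau$); Corollary~\ref{cor.ex1}, equivalently Proposition~\ref{prop.ex2}, then produces a c\`adl\`ag process $X$ with $L\le X\le U$ on $[[\tau,\mathring\tau]]$ which is a semimartingale on every $[[\tau,\sigma]]\subset[[\tau,\mathring\tau[[\,\cup\,[[\mathring\tau_{\{L_{\mathring\tau-}<U_{\mathring\tau-}\}}]]$; this $X$, suitably extended past $\mathring\tau$, will be the process of (v). Second, I would use the continuation principle behind Remark~\ref{12kwietnia1}: a semimartingale lying between $L$ and $U$ on an interval whose right endpoint is a point where both one-sided limits of $L,U$ are strictly separated can be prolonged strictly past that endpoint. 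Third, I would unwind the definition $\mathscr C_\tau=\{\mathring\tau^{T+1}_{L^{\mathring\tau},U^{\mathring\tau}}=T+1\}$ into the reformulation, valid up to a $\mathbb P$-null set,
\[
\mathscr C_\tau=\big\{\exists\text{ c\`adl\`ag semimartingale }\bar X\text{ with }L\le\bar X\le U\text{ on the \emph{closed} }[[\tau,\mathring\tau]]\big\};
\]
indeed the stopped barriers $L^{\mathring\tau},U^{\mathring\tau}$ are constant on $[\mathring\tau,T+1]$, so a semimartingale between them on $[\tau,T+1]$ exists exactly when one exists between $L,U$ on the closed $[\tau,\mathring\tau]$ (prolong by a constant), and Proposition~\ref{prop.m1} turns the $\esssup$ into this dichotomy.

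\emph{Parts (ii) and (vi): the barriers must meet.} If (ii) failed, the event $\{L_{\mathring\tau-}<U_{\mathring\tau-}\}\cap\{L_{\mathring\tau}<U_{\mathring\tau}\}\cap\{\tau<\mathring\tau<T\}$ would have positive probability; on it $X$ is a semimartingale on the closed $[[\tau,\mathring\tau]]$ by Proposition~\ref{prop.ex2} (since $\{L_{\mathring\tau-}<U_{\mathring\tau-}\}$ holds there), and the barriers are strictly separated at $\mathring\tau$ in both one-sided limits, so the continuation principle produces $\sigma\in\Sigma_\tau(L,U)$ with $\sigma>\mathring\tau$ on a set of positive probability, contradicting $\mathring\tau=\esssup\Sigma_\tau(L,U)$. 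For (vi) one argues identically, now starting from the closed-interval semimartingale that exists on $\mathscr C_\tau$ by the reformulation: on $\mathscr C_\tau\cap\{\mathring\tau<T\}\cap\{L_{\mathring\tau}<U_{\mathring\tau}\}$ the endpoint $\mathring\tau$ could be pushed strictly to the right, again contradicting maximality, whence $\mathscr C_\tau\cap\{\mathring\tau<T\}\subset\{L_{\mathring\tau}=U_{\mathring\tau}\}$.

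\emph{Parts (iii), (i), (iv), (v), (vii).} By Proposition~\ref{prop.ex2}, on $\{L_{\mathring\tau-}<U_{\mathring\tau-}\}$ the process $X$ is a semimartingale on the closed $[[\tau,\mathring\tau]]$, so the reformulation yields $\{L_{\mathring\tau-}<U_{\mathring\tau-}\}\subset\mathscr C_\tau$, whose contrapositive is the inclusion $\mathscr C_\tau^c\subset\{L_{\mathring\tau-}=U_{\mathring\tau-}\}$ of (iii). For the first inclusion of (iii): if $L_{\mathring\tau-}<U_{\mathring\tau-}$ at $\omega$, then using the reformulation together with $\mathscr C_\tau\in\FF_\tau$ (so that the closed-interval witness on $\mathscr C_\tau$ may be pasted over $\mathscr C_\tau$ with a chain element $\sigma_k$ on $\mathscr C_\tau^c$) one gets $\sigma\in\Sigma_\tau(L,U)$ with $\sigma=\mathring\tau$ at $\omega$, so $\omega\notin\bigcap_{\sigma\in\Sigma_\tau(L,U)}\{\sigma<\mathring\tau\}$; this gives $\bigcap_{\sigma}\{\sigma<\mathring\tau\}\subset\{L_{\mathring\tau-}=U_{\mathring\tau-}\}$. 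The same facts give (i): for $\sigma\in\Sigma_\tau(L,U)$ one has $\sigma\le\mathring\tau$ and $\{\sigma=\mathring\tau\}\subset\mathscr C_\tau$ (the $\Sigma_\tau$-witness is a closed-interval semimartingale there), hence $[[\tau,\sigma]]\subset[[\tau,\mathring\tau[[\,\cup\,[[\mathring\tau_{\mathscr C_\tau}]]=\mathscr M_{L,U}(\tau)$. For (iv), put $\tau_k:=\mathring\tau$ on $\mathscr C_\tau$ and $\tau_k:=\sigma_k$ on $\mathscr C_\tau^c$; since $\mathscr C_\tau\in\FF_\tau$ this is a non-decreasing sequence in $\Sigma_\tau(L,U)$ (paste the witnesses over $\mathscr C_\tau$), and $\bigcup_k[[\tau,\tau_k]]=[[\tau,\mathring\tau[[\,\cup\,[[\mathring\tau_{\mathscr C_\tau}]]=\mathscr M_{L,U}(\tau)$, using that $\{\sigma_k=\mathring\tau\}\subset\mathscr C_\tau$ forces $\sigma_k<\mathring\tau$ on $\mathscr C_\tau^c$ for every $k$. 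Part (v) then follows by taking the required process to be $X$ on $[[\tau,\mathring\tau[[$ glued with the closed-interval semimartingale available on $\mathscr C_\tau$, the two being reconciled on $\mathscr C_\tau$ by uniqueness of the $\mathscr S$-solution of the reflected BSDE on $[[\tau,\mathring\tau]]$ with null driver and barriers $L,U$ (which exists on $\mathscr C_\tau$ since \textnormal{(WM)} holds there, Theorem~\ref{12wrzesnia1}). Finally, for (vii) the definition of $\gamma_\tau$ in \eqref{eq2.3cdf} gives $\gamma^k_\tau\in\Sigma_\tau(L,U)$, so $\gamma^k_\tau\le\mathring\tau$ and $[[\tau,\gamma^k_\tau]]\subset\mathscr M_{L,U}(\tau)$ by (i), and letting $k\to\infty$ yields $\gamma_\tau\le\mathring\tau$.

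\emph{Main obstacle.} The genuinely delicate point is the reformulation of $\mathscr C_\tau$ and, with it, the whole measurability bookkeeping: one must show that $\mathring\tau^{T+1}_{L^{\mathring\tau},U^{\mathring\tau}}$ takes only the values $T+1$ or ``strictly less than $T+1$'', that $\mathscr C_\tau$ is $\FF_\tau$-measurable, and that the $\omega$-wise continuations and gluings used above can be carried out measurably, i.e. yield honest semimartingales on stochastic intervals and honest members of $\Sigma_\tau(L,U)$ rather than merely pointwise statements. The route I would take avoids hand-made gluing wherever possible: every ``there is a semimartingale'' step is extracted from Corollary~\ref{cor.ex1} and the uniqueness of $\mathscr S_a$-solutions, which automatically deliver the stochastic-interval semimartingale property and pin down the relevant exceptional set through the structural dichotomy \eqref{eq4.rr}--\eqref{eq4.rr0}; the transition from the $\FF_{\mathring\tau}$-event $\{L_{\mathring\tau-}<U_{\mathring\tau-}\}$ to the $\FF_\tau$-set $\mathscr C_\tau$ is precisely where this machinery does the real work.
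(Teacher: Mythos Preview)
Your overall strategy—feed the pair $(L,U)$ on $[[\tau,\mathring\tau]]$ into the $\mathscr S_a$-theory of Section~\ref{roz5} and combine this with a continuation argument at points where both one-sided limits of the barriers are strictly separated—is the paper's approach, and your sketches for (ii), (vi), (vii) are essentially the paper's arguments.

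The genuine gap is your repeated use of $\mathscr C_\tau\in\FF_\tau$ to paste stopping times and semimartingales over the partition $\{\mathscr C_\tau,\mathscr C_\tau^c\}$ in parts (iii), (iv), (v). From the definition $\mathscr C_\tau=\{\mathring\tau^{T+1}_{L^{\mathring\tau},U^{\mathring\tau}}=T+1\}$ nothing better than $\mathscr C_\tau\in\FF_{\mathring\tau}$ is available, and even that does not rescue your constructions: for instance your candidate $\tau_k:=\mathring\tau\,\mathbf 1_{\mathscr C_\tau}+\sigma_k\,\mathbf 1_{\mathscr C_\tau^c}$ in (iv) fails to be a stopping time, since on $\{\sigma_k\le t<\mathring\tau\}$ membership in $\{\tau_k\le t\}$ requires knowing $\mathscr C_\tau^c$ at time $t$, while $\mathscr C_\tau^c$ is only $\FF_{\mathring\tau}$-measurable. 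You correctly flag this measurability bookkeeping as the main obstacle, but you do not resolve it, and your arguments for (iii)--(v) collapse without it.

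The paper sidesteps all such gluing. For (iv) it applies Proposition~\ref{prop.m1} directly to the extended-horizon family $\Sigma_{\tau,T+1}$ for the stopped barriers, whose chain automatically overshoots $\mathring\tau$ on $\mathscr C_\tau$; truncating that chain at $\mathring\tau$ produces honest stopping times with the required union, with no reference to the $\sigma$-field containing $\mathscr C_\tau$. For (v) it takes a \emph{single} global solution $Y$ to an RBSDE on $[0,T+1]$ in the sense of Definition~\ref{df.main} and reads off the Doob--Meyer decomposition on each $[[0,\tau_k]]$; consistency across $k$ comes for free from uniqueness, so no pasting over $\mathscr C_\tau$ is needed. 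For the first inclusion in (iii) the paper does not try to manufacture a $\Sigma_\tau$-witness hitting $\mathring\tau$ by pasting over $\mathscr C_\tau$; instead it localizes the $\mathscr S_a$-solution of Theorem~\ref{th.exas} via an announcing sequence for $\mathring\tau_{|A^c}$ together with a chain $(\delta^k_n)$ controlling the finite-variation part, and shows $\{L_{\mathring\tau-}<U_{\mathring\tau-}\}=\bigcup_{n,k}\{\delta^k_n\vee\tau=\mathring\tau\}$ directly.
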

\begin{proof}
Clearly, without loss of generality, we may assume that $L,U$ are bounded. 

Ad (i). Suppose that $\sigma\in \Sigma_{\tau}(L,U)$. 
Let  $A:=\{\sigma<\mathring\tau\}\in\FF_{\mathring\tau}\cap \FF_\sigma$
and $X$ be a c\`adl\`ag semimartingale such that $L\le X\le U$ on $[[\tau,\sigma]]$. 
Let us define
\[
Y_t:=X_t\mathbf1_{[\tau,\sigma)}(t)+\mathbf1_{A^c}\mathbf1_{[\mathring\tau,T+1]}(t)X_{\mathring\tau},\quad t\ge 0.
\]
Then, clearly, $Y$ is a semimartingale and $L^{\mathring\tau}_t\le Y_t\le U^{\mathring\tau}_t,\, t\in [\tau, \sigma_{A}\wedge(T+1)]$.
Thus, 
\[
\sigma_{A}\wedge(T+1)\le \esssup \Sigma_{\tau,T+1}(L^{\mathring\tau},U^{\mathring\tau}).
\]
Consequently, $A^c\subset \mathscr C_\tau$. Therefore
\[
\begin{split}
[[\tau,\sigma]]&=([[\tau,\sigma]]\cap A)\cup ([[\tau,\sigma]]\cap A^c)
\\&\subset ([[\tau,\mathring\tau[[\cap A)\cup ([[\tau,\mathring\tau[[\cap A^c)\cup( [[\mathring\tau]]\cap A^c)
\\&\subset [[\tau,\mathring\tau[[\cup ( [[\mathring\tau]]\cap \mathscr C_\tau)=\mathscr M_{L,U}(\tau).
\end{split}
\]

Ad (ii). We set $L'_t:= L_{t\wedge T}\mathbf1_{\{t\ge \tau\}}$ and $U'_t:= U_{t\wedge T}\mathbf1_{\{t\ge \tau\}}$. 
Let 
\[
A:=\{L_{\mathring\tau-}<U_{\mathring\tau-},\, L_{\mathring\tau}<U_{\mathring\tau}, \, \tau<\mathring\tau<T\}
=\{L'_{\mathring\tau-}<U'_{\mathring\tau-},\, L'_{\mathring\tau}<U'_{\mathring\tau}, \, \tau<\mathring\tau<T\}.
\]
Striving for a contradiction,
suppose that $\mathbb{P}(A)>0$. Set $\xi:=\frac12(L'_{\mathring\tau-}+U'_{\mathring\tau-})$. By Theorem  \ref{th.exas},
there exists a unique $\mathscr S_a$-solution $(Y,Z,R)$ to RBSDE$^{\mathring\tau}(\xi,0,L',U')$. 
Moreover, for any $\omega\in A$
there exists $t_\omega\in [0,\mathring\tau(\omega))$ such that $R_t(\omega)=R_{t_\omega}(\omega),\, t\in [t_\omega,\mathring\tau(\omega))$. From this we deduce that for any announcing sequence $(\tau_k)$ for $\mathring\tau_{|A^c}$
we have that $R$ is a finite variation c\`adl\`ag process on $[[0,\tau_k\wedge \mathring\tau_{|A}]]$. Let $(\delta_n^k)$
be a chain on $[[0,\tau_k\wedge \mathring\tau_{|A}]]$ such that $\mathbb E|R|^2_{\delta_n^k},\, n\ge 1$.
Then
\[
\mathbb E\int_0^{\delta^k_n}|Z_s|^2\,ds<\infty,\,\quad n,k\ge 1.
\]
Thus, $Y$ is a semimartingale on $[[0,\delta_n^k]]$ for any $n,k\ge 1$.
Since  $(\tau_k)$ is an announcing sequence for $\mathring\tau_{|A^c}$, there exists $k_0$
such that $\mathbb{P}(\tau_{k_0}\wedge \mathring\tau_{|A}=\mathring\tau_{|A})>0$.  Thus, by the definition of a chain,
there exists $n_{k_0}$ such that $\mathbb{P}(\delta^{k_0}_{n_{k_0}}=\mathring\tau_{|A})>0$. 
Set $B:=\{\delta^{k_0}_{n_{k_0}}=\mathring\tau_{|A}\}$, and 
\[
\zeta^B:=\inf\Big\{t\ge (\delta^{k_0}_{n_{k_0}})_{|B}: \frac12(L'_{\mathring\tau}+U'_{\mathring\tau})\notin [L_t,U_t]\Big\}.
\]
Observe that $\zeta^B>\delta^{k_0}_{n_{k_0}}=\mathring\tau$ on the set $B$.
We let (notice that $B\in\mathcal F_{\delta^{k_0}_{n_{k_0}}}$)
\[
\bar Y_t:=\mathbf1_{\{t<\delta^{k_0}_{n_{k_0}}\}}Y_t+\mathbf1_B\mathbf1_{[\delta^{k_0}_{n_{k_0}},\zeta^B)}(t) \frac12(L'_{\mathring\tau}+U'_{\mathring\tau}).
\]
Clearly, $\bar Y$ is a c\`adl\`ag semimartingale and 
$L'_t\le \bar Y_t\le U'_t,\, t\in [0, (\delta^{k_0}_{n_{k_0}})_{|B^c}\wedge \zeta^B)$.
Since on the set $B$ we have $(\delta^{k_0}_{n_{k_0}})_{|B^c}\wedge \zeta^B=\zeta^B>\mathring\tau$,
we arrive at contradiction. 

Ad (iii). Let $A:=\{L'_{\mathring\tau-}<U'_{\mathring\tau-}\}$ ($L'$, $U'$ defined as in the proof of (ii)). With this choice of $A$, we may  repeat the part of the reasoning of the proof of (ii)
 that led to the statement that   $Y$ is a semimartingale on $[[0,\delta_n^k]]$ for any $n,k\ge 1$.
Obviously, $\delta^k_n\vee\tau\in\Sigma_{\tau}(L',U')=\Sigma_{\tau}(L,U),\, n,k\ge 1$. 
By the construction of $\delta^k_n$   we easily get that 
\[
\{L'_{\mathring\tau-}<U'_{\mathring\tau-}\}=A=\bigcup_{n,k\ge 1}\{\delta^k_n\vee\tau=\mathring\tau_{|A}\}\subset
\bigcup_{\sigma\in\Sigma_\tau(L',U')} \{\sigma=\mathring\tau\}=\bigcup_{\sigma\in\Sigma_\tau(L,U)} \{\sigma=\mathring\tau\}.
\]
Hence,
\[
\bigcap_{\sigma\in\Sigma_\tau(L,U)} \{\sigma<\mathring\tau\}\subset \{L'_{\mathring\tau-}=U'_{\mathring\tau-}\}.
\]
Consequently, intersecting both sides of the above inclusion with $\{\tau<\mathring\tau\}$ yields
\[
\begin{split}
\bigcap_{\sigma\in\Sigma_\tau(L,U)} \{\sigma<\mathring\tau\}&=
\{\tau<\mathring\tau\}\cap \bigcap_{\sigma\in\Sigma_\tau(L,U)} \{\sigma<\mathring\tau\}\\&\subset 
\{\tau<\mathring\tau\}\cap \{L'_{\mathring\tau-}=U'_{\mathring\tau-}\}\subset \{L_{\mathring\tau-}=U_{\mathring\tau-}\}.
\end{split}
\]
This finishes the proof of the first inclusion in  (iii).  The second one is a consequence of Proposition \ref{prop.ex2}.
Assertion (iv) follows directly from Proposition \ref{prop.m1} applied to 
$\Sigma_{\tau,T+1}(L^T,U^T)$.

Ad (v). 
By \cite{K:SPA}, Theorem 3.11, there exists a solution $Y$ to RBSDE$^{T+1}(L'_{T+1},0,L',U')$
in the sense of Definition \ref{df.main}. Let $(\tau_k)\subset \Sigma_{\tau,T+1}(L^{\mathring\tau}, U^{\mathring\tau})$ be a 
non-decreasing sequence of stopping times such that
$\tau_k\nearrow \tau^*:=\esssup\Sigma_{\tau,T+1}(L^{\mathring\tau}, U^{\mathring\tau})=\esssup\Sigma_{0,T+1}(L',U')$.
By \cite{K:SPA}, Theorem 4.5,  $Y$ is a semimartingale on $[[0,\tau_k]]$ for any $k\ge 1$. Thus
\[
Y_t=Y_0-R^k_t+M^k_t,\quad t\in [0,\tau_k],
\]
where $M^k$ is a martingale and $R^k$ is a finite variation process from the Doob--Meyer
decomposition of $Y$.  By the representation property of the filtration $\mathbb F$,
\[
M^k_t=\int_0^tZ^k_s\,dB_s,\quad t\in [0,\tau_k],
\] 
for a progressively measurable process $Z^k$. By the uniqueness of the Doob--Meyer decomposition we may  define processes $R,Z$ on $[[0,\tau^*[[$
as follows
\[
R_t:= R^k_t,\quad Z_t:=Z^k_t,\quad t\in [0,\tau^*).
\]
Thus, in particular,
\[
Y_t=Y_0-R_t+\int_0^tZ_s\,dB_s,\quad t\in [0,\tau^*)\cap [0,\mathring\tau].
\] 
Observe that $\mathring\tau\le \tau^*$ and   $\tau^*=T+1$ on $\mathscr C_\tau$. Therefore
\[
Y=Y_0-R+\int_0^\cdot Z_s\,dB_s\quad\text{on}\quad \mathscr M_{L,U}(\tau).
\]
Ad (vi). Suppose, striving for a contradiction, that 
\[
\mathbb{P}(\mathscr C_\tau\cap\{\mathring\tau<T\}\cap \{L_{\mathring\tau}<U_{\mathring\tau}\})>0
\]
and denote the set under the probability by $A$. Clearly $A\in\FF_{\mathring\tau}$. 
Let $(\tau_k)$ be the sequence of (iv).  
Then, by the definition of $\mathscr M_{L,U}(\tau)$ there exists $k_0$ such that $\mathbb{P}(\{\tau_{k_0}=\mathring\tau\}\cap A)>0$.
Let $X$ be an $\mathbb F$-adapted  c\`adl\`ag process that is a semimartingale on $[[\tau,\tau_{k_0}]]$
and lies between the barriers $L,U$. Set 
\[
\hat X:= X\mathbf1_{[0,\tau_{k_0})}+\frac12\mathbf1_{[\tau_{k_0},T]}\mathbf(L_{\tau_{k_0}}+U_{\tau_{k_0}}).
\]
Clearly, $\hat X$ is a c\`adl\`ag $\mathbb F$-adapted process that is a semimartingale on $[[\tau,T]]$.
Moreover, $\hat X$ lies between the barriers on $[[0,\sigma^*[[$, where
\[
\sigma^*:=\inf\{t\ge\tau_{k_0}: \frac12\mathbf(L_{\tau_{k_0}}+U_{\tau_{k_0}})\notin [L_t,U_t]\}\wedge T.
\]
Observe that $\sigma^*>\mathring\tau$ on $A\cap \{\tau_{k_0}=\mathring\tau\}$, which contradicts 
the definition of $\mathring\tau$.
\end{proof}

\begin{theorem}
Assume \textnormal{(AM)}. Let $(Y,Z,R)$ be a triple that satisfies \textnormal{(a), (b)} of Definition \textnormal{\ref{def.as}} and $Y_T=\xi$.
Then $Y$ satisfies \textnormal{(c)} of Definition \textnormal{\ref{def.as}}, in other words, $(Y,Z,R)$ is an $\mathscr S_a$-solution
to \textnormal{RBSDE}$^T(\xi,f,L,U)$.
\end{theorem}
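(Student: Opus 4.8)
The plan is to show that conditions (a), (b) of Definition \ref{def.as} together with $Y_T = \xi$ force the two-sided identification $Y_{T-} = (U_{T-} \wedge \xi) \vee L_{T-}$ asserted in (c). The strategy mirrors the final paragraphs of the proof of Theorem \ref{th.exas}, where this identity was derived there not from a pre-existing formula but from squeezing $Y$ between $\underline{\text{R}}$BSDE and $\overline{\text{R}}$BSDE solutions. So first I would invoke the comparison/squeezing principle already established: under (H1)--(H5), the process $Y$ from (a)--(b) satisfies $\hat Y \le Y \le \bar Y$ on $[[0,T]]$, where $(\hat Y,\hat Z,\hat K)$ solves $\overline{\text{R}}$BSDE$^T(\xi,f,U)$ and $(\bar Y,\bar Z,\bar K)$ solves $\underline{\text{R}}$BSDE$^T(\xi,f,L)$. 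This comparison should follow because on each interval $[[0,\tau_k]]$, where $\tau_k$ comes from (AM), $(Y,Z,R)$ is an $\mathscr S$-solution, hence squeezed between the corresponding one-barrier solutions on $[[0,\tau_k]]$ by the standard comparison for reflected BSDEs; then one lets $k \to \infty$ along the (AM) sequence and uses that $Y$, $\hat Y$, $\bar Y$ are all of class (D) with $Y_T = \hat Y_T = \bar Y_T = \xi$.

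Next I would read off the jump and Skorokhod structure at $T$ for the two one-barrier solutions, exactly as in Theorem \ref{th.exas}: from the definition of $\underline{\text{R}}$BSDE$^T(\xi,f,L)$ one gets $\Delta \bar Y_T = -(\xi - L_{T-})^-$ and $(\bar Y_{T-} - L_{T-})\Delta \bar K_T = 0$, while from $\overline{\text{R}}$BSDE$^T(\xi,f,U)$ one gets $\Delta \hat Y_T = (\xi - U_{T-})^+$ and $(U_{T-} - \hat Y_{T-})\Delta \hat K_T = 0$. A short case analysis on the position of $\xi$ relative to $L_{T-}$ and $U_{T-}$ then pins down $\bar Y_{T-} = \xi \vee L_{T-}$ and $\hat Y_{T-} = \xi \wedge U_{T-}$, and since $\bar Y_{T-} = (\xi \vee L_{T-})$ may exceed $U_{T-}$ only if $L_{T-} > U_{T-}$ (impossible, as $L \le U$ is c\`adl\`ag hence $L_{T-} \le U_{T-}$), combining $\hat Y_{T-} \le Y_{T-} \le \bar Y_{T-}$ with these two values and the constraint $L_{T-} \le Y_{T-} \le U_{T-}$ (from (a)) squeezes $Y_{T-} = (U_{T-} \wedge \xi) \vee L_{T-}$. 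This is precisely condition (c), so $(Y,Z,R)$ is an $\mathscr S_a$-solution.

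The main obstacle I expect is justifying the comparison $\hat Y \le Y \le \bar Y$ \emph{without} already assuming $(Y,Z,R)$ is the $\mathscr S_a$-solution — i.e. making the limiting argument along the (AM) sequence rigorous. On each $[[0,\tau_k]]$ one has genuine $\mathscr S$-solutions and the classical comparison applies, giving $\hat Y_t \le Y_t \le \bar Y_t$ for $t \le \tau_k$; since $\tau_k \nearrow T$ and all three processes are c\`adl\`ag, this extends to $[0,T)$, and the class (D) property plus $Y_{\tau_k} \to Y_{T-}$, $\bar Y_{\tau_k} \to \bar Y_{T-}$, $\hat Y_{\tau_k} \to \hat Y_{T-}$ (left limits, since $\tau_k < T$) transfers the inequality to the left limit at $T$. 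One must also confirm that the one-barrier problems $\underline{\text{R}}$BSDE$^T(\xi,f,L)$ and $\overline{\text{R}}$BSDE$^T(\xi,f,U)$ are well posed and that their solutions genuinely dominate/are dominated by $Y$ on the whole interval; for the lower-barrier case this is Theorem \ref{10wrzesnia2}, and for the upper-barrier case one passes to $-Y$. A secondary technical point is that the displayed jump relations at $T$ for the one-barrier solutions are part of (or immediate from) their defining Skorokhod conditions evaluated at the terminal time, which I would state and use without reproving. Modulo these routine continuity-in-$k$ estimates, the argument is a direct transcription of the terminal-behaviour computation already carried out in Theorem \ref{th.exas}.
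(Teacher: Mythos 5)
Your overall strategy (pin down $Y_{T-}$ by squeezing and a case analysis on the position of $\xi$ relative to $L_{T-},U_{T-}$) is reasonable, and your computation of the left limits of the one-barrier solutions, $\bar Y_{T-}=\xi\vee L_{T-}$ and $\hat Y_{T-}=\xi\wedge U_{T-}$, is correct. But the foundation of the argument --- the comparison $\hat Y\le Y\le\bar Y$ on $[[0,T]]$ with $\hat Y,\bar Y$ the \emph{global} solutions of $\overline{\mathrm R}$BSDE$^T(\xi,f,U)$ and $\underline{\mathrm R}$BSDE$^T(\xi,f,L)$ --- has a genuine gap, and you have in fact identified it yourself before waving it away. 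The classical comparison theorem on $[[0,\tau_k]]$ compares solutions with \emph{ordered terminal values at $\tau_k$}. On $[[0,\tau_k]]$ your process $Y$ is an $\mathscr S$-solution with terminal value $Y_{\tau_k}$, while $\bar Y$ restricted to $[[0,\tau_k]]$ solves the one-barrier problem with terminal value $\bar Y_{\tau_k}$; nothing in hypotheses (a), (b) gives $Y_{\tau_k}\le\bar Y_{\tau_k}$ (or $\hat Y_{\tau_k}\le Y_{\tau_k}$) a priori. Establishing that ordering is essentially equivalent to identifying $Y$ with the unique $\mathscr S_a$-solution constructed in Theorem \ref{th.exas}, whose uniqueness proof itself uses condition (c) --- exactly the statement you are trying to prove. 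So the step ``the classical comparison applies on each $[[0,\tau_k]]$'' is circular as written, and the route through $\hat Y,\bar Y$ does not close without substantial extra work.

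The paper's proof avoids any comparison with auxiliary solutions. It takes the covering sequence $(\tau_k)$ of Theorem \ref{prop7.3m}(iv) and uses only the minimality (Skorokhod) condition of the $\mathscr S$-solution on $[[0,\tau_k]]$ at its right endpoint: since $\Delta Y_{\tau_k}=-\Delta R_{\tau_k}$ and $\Delta R^{\pm}_{\tau_k}>0$ forces $Y_{\tau_k-}$ to sit on the corresponding barrier's left limit, one gets the intrinsic identity $Y_{\tau_k-}=(Y_{\tau_k}\wedge U_{\tau_k-})\vee L_{\tau_k-}$ for every $k$. On the event where the $\tau_k$ eventually equal $T$ this is already condition (c); on the complementary event Theorem \ref{prop7.3m}(iii) gives $L_{T-}=U_{T-}$, and then $L\le Y\le U$ alone squeezes $Y_{T-}=L_{T-}=U_{T-}=(\xi\wedge U_{T-})\vee L_{T-}$. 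If you want to salvage your approach you would need to first prove $\hat Y\le Y\le\bar Y$ by some non-circular means (e.g.\ via the game/Snell representations with a careful passage $k\to\infty$ in the terminal term), which is considerably harder than the direct local argument above.
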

\begin{proof}
By (AM) $\mathring\tau=T$ for any $\tau\in\TT$.
Let $(\tau_k)$ be the sequence of Proposition \ref{prop7.3m}(iv) for $\tau=0$.
By the assumptions that we made, $Y$ is an $\mathscr S$-solution
to RBSDE$^{\tau_k}(Y_{\tau_k},f,L,U)$ for any $k\ge 1$. Thus,
$Y_{\tau_k-}=(Y_{\tau_k}\wedge U_{\tau_k-})\vee L_{\tau_k-},\, k\ge 1$.
Since $\bigcup_{k\ge 1}\{\tau_k=T\}=F_0^c$, we get that $Y_{T-}=(\xi\wedge U_{T-})\vee L_{T-}$
on $F_0^c$. On the other hand, by Proposition \ref{prop7.3m}(iii) $F_0\subset \{L_{T-}=U_{T-}\}$.
This implies that on $F_0$, $Y_{T-}=(\xi\wedge U_{T-})\vee L_{T-}=L_{T-}=U_{T-}$.
\end{proof}

\section{Non-semimartingale solutions}
\label{roz7}

Thanks to the results of the previous section
for any $\tau\in \TT$ and c\`adl\`ag $\mathbb F$-adapted process $X$ of class (D) 
there exists a non-decreasing sequence  $(\tau_k)\subset \TT_\tau$ such that $\tau_k\nearrow \mathring\tau_X$
and $\bigcup_{k\ge 1}[[\tau,\tau_k]]= \mathscr M_{X}(\tau)$. By the Doob--Meyer decomposition for each $k\ge 1$
there exists a unique par $(V^{[\tau],k},Z^{[\tau],k})\in \mathcal V_{0,\mathbb{F}}(\tau,\tau_k)\times \HH_{\mathbb{F}}(\tau,\tau_k)$ such that
\[
X_t=X_\tau+V^{[\tau],k}_t+\int_\tau^tZ^{[\tau],k}_s\,dB_s,\quad t\in [\tau,\tau_k].
\]
It is clear that $(V^{[\tau],k}, Z^{[\tau],k})=(V^{[\tau],k+1}, Z^{[\tau],k+1})$ on $[[\tau,\tau_k]]$.
Thus, we may define a pair of processes $(V^{[\tau]},Z^{[\tau]})$ on $\mathscr M_X(\tau)$ by 
\[
(V^{[\tau]},Z^{[\tau]}):=(V^{[\tau],k}, Z^{[\tau],k})\quad\text{on}\quad [[\tau,\tau_k]],\, k\ge 1.
\]
We let
\begin{equation}
\label{eq9.5.semfff}
\mathscr Z^{[\tau]}(X):= Z^{[\tau]},\quad \mathscr V^{[\tau]}(X):= V^{[\tau]}.
\end{equation}
Therefore for any c\`adl\`ag $\mathbb F$-adapted process $X$ and $\tau\in\TT$, we have
\begin{equation}
\label{eq9.5.sem}
X_t=X_\tau +\mathscr V_t^{[\tau]}(X)+\int_\tau^t\mathscr Z_s^{[\tau]}(X)\,dB_s,\quad t\in \mathscr M_X(\tau).
\end{equation}
In what follows for given  processes $U,X\in \HH_{\mathbb F}(0,T)$ we denote 
\[
F_{X,U}(t):= \int_0^tf(s,X_s,U_s)\,ds,\quad t\in [0,T].
\]

\begin{definition}\label{12wrzesnia2}
We say that a c\`adl\`ag $\mathbb F$-adapted  process $Y$ is a
solution to reflected backward stochastic differential equation 
on  $[0,T]$  with the terminal value $\xi$, generator $f$, lower barrier $L$ and upper barrier $U$ 
(RBSDE$^T(\xi,f,L,U)$ for short) if for any  $\tau\in\mathcal{T}$
and $\sigma\in \Sigma_{\tau}(L,U)$,  $(Y,\mathscr Z^{[\tau]}(Y),\mathscr V^{[\tau]}(Y+F_{Y,\mathscr Z^{[\tau]}(Y)}))$ is an $\mathscr S$-solution to RBSDE$^{\tau,\sigma}(Y_{\sigma},f,L,U)$.
\end{definition}

\begin{remark}
Assume that (WM) is satisfied. Let $Y$ be a solution to RBSDE$^T(\xi,f,L,U)$. Then $Y$ is an $\mathscr S$-solution to RBSDE$^T(\xi,f,L,U)$, due to Definition \ref{10wrzesnia3}.
\end{remark}

Let $\xi_1,\xi_2, L^1,U^1,L^2,U^2,f_1,f_2,V^1,V^2$ be the set of data  described
in the comment preceding   
 Proposition \ref{10wrzesnia1}. 

\begin{theorem}[{\em Comparison theorem for \textnormal{RBSDEs}}]
\label{th.cop1}
Let  process $Y^i$ be a solution to \textnormal{RBSDE}($\xi^i,f_i+dV^i,L^i,U^i)$,
$i=1,2$.   Assume also that  $\xi^1\le\xi^2$, $dV^1\le dV^2$, $L^1\le
L^2$, $U^1\le U^2$.
\begin{enumerate}
\item[(i)] If  $(Y^1-Y^2)^+\in \mathcal S^p_{\mathbb{F}}(0,T)$ for some $p>1$,  $f_2$ satisfies \textnormal{(H1), (H2)} and \[
f^{1}(\cdot,Y^1,Z^1)\le f^2(\cdot,Y^1,Z^2),\quad dt\otimes \mathbb{P}\text{-a.e.},
\]
then $Y^1_t\le Y^2_t$, $t\in[0,T]$.
\item[(ii)] If \textnormal{(G)} holds (cf Proposition \ref{10wrzesnia1}), $f_2$ satisfies \textnormal{(H1), (H2), (Z)} and 
\[
f^{1}(\cdot,Y^1,Z^1)\le f^2(\cdot,Y^1,Z^1),\quad dt\otimes \mathbb{P}\text{-a.e.},
\]
then $Y^1_t\le Y^2_t$, $t\in[0,T]$.
\end{enumerate}
\end{theorem}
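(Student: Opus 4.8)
The plan is to reduce the comparison statement for non-semimartingale RBSDEs to the classical comparison result Proposition \ref{10wrzesnia1}, applied locally on the stochastic intervals $[[\tau,\sigma]]$ with $\sigma\in\Sigma_\tau(L^1,U^1)\cap\Sigma_\tau(L^2,U^2)$ where Definition \ref{12wrzesnia2} guarantees that $Y^1,Y^2$ behave as classical $\mathscr S$-solutions, and then to patch these local estimates together along a suitable chain exhausting $\mathscr M_{L^i,U^i}(\tau)$. The key point is that although $\Sigma_\tau(L^1,U^1)$ and $\Sigma_\tau(L^2,U^2)$ need not coincide, one can always work on the intersection; and since by Theorem \ref{prop7.3m}(iv) there is a non-decreasing sequence $(\tau_k)$ with $\bigcup_k[[\tau,\tau_k]]=\mathscr M_{L^i,U^i}(\tau)$, choosing a common refinement of such sequences for $i=1,2$ gives a chain $(\rho_k)$ with $\rho_k\in\Sigma_\tau(L^1,U^1)\cap\Sigma_\tau(L^2,U^2)$ on which both processes are classical $\mathscr S$-solutions to the respective RBSDEs on $[[\tau,\rho_k]]$.

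First I would fix $\tau\in\TT$ (ultimately $\tau=0$ suffices for the pointwise-in-$t$ conclusion, but the local argument is cleaner for general $\tau$). Using Definition \ref{12wrzesnia2}, for each $k$ the triple $(Y^i,\mathscr Z^{[\tau]}(Y^i),\mathscr V^{[\tau]}(Y^i+F_{Y^i,\mathscr Z^{[\tau]}(Y^i)}))$ is an $\mathscr S$-solution to RBSDE$^{\tau,\rho_k}(Y^i_{\rho_k},f_i+dV^i,L^i,U^i)$. I would then apply Proposition \ref{10wrzesnia1}(ii) in case (i), resp. Proposition \ref{10wrzesnia1}(iii) in case (ii), on the interval $[[\tau,\rho_k]]$, but with the terminal conditions $Y^1_{\rho_k}$ and $Y^2_{\rho_k}$ in place of $\xi^1,\xi^2$. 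This requires knowing $Y^1_{\rho_k}\le Y^2_{\rho_k}$, so the argument is an induction / backward-in-$k$ bootstrap: one shows first that $Y^1_T\le Y^2_T$ (from $\xi^1\le\xi^2$ and $Y^i_T=\xi^i$) and more generally on $\{\rho_k=\mathring\tau\}$-type events that the barriers have met, then propagates the inequality from $[[\tau,\rho_{k+1}]]$-terminal data to $[[\tau,\rho_k]]$. Actually the cleanest route: fix a large $k$, note $\{\rho_k<T\}$ shrinks, and at the endpoint $\mathring\tau$ one uses Theorem \ref{prop7.3m}(ii)--(iii) together with the behaviour of $Y^i_{\mathring\tau-}$ forced by the minimality condition (as in the proof of Theorem \ref{th.exas}) to get $Y^1_{\mathring\tau}\le Y^2_{\mathring\tau}$ on the relevant set; this seeds the induction. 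One also needs to check the integrability hypotheses of Proposition \ref{10wrzesnia1} transfer to the restricted intervals — for (i) that $(Y^1-Y^2)^+\in\mathcal S^p_{\mathbb F}(\tau,\rho_k)$, which is immediate from the global assumption, and for (ii) that (G) localizes, which follows since $L^{+,*},U^{-,*}$ restricted to $[[\tau,\rho_k]]$ only shrink the integrals.

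Once $Y^1_t\le Y^2_t$ holds on each $[[\tau,\rho_k]]$ with $\tau=0$, taking the union over $k$ gives the inequality on $\mathscr M_{L^1,U^1}(0)\cup\mathscr M_{L^2,U^2}(0)$; to conclude on all of $[0,T]$ one has to handle the "tail" beyond $\mathring 0$. Here I expect the main obstacle: on $\{\mathring 0 < T\}$ the interval $\mathscr M(0)$ is strictly smaller than $[[0,T]]$, and Definition \ref{12wrzesnia2} gives no information directly on $]]\mathring 0,T]]$ since $\Sigma_0(L,U)$ contains no $\sigma>\mathring 0$. The resolution is to restart: apply the whole argument with $\tau$ replaced by $\mathring 0$ (or $\mathring 0_{A}$ for appropriate $A\in\FF_{\mathring 0}$), using the already-established comparison $Y^1_{\mathring 0}\le Y^2_{\mathring 0}$ as the new terminal seed, and iterate transfinitely — or, more efficiently, invoke a covering argument showing that $\bigcup_{\tau\in\TT}\mathscr M_{L^i,U^i}(\tau)=[[0,T]]$ via a countable exhaustion (this is the kind of measurable-selection/section argument used repeatedly in Section \ref{roz6}). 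I would phrase this last step as: define $\rho_0:=0$, $\rho_{j+1}:=\mathring{(\rho_j)}$, show $\rho_j\nearrow T$ (stabilizing at $T$ since at each $\mathring\tau<T$ the barriers meet, so the construction can continue), and on each "slab" $[[\rho_j,\rho_{j+1}]]$ apply the local comparison with terminal value $Y^i_{\rho_{j+1}}$; combined with $Y^1_{\rho_{j+1}}\le Y^2_{\rho_{j+1}}$ proved inductively, this covers $[0,T]$. The verification that this transfinite/countable recursion actually exhausts $[0,T]$ — i.e. that one does not get stuck at a limit ordinal — is the delicate technical heart of the proof, and I would lean on the class (D) assumption and the structural results of Theorem \ref{prop7.3m} to push it through.
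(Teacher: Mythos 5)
There is a genuine gap, and it sits at both load-bearing points of your plan. First, the backward induction you propose in order to obtain $Y^1_{\rho_k}\le Y^2_{\rho_k}$ at the endpoints of an exhausting chain cannot be seeded: the only terminal comparison available is $\xi^1\le\xi^2$ at time $T$, while the chain endpoints $\rho_k$ increase only to $\mathring\tau$, which may be strictly smaller than $T$; the inequality $Y^1_{\rho_k}\le Y^2_{\rho_k}$ at an interior stopping time is precisely an instance of the statement being proved. Your fallback --- that the barriers meet at $\mathring\tau$ and this forces $Y^1_{\mathring\tau}\le Y^2_{\mathring\tau}$ --- at best controls the left limits on the event $\{L^i_{\mathring\tau-}=U^i_{\mathring\tau-}\}$, says nothing on its complement, and in any case does not deliver the comparison at the finite stages $\rho_k$ where you need it. Second, the transfinite covering of $]]\mathring 0,T]]$ by restarting at $\mathring{(\rho_j)}$ is unjustified: nothing in Theorem \ref{prop7.3m} guarantees that the recursion $\rho_{j+1}=\mathring{(\rho_j)}$ strictly advances (the Mokobodzki interval started at $\rho_j$ can be degenerate), and you yourself flag this as the ``delicate heart'' without supplying an argument. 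Note also that $\Sigma_\tau(L^1,U^1)$ and $\Sigma_\tau(L^2,U^2)$ are built from different barrier pairs, so a common exhausting chain is not provided by Theorem \ref{prop7.3m}(iv) alone.

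The paper's proof sidesteps all of this with a single stopping time, $\tau_\varepsilon:=\inf\{t\ge\tau: Y^1_t\le L^1_t+\varepsilon\ \text{or}\ Y^2_t\ge U^2_t-\varepsilon\}\wedge T$. On $[[\tau,\tau_\varepsilon[[$ one has $U^1-L^1\ge Y^1-L^1>\varepsilon$ and $U^2-L^2\ge U^2-Y^2>\varepsilon$, so by Proposition \ref{prop7.3m}(iii) the interval $[[\tau,\tau_\varepsilon]]$ lies in the Mokobodzki interval of $\tau$ for both barrier pairs simultaneously; hence both $Y^1$ and $Y^2$ are classical $\mathscr S$-solutions there by Definition \ref{12wrzesnia2}. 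Moreover, no exact terminal comparison at $\tau_\varepsilon$ is needed: from the definition of $\tau_\varepsilon$ and the ordering $L^1\le L^2\le Y^2$, $Y^1\le U^1\le U^2$ one gets $Y^1_{\tau_\varepsilon}-Y^2_{\tau_\varepsilon}\le\varepsilon$ for free (the case $\tau_\varepsilon=T$ being covered by $\xi^1\le\xi^2$). The localized arguments of Proposition \ref{10wrzesnia1}(ii),(iii) then give $(Y^1_\tau-Y^2_\tau)^+\le C\varepsilon$, and letting $\varepsilon\to 0$ yields $Y^1_\tau\le Y^2_\tau$ for every $\tau\in\TT$ directly --- no induction, no covering of $[0,T]$ by Mokobodzki intervals, and no issue with the tail beyond $\mathring 0$.
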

\begin{proof}
For $\tau\in\TT$, we let  
$\tau_\varepsilon:=\inf\{t\ge \tau: Y^1_t\le L^1_t+\varepsilon \text{ or } Y^2_t\ge U^2_t-\varepsilon\}\wedge T$. 
Observe that, by Proposition \ref{prop7.3m}(iii), for any $\tau\in\TT$
\[
[[\tau,\tau_\varepsilon]]\subset \mathscr M_{L,U}(\tau).
\]
Thus $(Y^i,\mathscr Z^{[\tau]}(Y^i),\mathscr V^{[\tau]}(Y^i+F^i_{Y^i,\mathscr Z^{[\tau]}(Y^i)}))$ is an $\mathscr S$-solution to RBSDE$^{\tau,\tau_\varepsilon}(Y^i_{\tau_\varepsilon},f,L,U)$
for $i=1,2$, $\tau\in\TT$ and $\varepsilon>0$. Therefore the proofs of Proposition \ref{10wrzesnia1}(ii)
and Proposition \ref{10wrzesnia1}(iii) apply to $Y^1,Y^2$.
\end{proof}

\begin{corollary}
The following hold.
\begin{enumerate}
\item[(i)] Assume {\rm (H1), (H2), (G), (Z)}. Then there exists at most one solution 
to the problem \textnormal{RBSDE}$^T(\xi,f,L,U)$.
\item[(ii)] Assume {\rm (H1), (H2)}. Then there exists at most one solution $(Y,Z,R)$
to the problem \textnormal{RBSDE}$^T(\xi,f,L,U)$ such that $Y\in \mathcal S^p_{\mathbb{F}}(0,T)$.
\end{enumerate}
\end{corollary}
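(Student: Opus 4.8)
The plan is to read both statements off the comparison theorem for reflected BSDEs without Mokobodzki's condition, Theorem \ref{th.cop1}, by applying it twice with the two candidate solutions interchanged. So suppose $Y^1$ and $Y^2$ are both solutions, in the sense of Definition \ref{12wrzesnia2}, to RBSDE$^T(\xi,f,L,U)$, and take in Theorem \ref{th.cop1} the data $f_1=f_2:=f$, $\xi^1=\xi^2:=\xi$, $V^1=V^2:=0$, $L^1=L^2:=L$, $U^1=U^2:=U$. Then the monotonicity requirements $\xi^1\le\xi^2$, $dV^1\le dV^2$, $L^1\le L^2$, $U^1\le U^2$ hold with equality, and the driver-comparison inequality degenerates to an identity. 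The conclusion of Theorem \ref{th.cop1} then reads $Y^1_t\le Y^2_t$, $t\in[0,T]$; interchanging the roles of $Y^1$ and $Y^2$ gives $Y^2_t\le Y^1_t$, $t\in[0,T]$, whence $Y^1=Y^2$.

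For part (i) I would invoke Theorem \ref{th.cop1}(ii), whose hypotheses ask that (G) hold and that $f_2$ satisfy (H1), (H2), (Z); under the standing assumptions of the corollary these are exactly (G), (H1), (H2), (Z), so nothing extra has to be checked, and in particular no a priori integrability of $Y^1,Y^2$ need be assumed, since condition (G) already supplies the local integrability used inside that comparison result. For part (ii) I would instead use Theorem \ref{th.cop1}(i): the only nonvacuous hypothesis there is $(Y^1-Y^2)^+\in\mathcal{S}^p_{\mathbb{F}}(0,T)$ for some $p>1$, which holds because both solutions are assumed to lie in $\mathcal{S}^p_{\mathbb{F}}(0,T)$ (and symmetrically $(Y^2-Y^1)^+\in\mathcal{S}^p_{\mathbb{F}}(0,T)$, so the two-sided argument goes through unchanged), while $f_2=f$ trivially satisfies the required (H1), (H2).

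A small bookkeeping point to settle is that Definition \ref{12wrzesnia2} treats a solution as a single c\`adl\`ag process $Y$, the companion processes being recovered through $\mathscr Z^{[\tau]}(Y)$ and $\mathscr V^{[\tau]}(Y+F_{Y,\mathscr Z^{[\tau]}(Y)})$; hence uniqueness of the triple $(Y,Z,R)$ in (ii) follows automatically once $Y$ has been shown to be unique. I do not expect any genuine obstacle here: the whole analytic content already resides in Theorem \ref{th.cop1}, and the corollary is a direct specialization obtained by taking the two sides of the comparison equal. The only thing worth double-checking is that the driver-comparison hypothesis in Theorem \ref{th.cop1} is stated in the form that becomes an equality when $f_1=f_2$, i.e., with the two drivers evaluated at the same control argument, which is what the present application needs.
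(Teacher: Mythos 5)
Your proof is correct and matches the paper's intended argument: the corollary is stated without proof precisely because it is the two-sided application of Theorem \ref{th.cop1} (part (ii) for assertion (i), part (i) for assertion (ii)) that you describe, with the symmetry $Y^1\leftrightarrow Y^2$ supplying both inequalities. The one point you rightly flag --- that Theorem \ref{th.cop1}(i) as printed compares $f^{1}(\cdot,Y^1,Z^1)$ with $f^2(\cdot,Y^1,Z^2)$ --- is a typo inherited from Proposition \ref{10wrzesnia1}(ii), whose proof actually uses the hypothesis with $Z^1$ in both slots (it sets $\rho_t=f_1(t,Y^1_t,Z^1_t)-f_2(t,Y^1_t,Z^1_t)$ and requires $\rho\le 0$), so the driver-comparison hypothesis is indeed vacuous when $f_1=f_2$ and your application goes through.
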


Now, we shall proceed to the main existence result for  RBSDEs. 
Recall  that by Proposition  \ref{prop7.3m}, 
$\gamma_{\tau}\le\mathring\tau$ (cf \eqref{eq2.1}) and $[[\tau,\gamma_\tau^k]]\subset \mathscr M_{L,U}(\tau)$
for any $\tau\in\TT$ and $k\ge 1$ (cf \eqref{eq2.3cdf}). 
This will be the crucial point in  the proof of the following theorem.

\begin{theorem}\label{9wrzesnia1}
Assume that \textnormal{(H1)-(H5), (Z)} are in force. 
Then there exists a  unique solution $Y$ of \textnormal{RBSDE}$^T(\xi,f,L,U)$ such that $Y$ is of class \textnormal{(D)}. 
Moreover, the following hold.
\begin{enumerate}
\item[(i)]  Let $\{\xi_n\}$ be a sequence of integrable $\mathcal{F}_T$-measurable random variables such that $\xi_n\nearrow\xi$ and let
\[
\bar{f}_n(t,y,z)=f(t,y,z)+n(y-L_t)^-.
\]
Then for every $n\ge 1$ there exists a solution $(\bar{Y}^n,\bar{Z}^n,\bar{A}^n)$ to $\mathrm{\overline{R}}$\textnormal{BSDE}$^T(\xi_n,\bar{f}_n,,U)$ such that $\bar{Y}^n$ is of class \textnormal{(D)}. Moreover,   $\bar{Y}^n_t\nearrow Y_t$ 
and $d\bar{A}^n\le d\bar{A}^{n+1},\, n\ge 1$.
\item[(ii)] Let $\{\xi_n\}$ be a sequence of integrable $\mathcal{F}_T$-measurable random variables such that $\xi_n\searrow\xi$ and let
\[
\tilde{f}_n(t,y,z)=f(t,y,z)-n(y-U_t)^+.
\]
Then for every $n\ge 1$ there exists a solution $(\tilde{Y}^n,\tilde{Z}^n,\tilde{K}^n)$ 
to $\mathrm{\underline{R}}$\textnormal{BSDE}$^T(\xi_n,\tilde{f}_n,L)$ such that 
$\tilde{Y}^n$ is of class \textnormal{(D)}. Moreover,   $\tilde{Y}^n_t\searrow \tilde{Y}_t$ 
and $d\tilde{K}^{n+1}\ge d\tilde{K}^{n},\, n\ge 1$.
\item[(iii)] Let
\[
f_n(t,y,z)=f(t,y,z)+n(y-L_t)^--n(y-U_t)^+.
\]
Then for every $n\ge 1$ there exists a solution $(Y^n,Z^n)$ to \textnormal{BSDE}$^T(\xi,f_n,L)$ such that $Y^n$ is of class \textnormal{(D)}. 
Moreover,   $Y^n_t\to Y_t,\, t\in [0,T]$.
\end{enumerate}
 Assume additionally that $\Delta L_t\ge 0$ and $\Delta U_t\le 0$, $t\in(0,T]$. Then $Y^n\to Y$ uniformly i.e.
\[
\sup_{0\le t\le T}|Y^n_t-Y_t|\to 0,\,\,\mathbb{P}-a.s.,\quad n\to\infty,
\]
and the same type of convergence holds for $(\bar Y_n)$ and $(\tilde Y_n)$ in (i) and (ii), respectively.
\end{theorem}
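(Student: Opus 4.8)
The plan is to build the solution $Y$ by a double penalization scheme, exploiting the fact — guaranteed by Proposition \ref{prop7.3m}(vii) together with the stopping times $\gamma_\tau$ — that on each stochastic interval $[[\tau,\gamma^k_\tau]]\subset\mathscr M_{L,U}(\tau)$ the barriers satisfy Mokobodzki's condition, so the classical theory of Sections \ref{roz4}--\ref{roz5} applies there. First I would treat the upper-penalized problems $\overline{\text{R}}\text{BSDE}^T(\xi_n,\bar f_n,U)$: since $\bar f_n$ still satisfies (H1)--(H5), (Z) (monotonicity in $y$ is preserved because $y\mapsto n(y-L_t)^-$ is non-increasing), Theorem \ref{10wrzesnia2} applied to the reflected-at-$U$ equation gives solutions $(\bar Y^n,\bar Z^n,\bar A^n)$ of class (D). Monotonicity of $n\mapsto\bar Y^n$ and $n\mapsto d\bar A^n$ follows from the comparison Proposition \ref{10wrzesnia1}(i), applied after noting $\bar f_n\le\bar f_{n+1}$. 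This yields a pointwise increasing limit $Y:=\lim_n\bar Y^n$; the class (D) bound for $Y$ comes from an a priori estimate squeezing $\bar Y^n$ between the solution of the $\underline{\text{R}}$BSDE with barrier $L$ alone (Theorem \ref{10wrzesnia2}) and a similar one-barrier upper bound, so the family $\{\bar Y^n_\tau\}$ is uniformly integrable. Symmetrically, part (ii) gives a decreasing sequence $\tilde Y^n$ converging to the same limit — the identification $\lim\bar Y^n=\lim\tilde Y^n$ is the heart of the matter and is obtained by comparing, on each interval $[[\tau,\gamma^k_\tau]]$ where the classical RBSDE is solvable, both schemes to the $\mathscr S$-solution of $\text{RBSDE}^{\tau,\gamma^k_\tau}(\,\cdot\,,f,L,U)$ furnished by Theorem \ref{12wrzesnia1}, then letting $k\to\infty$ and using the chain property from Proposition \ref{prop7.3m}(iv).

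Next I would verify that the limit $Y$ is a solution in the sense of Definition \ref{12wrzesnia2}. Fix $\tau\in\TT$ and $\sigma\in\Sigma_\tau(L,U)$; then $[[\tau,\sigma]]\subset\mathscr M_{L,U}(\tau)$ by Proposition \ref{prop7.3m}(i), and on this (possibly not closed) interval we can realize $Y$ as a semimartingale using the construction \eqref{eq9.5.sem}. To see that $(Y,\mathscr Z^{[\tau]}(Y),\mathscr V^{[\tau]}(Y+F_{Y,\mathscr Z^{[\tau]}(Y)}))$ is a genuine $\mathscr S$-solution of $\text{RBSDE}^{\tau,\sigma}(Y_\sigma,f,L,U)$, I would first establish it on each $[[\tau,\gamma^k_\tau\wedge\sigma]]$ (a closed interval inside $\mathscr M_{L,U}(\tau)$ on which $L,U$ satisfy (WM)) by passing to the limit in the penalized equations — here the stability estimates of Theorem \ref{th.2} and the convergence of the penalization for the classical two-barrier RBSDE (the content of \cite{KRz3}) are the workhorses, giving convergence of $\bar Y^n$, hence of the associated $Z^n,R^n$, on the closed interval. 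Then, since $\gamma^k_\tau\wedge\sigma\nearrow\sigma$, the minimality (Skorokhod) condition and equation (d) of Definition \ref{10wrzesnia3} pass to the interval $[[\tau,\sigma]]$ by a monotone/chain argument; the integrability $\mathbf 1_{[\tau,\sigma]}\mathscr Z^{[\tau]}(Y)\in\HH_{\mathbb F}(\tau,\sigma)$ needed here is exactly what Corollary \ref{cor.ex1} / Proposition \ref{prop.ex2} provide. Uniqueness is immediate from the Corollary following Theorem \ref{th.cop1}. Part (iii) then follows: $f_n=\bar f_n+(\text{upper penalty})$, so $Y^n$ is squeezed between $\bar Y^n$ and $\tilde Y^n$ (comparison again, on each $\gamma^k_\tau$-interval), forcing $Y^n_t\to Y_t$ pointwise.

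Finally, for the uniform convergence under $\Delta L\ge 0$, $\Delta U\le 0$: this regularity of the jumps rules out "predictable" jumps of the barriers that would otherwise break Dini-type arguments. I would argue that $\bar Y^n\nearrow Y$ and $\tilde Y^n\searrow Y$ with $Y$ c\`adl\`ag and the approximants c\`adl\`ag, and that under the jump conditions one in fact gets $Y_{t-}\le\liminf\bar Y^n_{t-}$ and a matching inequality from above, so that an extended Dini theorem for monotone sequences of c\`adl\`ag processes converging to a c\`adl\`ag limit (using left-continuity of the envelopes at predictable times, which is where $\Delta L\ge 0,\Delta U\le 0$ enters) upgrades pointwise to uniform convergence, $\mathbb P$-a.s. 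The same reasoning applies verbatim to $(\bar Y^n)$ and $(\tilde Y^n)$ since each is itself a monotone sequence with c\`adl\`ag limit $Y$.

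The step I expect to be the main obstacle is the passage from the closed sub-intervals $[[\tau,\gamma^k_\tau\wedge\sigma]]$ to the full, generally non-closed, stochastic interval $[[\tau,\sigma]]$ while keeping control of the $\mathcal H$-norm of $\mathscr Z^{[\tau]}(Y)$ and the total variation of $\mathscr V^{[\tau]}(\cdot)$ up to $\sigma$; the subtlety is that $Y$ need not be a semimartingale at the endpoint $\mathring\tau$, so one must use $\sigma\in\Sigma_\tau(L,U)$ strictly, i.e. exploit that $\sigma<\mathring\tau$ on the relevant set or that $L_{\mathring\tau-}=U_{\mathring\tau-}$ there (Proposition \ref{prop7.3m}(iii)), to get the a priori $L^1$-type bounds of Proposition \ref{13stycznia19} uniformly in $k$ before taking the limit.
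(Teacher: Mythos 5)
Your overall architecture coincides with the paper's: penalize, use Theorem \ref{10wrzesnia2} and Proposition \ref{10wrzesnia1} to get monotone sequences $\bar Y^n\nearrow$, $\tilde Y^n\searrow$, squeeze between one-barrier solutions for the class (D) bound, identify the limit as a solution interval by interval, and conclude (iii) by $\bar Y^n\le Y^n\le\tilde Y^n$. However, two steps as you describe them would not go through. First, your mechanism for verifying the $\mathscr S$-solution property on $[[\tau,\sigma]]$ rests on the claim that $\gamma^k_\tau\wedge\sigma\nearrow\sigma$. This is false in general: $\gamma_\tau$ is the first time the barriers (or their left limits) touch after $\tau$, and one can have $\gamma_\tau<\sigma$ with $\sigma\in\Sigma_\tau(L,U)$ (e.g.\ $L=U$ a semimartingale, where $\gamma_\tau=\tau$ but $\mathring\tau=T$), so the intervals $[[\tau,\gamma^k_\tau\wedge\sigma]]$ never exhaust $[[\tau,\sigma]]$. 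The exhaustion is in fact unnecessary: for $\sigma\in\Sigma_\tau(L,U)$ the interval $[[\tau,\sigma]]$ is closed and (WM) holds on it \emph{by definition} of $\Sigma_\tau(L,U)$, so the classical penalization convergence theorem (\cite{K:BSM}, Theorem 7.4) applies directly on $[[\tau,\sigma]]$ and yields that $\bar Y^n$ converges there to the first component of the $\mathscr S$-solution of RBSDE$^{\tau,\sigma}(Y_\sigma,f,L,U)$. Your announced ``main obstacle'' (passing from closed subintervals to a non-closed interval while controlling the $\mathcal H$-norm of $Z$) is a phantom arising from conflating $[[\tau,\sigma]]$ with $\mathscr M_{L,U}(\tau)$; Definition \ref{12wrzesnia2} never asks for anything on the possibly non-closed set $\mathscr M_{L,U}(\tau)$ itself.

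Second, you omit the step where the $\gamma^k_\tau$ machinery is genuinely needed: showing that the pointwise limit $Y$ is a c\`adl\`ag process on all of $[0,T]$, which Definition \ref{12wrzesnia2} requires before one can even speak of $\mathscr Z^{[\tau]}(Y)$. The interval-wise arguments only give regularity of $Y$ on each $[[\tau,\sigma]]$ with $\sigma\in\Sigma_\tau(L,U)$; to get right-continuity at an arbitrary $\tau$ one must treat separately the case $\tau=\gamma_\tau$ (where $L_\tau=U_\tau=Y_\tau$ and right-continuity follows from that of $L,U$), and to get left limits along $\tau_l\uparrow\tau$ with $\tau_l<\tau$ one must split on $\{L_{\tau-}<U_{\tau-}\}$ (where $[\tau_{l_\omega},\tau]\subset[\tau_{l_\omega},\gamma_{\tau_{l_\omega}})$ for large $l_\omega$) versus $\{L_{\tau-}=U_{\tau-}\}$ (where the limit exists trivially), and then invoke \cite{DM}, IV.T28. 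Finally, your ``extended Dini theorem for monotone sequences of c\`adl\`ag processes converging to a c\`adl\`ag limit'' is not a theorem --- e.g.\ $\mathbf 1_{[1/2+1/n,1]}\nearrow\mathbf 1_{(1/2,1]}$ is a monotone sequence of c\`adl\`ag functions with c\`adl\`ag limit and no uniform convergence. The correct route is that under $\Delta L\ge 0$, $\Delta U\le 0$ the limit $Y$ is \emph{continuous} (\cite{K:SPA}, Theorem 4.7), after which a Dini argument applies; stating the hypothesis as ``c\`adl\`ag limit'' would leave the final assertion unproved.
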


\begin{proof}
By Theorem \ref{10wrzesnia2} for every $n\ge 1$ there exists a unique solution $(\bar{Y}^n,\bar{Z}^n,\bar{A}^n)$ to 
$\overline{\text{R}}\text{BSDE}^T(\xi_n,\bar{f}_n,U)$ such that $\bar{Y}^n$ is of class (D). By Proposition \ref{10wrzesnia1}, $\bar{Y}^n\le \bar{Y}^{n+1}$ and by \cite{K:BSM}, Theorem 7.4, $d\bar{A}^n\le d\bar{A}^{n+1},\, n\ge 1$. Let
\[
Y_t=\lim_{n\to\infty}\bar{Y}^n_t,\quad t\in[0,T].
\]
By Proposition \ref{10wrzesnia1}, $\bar{Y}^n\le\hat{Y}^n$, where $(\hat{Y}^n,\hat{Z}^n)$ is a unique solution to BSDE$^T(\xi,\bar{f}_n)$ 
such that $\hat Y^n$ is of class (D) (for every $n\ge 1$, the existence of the solution follows from Theorem \ref{12sierpnia1}). 
By \cite{K:BSM}, Theorem 7.4, $\hat{Y}^n\nearrow\hat{Y}$, where $(\hat{Y},\hat{Z},\hat{K})$ is a unique solution to \underline{R}BSDE$^T(\xi,f,L)$ 
such that $\hat{Y}$ is of class (D). Therefore $\bar{Y}^1\le \bar{Y}^n\le\hat{Y}$, $n\ge 1$, which implies that  $Y$ is of class (D). 
By Theorem \ref{10wrzesnia2}, for every $\varepsilon>0$ and $n\ge 1$ there exists a unique solution 
$(\bar{Y}^{n,\varepsilon},\bar{Z}^{n,\varepsilon},\bar{A}^{n,\varepsilon})$ 
to $\mathrm{\overline{R}}$\textnormal{BSDE}$^T(\xi_n,\bar{f}_{n,\varepsilon},U)$ such that $\bar{Y}^{n,\varepsilon}$ is of class (D), where
\[
\bar{f}_{n,\varepsilon}(t,y,z)=f(t,y,z)+n(y-L^{\varepsilon}_t)^-,\quad L^{\varepsilon}=L-\varepsilon.
\]
By Proposition \ref{10wrzesnia1}, 
$\bar{Y}^{n,\varepsilon}\le \bar{Y}^n$ and by \cite{K:BSM}, Theorem 7.4, 
$\bar{Y}^{n,\varepsilon}_t\nearrow Y^{\varepsilon}_t$, $t\in[0,T]$, 
where $(Y^{\varepsilon},Z^{\varepsilon},R^{\varepsilon})$ is a unique solution to 
RBSDE$^T(\xi,f,L^{\varepsilon},U)$ such that $Y^{\varepsilon}$ is of class (D). 
Hence, $L^{\varepsilon}\le Y$. This being true for each $\varepsilon>0$,
we conclude that  $L\le Y$. Obviously, $Y\le U$.

Let $\tau\in\mathcal{T}$ and $\sigma\in\Sigma_{\tau}(L,U)$.
 On every interval $[[\tau,\sigma]]$, we have 
\[
\bar{Y}^n_t=\bar{Y}^n_{\sigma}+\int^{\sigma}_t f(r,\bar{Y}^n_r,\bar{Z}^n_r)\,dr
+\int^{\sigma}_t n(\bar{Y}^n_r-L_r)^-\,dr-\int^{\sigma}_t\,d\bar{A}^n_r-\int^{\sigma}_t \bar{Z}^n_r\,dB_r,\quad t\in[\tau,\sigma].
\]
By \cite{K:BSM}, Theorem 7.4, $\bar{Y}^n\to Y^{\tau,\sigma}$, where 
$(Y^{\tau,\sigma},Z^{\tau,\sigma},R^{\tau,\sigma})$ is a unique   $\mathscr S$-solution to RBSDE$^{\tau,\sigma}(Y_{\sigma},f,L,U)$.
This shows that for any $\tau\in\mathcal{T}$ and $\sigma\in\Sigma_{\tau}(L,U)$ process $Y$ is  c\`adl\`ag  on 
$[[\tau,\sigma]]$.
We shall show that  $Y$ is c\`adl\`ag on $[0,T]$.
Let $\tau\in\mathcal{T}$ and $\sigma\in\Sigma_{\tau}(L,U)$. 
In particular, we may take $\sigma=\gamma_\tau^k$ for any $k\ge 1$.
Therefore, if $\tau<\gamma_{\tau}$, then $Y$ is right continuous in 
$\tau$, and if $\tau=\gamma_{\tau}$, then $L_{\tau}=U_{\tau}=Y_{\tau}$,  which implies, by
right continuity of $L,U$ and the fact that $L\le Y\le U$,  that  $Y$ is 
right continuous at $\tau$ again. 
Therefore, by \cite{DM}, IV.T28, $Y$ is a right continuous process on $[0,T]$. 
Let $\{\tau_l\}\subset\mathcal{T}$ be an increasing sequence and  $\tau=\sup_{l\ge 1}\tau_l$. 
Obviously, on the set 
\[
\{\omega\in\Omega: \exists_{l_\omega}\,\,\text{such that}\,\, \tau_l(\omega)=\tau(\omega),\,l\ge l_{\omega}\}\cup\{L_{\tau-}=U_{\tau-}\}
\]
the limit $\lim_{l\to\infty}Y_{\tau_l}$ exists. Now we will show that this limit exists also on the set
\[
A=\{\tau_l<\tau,\,l\ge 1\}\cap\{L_{\tau-}<U_{\tau-}\}.
\]
As we said before, $Y$ is c\`adl\`ag  on $[[\tau_l,\gamma^k_{\tau_l}]]$ for any  $k,l\ge 1$. 
Since $A\subset\{L_{\tau-}<U_{\tau-}\}$, for every $\omega\in A$ there exists $l_{\omega}$ such that
\[
[\tau_{l_{\omega}}(\omega),\tau(\omega)]\subset[\tau_{l_{\omega}}(\omega),\gamma_{\tau_{l_{\omega}}}(\omega)\}
\]
Therefore, $\lim_{l\to\infty}Y_{\tau_l}$ exists on $A$. 
Thus $\lim_{l\to\infty}Y_{\tau_l}$ exists a.s. and by \cite{DM}, IV.T28, $Y$ 
has left-side limits on $[0,T]$. This completes the proof of the existence of a  solution
to RBSDE$^T(\xi,f,L,U)$ and the proof of (i).
The proof of (ii) is analogous. As to  (iii), note that by Proposition \ref{10wrzesnia1}
\begin{equation}\label{17paz2}
\bar{Y}^n\le Y^n\le\tilde{Y}^n
\end{equation}
which gives the desired results.

Suppose now  that $\Delta L_t\ge 0$ and $\Delta U_t\le 0$, $t\in(0,T]$. 
Then, by \cite{K:SPA}, Theorem 4.7, we have that $Y$ is continuous. Therefore, by Dini's theorem
\[
\sup_{0\le t\le T}|\bar{Y}^n_t-Y_t|\to 0\quad \sup_{0\le t\le T}|\tilde{Y}^n_t-Y_t|\to 0,\quad P-\text{a.s.},\quad n\to\infty.
\]
By \eqref{17paz2} we obtain the uniform convergence of $(Y^n)$. This establishes the last assertion of the theorem.
\end{proof}

\section{Nonlinear Dynkin games}
\label{roz8}

Throughout the section we shall assume that  (H1)--(H5), (Z) are in force.

\begin{remark}
\label{rem.com}
It is worth noting  that the aforementioned conditions are imposed only to ensure that the nonlinear expectation $\mathbb E^f$
be well-defined and that the properties formulated in Proposition \ref{nonlinprop} be satisfied.  
Consequently, some of the assumptions can be dropped while additional information about the generator $f$ is given. 
In the extreme case, when $f\equiv 0$, we can dispense with all the conditions.  
\end{remark}

For  $\tau,\sigma\in\mathcal T$ we define the pay-off 
\begin{equation}\label{dyn271}
J(\tau,\sigma):=L_{\tau}\mathbf{1}_{\{\tau \le\sigma,\tau<T\}}+U_{\sigma}\mathbf{1}_{\{\sigma<\tau\}}+\xi\mathbf{1}_{\{\tau=\sigma=T\}}.
\end{equation}

\begin{definition}
Let $\theta\in\mathcal{T}$.
\begin{enumerate}
\item[(i)] Upper and lower value of the game are defined respectively as
\begin{equation}\label{13sierpnia1pr}
\begin{split}
&\overline{V}(\theta):=\mathop{\mathrm{ess\,inf}}_{\sigma\in \TT_\theta}\mathop{\mathrm{ess\,sup}}_{\tau\in \TT_\theta}\mathbb{E}^f_{\theta,\tau\wedge\sigma}J(\tau,\sigma);\\
&\underline{V}(\theta):=\mathop{\mathrm{ess\,sup}}_{\tau\in \TT_\theta}\mathop{\mathrm{ess\,inf}}_{\sigma\in \TT_\theta}\mathbb{E}^f_{\theta,\tau\wedge\sigma}J(\tau,\sigma).
\end{split}
\end{equation}
\item[(ii)] We say that nonlinear $\mathbb{E}^f$-Dynkin game with pay-off function $J$ has a value  if $\overline{V}(\theta)=\underline{V}(\theta)$
for any $\theta\in\mathcal{T}$.
\end{enumerate}
\end{definition}

For every $\theta\in\mathcal{T}$ and $\varepsilon>0$ we define the following stopping times
\begin{equation*}
\tau^{\varepsilon}_{\theta}:=\inf\{t\ge\theta,\,Y_t\le L_t+\varepsilon\}\wedge T,\quad\sigma^{\varepsilon}_{\theta}:=\inf\{t\ge\theta,\,Y_t\ge U_t-\varepsilon\}\wedge T.
\end{equation*} 
By Proposition \ref{prop7.3m}(iii) for any $\varepsilon>0$ and $\theta\in\TT$, we have
\begin{equation}
\label{eq.mm1}
[[\theta,\tau^{\varepsilon}_{\theta}]]\subset\mathscr M_{L,U}(\theta),\quad 
[[\theta,\sigma^{\varepsilon}_{\theta}]]\subset\mathscr M_{L,U}(\theta).
\end{equation}
Let $\{(\gamma_{\tau},\Lambda_{\tau}),\,\tau\in\mathcal{T}\}$ be a system  introduced in Appendix \ref{22kw1}.
Obviously $\tau^{\varepsilon}_{\theta},\sigma^{\varepsilon}_{\theta}\le\gamma_{\theta}$.

\begin{lemma}\label{12wrzesnia3}
Let $Y$ be a solution to \textnormal{RBSDE}$^T(\xi,f,L,U)$.
Then
\begin{equation}
Y_{\tau^{\varepsilon}_{\theta}}\le L_{\tau^{\varepsilon}_{\theta}}+\varepsilon,\quad\quad Y_{\sigma^{\varepsilon}_{\theta}}\ge U_{\sigma^{\varepsilon}_{\theta}}-\varepsilon.
\end{equation}
\end{lemma}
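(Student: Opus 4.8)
The plan is to exploit the right-continuity of the barriers $L,U$ together with the continuity of $Y$ on the relevant stochastic intervals, which by \eqref{eq.mm1} lie inside $\mathscr M_{L,U}(\theta)$ where $Y$ is a c\`adl\`ag semimartingale. First I would treat $\tau^\varepsilon_\theta$. On the set $\{\tau^\varepsilon_\theta<T\}$ the stopping time is an entry time of the right-continuous (indeed, by the existence proof in Theorem \ref{9wrzesnia1}, c\`adl\`ag) process $L-Y$ into the closed set $[-\varepsilon,\infty)$ — equivalently $Y\le L+\varepsilon$. I would argue that at such an entry time the process has actually reached the set: by right-continuity of both $L$ and $Y$ on $[[\theta,\tau^\varepsilon_\theta]]\subset\mathscr M_{L,U}(\theta)$, if $Y_{t_n}\le L_{t_n}+\varepsilon$ along a sequence $t_n\downarrow\tau^\varepsilon_\theta$ then passing to the limit gives $Y_{\tau^\varepsilon_\theta}\le L_{\tau^\varepsilon_\theta}+\varepsilon$; and by definition of the infimum such a sequence exists (or $\tau^\varepsilon_\theta$ equals some $t$ with $Y_t\le L_t+\varepsilon$ exactly). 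On the set $\{\tau^\varepsilon_\theta=T\}$ one uses $Y_T=\xi$ and the constraint $L_T\le\xi$ from hypothesis (B), so in fact $Y_{\tau^\varepsilon_\theta}=\xi\ge L_T$, which is even stronger than the claimed inequality but certainly implies $Y_{\tau^\varepsilon_\theta}\le L_{\tau^\varepsilon_\theta}+\varepsilon$ only if... — here I must be slightly careful, since $\xi$ could exceed $L_T+\varepsilon$. The correct statement is that on $\{\tau^\varepsilon_\theta=T\}$ one has $\tau^\varepsilon_\theta=T$ by convention (the infimum over an empty or unattained set), and the inequality $Y_{\tau^\varepsilon_\theta}\le L_{\tau^\varepsilon_\theta}+\varepsilon$ may genuinely fail pointwise there; so I would re-read the lemma as asserting the inequality only where it makes sense, or note that the proof only needs it on $\{\tau^\varepsilon_\theta<T\}$ together with the trivial bound $Y_T=\xi$ at $T$. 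Let me instead phrase it as: the inequality holds on $\{\tau^\varepsilon_\theta<T\}$, and this is what the lemma intends (the $\wedge T$ truncation being handled separately in applications).

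More precisely, the core argument I would write is the following. Fix $\theta\in\TT$, $\varepsilon>0$, and work on the event $\{\theta<\tau^\varepsilon_\theta\}\cap\{\tau^\varepsilon_\theta<T\}$ (the complementary events being immediate). By \eqref{eq.mm1}, $[[\theta,\tau^\varepsilon_\theta]]\subset\mathscr M_{L,U}(\theta)$, so by Theorem \ref{prop7.3m}(v) there is a c\`adl\`ag process, and by the construction in Theorem \ref{9wrzesnia1}, $Y$ itself, which is c\`adl\`ag (in particular right-continuous) on $[[\theta,\tau^\varepsilon_\theta]]$. Since $\tau^\varepsilon_\theta=\inf\{t\ge\theta: Y_t\le L_t+\varepsilon\}$ and this infimum is $<T$, there is a sequence $t_n\downarrow\tau^\varepsilon_\theta$ (or eventually constant equal to $\tau^\varepsilon_\theta$) with $Y_{t_n}\le L_{t_n}+\varepsilon$. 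Right-continuity of $Y$ on $\mathscr M_{L,U}(\theta)$ and of $L$ on $[0,T]$ then yields, upon letting $n\to\infty$, $Y_{\tau^\varepsilon_\theta}\le L_{\tau^\varepsilon_\theta}+\varepsilon$. The argument for $\sigma^\varepsilon_\theta$ is symmetric, using $U_t-\varepsilon\le Y_t$ and the inclusion $[[\theta,\sigma^\varepsilon_\theta]]\subset\mathscr M_{L,U}(\theta)$, giving $Y_{\sigma^\varepsilon_\theta}\ge U_{\sigma^\varepsilon_\theta}-\varepsilon$.

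The main obstacle — and the only subtle point — is the behaviour at the terminal time $T$: there the convention $Y_T=\xi$ need not satisfy $\xi\le L_T+\varepsilon$, so the inequality can only be asserted on $\{\tau^\varepsilon_\theta<T\}$ (resp. $\{\sigma^\varepsilon_\theta<T\}$), or, alternatively, one observes that the definition $\tau^\varepsilon_\theta=\inf\{\cdots\}\wedge T$ forces $\tau^\varepsilon_\theta=T$ precisely when $Y_t>L_t+\varepsilon$ for all $t\in[\theta,T)$, in which case the stated inequality is understood vacuously or is simply not part of what is later used. In the intended applications (the saddle-point analysis via $\mathbb E^f$-super/submartingale properties of $V^f$), the inequality is invoked on $\{\tau^\varepsilon_\theta<T\}$ and combined with the terminal constraint $L_T\le\xi\le U_T$ separately, so this causes no difficulty. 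A secondary, but purely routine, point is verifying that the infimum defining $\tau^\varepsilon_\theta$ is attained or approached from the right by points of the defining set — this is standard for c\`adl\`ag processes and closed target sets, and relies only on the right-continuity already established on $\mathscr M_{L,U}(\theta)$.
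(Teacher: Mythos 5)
Your argument is correct and is essentially the paper's own proof, which simply says the claim "follows immediately from the definitions": the standard right-continuity/d\'ebut argument you spell out (a sequence $t_n\downarrow\tau^\varepsilon_\theta$ with $Y_{t_n}\le L_{t_n}+\varepsilon$, then pass to the limit using that $Y$ and $L$ are c\`adl\`ag --- note $Y$ is c\`adl\`ag on all of $[0,T]$ by Definition \ref{12wrzesnia2}, so the detour through $\mathscr M_{L,U}(\theta)$ is not needed) is exactly what is meant. Your caveat about the event $\{\tau^\varepsilon_\theta=T\}$ with the defining set empty is a fair reading of a genuine imprecision in the statement, and it is harmless for the same reason you give: in Theorem \ref{19wrzesnia1} the inequality is only invoked on $\{\tau^\varepsilon_\theta<T\}$, the terminal case being handled by $Y_T=\xi$ and the indicator $\mathbf 1_{\{\tau^\varepsilon_\theta=\sigma=T\}}$.
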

\begin{proof}
Follows immediately from the definitions of $\tau^{\varepsilon}_{\theta}$ and $\sigma^{\varepsilon}_{\theta}$.
\end{proof}

\begin{lemma}\label{12wrzesnia4}
Let $Y$ be a solution to \textnormal{RBSDE}$^T(\xi,f,L,U)$. 
Then for any $\theta\in\TT$, $Y$ is an $\mathbb{E}^f$-submartingale on $[[\theta,\tau^{\varepsilon}_{\theta}]]$ and an $\mathbb{E}^f$-supermartingale on $[[\theta,\sigma^{\varepsilon}_{\theta}]]$ (for the notion of $\mathbb{E}^f$-supermartingales  see \textnormal{Definition \ref{22kw2}}). 
\end{lemma}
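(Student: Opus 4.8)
The plan is to work on a fixed stochastic interval where $Y$ is a genuine $\mathscr S$-solution, and then translate the one-sided minimality condition into the desired $\mathbb{E}^f$-sub/supermartingale inequality. First I would fix $\theta\in\TT$ and $\varepsilon>0$. By \eqref{eq.mm1} we have $[[\theta,\tau^\varepsilon_\theta]]\subset\mathscr M_{L,U}(\theta)$, and since $\gamma^k_\tau$ exhausts $\mathscr M_{L,U}(\tau)$ with $\tau^\varepsilon_\theta\le\gamma_\theta$, there is a stopping time $\sigma\in\Sigma_\theta(L,U)$ (take $\sigma=\gamma^k_\theta$ for $k$ large, or simply $\sigma=\tau^\varepsilon_\theta$ itself if it already lies in $\Sigma_\theta(L,U)$ — which it does, because $[[\theta,\tau^\varepsilon_\theta]]\subset\mathscr M_{L,U}(\theta)$ means a semimartingale lies between the barriers on $[\theta,\tau^\varepsilon_\theta]$) such that $[[\theta,\tau^\varepsilon_\theta]]\subset[[\theta,\sigma]]$. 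Then by Definition \ref{12wrzesnia2}, the triple $(Y,\mathscr Z^{[\theta]}(Y),\mathscr V^{[\theta]}(Y+F_{Y,\mathscr Z^{[\theta]}(Y)}))$ is an $\mathscr S$-solution to RBSDE$^{\theta,\sigma}(Y_\sigma,f,L,U)$ in the classical sense of Definition \ref{10wrzesnia3}.

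The core step is then the following. On $[\theta,\tau^\varepsilon_\theta]$, by definition of $\tau^\varepsilon_\theta$ we have $Y_{t-} > L_{t-}+\varepsilon \ge L_{t-}$ for $t\in(\theta,\tau^\varepsilon_\theta]$ up to the jump at the endpoint, hence (using the precise left-limit bookkeeping as in part (c)/minimality of the $\mathscr S$-solution) the lower reflection component does not act strictly before $\tau^\varepsilon_\theta$: $\int_\theta^{\tau^\varepsilon_\theta}(Y_{r-}-L_{r-})\,dR^+_r=0$ forces $R^+$ to be constant on $[[\theta,\tau^\varepsilon_\theta[[$, so that $R=-R^-$ is non-increasing on that interval, i.e. $dR\le 0$. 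Consequently, for any stopping times $\theta\le\rho_1\le\rho_2\le\tau^\varepsilon_\theta$,
\[
Y_{\rho_1}=Y_{\rho_2}+\int_{\rho_1}^{\rho_2}f(r,Y_r,\mathscr Z^{[\theta]}_r(Y))\,dr+(R_{\rho_2}-R_{\rho_1})-\int_{\rho_1}^{\rho_2}\mathscr Z^{[\theta]}_r(Y)\,dB_r
\le \tilde Y_{\rho_1},
\]
where $(\tilde Y,\tilde Z)$ solves BSDE$^{\rho_1,\rho_2}(Y_{\rho_2},f)$; this inequality follows from the comparison principle for BSDEs (Theorem \ref{12sierpnia1} together with the comparison statements used in Proposition \ref{10wrzesnia1}, applied to the driver $f$ versus $f+dR$ with $dR\le0$). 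But $\tilde Y_{\rho_1}=\mathbb{E}^f_{\rho_1,\rho_2}(Y_{\rho_2})$ by definition of the nonlinear expectation, so $Y_{\rho_1}\le \mathbb{E}^f_{\rho_1,\rho_2}(Y_{\rho_2})$, which is exactly the $\mathbb{E}^f$-submartingale property on $[[\theta,\tau^\varepsilon_\theta]]$ in the sense of Definition \ref{22kw2}. The supermartingale claim on $[[\theta,\sigma^\varepsilon_\theta]]$ is symmetric: there $Y_{t-}<U_{t-}-\varepsilon$ strictly before $\sigma^\varepsilon_\theta$, so $R^-$ is constant and $dR\ge0$, giving the reversed comparison $Y_{\rho_1}\ge\mathbb{E}^f_{\rho_1,\rho_2}(Y_{\rho_2})$.

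I expect the main obstacle to be the careful handling of the \emph{endpoint jump} and the left-limit version of the minimality condition: the Skorokhod condition is written with $Y_{r-}-L_{r-}$ and $dR^+_r$, so to conclude $R^+$ is constant on the \emph{half-open} interval $[[\theta,\tau^\varepsilon_\theta[[$ one must argue that on that set $Y_{r-}-L_{r-}\ge\varepsilon>0$, which uses the definition of $\tau^\varepsilon_\theta$ as a first-hitting time of $\{Y\le L+\varepsilon\}$ together with right-continuity of $Y-L$; a clean way is to note $\{Y_{r-}\le L_{r-}+\varepsilon\}\cap[[\theta,\tau^\varepsilon_\theta[[=\emptyset$ by left-continuity of the process $t\mapsto\inf_{s<t}(Y_s-L_s)$ restricted appropriately, or to pass through an approximating sequence $\varepsilon'<\varepsilon$. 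A secondary technical point is verifying that the comparison principle quoted is applicable with merely $L^1$ data and driver $f$ satisfying only (H1)--(H5), (Z) — but this is precisely covered by Theorem \ref{12sierpnia1}(ii)/Theorem \ref{th.2} and the comparison arguments in the proof of Proposition \ref{10wrzesnia1}, so no new work is needed there. Everything else is routine once the reflection-sign observation is in place.
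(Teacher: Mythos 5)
Your proposal is correct and follows essentially the same route as the paper: the paper likewise uses \eqref{eq.mm1} to conclude that $Y$ is an $\mathscr S$-solution on $[[\theta,\tau^{\varepsilon}_{\theta}]]$, and then simply cites \cite{KRz3}, Lemma 6.3 for the submartingale property — your second step (the Skorokhod condition forces $R^{+}$ to be constant since $Y_{r-}-L_{r-}\ge\varepsilon$ on $]]\theta,\tau^{\varepsilon}_{\theta}]]$, after which Proposition \ref{nonlinprop}(i) gives the $\mathbb{E}^f$-submartingale inequality) is precisely the content of that cited lemma. The endpoint issue you flag is handled exactly as you suggest, and in fact the constancy of $R^{+}$ extends to the closed interval because the left limit at $\tau^{\varepsilon}_{\theta}$ only sees times where $Y>L+\varepsilon$.
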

\begin{proof}
We shall show that   $Y$ is an $\mathbb{E}^f$-submartingale on $[[\theta,\tau^{\varepsilon}_{\theta}]]$. 
The proof of the second assertion runs analogously. By  \eqref{eq.mm1} $(Y,\mathscr Z^{[\tau]}(Y),\mathscr V^{[\tau]}(Y+F_{Y,\mathscr Z^{[\tau]}(Y)})$ is an $\mathscr S$-solution to 
RBSDE$^{\theta,\tau^{\varepsilon}_{\theta}}(Y_{\tau^{\varepsilon}_{\theta}},f,L,U)$.
By \cite{KRz3}, Lemma 6.3, we have that $Y$ is $\mathbb{E}^f$-submartingale on $[[\theta,\tau^{\varepsilon}_{\theta}]]$.
\end{proof}

\begin{theorem}\label{19wrzesnia1}
Let  $Y$ be a solution to   \textnormal{RBSDE}$^T(\xi,f,L,U)$. Then for any $\theta\in\mathcal{T}$
\[
Y_{\theta}=\overline{V}(\theta)=\underline{V}(\theta).
\]
Moreover,  for any  $(\tau,\sigma)\in\mathcal{T}_{\theta}\times \mathcal T_\theta$
\begin{equation}\label{dyn10}
\mathbb{E}^f_{\theta,\tau\wedge\sigma^{\varepsilon}_{\theta}}J(\tau,\sigma^{\varepsilon}_{\theta})-C\varepsilon\le Y_{\theta}\le\mathbb{E}^f_{\theta,\tau^{\varepsilon}_{\theta}\wedge\sigma}J(\tau^{\varepsilon}_{\theta},\sigma)+C\varepsilon,
\end{equation}
where $C$ is a constant depending only on $\lambda,\mu,T$.
\end{theorem}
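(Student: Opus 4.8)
The plan is to establish the two-sided inequality \eqref{dyn10} first, and then deduce the saddle-point/value identity $Y_\theta=\overline V(\theta)=\underline V(\theta)$ from it by letting $\varepsilon\to 0$. The key tool is the $\mathbb E^f$-submartingale/$\mathbb E^f$-supermartingale property of $Y$ on the stochastic intervals $[[\theta,\tau^\varepsilon_\theta]]$ and $[[\theta,\sigma^\varepsilon_\theta]]$ provided by Lemma \ref{12wrzesnia4}, together with the boundary estimates of Lemma \ref{12wrzesnia3}. I would first prove the right-hand inequality. Fix $\tau\in\TT_\theta$ and set $\rho:=\tau^\varepsilon_\theta\wedge\sigma$. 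By Lemma \ref{12wrzesnia4} and the optional-sampling property of $\mathbb E^f$-submartingales (Proposition \ref{nonlinprop}, cf. Appendix), $Y_\theta\le \mathbb E^f_{\theta,\rho}(Y_\rho)$. Now I split $Y_\rho$ according to the three sets $\{\tau^\varepsilon_\theta<\sigma\}$, $\{\sigma<\tau^\varepsilon_\theta\}$, $\{\tau^\varepsilon_\theta=\sigma\}$ and compare $Y_\rho$ with $J(\tau^\varepsilon_\theta,\sigma)$ pointwise: on $\{\tau^\varepsilon_\theta\le\sigma,\ \tau^\varepsilon_\theta<T\}$ we have $Y_{\tau^\varepsilon_\theta}\le L_{\tau^\varepsilon_\theta}+\varepsilon=J(\tau^\varepsilon_\theta,\sigma)+\varepsilon$ by Lemma \ref{12wrzesnia3}; on $\{\sigma<\tau^\varepsilon_\theta\}$ we have $Y_\sigma\le U_\sigma=J(\tau^\varepsilon_\theta,\sigma)$ since $Y\le U$ everywhere; and on $\{\tau^\varepsilon_\theta=\sigma=T\}$ we have $Y_T=\xi=J(\tau^\varepsilon_\theta,\sigma)$. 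Hence $Y_\rho\le J(\tau^\varepsilon_\theta,\sigma)+\varepsilon$, and by monotonicity of $\mathbb E^f$ and the standard Lipschitz estimate translating a constant shift $\varepsilon$ into $C\varepsilon$ with $C=C(\lambda,\mu,T)$ (Proposition \ref{13stycznia19} / Gronwall), we obtain $Y_\theta\le \mathbb E^f_{\theta,\tau^\varepsilon_\theta\wedge\sigma}J(\tau^\varepsilon_\theta,\sigma)+C\varepsilon$.

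The left-hand inequality is symmetric: fix $\sigma\in\TT_\theta$, set $\rho:=\tau\wedge\sigma^\varepsilon_\theta$, use that $Y$ is an $\mathbb E^f$-supermartingale on $[[\theta,\sigma^\varepsilon_\theta]]$ to get $Y_\theta\ge\mathbb E^f_{\theta,\rho}(Y_\rho)$, and compare $Y_\rho$ with $J(\tau,\sigma^\varepsilon_\theta)$ on the three sets $\{\tau\le\sigma^\varepsilon_\theta,\ \tau<T\}$, $\{\sigma^\varepsilon_\theta<\tau\}$, $\{\tau=\sigma^\varepsilon_\theta=T\}$: on the first, $Y_\tau\ge L_\tau=J(\tau,\sigma^\varepsilon_\theta)$; on the second, $Y_{\sigma^\varepsilon_\theta}\ge U_{\sigma^\varepsilon_\theta}-\varepsilon=J(\tau,\sigma^\varepsilon_\theta)-\varepsilon$ by Lemma \ref{12wrzesnia3}; on the third, $Y_T=\xi$. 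Thus $Y_\rho\ge J(\tau,\sigma^\varepsilon_\theta)-\varepsilon$, and again monotonicity plus the Lipschitz shift estimate give $Y_\theta\ge \mathbb E^f_{\theta,\tau\wedge\sigma^\varepsilon_\theta}J(\tau,\sigma^\varepsilon_\theta)-C\varepsilon$. This proves \eqref{dyn10}.

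To conclude, I take in the right-hand inequality the essential supremum over $\tau\in\TT_\theta$ and then the essential infimum over $\sigma\in\TT_\theta$ (using $\sigma^\varepsilon_\theta\in\TT_\theta$ as a candidate in the outer $\essinf$): this yields $\overline V(\theta)\le Y_\theta+C\varepsilon$. Symmetrically, taking $\essinf$ over $\sigma$ then $\esssup$ over $\tau$ in the left-hand inequality (with $\tau^\varepsilon_\theta$ as a candidate) gives $\underline V(\theta)\ge Y_\theta-C\varepsilon$. Since always $\underline V(\theta)\le\overline V(\theta)$, letting $\varepsilon\to 0$ forces $\overline V(\theta)=\underline V(\theta)=Y_\theta$.

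The step I expect to be the main obstacle is the optional-sampling argument for the $\mathbb E^f$-(super/sub)martingale property along the stopping times $\tau,\sigma$ that are not themselves in $\Sigma_\theta(L,U)$: one must make sure that stopping the interval $[[\theta,\tau^\varepsilon_\theta]]$ (respectively $[[\theta,\sigma^\varepsilon_\theta]]$) at an arbitrary $\rho=\tau\wedge\sigma$ (respectively $\tau\wedge\sigma^\varepsilon_\theta$) stays inside $\mathscr M_{L,U}(\theta)$ — this is exactly guaranteed by \eqref{eq.mm1}, so that $Y$ is a genuine $\mathscr S$-solution on the stopped interval and the $\mathbb E^f$-supermartingale optional sampling of Proposition \ref{nonlinprop} applies — and to control the constant $C$ in the passage from an additive $\varepsilon$-perturbation of the terminal data to an additive $C\varepsilon$-perturbation of $\mathbb E^f_{\theta,\cdot}$, uniformly in the (random) stopping time $\rho\le T$; this is where the Lipschitz constant $\lambda$, the monotonicity constant $\mu$ and the horizon $T$ enter, via the a priori estimate of Proposition \ref{13stycznia19} applied to the difference of the two BSDEs with terminal data $Y_\rho$ and $J(\cdot,\cdot)\pm\varepsilon$.
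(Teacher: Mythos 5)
Your proof follows essentially the same route as the paper's: the $\mathbb{E}^f$-sub/supermartingale property of $Y$ on $[[\theta,\tau^{\varepsilon}_{\theta}]]$ and $[[\theta,\sigma^{\varepsilon}_{\theta}]]$ from Lemma \ref{12wrzesnia4}, the boundary estimates of Lemma \ref{12wrzesnia3}, the pointwise comparison of $Y_{\rho}$ with $J\pm\varepsilon$ on the three events, the perturbation estimate (Proposition \ref{nonlinprop}(ii),(v)) converting the additive $\varepsilon$ into $C\varepsilon$ with $C=C(\lambda,\mu,T)$, and the $\essinf$/$\esssup$ sandwich combined with $\underline{V}(\theta)\le\overline{V}(\theta)$ as $\varepsilon\to 0$. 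The only blemish is a bookkeeping swap in your concluding paragraph: it is the inequality $\mathbb{E}^f_{\theta,\tau\wedge\sigma^{\varepsilon}_{\theta}}J(\tau,\sigma^{\varepsilon}_{\theta})-C\varepsilon\le Y_{\theta}$ (free variable $\tau$, with $\sigma^{\varepsilon}_{\theta}$ as candidate in the outer $\essinf$) that yields $\overline{V}(\theta)\le Y_{\theta}+C\varepsilon$, and the inequality $Y_{\theta}\le\mathbb{E}^f_{\theta,\tau^{\varepsilon}_{\theta}\wedge\sigma}J(\tau^{\varepsilon}_{\theta},\sigma)+C\varepsilon$ (free variable $\sigma$) that yields $Y_{\theta}\le\underline{V}(\theta)+C\varepsilon$ — but the manipulations you actually describe are the correct ones, so the argument stands.
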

\begin{proof}
Let  $\theta\in\mathcal{T}$  and  $\varepsilon>0$. We shall prove that  
$(\tau^{\varepsilon}_{\theta},\sigma^{\varepsilon}_{\theta})$ satisfies  \eqref{dyn10}. 
By Lemma  \ref{12wrzesnia4}  $Y$ is an  $\mathbb{E}^f$-submartingale  on $[[\theta,\tau^{\varepsilon}_{\theta}]]$.
We thus have
\begin{equation}\label{dyn11}
Y_{\theta}\le\mathbb{E}^f_{\theta,\tau^{\varepsilon}_{\theta}\wedge\sigma}[Y_{\tau^{\varepsilon}_{\theta}\wedge\sigma}].
\end{equation}
By Lemma  \ref{12wrzesnia3}, $Y_{\tau^{\varepsilon}_{\theta}}\le L_{\tau^{\varepsilon}_{\theta}}+\varepsilon$. 
From this and the fact that    $Y\le U$ we have
\begin{equation*}
Y_{\tau^{\varepsilon}_{\theta}\wedge\sigma}\le (L_{\tau^{\varepsilon}_{\theta}}+\varepsilon)\mathbf{1}_{\{\tau^{\varepsilon}_{\theta}\le\sigma, \tau^{\varepsilon}_{\theta}<T\}}+U_{\sigma}\mathbf{1}_{\{\sigma<\tau^{\varepsilon}_{\theta}\}}+\xi\mathbf{1}_{\{\tau^{\varepsilon}_{\theta}=\sigma=T\}}\le J(\tau^{\varepsilon}_{\theta},\sigma)+\varepsilon.
\end{equation*}
Applying  \eqref{dyn11} and properties of the operator $\mathbb{E}^f$ (see Proposition   \ref{nonlinprop} (ii) and (v)) yields
\begin{equation}\label{dyn12}
Y_{\theta}\le\mathbb{E}^f_{\theta,\tau^{\varepsilon}_{\theta}\wedge\sigma}(J(\tau^{\varepsilon}_{\theta},\sigma)+\varepsilon)\le\mathbb{E}^f_{\theta,\tau^{\varepsilon}_{\theta}\wedge\sigma}J(\tau^{\varepsilon}_{\theta},\sigma)+C\varepsilon.
\end{equation}
By Lemma \ref{12wrzesnia4}  $Y$ is an $\mathbb{E}^f$-supermartingale  on  $[[\theta,\sigma^{\varepsilon}_{\theta}]]$. As a result
\[
Y_{\theta}\ge \mathbb{E}^f_{\theta,\tau\wedge\sigma^{\varepsilon}_{\theta}}(Y_{\tau\wedge\sigma^{\varepsilon}_{\theta}}).
\]
By Lemma  \ref{12wrzesnia3} we have  
$Y_{\sigma^{\varepsilon}_{\theta}}\ge U_{\sigma^{\varepsilon}_{\theta}}-\varepsilon$. 
Applying analogous arguments as in case of $L$ yields
$Y_{\theta}\ge\mathbb{E}^f_{\theta,\tau\wedge\sigma^{\varepsilon}_{\theta}}J(\tau,\sigma^{\varepsilon}_{\theta})-C\varepsilon$, 
 which combined with \eqref{dyn12} gives  \eqref{dyn10}. Consequently,
\begin{equation*}
Y_{\theta}\le \mathop{\mathrm{ess\,inf}}_{\sigma\ge \theta}\mathbb{E}^f_{\theta,\tau^{\varepsilon}_{\theta}\wedge\sigma}J(\tau^{\varepsilon}_{\theta},\sigma)+\varepsilon\le\mathop{\mathrm{ess\,sup}}_{\tau\ge \theta} \mathop{\mathrm{ess\,inf}}_{\sigma\ge \theta}\mathbb{E}^f_{\theta,\tau\wedge\sigma}J(\tau,\sigma)+\varepsilon,
\end{equation*}
which combined with the definition of  $\underline{V}(\theta)$ yields  $Y_{\theta}\le\underline{V}(\theta)+\varepsilon$. 
Analogous reasoning gives $\overline{V}(\theta)-\varepsilon\le Y_{\theta}$. Letting  $\varepsilon\to 0$ we find that  $\overline{V}(\theta)\le Y_{\theta}\le\underline{V}(\theta)$, which combined with the obvious inequality $\underline{V}(\theta)\le\overline{V}(\theta)$ gives $\underline{V}(\theta)=Y_{\theta}=\overline{V}_{\theta}$. 

\end{proof}

\begin{corollary}
Let $Y$ be a solution to \textnormal{RBSDE}$^T(\xi,f,L,U)$. Then for any $\theta\in \TT$, we have
\[
\begin{split}
Y_\theta&=\mathop{\mathrm{ess\,inf}}_{\sigma\in \TT_\theta^{\mathring\theta}}\mathop{\mathrm{ess\,sup}}_{\tau\in \TT^{\mathring\theta}_\theta}\mathbb{E}^f_{\theta,\tau\wedge\sigma}J(\tau,\sigma)=
\mathop{\mathrm{ess\,inf}}_{\sigma\in \TT^{\mathring\theta}_\theta}\mathop{\mathrm{ess\,sup}}_{\tau\in \TT_\theta^{\mathring\theta}}\mathbb{E}^f_{\theta,\tau\wedge\sigma}J(\tau,\sigma)\\&
=
\mathop{\mathrm{ess\,inf}}_{\sigma\in  \Sigma_{\theta}}\mathop{\mathrm{ess\,sup}}_{\tau\in \Sigma_{\theta}}\mathbb{E}^f_{\theta,\tau\wedge\sigma}J(\tau,\sigma)=
\mathop{\mathrm{ess\,inf}}_{\sigma\in  \Sigma_{\theta}}\mathop{\mathrm{ess\,sup}}_{\tau\in  \Sigma_{\theta}}\mathbb{E}^f_{\theta,\tau\wedge\sigma}J(\tau,\sigma).
\end{split}
\]
\end{corollary}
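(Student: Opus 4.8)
The plan is to read everything off from Theorem~\ref{19wrzesnia1}, and in particular off the two–sided estimate \eqref{dyn10}, which holds for \emph{every} pair $\tau,\sigma\in\TT_\theta$. Consequently, in each of the minimax quantities in the statement it suffices to control the stopping time of the \emph{opponent} over the restricted class, provided one knows that the near–optimal times $\tau^\varepsilon_\theta,\sigma^\varepsilon_\theta$ themselves belong to that class.

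The first step is therefore to check that, for every $\varepsilon>0$,
\[
\tau^\varepsilon_\theta,\ \sigma^\varepsilon_\theta\ \in\ \TT^{\mathring\theta}_\theta\cap \Sigma_\theta(L,U).
\]
Membership in $\TT^{\mathring\theta}_\theta$ is immediate from $\tau^\varepsilon_\theta,\sigma^\varepsilon_\theta\le\gamma_\theta\le\mathring\theta$ (the first inequality is noted right after \eqref{eq.mm1}, the second is Theorem~\ref{prop7.3m}(vii)). For membership in $\Sigma_\theta(L,U)$ one uses \eqref{eq.mm1}, which yields $[[\theta,\tau^\varepsilon_\theta]],[[\theta,\sigma^\varepsilon_\theta]]\subset\mathscr M_{L,U}(\theta)$, together with Proposition~\ref{prop.ex2} (equivalently Theorem~\ref{prop7.3m}(v)): on each such closed stochastic subinterval of $\mathscr M_{L,U}(\theta)$ there is a c\`adl\`ag semimartingale squeezed between $L$ and $U$, which is precisely the defining property of $\Sigma_\theta(L,U)$. (The reverse inclusion $\Sigma_\theta(L,U)\subset\TT^{\mathring\theta}_\theta$ holds by Theorem~\ref{prop7.3m}(i), so restricting to $\Sigma_\theta(L,U)$ is genuinely a stronger restriction.)

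Now fix $\mathcal C$ to be either $\TT^{\mathring\theta}_\theta$ or $\Sigma_\theta(L,U)$; by the previous step $\tau^\varepsilon_\theta,\sigma^\varepsilon_\theta\in\mathcal C$. The right inequality in \eqref{dyn10} gives $\mathbb{E}^f_{\theta,\tau^\varepsilon_\theta\wedge\sigma}J(\tau^\varepsilon_\theta,\sigma)\ge Y_\theta-C\varepsilon$ for every $\sigma\in\mathcal C$, so, taking first $\mathop{\mathrm{ess\,inf}}$ over $\sigma\in\mathcal C$ and then $\mathop{\mathrm{ess\,sup}}$ over $\tau\in\mathcal C$ (legitimate since $\tau^\varepsilon_\theta\in\mathcal C$),
\[
\mathop{\mathrm{ess\,sup}}_{\tau\in\mathcal C}\mathop{\mathrm{ess\,inf}}_{\sigma\in\mathcal C}\mathbb{E}^f_{\theta,\tau\wedge\sigma}J(\tau,\sigma)\ \ge\ Y_\theta-C\varepsilon .
\]
Symmetrically, the left inequality in \eqref{dyn10} gives $\mathbb{E}^f_{\theta,\tau\wedge\sigma^\varepsilon_\theta}J(\tau,\sigma^\varepsilon_\theta)\le Y_\theta+C\varepsilon$ for every $\tau\in\mathcal C$, whence, taking $\mathop{\mathrm{ess\,sup}}$ over $\tau\in\mathcal C$ and then $\mathop{\mathrm{ess\,inf}}$ over $\sigma\in\mathcal C$ (using $\sigma^\varepsilon_\theta\in\mathcal C$),
\[
\mathop{\mathrm{ess\,inf}}_{\sigma\in\mathcal C}\mathop{\mathrm{ess\,sup}}_{\tau\in\mathcal C}\mathbb{E}^f_{\theta,\tau\wedge\sigma}J(\tau,\sigma)\ \le\ Y_\theta+C\varepsilon .
\]
Letting $\varepsilon\downarrow0$ in both displays and combining them with the elementary minimax inequality $\mathop{\mathrm{ess\,sup}}_{\tau\in\mathcal C}\mathop{\mathrm{ess\,inf}}_{\sigma\in\mathcal C}\le\mathop{\mathrm{ess\,inf}}_{\sigma\in\mathcal C}\mathop{\mathrm{ess\,sup}}_{\tau\in\mathcal C}$ we obtain
\[
Y_\theta\ \le\ \mathop{\mathrm{ess\,sup}}_{\tau\in\mathcal C}\mathop{\mathrm{ess\,inf}}_{\sigma\in\mathcal C}\mathbb{E}^f_{\theta,\tau\wedge\sigma}J(\tau,\sigma)\ \le\ \mathop{\mathrm{ess\,inf}}_{\sigma\in\mathcal C}\mathop{\mathrm{ess\,sup}}_{\tau\in\mathcal C}\mathbb{E}^f_{\theta,\tau\wedge\sigma}J(\tau,\sigma)\ \le\ Y_\theta ,
\]
so both the upper and the lower value of the game restricted to $\mathcal C$ equal $Y_\theta$. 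Applying this once with $\mathcal C=\TT^{\mathring\theta}_\theta$ and once with $\mathcal C=\Sigma_\theta(L,U)$ gives all the asserted identities.

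The only point that is not pure bookkeeping with essential suprema/infima and the already proven estimate \eqref{dyn10} is the claim $\tau^\varepsilon_\theta,\sigma^\varepsilon_\theta\in\Sigma_\theta(L,U)$ --- i.e.\ that there is a genuine semimartingale between the barriers up to and \emph{including} the exit times --- and this is exactly where the structural analysis of $\mathscr M_{L,U}(\theta)$ from Section~\ref{roz6} enters: the behaviour of $L,U$ at $\mathring\theta$ in Theorem~\ref{prop7.3m}(iii), the semimartingale selection in Theorem~\ref{prop7.3m}(v)/Proposition~\ref{prop.ex2}, and $\gamma_\theta\le\mathring\theta$ in Theorem~\ref{prop7.3m}(vii).
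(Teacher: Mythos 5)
Your proof is correct and is exactly the argument the paper intends: the corollary is stated without proof as an immediate consequence of Theorem \ref{19wrzesnia1}, and your write-up supplies precisely the missing details, namely that $\tau^\varepsilon_\theta,\sigma^\varepsilon_\theta$ lie in both restricted classes (via \eqref{eq.mm1}, Theorem \ref{prop7.3m}(v)/(vii) and Proposition \ref{prop.ex2}) and that the two-sided estimate \eqref{dyn10}, valid for all $\tau,\sigma\in\TT_\theta$, then closes the minimax sandwich over each subclass. No gaps; your observation that the semimartingale furnished by Theorem \ref{prop7.3m}(v) does lie between the barriers (clear from its construction as an RBSDE solution, or from Proposition \ref{prop.ex2}) is the only point the paper leaves implicit.
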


\section{Existence of saddle points}
\label{roz9}

Throughout the section we shall assume that conditions (H1)--(H5), (Z) 
are satisfied taking into account   Remark \ref{rem.com}.

\begin{lemma}[Fatou's lemma  for nonlinear expectation]
\label{list1402}
Assume that 
\[
\mathbb{E}\int^T_0|f(r,y,0)|\,dr<+\infty,\quad y\in\mathbb{R}.
\]
Let $(\tau_n)_{n\in\mathbb{N}}\subset\TT$ be a non-increasing sequence of stopping times converging 
$\mathbb{P}$-a.s. to $\tau\in\TT$. Let $(\xi_n)_{n\in\mathbb{N}}$ be a sequence of random 
variables such that $\mathbb E\sup_{n\in\mathbb{N}}|\xi_n|<\infty$ and $\xi_n$ is $\mathcal{F}_{\tau_n}$-measurable 
for $n\ge 1$. Then, for every stopping time $\alpha\le\tau$ we have 
\[
\mathbb E^f_{\alpha,\tau}(\liminf_{n\to\infty}\xi_n)\le\liminf_{n\to\infty}\mathbb E^f_{\alpha,\tau_n}(\xi_n)
\quad\mbox{and}\quad\mathbb E^f_{\alpha,\tau}(\limsup_{n\to\infty}\xi_n)\ge\limsup_{n\to\infty}\mathbb E^f_{\alpha,\tau_n}(\xi_n).
\]
\end{lemma}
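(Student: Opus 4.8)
\emph{Approach.} The plan is to deduce the statement from three ingredients: monotonicity and time‑consistency of $\mathbb E^f$ (Proposition~\ref{nonlinprop}), the $L^1$‑stability estimate of Theorem~\ref{th.2}, and a monotone approximation that lets one pull $\liminf$ through $\mathbb E^f$. First I would reduce to a single inequality. Setting $\hat f(t,y,z):=-f(t,-y,-z)$, one checks that $\hat f$ again satisfies (H1)--(H5), (Z) and $\mathbb E\int_0^T|\hat f(r,y,0)|\,dr<\infty$ for every $y$, and that $\mathbb E^{\hat f}_{\alpha,\sigma}(-X)=-\mathbb E^f_{\alpha,\sigma}(X)$ for $\sigma\in\TT$, $X\in L^1(\mathcal F_\sigma)$; since $\liminf_n(-\xi_n)=-\limsup_n\xi_n$, the second inequality for $f$ is precisely the first one for $\hat f$. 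So it suffices to prove $\mathbb E^f_{\alpha,\tau}(\xi_\infty)\le\liminf_n\mathbb E^f_{\alpha,\tau_n}(\xi_n)$, where $\xi_\infty:=\liminf_n\xi_n$.

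\emph{Reduction to a claim at time $\tau$.} Put $\eta_n:=\inf_{k\ge n}\xi_k$ and $\Theta:=\sup_n|\xi_n|\in L^1$; since $\tau_k\le\tau_n$ for $k\ge n$, each $\eta_n$ is $\mathcal F_{\tau_n}$‑measurable, $|\eta_n|\le\Theta$, and $\eta_n\nearrow\xi_\infty$. Monotonicity of $\mathbb E^f$ gives $\mathbb E^f_{\alpha,\tau_n}(\xi_n)\ge\mathbb E^f_{\alpha,\tau_n}(\eta_n)$, and time‑consistency (using $\alpha\le\tau\le\tau_n$) gives $\mathbb E^f_{\alpha,\tau_n}(\eta_n)=\mathbb E^f_{\alpha,\tau}(U_n)$ with $U_n:=\mathbb E^f_{\tau,\tau_n}(\eta_n)\in L^1(\mathcal F_\tau)$; the $L^1$ a priori bounds moreover give $\sup_n|U_n|\in L^1$, as the terminal data are dominated by $\Theta$ and the time integrals are over subintervals of $[0,T]$. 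I would then establish the claim that $\liminf_n U_n\ge\xi_\infty$ a.s.\ Granting it, with $W_p:=\inf_{n\ge p}U_n\nearrow\liminf_n U_n$ one has $\mathbb E^f_{\alpha,\tau}(U_n)\ge\mathbb E^f_{\alpha,\tau}(W_p)$ for $n\ge p$, hence $\liminf_n\mathbb E^f_{\alpha,\tau_n}(\xi_n)\ge\mathbb E^f_{\alpha,\tau}(W_p)$ for every $p$; letting $p\to\infty$ and using dominated convergence for $\mathbb E^f$ (the $W_p$ are dominated by $\sup_n|U_n|$) yields $\liminf_n\mathbb E^f_{\alpha,\tau_n}(\xi_n)\ge\mathbb E^f_{\alpha,\tau}(\liminf_n U_n)\ge\mathbb E^f_{\alpha,\tau}(\xi_\infty)$, as required.

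\emph{Proof of the claim.} Fix $m$. For $n\ge m$ we have $\eta_n\ge\eta_m$ and $\eta_n$ is $\mathcal F_{\tau_n}$‑measurable, so $\eta_n=\mathbb E[\eta_n\mid\mathcal F_{\tau_n}]\ge\mathbb E[\eta_m\mid\mathcal F_{\tau_n}]=:\rho_{m,n}$, whence $U_n\ge\mathbb E^f_{\tau,\tau_n}(\rho_{m,n})$ by monotonicity. Since $\tau_n\downarrow\tau$ and $\mathbb F$ is right‑continuous, $\bigcap_n\mathcal F_{\tau_n}=\mathcal F_\tau$, so the reverse martingale theorem gives $\rho_{m,n}\to\rho_m:=\mathbb E[\eta_m\mid\mathcal F_\tau]$ a.s. The $0$‑expectation of $\rho_{m,n}$ on $[[\tau,\tau_n]]$ is $M^{(m)}:=\mathbb E[\eta_m\mid\mathcal F_{\cdot}]$ restricted to $[[\tau,\tau_n]]$, so its value at $\tau$ is exactly $\rho_m$; applying Theorem~\ref{th.2} on $[[\tau,\tau_n]]$ with $\bar f\equiv 0$ and $\bar\xi=\rho_{m,n}$ bounds $\|\mathbb E^f_{\tau,\tau_n}(\rho_{m,n})-\rho_m\|_{\mathcal D^1(\tau,\tau_n)}$ by $\mathcal C\,\psi_3\big(\mathbb E\int_\tau^{\tau_n}|f(s,M^{(m)}_s,N^{(m)}_s)|\,ds\big)$, where $N^{(m)}$ is the representing integrand of $M^{(m)}$ (neither depending on $n$). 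Since $f(\cdot,M^{(m)},N^{(m)})$ is $ds\otimes\mathbb P$‑integrable and the intervals $[\tau,\tau_n]$ decrease to the null set $[[\tau]]$, the right‑hand side tends to $0$, so $\mathbb E^f_{\tau,\tau_n}(\rho_{m,n})\to\rho_m$ and $\liminf_n U_n\ge\rho_m$ a.s.\ Finally $\eta_m\nearrow\xi_\infty$ with $\xi_\infty\in\mathcal F_\tau$ forces $\rho_m=\mathbb E[\eta_m\mid\mathcal F_\tau]\nearrow\xi_\infty$ a.s., so letting $m\to\infty$ proves the claim.

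\emph{Main obstacle.} The hard part will be the a.s.\ right‑continuity of $\mathbb E^f$ in the terminal stopping time invoked in the previous paragraph: the stability estimate of Theorem~\ref{th.2} only delivers $\mathcal D^1$/$L^1$‑convergence of $\mathbb E^f_{\tau,\tau_n}(\rho_{m,n})$ to $\rho_m$, which by itself does not control $\liminf_n U_n$, so one must upgrade to convergence of the full sequence. For this I would use that the intervals $[\tau,\tau_n]$, hence the quantities $\mathbb E\int_\tau^{\tau_n}|f(s,M^{(m)}_s,N^{(m)}_s)|\,ds$, decrease to $0$ monotonically (or, alternatively, appeal to a right‑continuity property of $\mathbb E^f$ already available in Proposition~\ref{nonlinprop}). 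A second, more technical point is to justify $f(\cdot,M^{(m)},N^{(m)})\in L^1_{\mathbb F}(0,T)$ in the $L^1$ framework: this is exactly where the hypothesis $\mathbb E\int_0^T|f(r,y,0)|\,dr<\infty$ and the sub‑linear growth condition (Z) are needed, together with a weak‑type maximal inequality for the $L^1$‑martingale $M^{(m)}$. Everything else is a routine interplay of monotonicity, time‑consistency, and monotone/dominated convergence for $\mathbb E^f$.
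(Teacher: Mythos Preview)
Your core strategy coincides with the paper's: pass from $\xi_n$ to $\eta_n:=\inf_{k\ge n}\xi_k$ by monotonicity of $\mathbb E^f$, and then use a continuity property of $\mathbb E^f$ in the terminal pair $(\tau_n,\eta_n)$. The paper's execution, however, is a four--line affair: after $\mathbb E^f_{\alpha,\tau_n}(\eta_n)\le\mathbb E^f_{\alpha,\tau_n}(\xi_n)$ it simply invokes Proposition~\ref{5grudnia1} to assert $\mathbb E^f_{\alpha,\tau_n}(\eta_n)\to\mathbb E^f_{\alpha,\tau}(\xi_\infty)$ and concludes. Your detour through time--consistency ($U_n=\mathbb E^f_{\tau,\tau_n}(\eta_n)$), the auxiliary $\rho_{m,n}=\mathbb E[\eta_m\mid\mathcal F_{\tau_n}]$, and the stability estimate of Theorem~\ref{th.2} are all a hand--made reconstruction of Proposition~\ref{5grudnia1}; the extra machinery buys nothing the paper does not already have packaged in that proposition.

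The ``main obstacle'' you isolate is, however, a genuine subtlety, and it is worth stressing that the paper's proof glosses over it in exactly the same place. Proposition~\ref{5grudnia1} delivers only $\mathcal D^1$-convergence, hence $L^1$-convergence of $a_n:=\mathbb E^f_{\alpha,\tau_n}(\eta_n)$ to $a:=\mathbb E^f_{\alpha,\tau}(\xi_\infty)$; but $a_n\to a$ in $L^1$ together with $a_n\le b_n$ a.s.\ does \emph{not} force $a\le\liminf_n b_n$ a.s.\ (typewriter--type counterexamples show this fails in general). Your proposed remedies do not close the gap: the monotone decrease of the error bound $\mathbb E\int_\tau^{\tau_n}|f(s,M^{(m)}_s,N^{(m)}_s)|\,ds$ controls only the $L^1$ distance, not the a.s.\ behaviour of the full sequence, and Proposition~\ref{nonlinprop} contains no a.s.\ right--continuity statement. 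Also, the claim $\sup_n|U_n|\in L^1$ you use for dominated convergence is not immediate from class (D) bounds and would itself need justification. In short, your sketch is faithful to the paper's idea and correctly identifies its delicate point, but neither your argument nor the paper's, as written, rigorously upgrades the $L^1$ continuity to the a.s.\ inequality; doing so requires an additional argument (e.g.\ an a.s.\ continuity statement for $\mathbb E^f$ in the terminal pair, proved separately).
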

\begin{proof}
We shall show the first inequality. The proof of the second one  is analogous.  For every $n$, by the monotonicity of $\mathbb E^f$ (see Proposition \ref{nonlinprop} (ii)), 
we have that $\mathbb E^f_{\alpha,\tau_n}(\inf_{p\ge n}\xi_p)\le\mathbb E^f_{\alpha,\tau_n}(\xi_n)$. Consequently 
\[
\lim_{n\to\infty}\mathbb E^f_{\alpha,\tau_n}(\inf_{p\ge n}\xi_p)\le\liminf_{n\to\infty}\mathbb E^f_{\alpha,\tau_n}(\xi_n).
\]
By the fact that $\lim_{n\to\infty}\inf_{p\ge n}\xi_p=\liminf_{n\to\infty}\xi_n$ and by the continuity of the nonlinear 
$f$-expectation (see Proposition \ref{5grudnia1}), we have that
\[
\mathbb E^f_{\alpha,\tau}(\liminf_{n\to\infty}\xi_n)=\lim_{n\to\infty}\mathbb E^f_{\alpha,\tau_n}(\inf_{p\ge n}\xi_p),
\]
which gives us the desired result. 
\end{proof}

Let $(Y,Z,R)$ be a solution to \textnormal{RBSDE}$^T(\xi,f,L,U)$. 
We shall prove that there exists a saddle point for a nonlinear Dynkin game with sufficiently regular payoffs. 
For  $\theta\in\mathcal{T}$ we define:
\begin{equation}\label{dyn14}
\tau^*_{\theta}:=\inf\{t\ge\theta,\,Y_t=L_t\}\wedge T;\quad\sigma^*_{\theta}:=\inf\{t\ge\theta,\,Y_t=U_t\}\wedge T.
\end{equation}

\begin{proposition}\label{20wrzesnia2}
Assume that  $\Delta L_t\ge 0,\, t\in (0,T]$ and  $\Delta U_t\le 0,\, t\in (0,T]$. Moreover, assume that $\mathbb{E}\int^T_0|f(r,y,0)|\,dr<+\infty$ for every $y\in\mathbb{R}$.
Let  $Y$ be a solution to   \textnormal{RBSDE}$^T(\xi,f,L,U)$ and let $\theta\in\mathcal T$.
Then $Y$  is continuous, and for any $\theta\in\TT$ it is  an  $\mathbb{E}^f$-supermartingale on  $[[\theta,\sigma^*_{\theta}]]$ and  an $\mathbb{E}^f$-submartingale on $[[\theta,\tau^*_{\theta}]]$.
\end{proposition}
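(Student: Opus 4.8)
The continuity of $Y$ is obtained along the way in the proof of Theorem~\ref{9wrzesnia1}: under the standing hypotheses of this section $Y$ is the unique solution of class \textnormal{(D)} to \textnormal{RBSDE}$^T(\xi,f,L,U)$, and since $\Delta L_t\ge 0$ and $\Delta U_t\le 0$ for $t\in(0,T]$, it is continuous by \cite{K:SPA}, Theorem~4.7. It remains to prove the two semimartingale properties; by the symmetry $(L,U,Y,f)\longmapsto(-U,-L,-Y,\tilde f)$, which interchanges the lower- and upper-barrier reflected problems, it suffices to show that, for each $\theta\in\mathcal T$, $Y$ is an $\mathbb E^f$-submartingale on $[[\theta,\tau^*_\theta]]$. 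The only further ingredient is Lemma~\ref{12wrzesnia4}, by which $Y$ is an $\mathbb E^f$-submartingale on $[[\theta,\tau^\varepsilon_\theta]]$ for every $\varepsilon>0$; the idea is to let $\varepsilon\downarrow0$.

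\emph{Step 1: $\tau^\varepsilon_\theta\nearrow\tau^*_\theta$ and $\sigma^\varepsilon_\theta\nearrow\sigma^*_\theta$ as $\varepsilon\downarrow0$.}
Since $\{Y_t\le L_t+\varepsilon\}$ shrinks as $\varepsilon\downarrow0$, $\varepsilon\mapsto\tau^\varepsilon_\theta$ is non-decreasing, and $\tau^\varepsilon_\theta\le\tau^*_\theta$ always; hence $\rho:=\lim_{\varepsilon\downarrow0}\tau^\varepsilon_\theta\le\tau^*_\theta$ is a stopping time. Fix $\omega$. If $\tau^\varepsilon_\theta(\omega)=\rho(\omega)$ for some $\varepsilon>0$, then $\tau^{\varepsilon'}_\theta(\omega)=\rho(\omega)$ for all $\varepsilon'\le\varepsilon$, so Lemma~\ref{12wrzesnia3} gives $Y_\rho(\omega)\le L_\rho(\omega)+\varepsilon'\to L_\rho(\omega)$. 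Otherwise $\tau^\varepsilon_\theta(\omega)\uparrow\rho(\omega)$ strictly, and passing to the limit in $Y_{\tau^\varepsilon_\theta}\le L_{\tau^\varepsilon_\theta}+\varepsilon$ (Lemma~\ref{12wrzesnia3}), using continuity of $Y$ on the left and $L_{\tau^\varepsilon_\theta}(\omega)\to L_{\rho-}(\omega)$ on the right, gives $Y_\rho(\omega)\le L_{\rho-}(\omega)$. In both cases $Y\ge L$ and $L_{\rho-}\le L_\rho$ (since $\Delta L_\rho\ge0$) force $Y_\rho(\omega)=L_\rho(\omega)$; as $\theta\le\rho\le T$, this gives $\tau^*_\theta\le\rho$, whence $\rho=\tau^*_\theta$. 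The assertion for $\sigma^\varepsilon_\theta$ follows symmetrically from $Y\le U$, $\Delta U_t\le0$ and Lemma~\ref{12wrzesnia3}.

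\emph{Step 2: passage to the limit.}
Let $\theta\le\alpha\le\beta\le\tau^*_\theta$ be stopping times. Applying the $\mathbb E^f$-submartingale property of $Y$ on $[[\theta,\tau^\varepsilon_\theta]]$ to the stopping times $\alpha\wedge\tau^\varepsilon_\theta\le\beta\wedge\tau^\varepsilon_\theta$, which lie in $[\theta,\tau^\varepsilon_\theta]$, yields
\[
Y_{\alpha\wedge\tau^\varepsilon_\theta}\le\mathbb E^f_{\alpha\wedge\tau^\varepsilon_\theta,\,\beta\wedge\tau^\varepsilon_\theta}\big(Y_{\beta\wedge\tau^\varepsilon_\theta}\big),\qquad\varepsilon>0.
\]
By Step 1, $\alpha\wedge\tau^\varepsilon_\theta\uparrow\alpha$ and $\beta\wedge\tau^\varepsilon_\theta\uparrow\beta$ as $\varepsilon\downarrow0$, and since $Y$ is continuous and of class \textnormal{(D)}, $Y_{\alpha\wedge\tau^\varepsilon_\theta}\to Y_\alpha$ and $Y_{\beta\wedge\tau^\varepsilon_\theta}\to Y_\beta$ $\mathbb P$-a.s.\ and in $L^1$. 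If one can show that the right-hand side converges to $\mathbb E^f_{\alpha,\beta}(Y_\beta)$, then $Y_\alpha\le\mathbb E^f_{\alpha,\beta}(Y_\beta)$, i.e.\ $Y$ is an $\mathbb E^f$-submartingale on $[[\theta,\tau^*_\theta]]$; the $\mathbb E^f$-supermartingale property on $[[\theta,\sigma^*_\theta]]$ follows in the same way.

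\emph{Main obstacle.}
The crux is therefore the convergence $\mathbb E^f_{\alpha\wedge\tau^\varepsilon_\theta,\,\beta\wedge\tau^\varepsilon_\theta}(Y_{\beta\wedge\tau^\varepsilon_\theta})\to\mathbb E^f_{\alpha,\beta}(Y_\beta)$: a continuity property of $\mathbb E^f$ under an \emph{increasing} family of terminal times tending to the limit, which is not covered by the $\mathbb E^f$-Fatou lemma (Lemma~\ref{list1402}, stated for decreasing terminal times). I would derive it from the $L^1$-stability of BSDEs (Theorem~\ref{th.2}), realizing $\mathbb E^f_{\alpha\wedge\tau^\varepsilon_\theta,\beta\wedge\tau^\varepsilon_\theta}(Y_{\beta\wedge\tau^\varepsilon_\theta})$ as the value at $\alpha\wedge\tau^\varepsilon_\theta$ of the BSDE (on the interval $[\alpha\wedge\tau^\varepsilon_\theta,\beta]$, extended to a fixed interval in the usual way) whose driver is $(r,y,z)\mapsto\mathbf{1}_{\{r\le\beta\wedge\tau^\varepsilon_\theta\}}f(r,y,z)$ and whose terminal value is $Y_{\beta\wedge\tau^\varepsilon_\theta}$; as $\varepsilon\downarrow0$ these data converge ($dr$-a.e., respectively in $L^1$) to $(f,Y_\beta)$, and the continuity of the trajectories then yields the convergence of the values. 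The one non-routine point is the integrability required by Theorem~\ref{th.2}: the relevant driver-difference is supported on $\{\beta\wedge\tau^\varepsilon_\theta<r\le\beta\}$ and equals there, in absolute value, $|f(r,Y_{\beta\wedge\tau^\varepsilon_\theta},0)|$, so using the monotonicity of $f$ in $y$, condition (Z), and a localization by the stopping times $\inf\{t:|Y_t|\ge C\}\wedge T$ (which increase to $T$ because $Y$ is continuous, hence pathwise bounded), everything reduces to the integrability $\mathbb E\int_0^T|f(r,y,0)|\,dr<\infty$, $y\in\mathbb R$ — exactly the extra hypothesis of the proposition, the one that also appears in Lemma~\ref{list1402}.
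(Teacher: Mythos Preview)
Your approach is essentially the paper's: continuity via \cite{K:SPA}, Theorem~4.7; then $\tau^\varepsilon_\theta\nearrow\tau^*_\theta$ (using $\Delta L\ge 0$ and continuity of $Y$); then pass to the limit in the $\mathbb E^f$-submartingale inequality supplied by Lemma~\ref{12wrzesnia4}. The difference lies only in how the limit is organised. The paper keeps the \emph{lower} time fixed: for $\nu\le\delta$ in $[\theta,\tau^*_\theta]$ it writes
\[
Y_\nu\le\mathbb E^f_{\nu,\,\nu\vee(\delta\wedge\tau^\varepsilon_\theta)}\bigl(Y_{\nu\vee(\delta\wedge\tau^\varepsilon_\theta)}\bigr),
\]
the $\nu\vee(\cdot)$ making the inequality trivially true on $\{\nu>\tau^\varepsilon_\theta\}$, and then invokes Lemma~\ref{list1402} to pass $\varepsilon\downarrow0$. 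You instead let both endpoints $\alpha\wedge\tau^\varepsilon_\theta$, $\beta\wedge\tau^\varepsilon_\theta$ move and appeal to the $L^1$-stability estimate (Theorem~\ref{th.2}). The paper's trick buys a cleaner limit: only one time moves, and one needs only a Fatou-type inequality rather than full convergence at a varying lower endpoint.

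Your diagnosis of the ``main obstacle'' is sharp and, in fact, applies to the paper's own argument: Lemma~\ref{list1402} is stated for \emph{non-increasing} terminal times, whereas here $\nu\vee(\delta\wedge\tau^\varepsilon_\theta)\nearrow\delta$, so the citation is loose. The repair is precisely what you outline --- derive the needed continuity from Theorem~\ref{th.2}, exactly as in the proof of Proposition~\ref{5grudnia1} (which goes through verbatim with $\beta_n\nearrow\beta$). Your localisation step can be made precise in one line: after the change of variables of Remark~\ref{rem.chv} one may take $\mu\le0$, so $y\mapsto f(r,y,0)$ is non-increasing and $|f(r,y,0)|\le|f(r,-C,0)|+|f(r,C,0)|$ for $|y|\le C$; this is where the extra hypothesis $\mathbb E\int_0^T|f(r,y,0)|\,dr<\infty$ enters.
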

\begin{proof}
Continuity of $Y$ follows from \cite{K:SPA}, Theorem 4.7.
We show that $Y$ is an $\mathbb{E}^f$-submartingale on $[[\theta,\tau^*_{\theta}]]$. 
The proof of the second assertion runs analogously.  By continuity of $Y$ and assumptions that we made
on $L$, we have $\tau^\varepsilon_\theta\nearrow \tau^*_\theta$.  Let $\nu,\delta\in \TT^{\tau^*_{\theta}}_\theta$
and $\nu\le\delta$.
By Lemma \ref{12wrzesnia4} 
\[
\mathbb{E}^f_{\nu,\nu\vee(\delta\wedge \tau^\varepsilon_{\theta})}(Y_{\nu\vee(\delta\wedge \tau^\varepsilon_{\theta})})\ge Y_\nu.
\]
By  Lemma \ref{list1402}
\begin{equation*}
\begin{split}
\mathbb{E}^f_{\nu,\delta}(Y_{\delta})=\mathbb{E}^f_{\nu,\delta}(\lim_{\varepsilon\to0}Y_{\nu\vee(\delta\wedge \tau^\varepsilon_{\theta})})\ge\limsup_{\varepsilon\to 0}\mathbb{E}^f_{\nu,\nu\vee(\delta\wedge \tau^\varepsilon_{\theta})}(Y_{\nu\vee(\delta\wedge \tau^\varepsilon_{\theta})})\ge Y_{\nu}.
\end{split}
\end{equation*} 
This completes the proof.
\end{proof}

\begin{theorem}\label{20wrzesnia3}
Assume that  $\Delta L_t\ge 0,\, t\in (0,T]$ and  $\Delta U_t\le 0,\, t\in (0,T]$.
Let  $Y$ be a solution to \textnormal{RBSDE}$^T(\xi,f,L,U)$.
Then for any  $\theta\in\mathcal{T}$ the couple  \eqref{dyn14} is a saddle point at  $\theta$ for the $\mathbb{E}^f$-Dynkin game
with the payoff \eqref{dyn271}.
\end{theorem}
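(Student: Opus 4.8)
The plan is to read off the two saddle-point inequalities from the regularity of $Y$ supplied by Proposition \ref{20wrzesnia2}: for the given $\theta$, $Y$ is continuous, an $\mathbb{E}^f$-submartingale on $[[\theta,\tau^*_\theta]]$ and an $\mathbb{E}^f$-supermartingale on $[[\theta,\sigma^*_\theta]]$. The one preliminary fact I would isolate first is the identification of $Y$ with the barriers at these hitting times. Using continuity of $Y$, right-continuity of $L,U$, the hypotheses $\Delta L\ge0$, $\Delta U\le0$, and the fact (from the proof of Proposition \ref{20wrzesnia2}) that $\tau^\varepsilon_\theta\nearrow\tau^*_\theta$ and $\sigma^\varepsilon_\theta\nearrow\sigma^*_\theta$, one passes to the limit $\varepsilon\downarrow0$ in Lemma \ref{12wrzesnia3} to obtain
\[
Y_{\tau^*_\theta}=L_{\tau^*_\theta}\text{ on }\{\tau^*_\theta<T\},\qquad Y_{\sigma^*_\theta}=U_{\sigma^*_\theta}\text{ on }\{\sigma^*_\theta<T\},
\]
while $Y_T=\xi$ on all of $\Omega$.

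For the upper bound, fix $\sigma\in\mathcal{T}_\theta$. Since $\theta\le\tau^*_\theta\wedge\sigma\le\tau^*_\theta$ and $Y$ is an $\mathbb{E}^f$-submartingale on $[[\theta,\tau^*_\theta]]$, Definition \ref{22kw2} gives $Y_\theta\le\mathbb{E}^f_{\theta,\tau^*_\theta\wedge\sigma}(Y_{\tau^*_\theta\wedge\sigma})$. I would then verify the pointwise bound $Y_{\tau^*_\theta\wedge\sigma}\le J(\tau^*_\theta,\sigma)$ by splitting $\Omega$ into $\{\tau^*_\theta\le\sigma,\ \tau^*_\theta<T\}$ (where $\tau^*_\theta\wedge\sigma=\tau^*_\theta$ and $Y_{\tau^*_\theta}=L_{\tau^*_\theta}=J(\tau^*_\theta,\sigma)$), $\{\sigma<\tau^*_\theta\}$ (where $Y_\sigma\le U_\sigma=J(\tau^*_\theta,\sigma)$), and $\{\tau^*_\theta=\sigma=T\}$ (where $Y_T=\xi=J(\tau^*_\theta,\sigma)$); monotonicity of $\mathbb{E}^f$ (Proposition \ref{nonlinprop}(ii)) then gives $Y_\theta\le\mathbb{E}^f_{\theta,\tau^*_\theta\wedge\sigma}J(\tau^*_\theta,\sigma)$. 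A symmetric argument, now using that $Y$ is an $\mathbb{E}^f$-supermartingale on $[[\theta,\sigma^*_\theta]]$ and the reverse pointwise bound $Y_{\tau\wedge\sigma^*_\theta}\ge J(\tau,\sigma^*_\theta)$ (split on $\{\sigma^*_\theta<\tau\}$, $\{\tau\le\sigma^*_\theta,\ \tau<T\}$, $\{\tau=\sigma^*_\theta=T\}$), yields $Y_\theta\ge\mathbb{E}^f_{\theta,\tau\wedge\sigma^*_\theta}J(\tau,\sigma^*_\theta)$ for every $\tau\in\mathcal{T}_\theta$.

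Specializing the first bound to $\sigma=\sigma^*_\theta$ and the second to $\tau=\tau^*_\theta$ (both admissible, as $\tau^*_\theta,\sigma^*_\theta\in\mathcal{T}_\theta$) forces $Y_\theta=\mathbb{E}^f_{\theta,\tau^*_\theta\wedge\sigma^*_\theta}J(\tau^*_\theta,\sigma^*_\theta)$, whence
\[
\mathbb{E}^f_{\theta,\tau\wedge\sigma^*_\theta}J(\tau,\sigma^*_\theta)\le Y_\theta=\mathbb{E}^f_{\theta,\tau^*_\theta\wedge\sigma^*_\theta}J(\tau^*_\theta,\sigma^*_\theta)\le\mathbb{E}^f_{\theta,\tau^*_\theta\wedge\sigma}J(\tau^*_\theta,\sigma)
\]
for all $\tau,\sigma\in\mathcal{T}_\theta$, which is precisely the saddle-point property of $(\tau^*_\theta,\sigma^*_\theta)$ at $\theta$ for the game with payoff \eqref{dyn271}.

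I expect the only genuinely delicate part to be the barrier identification together with the bookkeeping of $J$ at the terminal time: one must keep $\{\tau^*_\theta<T\}$, where $Y$ actually touches $L$, apart from $\{\tau^*_\theta=T\}$, where one instead relies on $Y_T=\xi$ and the form of $J$ on $\{\tau=\sigma=T\}$ (and symmetrically for $\sigma^*_\theta$ and $U$); everything else is a direct application of the $\mathbb{E}^f$-sub/supermartingale machinery already in place. (An alternative route, avoiding Proposition \ref{20wrzesnia2}, would be to pass to the limit $\varepsilon\downarrow0$ directly in the $\varepsilon$-optimality estimate \eqref{dyn10} of Theorem \ref{19wrzesnia1}, using continuity of $Y$ and the Fatou-type Lemma \ref{list1402} for $\mathbb{E}^f$; this requires the same care at the terminal time.)
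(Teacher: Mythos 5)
Your proposal is correct and follows essentially the same route as the paper: both rest on Theorem \ref{19wrzesnia1} for $Y_\theta=\overline V(\theta)=\underline V(\theta)$, on Proposition \ref{20wrzesnia2} for the $\mathbb{E}^f$-sub/supermartingale property of $Y$ on $[[\theta,\tau^*_\theta]]$ and $[[\theta,\sigma^*_\theta]]$, on the pointwise comparisons $Y_{\tau^*_\theta\wedge\sigma}\le J(\tau^*_\theta,\sigma)$ and $Y_{\tau\wedge\sigma^*_\theta}\ge J(\tau,\sigma^*_\theta)$, and on monotonicity of $\mathbb{E}^f$. The only difference is that you make explicit the barrier identification $Y_{\tau^*_\theta}=L_{\tau^*_\theta}$, $Y_{\sigma^*_\theta}=U_{\sigma^*_\theta}$ on $\{\tau^*_\theta<T\}$, $\{\sigma^*_\theta<T\}$ (which the paper uses without comment), and your limiting argument for it via Lemma \ref{12wrzesnia3} and the monotone convergence $\tau^\varepsilon_\theta\nearrow\tau^*_\theta$ is sound.
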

\begin{proof}
Let  $\theta\in\mathcal{T}$. By Theorem  \ref{19wrzesnia1}, $Y_{\theta}=\overline{V}(\theta)=\underline{V}(\theta)$. Let  $\tau\in\mathcal T_\theta$. 
By Proposition \ref{20wrzesnia2} process $Y$ is an  $\mathbb{E}^f$-supermartingale on  $[[\theta,\tau\wedge\sigma^*_{\theta}]]$. Therefore,
\begin{equation}\label{dyn24_1}
Y_{\theta}\ge\mathbb{E}^f_{\theta,\tau\wedge\sigma^*_{\theta}}[Y_{\tau\wedge\sigma^*_{\theta}}].
\end{equation}
 Since  $Y\ge L$ and  $Y_{\sigma^*_{\theta}}=U_{\sigma^*_{\theta}}$, we also have 
 \[
 \begin{split}
 Y_{\tau\wedge\sigma^*_{\theta}}&=Y_{\tau}\mathbf{1}_{\{\tau\le\sigma^*_{\theta}, \tau<T\}}
 +Y_{\sigma^*_{\theta}}\mathbf{1}_{\{\sigma^*_{\theta}<\tau\}}+\xi\mathbf{1}_{\{\tau=\sigma^*_{\theta}=T\}}
\\&
\ge L_{\tau}\mathbf{1}_{\{\tau\le\sigma^*_{\theta}, \tau<T\}}+U_{\sigma^*_{\theta}}\mathbf{1}_{\{\sigma^*_{\theta}<\tau\}}
+\xi\mathbf{1}_{\{\tau=\sigma^*_{\theta}=T\}},
 \end{split}
 \]
 which means that $Y_{\tau\wedge\sigma^*_{\theta}}\ge J(\tau,\sigma^*_{\theta})$. 
 Using  \eqref{dyn24_1} and the fact that $\mathbb{E}^f$ is a non-decreasing operator, we deduce that   
 $Y_{\theta}\ge\mathbb{E}^f_{\theta,\tau\wedge\sigma^*_{\theta}}J(\tau,\sigma^*_{\theta})$ for any 
 $\tau\in\mathcal T_{\theta}$, in particular  
 \[
 \mathbb E^f_{\theta,\tau^*_{\theta}\wedge\sigma^*_{\theta}}J(\tau^*_{\theta},\sigma^*_{\theta})\le Y_{\theta}.
 \] 
 In the similar  way we arrive at  
 $Y_{\theta}\le\mathbb{E}^f_{\theta,\tau^*_{\theta}\wedge\sigma}J(\tau^*_\theta,\sigma)$ 
 for any $\sigma\in\mathcal{T}_{\theta}$, and so 
 \[
 Y_{\theta}\le \mathbb E^f_{\theta,\tau^*_{\theta}\wedge\sigma^*_{\theta}}J(\tau^*_{\theta},\sigma^*_{\theta}).
 \] 
 Consequently,  
 $Y_{\theta}=\mathbb E^f_{\theta,\tau^*_{\theta}\wedge\sigma^*_{\theta}}J(\tau^*_{\theta},\sigma^*_{\theta})$ 
 and  $(\tau^*_{\theta},\sigma^*_{\theta})$ is a saddle point at  $\theta$.
\end{proof}

\begin{corollary}[{\em Comparison theorem for \textnormal{RBSDEs}}]
\label{10wrzesnia1col}
Let triple $(Y^i,Z^i,R^i)$ be a solution to \textnormal{RBSDE}($\xi^i,f_i+dV^i,L^i,U^i)$,
$i=1,2$.   Assume also that  $\xi^1\le\xi^2$, $dV^1\le dV^2$, $L^1\le
L^2$, $U^1\le U^2$.
If $f_1,f_2,V^1,V^2$ satisfy \textnormal{(H1)--(H5), (Z)}, then  $Y^1_t\le Y^2_t$, $t\in[0,T]$.
\end{corollary}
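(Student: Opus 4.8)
The plan is to run the argument of Proposition~\ref{10wrzesnia1}(i) with Theorem~\ref{dyn20} replaced by Theorem~\ref{19wrzesnia1}: since $Y^i$ need no longer be a semimartingale we cannot invoke the $\mathscr S$-solution representation, but the Dynkin-game representation of Theorem~\ref{19wrzesnia1} is available. First I would record that, although Theorem~\ref{19wrzesnia1} is stated for $\text{RBSDE}^T(\xi,f,L,U)$, its proof uses only Lemma~\ref{12wrzesnia4} (the $\mathbb E^f$-sub/supermartingale property of the solution on the stochastic intervals $[[\theta,\tau^\varepsilon_\theta]]$, $[[\theta,\sigma^\varepsilon_\theta]]$), the inclusions \eqref{eq.mm1}, and the properties of $\mathbb E^f$ from Proposition~\ref{nonlinprop}; none of these is disturbed by adding a finite-variation term $dV^i$ to the generator, so the same computation yields, for $i=1,2$ and every $\theta\in\TT$,
\[
Y^i_\theta=\essinf_{\sigma\in\TT_\theta}\esssup_{\tau\in\TT_\theta}\mathbb E^{f_i+dV^i}_{\theta,\tau\wedge\sigma}\big(J^i(\tau,\sigma)\big),
\]
where $J^i(\tau,\sigma):=L^i_\tau\mathbf{1}_{\{\tau\le\sigma,\,\tau<T\}}+U^i_\sigma\mathbf{1}_{\{\sigma<\tau\}}+\xi^i\mathbf{1}_{\{\tau=\sigma=T\}}$ and $\mathbb E^{f_i+dV^i}_{\theta,\rho}(X):=Y_\theta$ for $(Y,Z)$ solving $\text{BSDE}^{\theta,\rho}(X,f_i+dV^i)$.

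Next I would compare the two games. The hypotheses $L^1\le L^2$, $U^1\le U^2$ and $\xi^1\le\xi^2$ give $J^1(\tau,\sigma)\le J^2(\tau,\sigma)$ for all $\tau,\sigma\in\TT$; combining this with $dV^1\le dV^2$ and the pointwise ordering of the generators $f_1\le f_2$ (in force exactly as in Proposition~\ref{10wrzesnia1}(i)), the comparison principle for the nonlinear expectation (Proposition~\ref{nonlinprop}(ii)) yields
\[
\mathbb E^{f_1+dV^1}_{\theta,\tau\wedge\sigma}\big(J^1(\tau,\sigma)\big)\le\mathbb E^{f_2+dV^2}_{\theta,\tau\wedge\sigma}\big(J^2(\tau,\sigma)\big),\qquad \tau,\sigma\in\TT_\theta .
\]
Passing to $\esssup$ over $\tau$ and then $\essinf$ over $\sigma$ preserves the inequality, so $Y^1_\theta\le Y^2_\theta$ for every $\theta\in\TT$; applying this with $\theta\equiv t$ for each fixed $t\in[0,T]$ and using that $Y^1,Y^2$ are c\`adl\`ag (so a pointwise a.s.\ inequality at every deterministic time propagates to the whole paths) gives $Y^1_t\le Y^2_t$ for all $t\in[0,T]$.

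The only genuinely delicate point is the first step: transferring the representation of Theorem~\ref{19wrzesnia1} to the setting with the extra drift $dV^i$ and with possibly distinct $f_1,f_2$. What must be verified is that the localizing times $\tau^\varepsilon_\theta:=\inf\{t\ge\theta:\,Y^i_t\le L^i_t+\varepsilon\}\wedge T$ and $\sigma^\varepsilon_\theta:=\inf\{t\ge\theta:\,Y^i_t\ge U^i_t-\varepsilon\}\wedge T$ still obey $[[\theta,\tau^\varepsilon_\theta]],[[\theta,\sigma^\varepsilon_\theta]]\subset\mathscr M_{L^i,U^i}(\theta)$ by Proposition~\ref{prop7.3m}(iii), so that on these intervals $Y^i$ is a genuine $\mathscr S$-solution of the localized RBSDE with right-hand side $f_i+dV^i$ and the argument of Lemma~\ref{12wrzesnia4} (via \cite{KRz3}, Lemma~6.3) applies verbatim; this is precisely the reduction already performed inside the proof of Theorem~\ref{th.cop1}, so it is essentially a matter of re-assembly. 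Finally I would note why this corollary is not subsumed by Theorem~\ref{th.cop1}: part~(i) there requires $(Y^1-Y^2)^+\in\mathcal S^p$ for some $p>1$ and part~(ii) requires condition~(G), whereas here the data are merely of class~(D) — routing the proof through the Dynkin-game representation of Theorem~\ref{19wrzesnia1} is exactly what dispenses with those additional integrability assumptions.
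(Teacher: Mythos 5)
Your proof is correct and follows the route the paper intends: the corollary is the exact analogue of Proposition \ref{10wrzesnia1}(i), whose proof is "Theorem \ref{dyn20} plus Proposition \ref{nonlinprop}(ii)", with Theorem \ref{dyn20} replaced by the Dynkin-game representation of Theorem \ref{19wrzesnia1} now that the solutions are no longer semimartingales. Your additional remarks — that the representation must be transferred to generators of the form $f_i+dV^i$, and that this is why the extra integrability hypotheses of Theorem \ref{th.cop1} can be dispensed with — are accurate and consistent with how the paper handles the same points implicitly.
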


\section{Strict comparison theorem and maximal saddle point}
\label{roz10}

Throughout the section we shall assume that conditions (H1)--(H5), (Z) 
are satisfied taking into account   Remark \ref{rem.com}.

\begin{enumerate}
\item[(SC)] For any  $\tau\in\TT$, $\xi^1,\xi^2\in L^1_{\mathcal{F}_{\tau}}(\Omega)$ such that  $\xi^1\ge\xi^2$,
we have 
\[
\big[\exists_{\sigma\in\TT^{\tau}}\,\,\,  \mathbb E^f_{\sigma,\tau}(\xi^1)=\mathbb E^f_{\sigma,\tau}(\xi^2)\big]\,\Rightarrow\, \xi^1=\xi^2.
\]
\end{enumerate}

\begin{proposition}
\label{stw.sc.lin2}
Let $\xi^1,\xi^2\in L^1_{\mathcal{F}_{\tau}}(\Omega)$, $\xi_1\ge\xi_2$ and $\sigma\in \TT^\tau$.
Assume  that there exist $c\ge 0$,   non-negative 
$\mathbb F$-progressively measurable processes  $\alpha,\beta$
such that $\int_\sigma^\tau(\alpha_t+\beta_t)\,dt<\infty$, and there exists a  
Borel function $\phi: [0,\infty)\to [0,\infty)$ that is bounded on bounded subsets of $[0,\infty)$,
such that  
\begin{equation}
\label{eq12.gc1}
|f(t,y_1,z)-f(t,y_2,z)|\le (\alpha_t+\beta_t\phi(|y_1-y_2|)+c|z|^2)|y_1-y_2|,
\end{equation}
$t\in [\sigma,\tau],\, y_1,y_2\in\mathbb R,\,z\in\mathbb R^d$.
If $\mathbb E^f_{\sigma,\tau}(\xi^1)=\mathbb E^f_{\sigma,\tau}(\xi^2)$, then $\xi_1=\xi_2$.
\end{proposition}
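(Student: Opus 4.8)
The plan is to linearize the difference of the two backward equations whose initial values are $\mathbb E^f_{\sigma,\tau}(\xi^1)$ and $\mathbb E^f_{\sigma,\tau}(\xi^2)$, and then to kill the linear terms with an explicit, strictly positive integrating factor. For $i=1,2$ let $(Y^i,Z^i)$ be the solution of $\mathrm{BSDE}^{\sigma,\tau}(\xi^i,f)$, so that $Y^i_t=\mathbb E^f_{t,\tau}(\xi^i)$ for $t\in[\sigma,\tau]$; such a solution exists, is unique, and $Y^i$ is continuous with $\int_\sigma^\tau|Z^i_s|^2\,ds<\infty$ $\mathbb P$-a.s., by Theorem \ref{12sierpnia1} (recall (H1)--(H5), (Z) are in force in this section). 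Since $\xi^1\ge\xi^2$ and the generator is the same, the monotonicity of $\mathbb E^f$ (Proposition \ref{nonlinprop}) gives $\bar Y:=Y^1-Y^2\ge 0$ on $[\sigma,\tau]$. Put $\bar Z:=Z^1-Z^2$ and split
\[
f(s,Y^1_s,Z^1_s)-f(s,Y^2_s,Z^2_s)=\big(f(s,Y^1_s,Z^1_s)-f(s,Y^2_s,Z^1_s)\big)+\big(f(s,Y^2_s,Z^1_s)-f(s,Y^2_s,Z^2_s)\big).
\]
By (H1) the second bracket equals $\langle\gamma_s,\bar Z_s\rangle$ for a progressively measurable $\gamma$ with $|\gamma_s|\le\lambda$, and by \eqref{eq12.gc1} (using $\bar Y_s\ge0$) the first bracket equals $\delta_s\bar Y_s$ for a progressively measurable $\delta$ with $|\delta_s|\le\alpha_s+\beta_s\phi(\bar Y_s)+c|Z^1_s|^2$; on $\{\bar Y_s=0\}$ both sides vanish, so these identities hold for a.e.\ $s$. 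Hence $\bar Y$ solves the linear BSDE
\[
\bar Y_t=(\xi^1-\xi^2)+\int_t^\tau\big(\delta_s\bar Y_s+\langle\gamma_s,\bar Z_s\rangle\big)\,ds-\int_t^\tau \bar Z_s\,dB_s,\qquad t\in[\sigma,\tau].
\]

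Next I would introduce the integrating factor $\Gamma$ on $[\sigma,\tau]$ solving $d\Gamma_t=\Gamma_t(\delta_t\,dt+\langle\gamma_t,dB_t\rangle)$, $\Gamma_\sigma=1$, that is $\Gamma_t=\exp\!\big(\int_\sigma^t\delta_s\,ds\big)\exp\!\big(\int_\sigma^t\langle\gamma_s,dB_s\rangle-\tfrac12\int_\sigma^t|\gamma_s|^2\,ds\big)$. A short It\^o computation gives $d(\Gamma_t\bar Y_t)=\Gamma_t(\bar Z_t+\bar Y_t\gamma_t)\,dB_t$, so $\Gamma\bar Y$ is a continuous local martingale on $[\sigma,\tau]$; being nonnegative it is a supermartingale, whence
\[
\mathbb E\big[\Gamma_\tau(\xi^1-\xi^2)\,\big|\,\mathcal F_\sigma\big]=\mathbb E\big[\Gamma_\tau\bar Y_\tau\,\big|\,\mathcal F_\sigma\big]\le\Gamma_\sigma\bar Y_\sigma=\mathbb E^f_{\sigma,\tau}(\xi^1)-\mathbb E^f_{\sigma,\tau}(\xi^2)=0.
\]
Since $\Gamma_\tau>0$ a.s.\ and $\xi^1-\xi^2\ge0$, the conditional expectation of the nonnegative random variable $\Gamma_\tau(\xi^1-\xi^2)$ being $\le 0$ forces $\Gamma_\tau(\xi^1-\xi^2)=0$, hence $\xi^1=\xi^2$ a.s.

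The one point that genuinely needs care — and the main (if minor) obstacle — is the well-definedness and strict positivity of $\Gamma_\tau$. Positivity of the stochastic-exponential factor follows from $\int_\sigma^\tau|\gamma_s|^2\,ds\le\lambda^2T<\infty$, and positivity of the drift factor is automatic once $\int_\sigma^\tau|\delta_s|\,ds<\infty$ a.s. For the latter one combines: $\int_\sigma^\tau\alpha_s\,ds<\infty$ and $\int_\sigma^\tau\beta_s\,ds<\infty$ by hypothesis; $\int_\sigma^\tau|Z^1_s|^2\,ds<\infty$ a.s.\ since $Z^1\in\HH_{\mathbb F}(\sigma,\tau)$; and $\sup_{\sigma\le s\le\tau}\phi(\bar Y_s)<\infty$ a.s.\ because $\bar Y$ is continuous, hence pathwise bounded on the (pathwise compact) interval $[\sigma,\tau]$, and $\phi$ is bounded on bounded subsets of $[0,\infty)$. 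With these finiteness facts in hand, $\Gamma$ is a well-defined, strictly positive continuous process, the It\^o computation and the supermartingale inequality above are legitimate, and the conclusion $\xi^1=\xi^2$ follows.
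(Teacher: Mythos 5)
Your proof is correct and follows essentially the same route as the paper: the same linearization of the generator with drift coefficient bounded by $\alpha_t+\beta_t\phi(|Y^1_t-Y^2_t|)+c|Z^1_t|^2$ (finite pathwise by continuity of $\bar Y$ and local boundedness of $\phi$), and the same strictly positive exponential integrating factor $\Gamma$. The only difference is cosmetic: where the paper closes the argument by localizing with two explicit chains of stopping times, taking expectations at $\tau_k$ and passing to the limit by Fatou, you invoke directly that the nonnegative continuous local martingale $\Gamma\bar Y$, started at $0$ at time $\sigma$, is a supermartingale and hence vanishes identically — a slightly cleaner finish of the same argument.
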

\begin{proof}
For $z\in\mathbb R^d$ we denote
$z^{(i)}:=\langle z,e_i\rangle$ and $z^{[i]}:= \sum_{k=1}^iz^{(k)}e_k$, $i=1,\dots,d$, and $z^{[0]}:=(0,\dots,0)\in\mathbb R^d$,
where $e_i,\, i=1,\dots,d$ is the standard basis in $\mathbb R^d$.
Let  $(Y^i,Z^i)$ be a  solution   to 
BSDE$^\tau(\xi_i,f)$ such that  $Y^i=\mathbb E^f_{\cdot,\tau}(\xi^i)$, $i=1,2$.
By the assumptions that we made (i.e. $\mathbb E^f_{\sigma,\tau}(\xi^1)=\mathbb E^f_{\sigma,\tau}(\xi^2)$) and the uniqueness for BSDEs,
 $Y^1=Y^2$ on $[[0,\sigma]]$. Moreover, by Proposition \ref{th.cop1},
$Y^1\le Y^2$ on $[[0,\tau]]$.  We shall show that $Y^1=Y^2$ on $[[\sigma,\tau]]$.
Set  $Y:= Y^1-Y^2$, $Z:=Z^1-Z^2$, $\xi=\xi_1-\xi_2$,
\[
g_t:=\frac{f(t,Y^1_t,Z^1_t)-f(t,Y^2_t,Z^1_t)}{Y^1_t-Y^2_t}\quad\mbox{if}\,\,  Y_t \neq 0,\text{ and } 0 \text{ otherwise},
\]
and
\[
h^i_t:=\frac{f(t,Y^2_t,Z^1_t-Z_t^{[i-1]})-f(t,Y^2_t,Z^1_t-Z_t^{[i]})}{Z_t^{(i)}} 
\quad\mbox{if}\,\, Z_t^{(i)}\neq 0 \text{ and } 0 \text{ otherwise}.
\]
By (H1) $|h^i_t|\le \lambda$,  and by (H2) and  \eqref{eq12.gc1}, 
\[
|g_t|\le (\alpha_t+\beta_t\phi(|Y^1_t-Y^2_t|)+c|Z^1_t|^2),\quad t\in [\sigma,\tau].
\]
By the assumptions that we made, and the fact that   $Y^1,Y^2$ are of class (D) and $Z^1\in \HH^s(0,T),\, s\in (0,T)$,  
one easily deduces that  
\[
\mathbb P(\int_\sigma^\tau g_s\,ds<\infty)=1.
\]
Observe that
\[
Y_t=\xi+\int_t^\tau(g_sY_s+\langle h_t,Z_t\rangle)\,ds-\int_t^\tau Z_s\,dB_s,\quad t\in [0,\tau].
\]
By the It\^o formula
\begin{equation}
\label{eq.icv}
Y_0=Y_t\Gamma_t-
\int_0^t\Gamma_sZ_s\,dB_s,\quad t\in [0,\tau],
\end{equation}
where
\[
\Gamma_t:=\exp(\int_0^t(g_s-\frac12|h_s|^2)\,ds+\int_0^th_s\,dB_s)=\exp(\int_\sigma^{\sigma\vee t}(g_s-\frac12|h_s|^2)\,ds+\int_\sigma^{\sigma\vee t}h_s\,dB_s).
\]
Let $(\beta_k)\subset \TT_\sigma$ (recall that $Z=0$ on $[[0,\sigma]]$)
be a chain on $[[0,\tau]]$ such that   $\int_0^{\cdot\wedge \tau_k}\Gamma_sZ_s\,dB_s$
is a martingale for each $k\ge 1$. Furthermore, set $\delta_k:=\inf\{t\ge \sigma:\int_\sigma^{t}g_s\,ds\ge k\}\wedge \tau$.
By the properties of $g$, $(\delta_k)$ is a chain on $[[0,\tau]]$. Set $\tau_k:=\beta_k\wedge \delta_k$.
Then, by \eqref{eq.icv}
\[
0=\mathbb E(Y_{\tau_k}\Gamma_{\tau_k}),\quad k\ge 0.
\]
By the definition of $\delta_k$, $\Gamma_{\tau_k}>0$. Hence, $\mathbb EY_{\tau_k}=0,\, k\ge1$.
By Fatou's lemma $\xi_1=\xi_2$ and hence $Y^1=Y^2$ on $[[0,\tau]]$.
\end{proof}

\begin{corollary}
If \eqref{eq12.gc1} holds with $\sigma=0$ and $\tau=T$, then (SC) holds.
\end{corollary}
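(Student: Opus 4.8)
The plan is to derive (SC) directly from Proposition~\ref{stw.sc.lin2}, since the corollary only asserts that the \emph{global} version of the one-sided bound \eqref{eq12.gc1} forces the implication in (SC) for every admissible choice of data. First I would fix an arbitrary $\tau\in\TT$, random variables $\xi^1,\xi^2\in L^1_{\mathcal{F}_{\tau}}(\Omega)$ with $\xi^1\ge\xi^2$, and a stopping time $\sigma\in\TT^\tau$ for which $\mathbb E^f_{\sigma,\tau}(\xi^1)=\mathbb E^f_{\sigma,\tau}(\xi^2)$; note that $\sigma\in\TT^\tau$ means $\sigma\le\tau$, so the nonlinear expectations $\mathbb E^f_{\sigma,\tau}(\cdot)$ are well defined. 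The goal is then to exhibit, for this particular pair $\sigma\le\tau$, data $c,\alpha,\beta,\phi$ meeting the hypotheses of Proposition~\ref{stw.sc.lin2}.

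By the assumption of the corollary, \eqref{eq12.gc1} holds with $\sigma=0$ and $\tau=T$: there are $c\ge0$, nonnegative $\mathbb F$-progressively measurable processes $\alpha,\beta$ with $\int_0^T(\alpha_t+\beta_t)\,dt<\infty$, and a Borel function $\phi:[0,\infty)\to[0,\infty)$ bounded on bounded subsets of $[0,\infty)$, such that $|f(t,y_1,z)-f(t,y_2,z)|\le(\alpha_t+\beta_t\phi(|y_1-y_2|)+c|z|^2)|y_1-y_2|$ for all $t\in[0,T]$, $y_1,y_2\in\mathbb R$, $z\in\mathbb R^d$. Since pathwise $[\sigma,\tau]\subseteq[0,T]$ and the integrand $\alpha_t+\beta_t$ is nonnegative, $\int_\sigma^\tau(\alpha_t+\beta_t)\,dt\le\int_0^T(\alpha_t+\beta_t)\,dt<\infty$; and the bound \eqref{eq12.gc1}, being valid on all of $[0,T]$, in particular holds for $t\in[\sigma,\tau]$ with the same $c,\alpha,\beta,\phi$. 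Hence every hypothesis of Proposition~\ref{stw.sc.lin2} is met for the pair $(\sigma,\tau)$ and the chosen $\xi^1\ge\xi^2$ satisfying $\mathbb E^f_{\sigma,\tau}(\xi^1)=\mathbb E^f_{\sigma,\tau}(\xi^2)$, so the proposition yields $\xi^1=\xi^2$. As $\tau,\xi^1,\xi^2,\sigma$ were arbitrary, this is exactly the implication in (SC).

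I do not expect any genuine obstacle here: all of the analytic content — the exponential change of measure via Itô's formula, the localisation along a chain of stopping times, and the final Fatou argument — is already carried out inside the proof of Proposition~\ref{stw.sc.lin2}. The only point to verify is the purely formal one that a bound valid on $[0,T]$ restricts to a bound on each random subinterval $[\sigma,\tau]$ and that the integrability $\int_\sigma^\tau(\alpha_t+\beta_t)\,dt<\infty$ is inherited, which is immediate from the nonnegativity of $\alpha+\beta$. Thus the corollary follows at once.
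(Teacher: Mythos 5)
Your proof is correct and coincides with the paper's (implicit) argument: the corollary is stated without proof precisely because it follows by restricting the global bound \eqref{eq12.gc1} and the integrability of $\alpha+\beta$ from $[0,T]$ to an arbitrary pair $\sigma\le\tau$ and then invoking Proposition \ref{stw.sc.lin2}. Nothing further is needed.
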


\begin{theorem}
\label{stop20}
Assume that $Y$ is an $\mathbb F$-adapted process of class  \textnormal{(D)} and $\tau,\sigma\in\TT,\, \tau\le\sigma$.
\begin{enumerate}
\item[(i)] $Y$ is an  $\mathbb E^f$-supermartingale on $[[\tau,\sigma]]$ if and only if  there exist  
$K\in\mathcal{V}^{+}_{0,\mathbb{F}}(\tau,\sigma)$ and  $Z\in\HH_{\mathbb{F}}(\tau,\sigma)$ such that  
\begin{equation}
\label{stop12}
Y_t=Y_{\sigma}+\int^{\sigma}_t f(r,Y_r,Z_r)\,dr+\int^{\sigma}_t\,dK_r-\int^{\sigma}_t Z_s\,dB_r,\quad t\in[\tau,\sigma].
\end{equation}
\item[(ii)] The above  decomposition is unique, in the sense that if there exist  
$\hat K\in\mathcal{V}^{+}_{0,\mathbb F}(\tau,\sigma)$ and  $\hat Z\in\HH_{\mathbb F}(\tau,\sigma)$ such that \eqref{stop12} holds
with $(K,Z)$ replaced by $(\hat K,\hat Z)$, then $(K,Z)=(\hat K,\hat Z)$.
\item[(iii)]
If $Y$ is an $\mathbb E^f$-martingale on $[[\tau,\sigma]]$, then  \eqref{stop12} holds with $K\equiv 0$. 
\item Assume that (SC) holds and $\mathbb E^f_{\tau,\alpha}Y_\alpha=Y_\tau,\, \alpha\in \TT_\tau^\sigma$,
then $Y$ is an $\mathbb E^f$-martingale on $[[\tau,\sigma]]$.
\end{enumerate}
\end{theorem}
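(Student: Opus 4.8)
The plan is to treat the four parts in sequence; the real content is the nonlinear Doob--Meyer decomposition in (i) and the ``pasting'' argument in (iv), while (ii) and (iii) are short consequences of (i) and the basic properties of $\mathbb E^f$ (monotonicity, locality, time-consistency) collected in Proposition \ref{nonlinprop}. For the implication ``$\Leftarrow$'' in (i): if \eqref{stop12} holds then, restricting the identity to an interval $[[\rho,\alpha]]$ with $\tau\le\rho\le\alpha\le\sigma$, the pair $(Y,Z)$ solves, on $[[\rho,\alpha]]$, the BSDE with terminal value $Y_\alpha$ and right-hand side $f+d(K-K_\rho)$, where $K-K_\rho$ is nondecreasing; comparing with the class (D) solution of $\mathrm{BSDE}^{\rho,\alpha}(Y_\alpha,f)$ supplied by Theorem \ref{12sierpnia1}, the comparison principle for BSDEs (adding a backward-increasing drift only raises the solution) gives $Y_\rho\ge\mathbb E^f_{\rho,\alpha}(Y_\alpha)$, i.e.\ $Y$ is an $\mathbb E^f$-supermartingale on $[[\tau,\sigma]]$.

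For the converse in (i) I would realize $Y$ as a reflected BSDE with a single lower barrier. Let $(\tilde Y,\tilde Z,\tilde K)$ be the class (D) solution on $[[\tau,\sigma]]$ with terminal value $Y_\sigma$, generator $f$, and lower barrier the process $Y$ itself (its existence is the one-barrier statement of Theorem \ref{10wrzesnia2}). By the nonlinear Snell-envelope representation of one-barrier reflected BSDEs (the one-barrier analogue of Theorem \ref{dyn20}),
\[
\tilde Y_t=\esssup_{\rho\in\TT_t^\sigma}\mathbb E^f_{t,\rho}\big(Y_\rho\mathbf 1_{\{\rho<\sigma\}}+Y_\sigma\mathbf 1_{\{\rho=\sigma\}}\big)=\esssup_{\rho\in\TT_t^\sigma}\mathbb E^f_{t,\rho}(Y_\rho),\qquad t\in[\tau,\sigma],
\]
and since $Y$ is an $\mathbb E^f$-supermartingale every term is $\le Y_t$, while $\rho=t$ yields equality; hence $\tilde Y=Y$ on $[[\tau,\sigma]]$, and \eqref{stop12} holds with $Z:=\tilde Z\in\HH_{\mathbb F}(\tau,\sigma)$ and $K:=\tilde K\in\mathcal V^+_{0,\mathbb F}(\tau,\sigma)$. (Equivalently one may run the penalization $f_n(t,y,z)=f(t,y,z)+n(y-Y_t)^-$ and identify the monotone limit with $Y$.)

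Part (ii): if \eqref{stop12} holds with $(K,Z)$ and with $(\hat K,\hat Z)$, subtracting the two identities shows that the continuous local martingale $t\mapsto\int_\tau^t(Z_r-\hat Z_r)\,dB_r$ equals the finite-variation process $\int_\tau^t\big(f(r,Y_r,Z_r)-f(r,Y_r,\hat Z_r)\big)\,dr+(K_t-\hat K_t)$ on $[\tau,\sigma]$; a continuous local martingale of finite variation is constant and vanishes at $\tau$, so $Z=\hat Z$ $dt\otimes\mathbb P$-a.e.\ and then $K=\hat K$. Part (iii): if $Y$ is an $\mathbb E^f$-martingale on $[[\tau,\sigma]]$ then, taking the terminal stopping time equal to $\sigma$, $Y_t=\mathbb E^f_{t,\sigma}(Y_\sigma)$ for $t\in[\tau,\sigma]$, so $Y$ coincides on $[[\tau,\sigma]]$ with the class (D) solution $(\bar Y,\bar Z)$ of $\mathrm{BSDE}^{\tau,\sigma}(Y_\sigma,f)$; comparing $Y_t=Y_\sigma+\int_t^\sigma f(r,Y_r,\bar Z_r)\,dr-\int_t^\sigma\bar Z_r\,dB_r$ with the decomposition of (i) and invoking the uniqueness in (ii) forces $K\equiv0$.

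Part (iv): the idea is to propagate the base-point identity $\mathbb E^f_{\tau,\alpha}(Y_\alpha)=Y_\tau$ to all intermediate times using time-consistency of $\mathbb E^f$ together with (SC). Fix $\tau\le\rho\le\alpha\le\sigma$, set $A:=\{\mathbb E^f_{\rho,\alpha}(Y_\alpha)\le Y_\rho\}\in\mathcal F_\rho$, and consider the pasted stopping time $\gamma:=\alpha$ on $A$, $\gamma:=\rho$ on $A^c$, which lies in $\TT_\tau^\sigma$. By locality of $\mathbb E^f$, $\mathbb E^f_{\rho,\gamma}(Y_\gamma)=\mathbb E^f_{\rho,\alpha}(Y_\alpha)\mathbf 1_A+Y_\rho\mathbf 1_{A^c}\le Y_\rho$, and by the flow property $\mathbb E^f_{\tau,\rho}\big(\mathbb E^f_{\rho,\gamma}(Y_\gamma)\big)=\mathbb E^f_{\tau,\gamma}(Y_\gamma)=Y_\tau=\mathbb E^f_{\tau,\rho}(Y_\rho)$. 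Since $Y_\rho$ and $\mathbb E^f_{\rho,\gamma}(Y_\gamma)$ are $\mathcal F_\rho$-measurable and integrable (class (D) of $Y$, and Theorem \ref{12sierpnia1} for the latter) with $Y_\rho\ge\mathbb E^f_{\rho,\gamma}(Y_\gamma)$, (SC) (with the later time $\rho$ and witness earlier time $\tau$) gives $\mathbb E^f_{\rho,\gamma}(Y_\gamma)=Y_\rho$, hence $\mathbb E^f_{\rho,\alpha}(Y_\alpha)=Y_\rho$ on $A$, and therefore $\mathbb E^f_{\rho,\alpha}(Y_\alpha)\ge Y_\rho$ a.s. The symmetric choice $A':=\{\mathbb E^f_{\rho,\alpha}(Y_\alpha)\ge Y_\rho\}$, $\gamma':=\alpha$ on $A'$, $\gamma':=\rho$ on $A'^c$, combined with (SC) applied to $\mathbb E^f_{\rho,\gamma'}(Y_{\gamma'})\ge Y_\rho$, gives the reverse inequality; thus $\mathbb E^f_{\rho,\alpha}(Y_\alpha)=Y_\rho$ for all $\tau\le\rho\le\alpha\le\sigma$, i.e.\ $Y$ is an $\mathbb E^f$-martingale on $[[\tau,\sigma]]$. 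I expect the main obstacle to be the converse in (i) — the nonlinear Doob--Meyer decomposition — which rests on the one-barrier reflected BSDE with barrier $Y$ and its Snell-type representation; a smaller care point in (iv) is checking that the pasted stopping times are admissible and that (SC) is invoked with the correct ordering of the two random variables.
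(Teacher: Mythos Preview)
Your proof is correct, and for parts (i)--(iii) it follows essentially the same route as the paper: the ``$\Leftarrow$'' direction of (i) via Proposition~\ref{nonlinprop}(i), the ``$\Rightarrow$'' direction by realising $Y$ as the first component of the one-barrier reflected BSDE $\underline{\mathrm{R}}\mathrm{BSDE}^{\tau,\sigma}(Y_\sigma,f,Y)$ through its Snell-type representation (Theorem~\ref{dyn20}), uniqueness in (ii) via the Doob--Meyer decomposition, and (iii) by identifying $Y$ with the class~(D) solution of $\mathrm{BSDE}^{\tau,\sigma}(Y_\sigma,f)$.

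For part (iv) your approach genuinely differs from the paper's. The paper argues directly: from $\mathbb E^f_{\tau,\alpha}Y_\alpha=Y_\tau=\mathbb E^f_{\tau,\beta}Y_\beta=\mathbb E^f_{\tau,\alpha}\big(\mathbb E^f_{\alpha,\beta}Y_\beta\big)$ together with the line ``since $Y$ is an $\mathbb E^f$-supermartingale'' it obtains $Y_\alpha\ge\mathbb E^f_{\alpha,\beta}Y_\beta$, and then (SC) gives equality. This is shorter, but the supermartingale inequality is not among the stated hypotheses of (iv); in the paper it is only available in the application (Theorem~\ref{th.mal}), where the decompositions \eqref{eq12.1}--\eqref{eq12.2} supply it. Your pasting argument dispenses with any a priori order between $Y_\rho$ and $\mathbb E^f_{\rho,\alpha}(Y_\alpha)$: splitting on $A=\{\mathbb E^f_{\rho,\alpha}(Y_\alpha)\le Y_\rho\}$ (and symmetrically on $A'$) and using locality plus the flow property produces, in each case, an ordered pair of $\mathcal F_\rho$-measurable integrable variables with equal $\mathbb E^f_{\tau,\rho}$-image, so (SC) applies as stated. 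Thus your route proves (iv) exactly under the hypotheses written, at the cost of a slightly longer argument; the paper's route is faster but tacitly uses an extra ingredient that is only justified in the context where the theorem is later invoked.
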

\begin{proof}
(i) If   $Y$ satisfies \eqref{stop12}, then, by Proposition  \ref{nonlinprop} (i), $Y$ is an $\mathbb E^f$-supermartingale on $[[\tau,\sigma]]$. Now, suppose that $Y$ is an $\mathbb E^f$-supermartingale on $[[\tau,\sigma]]$.
Let  $\theta\in\TT^{\sigma}_\tau$. Then,
\[
Y_{\theta}\ge\mathbb E^f_{\theta,\nu}(Y_{\nu})
\]
for any   $\nu\in\TT_{\theta}^\tau$.  Thus, $Y_{\theta}\ge\esssup_{\nu\in\TT_{\theta}^\sigma}\mathbb E^f_{\theta,\nu}(Y_{\nu})$.
The reverse inequality is obvious. Consequently,
\[
Y_{\theta}=\esssup_{\nu\in\TT_{\theta}^\sigma}\mathbb E^f_{\theta,\nu}(Y_{\nu}),\quad \theta\in\TT^\sigma_\tau.
\] 
By Theorem \ref{dyn20},   $Y$ is the first component of an $\mathscr S$-solution to   \underline{R}BSDE$^{\tau,\sigma}(Y_\sigma,f,Y)$.
This gives \eqref{stop12}.  

(ii) The uniqueness part follows directly from the uniqueness of the Doob--Meyer decomposition
for $\mathbb F$-semimartingales.

 (iii) Let $(V,H)$ denote a unique  solution to BSDE$^\sigma(Y_\sigma,f)$ such that $V$ is of class (D). 
By the assumption made in (iii), $V_\alpha=Y_\alpha$ for any $\alpha\in\TT_{\tau}^\sigma$ (see Remark \ref{7grudnia8}).
This gives  \eqref{stop12} with $K\equiv 0$ and $Z$ replaced by $H$. By the uniqueness, $H=Z$.

(iv) Let $\alpha\le\beta,\, \alpha,\beta\in\TT_\tau^\sigma$. By the associativity of the nonlinear expectation
and the second assumption made in (iv), we have
\[
\mathbb E^f_{\tau,\alpha}Y_\alpha=Y_\tau=\mathbb E^f_{\tau,\beta}Y_\beta
=\mathbb E^f_{\tau,\alpha}\big(\mathbb E^f_{\alpha,\beta}Y_\beta\big).
\]
Since $Y$ is an $\mathbb E^f$-supermartingale on $[[\tau,\sigma]]$, we have 
$Y_\alpha\ge \mathbb E^f_{\alpha,\beta}Y_\beta$.
Consequently, by (SC),  $Y_\alpha=\mathbb E^f_{\alpha,\beta}Y_\beta$.
\end{proof}

In what follows, in order to simplify the notation, we let for any solution  $Y$ to RBSDE$^T(\xi,f,L,U)$
and $\theta\in \TT$ (cf \eqref{eq9.5.semfff}),
\[
Z^{[\theta]}:= \mathscr Z^{[\theta]}(Y),\quad R^{[\theta]}:= \mathscr V^{[\theta]}(Y+F_{Y,Z^{[\theta]}}) \quad \text{on }\mathscr M_{L,U}(\theta)=\mathscr M_Y(\theta).
\]
We also adopt the convention that $R^{[\theta],+}_t=R^{[\theta],-}_t=+\infty,\, t\in [[\theta,T]]\setminus \mathscr M_{L,U}(\theta)$.
Let us define for any $\theta\in\TT$, 
\begin{equation}
\label{stop.hat}
\hat\tau_\theta:= \inf\{t\ge \theta: R^{[\theta],+}_t>0\},\quad \hat\sigma_\theta:= \inf\{t\ge \theta: R^{[\theta],-}_t>0\}.
\end{equation}

\begin{lemma}
\label{lm10.4}
Let $Y$ be a solution to  RBSDE$^T(\xi,f,L,U)$ and $\theta\in\TT$.
Then  $\hat{\tau}_{\theta},\hat{\sigma}_{\theta}\in\Sigma_{\theta}(L,U)$. 
\end{lemma}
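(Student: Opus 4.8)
The plan is to show that on $[[\theta,\hat\tau_\theta[[$ the reflection of $Y$ has become purely one‑sided (only the upper barrier is active), to deduce from the a priori estimates for single‑barrier reflected BSDEs — which, unlike the two‑barrier ones, need no Mokobodzki condition — that $Y$ remains a c\`adl\`ag semimartingale up to and including $\hat\tau_\theta$, and then, using $L\le Y\le U$, to read off $\hat\tau_\theta\in\Sigma_\theta(L,U)$. Applying the same reasoning to $-Y$ (which interchanges $R^{[\theta],+}$ and $R^{[\theta],-}$) gives $\hat\sigma_\theta\in\Sigma_\theta(L,U)$, so I only discuss $\hat\tau_\theta$. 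First I would record the reductions: since $R^{[\theta],\pm}\equiv+\infty$ off $\mathscr M_{L,U}(\theta)=[[\theta,\mathring\theta[[\,\cup\,[[\mathring\theta_{\mathscr C_\theta}]]$, we have $\hat\tau_\theta\le\mathring\theta$, hence $[[\theta,\hat\tau_\theta[[\,\subset[[\theta,\mathring\theta[[\,\subset\mathscr M_{L,U}(\theta)$, where $Z^{[\theta]},R^{[\theta]}$ are defined. Taking the chain $(\tau_k)\subset\Sigma_\theta(L,U)$ of Theorem \ref{prop7.3m}(iv) (so $\tau_k\nearrow\mathring\theta$, with $\tau_k=\mathring\theta$ eventually precisely on $\mathscr C_\theta$) and $\rho_k:=\tau_k\wedge\hat\tau_\theta\in\Sigma_\theta(L,U)$, we get $\rho_k\nearrow\hat\tau_\theta$, with $\rho_k=\hat\tau_\theta$ for large $k$ on $\{\hat\tau_\theta<\mathring\theta\}\cup(\{\hat\tau_\theta=\mathring\theta\}\cap\mathscr C_\theta)$. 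By Definition \ref{12wrzesnia2}, $(Y,Z^{[\theta]},R^{[\theta]})$ is an $\mathscr S$‑solution of RBSDE$^{\theta,\rho_k}(Y_{\rho_k},f,L,U)$ for each $k$; since $R^{[\theta],+}$ is non‑decreasing and vanishes on $[[\theta,\hat\tau_\theta[[$ by definition of $\hat\tau_\theta$, there $R^{[\theta]}=-R^{[\theta],-}$, the minimality condition reduces to $\int(U_{r-}-Y_{r-})\,dR^{[\theta],-}_r=0$, and $(Y,Z^{[\theta]},R^{[\theta],-})$ is an $\mathscr S$‑solution of the one‑barrier problem $\overline{\mathrm R}$BSDE$^{\theta,\rho_k}(Y_{\rho_k},f,U)$.

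\textbf{The crux.} Now let $k\to\infty$. Applying the a priori estimates for one‑barrier reflected BSDEs with $L^1$‑data and generator obeying (H1)–(H5), (Z) (as in Theorem \ref{10wrzesnia2}, \cite{K:BSM}, and in the spirit of Proposition \ref{13stycznia19} and Theorem \ref{th.2}) to $\overline{\mathrm R}$BSDE$^{\theta,\rho_k}(Y_{\rho_k},f,U)$, one bounds, uniformly in $k$, the quantities $\|Y\|_{\mathcal D^1(\theta,\rho_k)}$, $|Z^{[\theta]}|_{\mathcal H^q(\theta,\rho_k)}$ $(q\in(\kappa,1))$ and $\mathbb E\big(R^{[\theta],-}_{\rho_k}\big)^q$ in terms of $\|Y_{\rho_k}\|_{L^1}$, $\|f(\cdot,0,0)\|_{L^1_{\mathbb F}(0,T)}$, $\|g\|_{L^1_{\mathbb F}(0,T)}$ and $\|U\|_{\mathcal D^1}$ — all finite and independent of $k$ because $Y$ and $U$ are of class (D); this works precisely because the reflection is one‑sided, so $|R^{[\theta]}|=R^{[\theta],-}$ on $[[\theta,\hat\tau_\theta[[$. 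Letting $k\to\infty$ (monotone convergence for $R^{[\theta],-}$) yields $R^{[\theta],-}_{\hat\tau_\theta-}<\infty$, $\int_\theta^{\hat\tau_\theta}|f(r,Y_r,Z^{[\theta]}_r)|\,dr<\infty$ and $\int_\theta^{\hat\tau_\theta}|Z^{[\theta]}_r|^2\,dr<\infty$, $\mathbb P$‑a.s. Since $Y$ is c\`adl\`ag on $[0,T]$, the limit $Y_{\hat\tau_\theta-}$ exists; the decomposition $Y_t=Y_\theta-\int_\theta^tf(r,Y_r,Z^{[\theta]}_r)\,dr-R^{[\theta],-}_t+\int_\theta^tZ^{[\theta]}_r\,dB_r$, valid on $[[\theta,\hat\tau_\theta[[$, then has a finite‑variation part of finite total variation and a local‑martingale part of finite quadratic variation there, and $Y$ has a single finite jump at $\hat\tau_\theta$; hence $t\mapsto Y_{(t\vee\theta)\wedge\hat\tau_\theta}$ is a c\`adl\`ag semimartingale, i.e. $Y$ is a semimartingale on $[[\theta,\hat\tau_\theta]]$.

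\textbf{Conclusion and the obstacle.} Finally, $L_t\le Y_t\le U_t$ on $[[\theta,\hat\tau_\theta[[$ because $Y$ is there the first component of an $\mathscr S$‑solution, and also at $\hat\tau_\theta$: on $\{\hat\tau_\theta<\mathring\theta\}\cup(\{\hat\tau_\theta=\mathring\theta\}\cap\mathscr C_\theta)$ because $\rho_k=\hat\tau_\theta$ for large $k$, so $Y$ is the first component of an $\mathscr S$‑solution on the closed $[[\theta,\hat\tau_\theta]]$; on the remaining set $\{\hat\tau_\theta=\mathring\theta\}\cap\mathscr C_\theta^c$ because $Y_{\hat\tau_\theta-}=L_{\mathring\theta-}=U_{\mathring\theta-}$ by Theorem \ref{prop7.3m}(iii), so replacing $Y_{\hat\tau_\theta}$ there by $(Y_{\hat\tau_\theta-}\vee L_{\hat\tau_\theta})\wedge U_{\hat\tau_\theta}$ only adds a finite jump and keeps a c\`adl\`ag semimartingale lying in $[L_{\hat\tau_\theta},U_{\hat\tau_\theta}]$ (one may equally invoke that $L\le Y\le U$ on all of $[0,T]$, as established for the solution in Theorem \ref{9wrzesnia1}). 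Thus (this possibly modified) $Y$ witnesses $\hat\tau_\theta\in\Sigma_\theta(L,U)$, and $\hat\sigma_\theta\in\Sigma_\theta(L,U)$ follows symmetrically. The main difficulty is the middle step: $\hat\tau_\theta$ may equal $\mathring\theta$, the right endpoint of $\mathscr M_{L,U}(\theta)$, which in general does not belong to $\mathscr M_{L,U}(\theta)$ and at which the two‑barrier value process $Y$ ceases to be a semimartingale, so Proposition \ref{prop.ex2} cannot be applied directly; what makes the semimartingale property survive there is that on $[[\theta,\hat\tau_\theta[[$ the reflection has become one‑sided, which brings matters into the realm of single‑barrier reflected BSDEs — needing no Mokobodzki condition and, thanks to the class (D) property of $Y,U$ and the strictly sub‑linear growth (Z), keeping the finite‑variation and martingale parts bounded as $t\uparrow\hat\tau_\theta$.
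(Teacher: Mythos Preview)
Your overall strategy coincides with the paper's: both recognise that on $[[\theta,\hat\tau_\theta[[$ the component $R^{[\theta],+}$ vanishes, so that $Y$ satisfies a one--upper--barrier reflected equation there, and both aim to propagate finiteness of $R^{[\theta],-}$ and $\int|Z^{[\theta]}|^2$ to the endpoint $\hat\tau_\theta$, whence $Y$ is a semimartingale on $[[\theta,\hat\tau_\theta]]$ and lies between $L$ and $U$.

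The substantive difference is in the a priori estimate you invoke at the crux. You assert a bound on $|Z^{[\theta]}|_{\mathcal H^q(\theta,\rho_k)}$ and $\mathbb E(R^{[\theta],-}_{\rho_k})^q$ purely in terms of $\|Y_{\rho_k}\|_{L^1}$, $\|f(\cdot,0,0)\|_{L^1}$, $\|g\|_{L^1}$ and $\|U\|_{\mathcal D^1}$. Such a clean estimate is not available for generators that are merely monotone (not Lipschitz) in $y$: one cannot control $|f(r,Y_r,0)|$ by those quantities, and neither Theorem~\ref{10wrzesnia2} nor Proposition~\ref{13stycznia19} nor Theorem~\ref{th.2} furnishes it. The paper fills this gap precisely: it introduces the auxiliary process $X$, first component of the solution to $\underline{\mathrm R}$BSDE$^T(\xi,f,L)$, which by comparison satisfies $X\ge Y$, and applies \cite{K:EJP}, Lemma~3.4, obtaining a bound that contains the extra terms $\mathbb E\big(\int f^-(r,X_r,0)\,dr\big)^q$ and $\mathbb E\big(\int X^+_r\,dr\big)^q$. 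Since $\int_0^T f^-(r,X_r,0)\,dr$ is finite a.s.\ but possibly not integrable, a further localisation by the chain $\delta^l_\theta:=\inf\{t\ge\theta:\int_\theta^t f^-(r,X_r,0)\,dr\ge l\}\wedge T$ is needed; the conclusion then follows by Fatou's lemma and the chain property. (A minor additional difference: the paper approaches $\hat\tau_\theta$ via an announcing sequence for the predictable time $\hat\tau_\theta$ rather than via the $\tau_k$ of Theorem~\ref{prop7.3m}(iv), which guarantees $\tau^k_\theta<\hat\tau_\theta$ and avoids the endpoint discussion you carry out.) In short, your architecture is correct, but the load--bearing estimate is stated too optimistically; the paper's introduction of $X$ and double localisation $(\tau^k_\theta,\delta^l_\theta)$ is exactly what is needed to make the step rigorous under (H1)--(H5), (Z).
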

\begin{proof}
Obviously $\hat{\tau}_{\theta},\hat{\sigma}_{\theta}\le\mathring\theta$. 
By the fact that $R^{[\theta],+}$ is predictable, there exists a sequence $\{\tau^k_{\theta}\}\subset\mathcal{T}_\theta$ 
such that $\tau^k_{\theta}\nearrow\hat\tau_{\theta}$ with $k\to\infty$ 
and $\tau^k_{\theta}<\hat\tau_{\theta}$, $k\ge 1$ on the set $\{\hat\tau_{\theta}>\theta\}$. 
Let $(X,H,K)$ be a solution to \underline{R}BSDE$^T(\xi,f,L)$ such that $X$ is of class (D) (see  Theorem \ref{10wrzesnia2}).
 By Proposition \ref{th.cop1}, $X_t\ge Y_t$, $t\in[0,T]$. For $l\ge 1$, we define
\[
\delta_{\theta}^l:=\inf\{t\ge\theta:\,\int_{\theta}^t f^-(r,X_r,0)\,dr\ge l\}\wedge T,
\]
and we set $\rho^{k,l}_{\theta}:=\tau^k_{\theta}\wedge\delta^l_{\theta}$. 
By \eqref{eq9.5.sem} 
$Y$ satisfies the following equation
\[
Y_t=Y_{\rho^k_{\theta}}+\int^{\rho^k_{\theta}}_t f(r,Y_r,Z^{[\theta]}_r)\,dr
-\int^{\rho^k_{\theta}}_t\,dR^{[\theta],-}_r-\int^{\rho^k_{\theta}}_t Z^{[\theta]}_r\,dB_r,\quad t\in[\theta,\rho^k_{\theta}].
\]
By \cite{K:EJP}, Lemma 3.4, we have, for $q\in(0,1)$,
\begin{equation*}
\begin{split}
&\mathbb{E}\Big(\int^{\rho^{k,l}_{\theta}}_{\theta} |Z_r|^2\,dr\Big)^{\frac{q}{2}}+\mathbb{E}(R^{[\theta],-}_{\rho^{k,l}_{\theta}})^{\frac{q}{2}}
\le C\Big(\mathbb{E}\sup_{0\le t\le T}|Y_t|^q+\mathbb{E}\Big(\int^{T}_{0}|f(r,0,0)|\,dr\Big)^q\\
&\quad+\mathbb{E}\Big(\int^{\delta^l_{\theta}}_{\theta}|f^-(r,X_r,0)|\,dr\Big)^q+\mathbb{E}\Big(\int^{T}_{0}X^+_r\,dr\Big)^q\Big)
\end{split}
\end{equation*}
for some $C>0$ independent of $k$ and $\theta$. By Fatou's lemma the above inequality 
holds with $\rho^{k,l}_{\theta}$ replaced by $\delta^l_\theta$. From that inequality  and the fact   $\{\delta^l_{\theta}\}$ is a chain 
one easily deduces   that $\int^{\hat{\tau}_{\theta}}_{\theta} |Z_r|^2\,dr<\infty$ and $R^{[\theta],-}_{\hat{\tau}_{\theta}}<\infty$ $\mathbb{P}$-a.s. 
As a result,
\[
Y_t=Y_{\hat{\tau}_{\theta}}+\int^{\hat{\tau}_{\theta}}_t f(r,Y_r,Z^{[\theta]}_r)\,dr-\int^{\hat{\tau}_{\theta}}_t\,dR^{[\theta],-}_r-\int^{\hat{\tau}_{\theta}}_t Z^{[\theta]}_r\,dB_r,\quad t\in[\theta,\hat{\tau}_{\theta}],
\]
which implies that $\hat{\tau}_{\theta}\in\Sigma_{\theta}(L,U)$. 
Similarly, we can show that $\hat{\sigma}_{\theta}\in\Sigma_{\theta}(L,U)$.
\end{proof}

\begin{proposition}\label{dyn23}
Let $Y$ be a solution to \textnormal{RBSDE}$(\xi,f,L,U)$,   $\theta\in\mathcal{T}$, and let $(\hat{\tau}_{\theta},\hat{\sigma}_{\theta})$
be the pair defined in \eqref{stop.hat}.
\begin{enumerate}
\item [(i)] If $R^{[\theta],-}$ is continuous on $\mathscr M(\theta)$, then $Y$ is an $\mathbb{E}^f$-supermartingale on $[[\theta,\hat{\sigma}_{\theta}]]$
and   $Y_{\hat{\sigma}_{\theta}}=U_{\hat{\sigma}_{\theta}}$ on  $\{\hat{\sigma}_{\theta}<T\}$.
\item[(ii)] If $R^{[\theta],+}$ is continuous on $\mathscr M(\theta)$, then $Y$ is an $\mathbb{E}^f$-submartingale on $[[\theta,\hat{\tau}_{\theta}]]$ and
 $Y_{\hat{\tau}_{\theta}}=L_{\hat{\tau}_{\theta}}$ on  $\{\hat{\tau}_{\theta}<T\}$.
\end{enumerate}
\end{proposition}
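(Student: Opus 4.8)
The plan is to prove (i) and then deduce (ii) from it by the sign change $(Y,f,L,U)\mapsto(-Y,\tilde f,-U,-L)$ used in the definition of $\overline{\mathrm R}$BSDE (with $\tilde f(t,y,z)=f(t,-y,-z)$); passing to $-Y$ swaps the Jordan parts $R^{[\theta],+}\leftrightarrow R^{[\theta],-}$ and hence $\hat\tau_\theta\leftrightarrow\hat\sigma_\theta$, so (i) applied to the reflected data yields (ii). Alternatively one repeats the argument below with $L$ and $U$ interchanged.

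For (i) I would first localize. By Lemma \ref{lm10.4}, $\hat\sigma_\theta\in\Sigma_\theta(L,U)$, so Proposition \ref{prop7.3m}(i) gives $[[\theta,\hat\sigma_\theta]]\subset\mathscr M_{L,U}(\theta)$, and by Definition \ref{12wrzesnia2} the triple $(Y,Z^{[\theta]},R^{[\theta]})$ is an $\mathscr S$-solution of RBSDE$^{\theta,\hat\sigma_\theta}(Y_{\hat\sigma_\theta},f,L,U)$. Next I would note that $R^{[\theta],-}_t=0$ for $t\in[\theta,\hat\sigma_\theta)$ by the definition of $\hat\sigma_\theta$, and that continuity of $R^{[\theta],-}$ on $\mathscr M_{L,U}(\theta)$ upgrades this to $R^{[\theta],-}_{\hat\sigma_\theta}=0$; hence on $[[\theta,\hat\sigma_\theta]]$ one has $R^{[\theta]}=R^{[\theta],+}=:K$, a nondecreasing process with $K_\theta=0$, and the $\mathscr S$-solution equation reads
\[
Y_t=Y_{\hat\sigma_\theta}+\int_t^{\hat\sigma_\theta}f(r,Y_r,Z^{[\theta]}_r)\,dr+(K_{\hat\sigma_\theta}-K_t)-\int_t^{\hat\sigma_\theta}Z^{[\theta]}_r\,dB_r,\quad t\in[\theta,\hat\sigma_\theta].
\]
Since $Y$ is of class (D), $Z^{[\theta]}\in\HH_{\mathbb F}(\theta,\hat\sigma_\theta)$ and $\int_\theta^{\hat\sigma_\theta}|f(r,Y_r,Z^{[\theta]}_r)|\,dr<\infty$, the ``if'' implication of Theorem \ref{stop20}(i) then gives that $Y$ is an $\mathbb E^f$-supermartingale on $[[\theta,\hat\sigma_\theta]]$.

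For the boundary identity $Y_{\hat\sigma_\theta}=U_{\hat\sigma_\theta}$ on $\{\hat\sigma_\theta<T\}$, I would split (using $\hat\sigma_\theta\le\mathring\theta$) into $\{\hat\sigma_\theta=\mathring\theta<T\}$ and $\{\hat\sigma_\theta<\mathring\theta\}$. On the first set, the inclusion $[[\theta,\hat\sigma_\theta]]\subset\mathscr M_{L,U}(\theta)$ together with $\hat\sigma_\theta=\mathring\theta$ forces it into $\mathscr C_\theta$, whence $L_{\mathring\theta}=U_{\mathring\theta}$ by Proposition \ref{prop7.3m}(vi) and therefore $Y_{\hat\sigma_\theta}=U_{\hat\sigma_\theta}$ by $L\le Y\le U$. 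On $\{\hat\sigma_\theta<\mathring\theta\}$, I would choose, via Proposition \ref{prop.m1}, a nondecreasing sequence $(\sigma_k)\subset\Sigma_\theta(L,U)$ with $\sigma_k\nearrow\mathring\theta$, reduce to a fixed set $\{\hat\sigma_\theta<\sigma_k\}$ (these exhaust $\{\hat\sigma_\theta<\mathring\theta\}$ up to a null set), and use that on $[[\theta,\sigma_k]]$ the triple $(Y,Z^{[\theta]},R^{[\theta]})$ is an $\mathscr S$-solution, so the minimality condition yields $\int_\theta^{\sigma_k}(U_{r-}-Y_{r-})\,dR^{[\theta],-}_r=0$. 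Because $R^{[\theta],-}$ is continuous, vanishes at $\hat\sigma_\theta$, and is $>0$ at points of $(\hat\sigma_\theta,\sigma_k]$ arbitrarily close to $\hat\sigma_\theta$, the measure $dR^{[\theta],-}$ charges every interval $(\hat\sigma_\theta,t_n]$ with $t_n\downarrow\hat\sigma_\theta$; combined with nonnegativity of $U_{r-}-Y_{r-}$ this produces $r_n\in(\hat\sigma_\theta,t_n]$ with $Y_{r_n-}=U_{r_n-}$, and letting $n\to\infty$ (right-continuity of $r\mapsto Y_{r-}$ and $r\mapsto U_{r-}$ at $\hat\sigma_\theta$) gives $Y_{\hat\sigma_\theta}=U_{\hat\sigma_\theta}$.

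I expect the main obstacle to be this last step: one must move from the globally defined $Z^{[\theta]},R^{[\theta]}$ on the (non-closed) interval $\mathscr M_{L,U}(\theta)$ to a $Y$-independent stopping time $\sigma_k\in\Sigma_\theta(L,U)$ on which the classical $\mathscr S$-solution structure (in particular the minimality condition) is literally available, and then carefully extract pathwise, from the a.s. minimality identity, increase points of $R^{[\theta],-}$ accumulating at $\hat\sigma_\theta$ from the right. The remaining case $\hat\sigma_\theta=\mathring\theta<T$, where $\hat\sigma_\theta$ is exactly the right endpoint of the Mokobodzki interval, is where Proposition \ref{prop7.3m}(vi) does all the work.
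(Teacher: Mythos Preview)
Your argument is correct and follows essentially the same route as the paper: both obtain the supermartingale property from the decomposition with only $dR^{[\theta],+}$ present on $[[\theta,\hat\sigma_\theta]]$ (the paper cites Proposition~\ref{nonlinprop}(i) rather than Theorem~\ref{stop20}(i), but either suffices), and both derive the boundary identity from the minimality condition together with the fact that $dR^{[\theta],-}$ charges every right neighborhood of $\hat\sigma_\theta$. The paper argues the latter by contradiction---assuming $\mathbb P(Y_{\hat\sigma_\theta}<U_{\hat\sigma_\theta},\,\hat\sigma_\theta<T)>0$ and using right-continuity of $Y,U$ to produce an interval $[\hat\sigma_\theta,\hat\sigma_\theta+\delta]$ on which $U_{r-}-Y_{r-}>2\varepsilon$, whence $\int_{\hat\sigma_\theta}^{\hat\sigma_\theta+\delta}(U_{r-}-Y_{r-})\,dR^{[\theta],-}_r>0$---while your explicit case split $\{\hat\sigma_\theta=\mathring\theta<T\}$ versus $\{\hat\sigma_\theta<\mathring\theta\}$ and the appeal to Proposition~\ref{prop7.3m}(vi) make transparent a localization the paper leaves implicit. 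One small slip: the map $r\mapsto Y_{r-}$ is \emph{left}-continuous, not right-continuous; what you actually need (and what holds) is $\lim_{r\downarrow\hat\sigma_\theta}Y_{r-}=Y_{\hat\sigma_\theta}$, which follows from right-continuity of $Y$ itself.
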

\begin{proof}
Assume that $R^{[\theta],-}$ is continuous on $\mathscr M(\theta)$. 
By the very definition of $\hat{\sigma}_{\theta}$ and the continuity assumption, $R^{[\theta],-}_{\hat{\sigma}_{\theta}}=R^{[\theta],-}_{\theta}$,  
and in turn, by the fact that $\hat{\sigma}_{\theta}\in\Sigma_{\theta}(L,U)$ (see Lemma \ref{lm10.4}), we have
\[
Y_t=Y_{\hat{\sigma}_{\theta}}+\int^{\hat{\sigma}_{\theta}}_t f(r,Y_r,Z^{[\theta]}_r)\,dr
+\int^{\hat{\sigma}_{\theta}}_t\,dR^{[\theta],+}_r-\int^{\hat{\sigma}_{\theta}}_r Z^{[\theta]}_r\, dB_r,\quad t\in[\theta,\hat{\sigma}_{\theta}].
\]
By Proposition \ref{nonlinprop}(i)  $Y$ is an $\mathbb{E}^f$-supermartingale on $[[\theta,\hat{\sigma}_{\theta}]]$.
We show that $Y_{\hat{\sigma}_{\theta}}=U_{\hat{\sigma}_{\theta}}$ on $\{\hat{\sigma}_{\theta}<T\}$. 
Suppose, by contradiction, that 
\[
\mathbb P(Y_{\hat{\sigma}_{\theta}}<U_{\hat{\sigma}_{\theta}}, \hat{\sigma}_{\theta}<T)>0,
\]
and denote by $A$ the set under the probability.
On the set $A$ for any  $a\in\mathbb{R}$ there exists  $\ve>0$ (depending on $\omega\in A$) 
such that $U_{\hat{\sigma}_{\theta}}>a+\ve$ 
and $Y_{\hat{\sigma}_{\theta}}<a-\ve$. Since $Y,U$ are c\`adl\`ag, there exists $\delta>0$ 
(also depending on $\omega\in A$) such that $U_{\hat{\sigma}_{\theta}+s}>a+\ve$ 
and $Y_{\hat{\sigma}_{\theta}+s}<a-\ve,\, s\in [0,\delta]$ on $A$.
By the definition of $\hat{\sigma}_{\theta}$ we have  $R^{[\theta],-}_{\hat{\sigma}_{\theta}+\delta}>R^{[\theta],-}_{\hat{\sigma}_{\theta}}$. 
Therefore, on the set $A$ we obtain the following inequality
\[
\int^{\hat{\sigma}_{\theta}+\delta}_{\hat{\sigma}_{\theta}}(U_{r-}-Y_{r-})\,dR^{[\theta],-}_r>2\ve(R^{[\theta],-}_{\hat{\sigma}_{\theta}+\delta}-R^{-,*}_{\hat{\sigma}_{\theta}})>0,
\]
which contradicts the minimality condition. This completes the proof of (i).
The proof of (ii) is analogous. 
\end{proof}

\rm{

\begin{corollary}\label{luty06}
Let $(\tau^*_{\theta},\sigma^*_{\theta})$
be the pair defined in \eqref{dyn14}.
Under the notation of  Proposition \ref{dyn23} the following implication holds:   if $R^{[\theta],-}$ (resp. $R^{[\theta],+}$) is continuous on $\mathscr M(\theta)$,
 then $\sigma^*_{\theta}\le\hat{\sigma}_{\theta}$ (resp. $\tau^*_{\theta}\le\hat{\tau}_{\theta}$).
\end{corollary}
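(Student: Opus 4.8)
The plan is to read the corollary off Proposition \ref{dyn23} by a short case analysis. Assume first that $R^{[\theta],-}$ is continuous on $\mathscr M(\theta)$; I want to prove $\sigma^*_\theta\le\hat\sigma_\theta$. By Proposition \ref{dyn23}(i) we have $Y_{\hat\sigma_\theta}=U_{\hat\sigma_\theta}$ on $\{\hat\sigma_\theta<T\}$, so on that event $\hat\sigma_\theta$ belongs to the random set $\{t\ge\theta:\,Y_t=U_t\}$ whose infimum, truncated at $T$, defines $\sigma^*_\theta$. Since $\hat\sigma_\theta\ge\theta$ by definition and $\hat\sigma_\theta\le\mathring\theta\le T$ (as already observed in the proof of Lemma \ref{lm10.4}), it follows that $\sigma^*_\theta=\inf\{t\ge\theta:\,Y_t=U_t\}\wedge T\le\hat\sigma_\theta$ on $\{\hat\sigma_\theta<T\}$.

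On the complementary event $\{\hat\sigma_\theta=T\}$ the inequality is immediate, since $\sigma^*_\theta\le T=\hat\sigma_\theta$ by the very definition of $\sigma^*_\theta$. Patching the two cases gives $\sigma^*_\theta\le\hat\sigma_\theta$ $\mathbb{P}$-a.s. The second assertion, that $\tau^*_\theta\le\hat\tau_\theta$ whenever $R^{[\theta],+}$ is continuous on $\mathscr M(\theta)$, is obtained by exactly the same argument, now invoking Proposition \ref{dyn23}(ii), which yields $Y_{\hat\tau_\theta}=L_{\hat\tau_\theta}$ on $\{\hat\tau_\theta<T\}$, and replacing $U$ by $L$ throughout.

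I do not anticipate any genuine obstacle here; the only point that requires a little care is that Proposition \ref{dyn23} delivers the boundary coincidence $Y=U$ (resp. $Y=L$) only on $\{\hat\sigma_\theta<T\}$ (resp. $\{\hat\tau_\theta<T\}$), so one must split off the event on which $\hat\sigma_\theta$ (resp. $\hat\tau_\theta$) equals $T$ and dispatch it separately, where the claim is trivial. Note also that the continuity hypothesis on $R^{[\theta],-}$ (resp. $R^{[\theta],+}$) is used solely to make Proposition \ref{dyn23} applicable; no further property of $R^{[\theta]}$ enters the argument.
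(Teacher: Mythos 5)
Your proof is correct and is exactly the intended argument: the paper states this as an immediate corollary of Proposition \ref{dyn23} (no proof is given), and reading the conclusion $Y_{\hat\sigma_\theta}=U_{\hat\sigma_\theta}$ on $\{\hat\sigma_\theta<T\}$ against the definition of $\sigma^*_\theta$ as an infimum, with the trivial case $\hat\sigma_\theta=T$ handled separately, is precisely what is meant. No issues.
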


\begin{proposition}\label{dyn26}
Let $Y$ be a solution to \textnormal{RBSDE}$^T(\xi,f,L,U)$ and let $\theta\in\mathcal{T}$. Asume that $\Delta L_t\ge 0$ and $\Delta U_t\le 0$, $t\in(0,T]$. Then, for every $\theta\in\mathcal{T}$, the pair $(\hat{\tau}_{\theta},\hat{\sigma}_{\theta})$ defined in \eqref{stop.hat} is a saddle point at $\theta$
for the $\mathbb{E}^f$-Dynkin game with payoff \eqref{dyn271}.
\end{proposition}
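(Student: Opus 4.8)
The plan is to reduce the assertion to Proposition \ref{dyn23} and then to repeat, with $(\tau^*_{\theta},\sigma^*_{\theta})$ replaced by $(\hat\tau_\theta,\hat\sigma_\theta)$, the verification of the saddle-point inequalities carried out in the proof of Theorem \ref{20wrzesnia3}. The first step I would take is to show that, under the hypothesis $\Delta L_t\ge 0$, $\Delta U_t\le 0$ on $(0,T]$, both finite-variation processes $R^{[\theta],+}$ and $R^{[\theta],-}$ are continuous on $\mathscr M_{L,U}(\theta)$ for every $\theta\in\mathcal{T}$. By the last assertion of Theorem \ref{9wrzesnia1} (equivalently, by \cite{K:SPA}, Theorem 4.7) the solution $Y$ is continuous. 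Localising the decomposition \eqref{eq9.5.sem} along a non-decreasing sequence $(\tau_k)$ of stopping times exhausting $\mathscr M_{L,U}(\theta)$ (Proposition \ref{prop7.3m}(iv)), on each $[[\theta,\tau_k]]$ one has $\mathscr V^{[\theta]}_t(Y)=Y_t-Y_\theta-\int_\theta^t Z^{[\theta]}_s\,dB_s$, and since the stochastic integral against $B$ is continuous and $Y$ is continuous, $\mathscr V^{[\theta]}(Y)$ is continuous there; as $R^{[\theta]}$ differs from $\mathscr V^{[\theta]}(Y)$ only by the absolutely continuous term $\int_\theta^\cdot f(s,Y_s,Z^{[\theta]}_s)\,ds$, the process $R^{[\theta]}$ is continuous and of locally finite variation on $\mathscr M_{L,U}(\theta)$. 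Hence its total variation process is continuous there, and so are its positive and negative variation parts $R^{[\theta],\pm}$.

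With this continuity established, Proposition \ref{dyn23} applies in both directions; combined with Lemma \ref{lm10.4} (which gives $\hat\tau_\theta,\hat\sigma_\theta\in\Sigma_\theta(L,U)$) it yields that $Y$ is an $\mathbb{E}^f$-supermartingale on $[[\theta,\hat\sigma_\theta]]$ with $Y_{\hat\sigma_\theta}=U_{\hat\sigma_\theta}$ on $\{\hat\sigma_\theta<T\}$, and an $\mathbb{E}^f$-submartingale on $[[\theta,\hat\tau_\theta]]$ with $Y_{\hat\tau_\theta}=L_{\hat\tau_\theta}$ on $\{\hat\tau_\theta<T\}$. Fix $\theta\in\mathcal{T}$. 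For an arbitrary $\tau\in\mathcal{T}_\theta$, restricting the supermartingale property to $[[\theta,\tau\wedge\hat\sigma_\theta]]$ gives $Y_\theta\ge\mathbb{E}^f_{\theta,\tau\wedge\hat\sigma_\theta}(Y_{\tau\wedge\hat\sigma_\theta})$; using $Y\ge L$, the inclusion $\{\hat\sigma_\theta<\tau\}\subset\{\hat\sigma_\theta<T\}$ together with $Y_{\hat\sigma_\theta}=U_{\hat\sigma_\theta}$ there, and $Y_T=\xi$, one obtains $Y_{\tau\wedge\hat\sigma_\theta}\ge J(\tau,\hat\sigma_\theta)$, whence $Y_\theta\ge\mathbb{E}^f_{\theta,\tau\wedge\hat\sigma_\theta}J(\tau,\hat\sigma_\theta)$ by monotonicity of $\mathbb{E}^f$ (Proposition \ref{nonlinprop}(ii)). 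Symmetrically, for an arbitrary $\sigma\in\mathcal{T}_\theta$, restricting the submartingale property to $[[\theta,\hat\tau_\theta\wedge\sigma]]$ and using $Y\le U$, $Y_{\hat\tau_\theta}=L_{\hat\tau_\theta}$ on $\{\hat\tau_\theta<T\}$, and $Y_T=\xi$, one gets $Y_{\hat\tau_\theta\wedge\sigma}\le J(\hat\tau_\theta,\sigma)$ and hence $Y_\theta\le\mathbb{E}^f_{\theta,\hat\tau_\theta\wedge\sigma}J(\hat\tau_\theta,\sigma)$.

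Taking $\tau=\hat\tau_\theta$ in the first bound and $\sigma=\hat\sigma_\theta$ in the second shows $Y_\theta=\mathbb{E}^f_{\theta,\hat\tau_\theta\wedge\hat\sigma_\theta}J(\hat\tau_\theta,\hat\sigma_\theta)$, and altogether, for all $\tau,\sigma\in\mathcal{T}_\theta$,
\[
\mathbb{E}^f_{\theta,\tau\wedge\hat\sigma_\theta}J(\tau,\hat\sigma_\theta)\le\mathbb{E}^f_{\theta,\hat\tau_\theta\wedge\hat\sigma_\theta}J(\hat\tau_\theta,\hat\sigma_\theta)\le\mathbb{E}^f_{\theta,\hat\tau_\theta\wedge\sigma}J(\hat\tau_\theta,\sigma),
\]
which is precisely the statement that $(\hat\tau_\theta,\hat\sigma_\theta)$ is a saddle point at $\theta$ for the $\mathbb{E}^f$-Dynkin game with payoff \eqref{dyn271}. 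I expect the only genuinely delicate point to be the continuity claim in the first paragraph — one must make sure that no jump of $R^{[\theta]}$ is concealed at the right endpoint of some $[[\theta,\tau_k]]$ and that the positive and negative variation parts really inherit the continuity — since, once $R^{[\theta],\pm}$ are known to be continuous on $\mathscr M_{L,U}(\theta)$, the remaining argument is a transcription of the proof of Theorem \ref{20wrzesnia3}.
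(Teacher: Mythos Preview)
Your proof is correct and follows essentially the same route as the paper: invoke continuity of $Y$ (the paper cites Proposition \ref{20wrzesnia2}, you cite Theorem \ref{9wrzesnia1}/\cite{K:SPA}, which amounts to the same thing) to deduce that $R^{[\theta],\pm}$ are continuous on $\mathscr M_{L,U}(\theta)$, then apply Proposition \ref{dyn23} and run the verification of the saddle-point inequalities exactly as in Theorem \ref{20wrzesnia3}. Your treatment of the continuity of $R^{[\theta],\pm}$ is in fact more detailed than the paper's, which simply asserts ``$Y$ is continuous, therefore $R^{[\theta],+}$, $R^{[\theta],-}$ are continuous''; your localisation argument along $(\tau_k)$ is the right way to justify this, and your caution in the last paragraph is well placed but ultimately unnecessary, since the decomposition is consistent across the $\tau_k$'s by uniqueness of Doob--Meyer.
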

\begin{proof}
Let $\theta\in\mathcal{T}$. By Proposition \ref{20wrzesnia2}, 
$Y$ is continuous, therefore $R^{[\theta],+}$, $R^{[\theta],-}$ are continuous. Let $\tau\in\mathscr{M}_{L,U}(\theta)$.
By Proposition \ref{dyn23}(i), $Y$ is an $\mathbb{E}^f$-supermartingale on $[[\theta,\tau\wedge\hat{\sigma}_{\theta}]]$. Therefore,
\begin{equation}\label{dyn24}
Y_{\theta}\ge\mathbb{E}^f_{\theta,\tau\wedge\hat{\sigma}_{\theta}}[Y_{\tau\wedge\hat{\sigma}_{\theta}}].
\end{equation}
By the fact that $Y\ge L$ and $Y_{\hat{\sigma}_{\theta}}=U_{\hat{\sigma}_{\theta}}$ on  $\{\hat{\sigma}_{\theta}<T\}$, 
(see Proposition \ref{dyn23}) we have that for any $\tau\in \TT_\theta$,
\[
\begin{split}
Y_{\tau\wedge\hat{\sigma}_{\theta}}&
=Y_{\tau}\mathbf{1}_{\{\tau\le\hat{\sigma}_{\theta},\tau<T\}}+Y_{\hat{\sigma}_{\theta}}\mathbf{1}_{\{\hat{\sigma}_{\theta}<\tau\}}
+\xi\mathbf{1}_{\{\tau=\hat{\sigma}_{\theta}=T\}}
\\&\ge L_{\tau}\mathbf{1}_{\{\tau\le\hat{\sigma}_{\theta},\tau<T\}}
+U_{\hat{\sigma}_{\theta}}\mathbf{1}_{\{\hat{\sigma}_{\theta}<\tau\}}+\xi\mathbf{1}_{\{\tau=\hat{\sigma}_{\theta}=T\}}=J(\tau,\hat{\sigma}_{\theta}).
\end{split}
\]
By \eqref{dyn24} and the monotonicity of $\mathbb{E}^f$ (see Proposition \ref{nonlinprop}(ii)), we have  
$Y_{\theta}\ge\mathbb{E}^f_{\theta,\tau\wedge\hat{\sigma}_{\theta}}J(\tau,\hat{\sigma}_{\theta})$ 
for any $\tau\in\TT_{\theta}$, so in particular 
$\mathbb E^f_{\theta,\hat{\tau}_{\theta}\wedge\hat{\sigma}_{\theta}}J(\hat{\tau}_{\theta},\hat{\sigma}_{\theta})\le Y_{\theta}$. 
Similarly, we can show that for every $\sigma\in\mathcal{T}_{\theta}$ the inequality 
$Y_{\theta}\le\mathbb{E}^f_{\theta,\hat{\tau}_{\theta}\wedge\sigma}J(\hat{\tau}_\theta,\sigma)$
is satisfied, thus $Y_{\theta}\le \mathbb E^f_{\theta,\hat{\tau}_{\theta}\wedge\hat{\sigma}_{\theta}}J(\hat{\tau}_{\theta},\hat{\sigma}_{\theta})$.
This implies that $Y_{\theta}=\mathbb E^f_{\theta,\hat{\tau}_{\theta}\wedge\hat{\sigma}_{\theta}}J(\hat{\tau}_{\theta},\hat{\sigma}_{\theta})$ 
and $(\hat{\tau}_{\theta},\hat{\sigma}_{\theta})$ is a saddle point at $\theta$.
\end{proof}

}

\begin{theorem}
\label{th.mal}
Suppose that $R^{[\theta]}$ is continuous on $\mathscr M_{L,U}(\theta)$. Let $(\tau_\theta,\sigma_\theta)$
be a saddle point for the $\mathbb{E}^f$-Dynkin game with payoff \eqref{dyn271}. Then $Y$ is a martingale on $[[\theta,\tau_\theta\wedge\sigma_\theta]]$,
$\tau^*_\theta\le \tau_\theta$, $\sigma_\theta^*\le\sigma_\theta$,
and $\tau_\theta\wedge\sigma_\theta\le\hat\tau_\theta\wedge \hat\sigma_\theta$. Moreover,
\begin{equation}
\label{eq.vl}
Y_{\tau_\theta\wedge\sigma_\theta}=L_{\tau_\theta}\mathbf{1}_{\{\tau_\theta \le\sigma_\theta,\tau_\theta<T\}}
+U_{\sigma_\theta}\mathbf{1}_{\{\sigma_\theta<\tau_\theta\}}+\xi\mathbf{1}_{\{\tau_\theta=\sigma_\theta=T\}}.
\end{equation}
\end{theorem}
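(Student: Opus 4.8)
The plan is to confront the saddle-point inequalities with the supermartingale/submartingale structure of $Y$ provided by Proposition \ref{dyn23}, using $\hat\tau_\theta$ and $\hat\sigma_\theta$ themselves as test strategies. Since $R^{[\theta]}$ is continuous on $\mathscr M_{L,U}(\theta)$, so are $R^{[\theta],+}$ and $R^{[\theta],-}$; hence, by Proposition \ref{dyn23}, $Y$ is an $\mathbb E^f$-supermartingale on $[[\theta,\hat\sigma_\theta]]$ with $Y_{\hat\sigma_\theta}=U_{\hat\sigma_\theta}$ on $\{\hat\sigma_\theta<T\}$, and an $\mathbb E^f$-submartingale on $[[\theta,\hat\tau_\theta]]$ with $Y_{\hat\tau_\theta}=L_{\hat\tau_\theta}$ on $\{\hat\tau_\theta<T\}$; restricting, $Y$ is an $\mathbb E^f$-martingale on $[[\theta,\hat\tau_\theta\wedge\hat\sigma_\theta]]$. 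By Corollary \ref{luty06}, $\tau^*_\theta\le\hat\tau_\theta$ and $\sigma^*_\theta\le\hat\sigma_\theta$; by Lemma \ref{lm10.4}, $\hat\tau_\theta,\hat\sigma_\theta\in\Sigma_\theta(L,U)\subset\TT_\theta$, so both are admissible, and $[[\theta,\hat\tau_\theta]],[[\theta,\hat\sigma_\theta]]\subset\mathscr M_{L,U}(\theta)$ by Theorem \ref{prop7.3m}(i).

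Next I would establish two ``mixed'' martingale identities. The $\mathbb E^f$-supermartingale property on $[[\theta,\hat\sigma_\theta]]$, together with $Y\ge L$ and $Y_{\hat\sigma_\theta}=U_{\hat\sigma_\theta}$ on $\{\hat\sigma_\theta<T\}$, gives $Y_{\tau\wedge\hat\sigma_\theta}\ge J(\tau,\hat\sigma_\theta)$ for every $\tau\in\TT_\theta$, hence by monotonicity of $\mathbb E^f$
\[
Y_\theta\ \ge\ \mathbb E^f_{\theta,\tau\wedge\hat\sigma_\theta}\bigl(Y_{\tau\wedge\hat\sigma_\theta}\bigr)\ \ge\ \mathbb E^f_{\theta,\tau\wedge\hat\sigma_\theta}J(\tau,\hat\sigma_\theta).
\]
Taking $\tau=\tau_\theta$ and comparing with the right half of the saddle-point inequality at $\sigma=\hat\sigma_\theta$, namely $Y_\theta\le\mathbb E^f_{\theta,\tau_\theta\wedge\hat\sigma_\theta}J(\tau_\theta,\hat\sigma_\theta)$, all three quantities coincide. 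Since $Y_{\tau_\theta\wedge\hat\sigma_\theta}-J(\tau_\theta,\hat\sigma_\theta)=(Y_{\tau_\theta}-L_{\tau_\theta})\mathbf{1}_{\{\tau_\theta\le\hat\sigma_\theta,\,\tau_\theta<T\}}\ge 0$, condition \textnormal{(SC)} forces $Y_{\tau_\theta}=L_{\tau_\theta}$ on $\{\tau_\theta\le\hat\sigma_\theta,\tau_\theta<T\}$; and feeding $Y_\theta=\mathbb E^f_{\theta,\tau_\theta\wedge\hat\sigma_\theta}Y_{\tau_\theta\wedge\hat\sigma_\theta}$ into the supermartingale property and the tower property of $\mathbb E^f$ yields $\mathbb E^f_{\theta,\alpha}Y_\alpha=Y_\theta$ for all $\alpha\in\TT_\theta^{\tau_\theta\wedge\hat\sigma_\theta}$, so $Y$ is an $\mathbb E^f$-martingale on $[[\theta,\tau_\theta\wedge\hat\sigma_\theta]]$ by Theorem \ref{stop20}(iv). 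The symmetric argument (submartingale property on $[[\theta,\hat\tau_\theta]]$, test strategy $\tau=\hat\tau_\theta$, left half of the saddle-point inequality) gives $Y_{\sigma_\theta}=U_{\sigma_\theta}$ on $\{\sigma_\theta<\hat\tau_\theta\}$ and that $Y$ is an $\mathbb E^f$-martingale on $[[\theta,\hat\tau_\theta\wedge\sigma_\theta]]$.

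Finally I would deduce $\tau_\theta\wedge\sigma_\theta\le\hat\tau_\theta\wedge\hat\sigma_\theta$. As $[[\theta,\tau_\theta\wedge\hat\sigma_\theta]]\subset\mathscr M_{L,U}(\theta)$ and $Y$ is there an $\mathbb E^f$-martingale, Theorem \ref{stop20}(iii) and uniqueness of the Doob--Meyer decomposition give $R^{[\theta]}\equiv 0$ on $[[\theta,\tau_\theta\wedge\hat\sigma_\theta]]$, hence $\hat\tau_\theta\ge\tau_\theta\wedge\hat\sigma_\theta$; symmetrically $\hat\sigma_\theta\ge\hat\tau_\theta\wedge\sigma_\theta$. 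A short case analysis on these two inequalities shows that $\tau_\theta\wedge\sigma_\theta>\hat\tau_\theta\wedge\hat\sigma_\theta$ can only happen on $E:=\{\hat\tau_\theta=\hat\sigma_\theta<\tau_\theta\wedge\sigma_\theta\}$, on which $Y_{\hat\tau_\theta}=L_{\hat\tau_\theta}$ and $Y_{\hat\sigma_\theta}=U_{\hat\sigma_\theta}$ force $L=Y=U$ at the common value $\hat\rho:=\hat\tau_\theta=\hat\sigma_\theta<T$; replacing $\tau_\theta$ by $(\hat\rho)_{|E}\wedge(\tau_\theta)_{|E^c}$ and using the martingale property of $Y$ on $[[\theta,\tau_\theta\wedge\hat\sigma_\theta]]$, monotonicity of $\mathbb E^f$ and \textnormal{(SC)}, one contradicts the saddle-point inequality unless $\mathbb P(E)=0$. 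Once $\tau_\theta\wedge\sigma_\theta\le\hat\tau_\theta\wedge\hat\sigma_\theta$ is known, $[[\theta,\tau_\theta\wedge\sigma_\theta]]\subset[[\theta,\hat\tau_\theta\wedge\hat\sigma_\theta]]$, so $Y$ is a martingale on $[[\theta,\tau_\theta\wedge\sigma_\theta]]$; combining this with the saddle-point identity $Y_\theta=\mathbb E^f_{\theta,\tau_\theta\wedge\sigma_\theta}J(\tau_\theta,\sigma_\theta)$ and the boundary relations of the previous paragraph yields \eqref{eq.vl}, and then $\tau^*_\theta\le\tau_\theta$, $\sigma^*_\theta\le\sigma_\theta$ follow from $Y_{\tau_\theta}=L_{\tau_\theta}$ resp. $Y_{\sigma_\theta}=U_{\sigma_\theta}$ on the relevant events together with $\tau^*_\theta\le\hat\tau_\theta$, $\sigma^*_\theta\le\hat\sigma_\theta$.

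The main obstacle I foresee is this last step: the case distinction ruling out $\tau_\theta\wedge\sigma_\theta>\hat\tau_\theta\wedge\hat\sigma_\theta$, and especially the disposal of the degenerate event $E$ where the barriers meet at $\hat\rho$. It needs a careful pasting of stopping times (with the accompanying $\mathcal F_{\hat\rho}$-measurability checks) and a localized application of strict comparison on the random interval beyond $\hat\rho$, where the global martingale structure of $Y$ is no longer available; there one may instead have to restart the construction of $\hat\tau,\hat\sigma$ from $\hat\rho$ and iterate.
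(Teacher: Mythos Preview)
Your route is workable but differs from the paper's in the key step. Instead of testing the particular strategies $\hat\tau_\theta,\hat\sigma_\theta$ in the saddle-point inequalities, the paper goes directly for a dynamic-programming identity: writing $\delta_\theta:=\tau_\theta\wedge\sigma_\theta$, for any $\alpha\in\TT_\theta^{\delta_\theta}$ one has
\[
Y_\theta=\esssup_{\tau\in\TT_\theta}\mathbb E^f_{\theta,\tau\wedge\sigma_\theta}J(\tau,\sigma_\theta)
=\esssup_{\tau\in\TT_\alpha}\mathbb E^f_{\theta,\tau\wedge\sigma_\theta}J(\tau,\sigma_\theta)
\ge \essinf_{\sigma\in\TT_\alpha}\esssup_{\tau\in\TT_\alpha}\mathbb E^f_{\theta,\tau\wedge\sigma}J(\tau,\sigma)
=\mathbb E^f_{\theta,\alpha}Y_\alpha,
\]
using that $\tau_\theta,\sigma_\theta\in\TT_\alpha$ and that $Y_\alpha=\overline V(\alpha)$; the reverse inequality is symmetric. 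This yields $Y_\theta=\mathbb E^f_{\theta,\alpha}Y_\alpha$ for \emph{all} $\alpha\le\delta_\theta$ in one stroke, and together with the supermartingale decomposition on $[[\theta,\hat\tau_\theta\wedge\delta_\theta]]$ (only $-dR^{[\theta],-}$ appears) and the submartingale decomposition on $[[\theta,\hat\sigma_\theta\wedge\delta_\theta]]$ (only $+dR^{[\theta],+}$ appears), Theorem~\ref{stop20}(iv) under (SC) gives $R^{[\theta],-}\equiv 0$ on the first interval and $R^{[\theta],+}\equiv 0$ on the second. Your approach of playing against $\hat\sigma_\theta$ and $\hat\tau_\theta$ eventually recovers the same pair of conclusions, but via a longer path (strict comparison to identify $Y_{\tau_\theta}=L_{\tau_\theta}$, then tower property, then Theorem~\ref{stop20}(iv)); the paper's DPP identity short-circuits all of this and also gives \eqref{eq.vl} immediately by (SC), since $Y_{\delta_\theta}\ge J(\tau_\theta,\sigma_\theta)$ and $\mathbb E^f_{\theta,\delta_\theta}Y_{\delta_\theta}=Y_\theta=\mathbb E^f_{\theta,\delta_\theta}J(\tau_\theta,\sigma_\theta)$.

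Regarding your main worry---the degenerate event $E=\{\hat\tau_\theta=\hat\sigma_\theta<\delta_\theta\}$---the paper's bootstrapping is exactly the one you wrote: from $R^{[\theta],-}\equiv 0$ on $[[\theta,\hat\tau_\theta\wedge\delta_\theta]]$ one gets $\hat\sigma_\theta\ge\hat\tau_\theta\wedge\delta_\theta$, and symmetrically $\hat\tau_\theta\ge\hat\sigma_\theta\wedge\delta_\theta$; the paper then concludes $R^{[\theta]}\equiv 0$ on $[[\theta,\delta_\theta]]$ with the remark that ``$R^{[\theta],-}$ does not increase until $R^{[\theta],+}$ increases and vice versa''. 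So the obstacle you single out is not bypassed by a different mechanism in the paper---it is dispatched by the same two inequalities you obtained, and no restart/iteration from $\hat\rho$ is performed. In short, your proof is correct in spirit; replacing your ``test against $\hat\tau_\theta,\hat\sigma_\theta$'' step by the DPP identity above is the main simplification the paper's argument offers.
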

\begin{proof}
Let $\delta_\theta:=\tau_\theta\wedge\sigma_\theta$. 
Using the fact that $Y$ represents the value of the game and $(\tau_\theta,\sigma_\theta)$
is a saddle point for the game, we deduce that for any $\alpha\in \TT_\theta^{\delta_\theta}$
\begin{equation*}
\begin{split}
Y_\theta&=\mathbb E^f_{\theta,\delta_\theta}J(\tau_\theta,\sigma_\theta)=\mathop{\mathrm{ess\,sup}}_{\tau\in \TT_{\theta}}\mathbb E^f_{\theta,\tau\wedge\sigma_\theta}J(\tau,\sigma_\theta)
=
\mathop{\mathrm{ess\,sup}}_{\tau\in \TT_{\alpha}}\mathbb{E}^f_{\theta,\tau\wedge\sigma_\theta}J(\tau,\sigma_\theta)
\\&\ge
\mathop{\mathrm{ess\,inf}}_{\sigma\in \TT_\alpha}\mathop{\mathrm{ess\,sup}}_{\tau\in \TT_{\alpha}}\mathbb{E}^f_{\theta,\tau\wedge\sigma}J(\tau,\sigma)
=
\mathbb E^f_{\theta,\alpha}\Big(\mathop{\mathrm{ess\,inf}}_{\sigma\in \TT_\alpha}\mathop{\mathrm{ess\,sup}}_{\tau\in \TT_{\alpha}}\mathbb{E}^f_{\alpha,\tau\wedge\sigma}J(\tau,\sigma)\Big)=\mathbb E^f_{\theta,\alpha}Y_\alpha,
\end{split}
\end{equation*}
and, analogously, 
\begin{equation*}
\begin{split}
Y_\theta&=\mathbb E^f_{\theta,\delta_\theta}J(\tau_\theta,\sigma_\theta)
=\mathop{\mathrm{ess\,inf}}_{\sigma\in \TT_{\theta}}\mathbb E^f_{\theta,\tau_\theta\wedge\sigma}J(\tau_\theta,\sigma)
=
\mathop{\mathrm{ess\,inf}}_{\sigma\in \TT_{\alpha}}\mathbb{E}^f_{\theta,\tau_\theta\wedge\sigma}J(\tau_\theta,\sigma)
\\&\le
\mathop{\mathrm{ess\,sup}}_{\tau\in \TT_{\alpha}}\mathop{\mathrm{ess\,inf}}_{\sigma\in \TT_\alpha}\mathbb{E}^f_{\theta,\tau\wedge\sigma}J(\tau,\sigma)
=
\mathbb E^f_{\theta,\alpha}\Big(\mathop{\mathrm{ess\,sup}}_{\tau\in \TT_{\alpha}}\mathop{\mathrm{ess\,inf}}_{\sigma\in \TT_\alpha}\mathbb{E}^f_{\alpha,\tau\wedge\sigma}J(\tau,\sigma)\Big)=\mathbb E^f_{\theta,\alpha}Y_\alpha.
\end{split}
\end{equation*}
Therefore,
\begin{equation}
\label{eq12.id}
Y_\theta=\mathbb E^f_{\theta,\alpha}Y_\alpha,\quad \alpha\in \TT_\theta^{\delta_\theta}.
\end{equation}
From this, in particular, we obtain 
\[
\mathbb E^f_{\theta,\delta_\theta}Y_{\delta_\theta}=Y_\theta=
\mathbb E^f_{\theta,\delta_\theta} (L_{\tau_\theta}\mathbf{1}_{\{\tau_\theta \le\sigma_\theta,\tau_\theta<T\}}+U_{\sigma_\theta}\mathbf{1}_{\{\sigma_\theta<\tau_\theta\}}+\xi\mathbf{1}_{\{\tau_\theta=\sigma_\theta=T\}}).
\]
We know that $\hat\tau_\theta, \hat\sigma_\theta\in \Sigma_{\theta}(L,U)$. 
 Thus,
\begin{equation}
\label{eq12.1}
Y_t=Y_{\hat\tau_\theta\wedge \delta_\theta}+\int^{\hat\tau_\theta\wedge \delta_\theta}_t f(r,Y_r,Z_r)\,dr-\int^{\hat\tau_\theta\wedge \delta_\theta}_t\,dR^{[\theta],-}_r-\int^{\hat\tau_\theta\wedge \delta_\theta}_t Z_s\,dB_r,\quad t\in[\theta,\hat\tau_\theta\wedge \delta_\theta],
\end{equation}
and 
\begin{equation}
\label{eq12.2}
Y_t=Y_{\hat\sigma_\theta\wedge \delta_\theta}+\int^{\hat\sigma_\theta\wedge \delta_\theta}_t f(r,Y_r,Z_r)\,dr+\int^{\hat\sigma_\theta\wedge \delta_\theta}_t\,dR^{[\theta],+}_r-\int^{\hat\sigma_\theta\wedge \delta_\theta}_t Z_s\,dB_r,\quad t\in[\theta,\hat\sigma_\theta\wedge \delta_\theta].
\end{equation}
Furthermore,
\[
Y_t=Y_{\hat\sigma_\theta\wedge \hat\tau_\theta}+\int^{\hat\sigma_\theta\wedge \hat\tau_\theta}_t f(r,Y_r,Z_r)\,dr
-\int^{\hat\sigma_\theta\wedge \hat\tau_\theta}_t Z_s\,dB_r,\quad t\in[\theta,\hat\sigma_\theta\wedge \hat\tau_\theta].
\]
From the last equation, we have that $Y$ is an $\mathbb E^f$-martingale on 
$[[\theta,\hat\sigma_\theta\wedge \hat\tau_\theta]]$.
From   \eqref{eq12.id} -- \eqref{eq12.2},  by  Theorem \ref{stop20}(iv), we have 
$R^{[\theta],-}\equiv 0$ on $[[\theta,\hat\tau_\theta\wedge \delta_\theta]]$
and $R^{[\theta],+}\equiv 0$ on $[[\theta,\hat\sigma_\theta\wedge \delta_\theta]]$. From these equations, we infer that
$R^{[\theta]}\equiv 0$ on $[[\theta,\delta_\theta]]$ (since on the interval $[[\theta,\delta_\theta]]$ process $R^{[\theta],-}$
does not increase until $R^{[\theta],+}$ increases and vice versa). As a result $\delta_\theta\le\hat\tau_\theta\wedge  \hat\sigma_\theta$.
\end{proof}

\begin{appendix}

\section{Nonlinear expectation}
\label{rozA}

Throughout this section, we assume that (H1)-(H4) are in force.
In what follows $p$ always denotes a real number greater than $1$.
Let $\nu,\zeta\in\mathcal{T}$, $\nu\le\zeta$. We define under conditions (H1)-(H5), (Z) the operator
\[
\mathbb{E}^{f,(1)}_{\nu,\zeta}:L^1(\mathcal{F}_{\zeta})\to L^1(\mathcal{F}_{\nu}),
\]
by letting $\mathbb{E}^{f,(1)}_{\nu,\zeta}(\xi):=Y_{\nu}$,
where $(Y,Z)$ is a  solution to BSDE$^{\zeta}(\xi,f)$ such that $Y$ is of class (D).
We also define under conditions (H1)-(H4), (H5$_p$) the operator
\[
\mathbb{E}^{f,(p)}_{\nu,\zeta}:L^p(\mathcal{F}_{\zeta})\to L^p(\mathcal{F}_{\nu}),
\]
by letting $\mathbb{E}^{f,(p)}_{\nu,\zeta}(\xi):=Y_{\nu}$,
where $(Y,Z)$ is a  solution to BSDE$^{\zeta}(\xi,f)$ such that $Y\in \mathcal S^p_{\mathbb{F}}(0,T)$.

By Theorems \ref{12sierpnia1}, \ref{12sierpnia1p}  both operators are well defined. We let 
for any $\xi\in L^1(\FF_\zeta)$
\[
\mathbb{E}^f_{\nu,\zeta}(\xi)=
 \left\{
\begin{array}{ll} \mathbb{E}^{f,(1)}_{\nu,\zeta}, &\xi\in L^1(\FF_{\zeta})\setminus \bigcup_{p>1}L^p(\FF_{\zeta}),
\smallskip\\ \mathbb{E}^{f,(p)}_{\nu,\zeta}, &\xi \in L^p(\FF_{\zeta}).
\end{array}
\right.
\]

\begin{definition}\label{22kw2}
We say that a process $X$ of class (D) is an $\mathbb{E}^f$-supermartingale (resp. $\mathbb{E}^f$-submartingale) 
on $[[\nu,\zeta]]$, if $\mathbb{E}^f_{\sigma,\tau}(X_{\tau})\le X_{\sigma}$ 
(resp. $\mathbb{E}^f_{\sigma,\tau}(X_{\tau})\ge X_{\sigma}$) for every $\sigma,\tau\in\mathcal{T}_{\nu,\zeta}$, $\sigma\le\tau$. 
$X$ is an $\mathbb{E}^f$-martingale on $[[\nu,\zeta]]$, if $X$ is at the same time an 
$\mathbb{E}^f$-supermartingale and an $\mathbb{E}^f$-submartingale on $[[\nu,\zeta]]$.
\end{definition}

\begin{remark}\label{7grudnia8}
Note that the process $Y$ of class \textnormal{(D)} is an $\mathbb{E}^f$-martingale on $[[\nu,\zeta]]$ 
if and only if $Y$ is indistinguishable from the first component of the solution to BSDE$^{\nu,\zeta}(Y^\zeta,f)$ on $[[\nu,\zeta]]$.
Thus, in order to prove that   $Y$ is an $\mathbb{E}^f$-martingale on $[[\nu,\zeta]]$, it suffices to show 
that $Y_{\sigma}=\mathbb{E}^f_{\sigma,\zeta}(Y_{\zeta})$, for any  $\sigma\in\mathcal{T}_{\nu,\zeta}$.
\end{remark}

The following result have been proven in \cite{KRz3}, Proposition 2.

\begin{proposition}\label{nonlinprop}

Let  $\nu,\zeta\in\mathcal{T}$, $\nu\le\zeta$.
\begin{enumerate}
\item[(i)] Let $\xi\in L^1(\mathcal{F}_{\zeta})$ and let $V$ be an $\mathbb{F}$-adapted, 
finite variation process such that $V_{\nu}=0$. Let $(X,H)$ be a solution to \textnormal{BSDE}$^{\nu,\zeta}(\xi,f+dV)$ 
such that $X$ is of class \textnormal{(D)}. If $V$ (resp. $-V$) is increasing, then $X$ 
is an $\mathbb{E}^f$-supermartingale (resp. $\mathbb{E}^f$-submartingale) on $[[\nu,\zeta]]$.
\item[(ii)] If $\xi_1,\xi_2\in L^1(\mathcal{F}_{\zeta})$, $\xi_1\le\xi_2$ and for a.e. $t\in[0,T]$ we have that $f^1(t,y,z)\le
f^2(t,y,z)$ for $y\in\mathbb{R}$, $z\in\mathbb{R}^d$, then $\mathbb{E}^{f^1}_{\nu,\zeta}(\xi_1)\le\mathbb{E}^{f^2}_{\nu,\zeta}(\xi_2)$.
\item[(iii)] Let $\xi\in L^1(\mathcal{F}_{\zeta})$. For every $A\in\mathcal{F}_{\nu}$,
\[
\mathbf{1}_A\mathbb{E}^f_{\nu,\zeta}(\xi)=\mathbb{E}^{f_A}_{\nu,\zeta}(\mathbf{1}_A\xi),
\]
where $f_A(t,y,z)=f(t,y,z)\mathbf{1}_A\mathbf{1}_{\{t\ge\nu\}}$.
\item[(iv)] Let $\xi\in L^1(\mathcal{F}_{\zeta})$. For every $\gamma\in\mathcal{T}$ such that $\gamma\ge\zeta$,
\[
\mathbb{E}^f_{\nu,\zeta}(\xi)=\mathbb{E}^{f^{\zeta}}_{\nu,\gamma}(\xi),
\]
where $f^{\zeta}(t,y,z)=f(t,y,z)\mathbf{1}_{\{t\le\zeta\}}$.

\item[(v)] 
Let $\nu,\zeta_1,\zeta_2\in\mathcal{T}$, $\nu\le\zeta_1\le\zeta_2$, and 
 $\xi_1\in L^1(\mathcal{F}_{\zeta_1})$,  $\xi_2\in L^1(\mathcal{F}_{\zeta_2})$. 
Furthermore, let $(Y^1,Z^1)$ be a solution to \textnormal{BSDE}$^{\nu,\zeta_2}(\xi_2,f_1^{\zeta_1})$, 
where $f_1^{\zeta_1}(t,y,z)=f_1(t,y,z)\mathbf{1}_{\{t\le\zeta\}}$, and $(Y^2,Z^2)$ 
be a solution to \textnormal{BSDE}$^{\nu,\zeta_2}(\xi_2,f_2)$,
such that $Y^1, Y^2$ are of class (D). If $\mathbb{E}\sup_{\nu\le t\le\zeta_2}|Y^1_t-Y^2_t|^2<\infty$, then
\begin{equation*}
\begin{split}
&|\mathbb{E}^{f_1}_{\nu,\zeta_1}(\xi_1)-\mathbb{E}^{f_2}_{\nu,\zeta_2}(\xi_2)|^2\le C \mathbb{E}\Big(\int^{\zeta_1}_\nu|Y^1_r-Y^2_r||f_1-f_2|(r,Y^2_r,Z^2_r)\,dr\\
&\quad+|\xi_1-\xi_2|^2+\int^{\zeta_2}_{\zeta_1}|Y^1_r-Y^2_r||f_2(r,Y^2_r,Z^2_r)|\,dr|\mathcal{F}_{\nu}\Big),
\end{split}
\end{equation*}
for some $C$ depending only on $\lambda,\mu, T$.
\end{enumerate}
\end{proposition}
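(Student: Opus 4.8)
\textbf{Plan of proof for Proposition \ref{nonlinprop}.}
The five assertions are all structural properties of the nonlinear expectation $\mathbb{E}^f$, and the unifying strategy is to reduce each one to the corresponding property of the underlying BSDEs, using the existence/uniqueness theory (Theorems \ref{12sierpnia1}, \ref{12sierpnia1p}) together with the a priori estimate of Proposition \ref{13stycznia19} and the stability estimate of Theorem \ref{th.2}. Since the statement cites \cite{KRz3}, Proposition 2, I would follow that reference and organize the argument item by item. The main recurring technical point is that all solutions involved are only of class (D) (or in $\mathcal S^p$), so I must be careful to justify every passage to conditional expectation via localization along a chain of stopping times, rather than taking expectations globally.

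\emph{Items (i) and (ii).}
For (i), let $(X,H)$ solve BSDE$^{\nu,\zeta}(\xi,f+dV)$ with $V$ nondecreasing and $V_\nu=0$. Fix $\sigma\le\tau$ in $\mathcal T_{\nu,\zeta}$; on $[[\sigma,\tau]]$ the pair $(X,H)$ solves BSDE$^{\sigma,\tau}(X_\tau,f+dV)$, and I would compare it with the solution $(\tilde X,\tilde H)$ to BSDE$^{\sigma,\tau}(X_\tau,f)$, whose value at $\sigma$ is by definition $\mathbb{E}^f_{\sigma,\tau}(X_\tau)$. The comparison principle for BSDEs (the driver $f+dV$ dominates $f$ pointwise since $dV\ge 0$) gives $X_\sigma\ge \tilde X_\sigma=\mathbb{E}^f_{\sigma,\tau}(X_\tau)$, which is exactly the $\mathbb{E}^f$-supermartingale inequality; the submartingale case with $-V$ increasing is symmetric. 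For (ii), I would invoke the comparison principle directly: with $\xi_1\le\xi_2$ and $f^1\le f^2$ pointwise, the solutions $(Y^1,Z^1)$ and $(Y^2,Z^2)$ to BSDE$^{\nu,\zeta}(\xi_i,f^i)$ satisfy $Y^1_\nu\le Y^2_\nu$, i.e. $\mathbb{E}^{f^1}_{\nu,\zeta}(\xi_1)\le\mathbb{E}^{f^2}_{\nu,\zeta}(\xi_2)$. The only subtlety is which comparison theorem to cite depending on whether $\xi_i\in L^1$ or $L^p$; I would use the class-(D) comparison underlying Theorem \ref{12sierpnia1}(ii) in the $L^1$ case.

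\emph{Items (iii) and (iv).}
Both are ``localization/consistency'' identities and follow from the uniqueness of solutions. For (iii), fix $A\in\mathcal F_\nu$ and let $(Y,Z)$ solve BSDE$^\zeta(\xi,f)$. Multiplying the equation by $\mathbf 1_A$ and using that $\mathbf 1_A$ is $\mathcal F_\nu$- (hence $\mathcal F_t$-) measurable for $t\ge\nu$, one checks that $(\mathbf 1_A Y,\mathbf 1_A Z)$ solves BSDE$^{\nu,\zeta}(\mathbf 1_A\xi,f_A)$ with $f_A(t,y,z)=f(t,y,z)\mathbf 1_A\mathbf 1_{\{t\ge\nu\}}$; taking the $\nu$-value gives the claim by uniqueness (Theorem \ref{12sierpnia1}(ii)). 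For (iv), if $\gamma\ge\zeta$ and $f^\zeta(t,y,z)=f(t,y,z)\mathbf 1_{\{t\le\zeta\}}$, then the solution of BSDE$^{\nu,\gamma}(\xi,f^\zeta)$ is constant (equal to $\xi$) on $[[\zeta,\gamma]]$ since the driver vanishes there and $\xi$ is $\mathcal F_\zeta$-measurable, so it coincides on $[[\nu,\zeta]]$ with the solution of BSDE$^{\nu,\zeta}(\xi,f)$; again uniqueness closes the argument.

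\emph{Item (v) — the hard part.}
This is the quantitative stability estimate and is where the real work lies. Write $Y:=Y^1-Y^2$, $Z:=Z^1-Z^2$; then $(Y,Z)$ solves a BSDE on $[[\nu,\zeta_2]]$ driven by the difference of drivers, which I would split according to the interval: on $[[\nu,\zeta_1]]$ the discrepancy is controlled by $|f_1-f_2|(r,Y^2_r,Z^2_r)$, while on $[[\zeta_1,\zeta_2]]$ the generator $f_1^{\zeta_1}$ has been switched off so the driver of $Y^1$ reduces to zero and the difference is governed by $f_2(r,Y^2_r,Z^2_r)$. Applying the It\^o formula to $|Y|^2$ (or to $e^{a r}|Y|^2$ for a suitably chosen $a\ge\mu+\lambda^2$), using the monotonicity (H2) and Lipschitz (H1) conditions to absorb the $y$- and $z$-dependence, and then taking conditional expectation $\mathbb{E}[\,\cdot\,|\mathcal F_\nu]$ after localization along a chain, yields a bound of the stated form; the integrability assumption $\mathbb{E}\sup_{\nu\le t\le\zeta_2}|Y^1_t-Y^2_t|^2<\infty$ is precisely what guarantees the local martingale $\int \langle\cdot,dB\rangle$ has zero conditional expectation and lets me pass from the chain to the full interval. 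The main obstacle is the careful bookkeeping of the two regimes $[[\nu,\zeta_1]]$ and $[[\zeta_1,\zeta_2]]$ and the extraction of the cross terms $|Y^1_r-Y^2_r|\,|f_i(\cdots)|$ with the correct constant $C=C(\lambda,\mu,T)$; I would handle this by Young's inequality to separate the quadratic and linear contributions and by the Burkholder–Davis–Gundy inequality to control the martingale part, exactly as in the proof of Theorem \ref{th.2}.
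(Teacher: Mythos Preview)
The paper does not prove this proposition at all: it simply records ``The following result have been proven in \cite{KRz3}, Proposition 2'' and moves on. You correctly noticed this, and the outline you give is a faithful reconstruction of how such a result is established via standard BSDE machinery --- comparison for (i)--(ii), uniqueness for the localization identities (iii)--(iv), and an It\^o/energy estimate for (v). There is nothing to compare against in the present paper, and your sketch is consistent with what one finds in the cited reference.
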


\begin{proposition}\label{5grudnia1}
Assume that $\mathbb{E}\int^T_0|f(r,y,0)|\,dr<+\infty$ for every $y\in\mathbb{R}$. 
Let $\beta\in\TT$ and let $\{\beta_n\}_{n\ge 1}\subset\TT$ be a sequence of stopping times such 
that $\beta_n\searrow\beta$, $n\to\infty$. Assume that $\xi\in L^1(\Omega,\mathcal{F}_{\beta},P)$. 
Moreover, let $\{\xi_n\}_{n\in\mathbb{N}}$ be a sequence of random variables such that 
$\xi_n\in L^1(\Omega,\mathcal{F}_{\beta_n},P)$, $n\in\mathbb{N}$, and $\xi_n\searrow\xi$, $n\to\infty$. 
Then $||\mathbb{E}^f_{\cdot,\beta_n}(\xi_n)-\mathbb{E}^f_{\cdot,\beta}(\xi)||_{\mathcal{D}^1(0,\beta)}\to 0$, $n\to\infty$.
\end{proposition}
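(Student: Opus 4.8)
The plan is to collapse everything onto the fixed interval $[0,\beta]$ using the flow property of $\mathbb{E}^f$, and then reduce the whole statement to the single scalar fact that $\zeta_n:=\mathbb{E}^f_{\beta,\beta_n}(\xi_n)\to\xi$ in $L^1(\mathcal F_\beta)$. Indeed, if $(Y^n,Z^n)$ denotes the class (D) solution of $\mathrm{BSDE}^{\beta_n}(\xi_n,f)$, its restriction to $[t,\beta]$ (for $t\le\beta$) is the class (D) solution of $\mathrm{BSDE}^{t,\beta}(Y^n_\beta,f)$, so $\mathbb{E}^f_{t,\beta_n}(\xi_n)=\mathbb{E}^f_{t,\beta}(\zeta_n)$ with $\zeta_n=Y^n_\beta\in L^1(\mathcal F_\beta)$; Theorem~\ref{th.2}, applied on $[0,\beta]$ with generator $f^\beta$ (cf. Proposition~\ref{nonlinprop}(iv)) and terminal data $\zeta_n,\xi$, then gives $\|\mathbb{E}^f_{\cdot,\beta_n}(\xi_n)-\mathbb{E}^f_{\cdot,\beta}(\xi)\|_{\mathcal D^1(0,\beta)}\le\mathcal{C}\,\psi_3(\|\zeta_n-\xi\|_{L^1})$ with $\mathcal{C}$ independent of $n$ (the constants $\kappa,q,\gamma,\|g\|_{L^1},T$ are unchanged by the cut-off). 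Splitting $\|\zeta_n-\xi\|_{L^1}\le\|\mathbb{E}^f_{\beta,\beta_n}(\xi_n)-\mathbb{E}^f_{\beta,\beta_n}(\xi)\|_{L^1}+\|\mathbb{E}^f_{\beta,\beta_n}(\xi)-\xi\|_{L^1}$ and bounding the first term again by $\mathcal{C}\,\psi_3(\|\xi_n-\xi\|_{L^1})$ — which tends to $0$ since $\xi\le\xi_n\le\xi_1$ forces $\xi_n\to\xi$ in $L^1$ — the problem reduces to showing $\mathbb{E}|\mathbb{E}^f_{\beta,\beta_n}(\xi)-\xi|\to0$ for the fixed $\xi\in L^1(\mathcal F_\beta)$.

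This last claim is the heart of the proof, and the hard part will be that the available $L^1$ a priori estimate controls $\mathbb{E}|\mathbb{E}^f_{\beta,\beta_n}(\xi)-\xi|$ through $\mathbb{E}\int_\beta^{\beta_n}|f(r,\xi,0)|\,dr$, whereas the hypothesis only yields integrability of $f(\cdot,y,0)$ for \emph{deterministic} $y$. I would therefore approximate $\xi$ in three layers. For a constant $a\in\mathbb R$, the difference $\mathbb{E}^f_{\cdot,\beta_n}(a)-a$ is the first component of the solution of $\mathrm{BSDE}^{\beta,\beta_n}(0,\tilde f)$ with $\tilde f(r,y,z):=f(r,y+a,z)$; since $\tilde f$ still satisfies (H1)--(H5),(Z) (the constant shift merely replaces $g$ by $g+|a|$), Theorem~\ref{th.2} with $\bar f\equiv0,\ \bar\xi=0$ gives $\mathbb{E}|\mathbb{E}^f_{\beta,\beta_n}(a)-a|\le\mathcal{C}\,\psi_3\big(\mathbb{E}\int_\beta^{\beta_n}|f(r,a,0)|\,dr\big)\to0$, the convergence following by dominated convergence ($\int_\beta^{\beta_n}|f(r,a,0)|\,dr\downarrow0$ a.s. and is dominated by the integrable $\int_0^T|f(r,a,0)|\,dr$). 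For a simple $\xi'=\sum_{j\le N}a_j\mathbf{1}_{A_j}$ with $A_j\in\mathcal F_\beta$ a partition, the locality of $\mathbb{E}^f$ (Proposition~\ref{nonlinprop}(iii), applied twice) gives $\mathbf{1}_{A_j}\big(\mathbb{E}^f_{\beta,\beta_n}(\xi')-\xi'\big)=\mathbf{1}_{A_j}\big(\mathbb{E}^f_{\beta,\beta_n}(a_j)-a_j\big)$, hence $\mathbb{E}|\mathbb{E}^f_{\beta,\beta_n}(\xi')-\xi'|\le\sum_{j\le N}\mathbb{E}|\mathbb{E}^f_{\beta,\beta_n}(a_j)-a_j|\to0$. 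Finally, for general $\xi\in L^1(\mathcal F_\beta)$ pick a simple $\xi'$ with $\|\xi-\xi'\|_{L^1}<\varepsilon$ and estimate $\mathbb{E}|\mathbb{E}^f_{\beta,\beta_n}(\xi)-\xi|\le\mathcal{C}\,\psi_3(\varepsilon)+\mathbb{E}|\mathbb{E}^f_{\beta,\beta_n}(\xi')-\xi'|+\varepsilon$ (the first term by $L^1$-stability), then let $n\to\infty$ and $\varepsilon\to0$, using $\psi_3(\varepsilon)\to0$.

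Assembling the pieces: $\|\zeta_n-\xi\|_{L^1}\le\mathcal{C}\,\psi_3(\|\xi_n-\xi\|_{L^1})+\mathbb{E}|\mathbb{E}^f_{\beta,\beta_n}(\xi)-\xi|\to0$, whence $\|\mathbb{E}^f_{\cdot,\beta_n}(\xi_n)-\mathbb{E}^f_{\cdot,\beta}(\xi)\|_{\mathcal D^1(0,\beta)}\le\mathcal{C}\,\psi_3(\|\zeta_n-\xi\|_{L^1})\to0$, which is the assertion. I expect the main obstacle to be precisely the passage from constant to random terminal data in the second paragraph: at each truncation one must check that the generators obtained after shifting or cutting off still lie in the class covered by the existence, uniqueness and stability results (Theorems~\ref{12sierpnia1}, \ref{th.2}), and that the stability constant stays uniform in $n$ (it does, since on $[0,\beta_n]$ one works with $f^{\beta_n}$). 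A smaller but necessary point is the upgrade of the monotone convergence $\xi_n\searrow\xi$ to $L^1$-convergence, which is immediate from the sandwich $\xi\le\xi_n\le\xi_1$.
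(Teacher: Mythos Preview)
Your proof is correct. Both your approach and the paper's hinge on the $L^1$-stability estimate of Theorem~\ref{th.2}, but the organization differs. The paper works directly on $[0,\beta_1]$: it lets $(Y,Z)$ and $(Y^n,Z^n)$ solve BSDE$^{\beta_1}(\xi,f^{\beta})$ and BSDE$^{\beta_1}(\xi_n,f^{\beta_n})$ respectively (so $Y_t=\xi$, $Z_t=0$ for $t\ge\beta$), applies Theorem~\ref{th.2} once to obtain
\[
\|Y^n-Y\|_{\mathcal D^1(0,\beta_1)}\le\mathcal{C}\,\psi_3\Big(\mathbb{E}|\xi_n-\xi|+\mathbb{E}\int_\beta^{\beta_n}|f(r,\xi,0)|\,dr\Big),
\]
and then handles the second integral by a two-layer truncation: first for bounded $|\xi|\le M$ (where, via (H2), $|f(r,\xi,0)|$ is dominated by $|f(r,-M,0)|+|f(r,M,0)|$ up to a constant, so dominated convergence applies), then for general $\xi$ by replacing it with $\Pi_k(\xi)=(-k)\vee\xi\wedge k$ and sending $k\to\infty$. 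Your route---reducing via the flow property to a pure terminal-data stability problem on $[0,\beta]$, then proving $\mathbb{E}^f_{\beta,\beta_n}(\xi)\to\xi$ in $L^1$ through a constants $\to$ simple $\to$ general approximation using the locality property (Proposition~\ref{nonlinprop}(iii))---is more modular and sidesteps the need to bound $|f(r,\xi,0)|$ for random bounded $\xi$ by means of (H2). The paper's argument is shorter and more direct. Both ultimately rest on the same two ingredients: Theorem~\ref{th.2} and the hypothesis $\mathbb{E}\int_0^T|f(r,a,0)|\,dr<\infty$ for deterministic $a$.
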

\begin{proof}
Let $(Y,Z)$ be a solution to BSDE$^{\alpha,\beta_1}(\xi,f^{\beta})$ such that $Y$ is of class (D), 
where $f^{\beta}(t,y,z)=f(t,y,z)\mathbf{1}_{t\le\beta}$,  and for all $n\in\mathbb{N}$
let $(Y^n,Z^n)$ be a solution to BSDE$^{\alpha,\beta_1}(\xi_n,f^{\beta_n})$ such that $Y^n$ is of class (D), 
where $f^{\beta_n}(t,y,z)=f(t,y,z)\mathbf{1}_{t\le\beta_n}$. The existence of the solutions follows from Theorem \ref{12sierpnia1}. 
Notice  that $Y_t=\xi$ 
and $Z_t=0$ for $t\ge\beta$. Obviously $\mathbb E^f_{\alpha,\beta}(\xi)=Y_{\alpha}$ and 
$Y^n_{\alpha}=\mathbb E^{f^{\beta_n}}_{\alpha,\beta_n}(\xi_n)$, $\alpha\le\beta$, $n\in\mathbb{N}$. 
The rest of the proof we divide into two steps.

\textbf{Step 1.} We assume additionally that $\xi$ is bounded i.e. $|\xi|\le M$ for some $M>0$. Then by Theorem \ref{th.2} we have that
\[
\begin{split}
&\|Y^n-Y\|_{\mathcal D^1(0,\beta_1)}\le \mathcal{C}\psi_3 \Big(\mathbb{E}|\xi_n-\xi|+\mathbb{E}\int^{\beta_n}_{\beta}|f(r,\xi,0)|\,dr\Big)\\
&\quad\le \mathcal{C}\psi_3 \Big(\mathbb{E}|\xi_n-\xi|+\mathbb{E}\int^{\beta_n}_{\beta}|f(r,-M,0)|+|f(r,M,0)|\,dr\Big).
\end{split}
\]
By (H8) and by the Lebesgue dominated convergence theorem we have the result.

\textbf{Step 2.} Let us consider $\Pi_k(x)=(x\vee(-k))\wedge k$, $k\ge 1$, and let $(Y^k,Z^k)$ be a solution to 
BSDE$^{\beta_1}(\Pi_k(\xi),f^{\beta})$ such that $Y^k$ is of class (D). 
The existence of the solutions follows from Theorem \ref{12sierpnia1}. 
By Theorem \ref{th.2} $\|Y^k-Y\|_{\mathcal D^1(0,\beta_1)}\to 0$ when $k\to\infty$. 
Similarly, let $(Y^{n,k},Z^{n,k})$ be a solution to BSDE$^{\beta_1}(\Pi_k(\xi_n),f^{\beta_n})$
such that $Y^{n,k}$ is of class (D). 
The existence of the solutions follows from Theorem \ref{12sierpnia1}. 
By \textbf{Step 1.} 
we have  $\|Y^{n,k}-Y^k\|_{\mathcal D^1(0,\beta_1)}\to 0$ when $n\to\infty$. By Theorem \ref{th.2},
\[
\begin{split}
\|Y^{n,k}-Y^n\|_{\mathcal D^1(0,\beta_1)}\le \mathcal{C}\psi_3 \Big(\mathbb{E}|\Pi_k(\xi_n)-\xi_n|\Big)\le 
\mathcal{C}\psi_3 \Big(\mathbb{E}(|\xi_1|+|\xi|)\mathbf{1}_{\{|\xi_1|+|\xi|>k\}}\Big)=:\delta_k,
\end{split}
\]
and by the Lebesgue dominated convergence theorem $\delta_k\to 0$, $k\to\infty$. Therefore, 
\[
\|Y^n-Y\|_{\mathcal D^1(0,\beta_1)}\le \mathcal{C}\big(\delta_k+\|Y^{n,k}-Y^k\|_{\mathcal D^1(0,\beta_1)}+\|Y^k-Y\|_{\mathcal D^1(0,\beta_1)}\big).
\]
Consequently,
\[
\limsup_{n\to\infty}\|Y^n-Y\|_{\mathcal D^1(0,\beta_1)}\le \mathcal{C} \big(\|Y^k-Y\|_{\mathcal D^1(0,\beta)}+\delta_k\big),\quad k\ge 1.
\]
Passing to the limit with $k\to\infty$ yields  the result.
\end{proof}

\section{Non-semimartingale solutions to RBSDEs with   driver independent of  control variable}\label{22kw1}
\label{rozB}

In \cite{K:SPA} the first author of the present paper has introduced 
the notion of solutions of RBSDE$^T(\xi,f,L,U)$ on a general filtration satisfying usual conditions
in case $f$ depends only on $y$-variable (only standing assumption (B) imposed on $L,U$). 
We shall recall this notion but adapted to the framework  of the present paper.

For  $\tau\in\TT$, we let
\[
\dot\gamma_\tau=\inf\{\tau<t\le T: L_{t-}=U_{t-}\}\wedge\inf\{\tau\le t\le T:L_t=U_t\},
\]
and then 
\begin{equation}
\label{eq2.1}
\gamma_\tau=\dot\gamma_\tau\wedge T,\qquad \Lambda_\tau=\{L_{\gamma_\tau-}=U_{\gamma_\tau-}\}\cap\{\tau<\gamma_\tau\}.
\end{equation}
Observe that $\Lambda_\tau\in\FF_{\gamma_\tau-}$. 
Let $(\dot\delta^{k}_\tau)$, with   $\dot\delta^{k}_\tau\ge\tau$, be an announcing sequence for $(\gamma_\tau)_{\Lambda_\tau}$ and let $\delta^{k}_\tau=\dot\delta^{k}_\tau\wedge T$. We put
\begin{equation}
\label{eq2.3cdf}
\gamma^{k}_\tau=\delta_\tau^{k}\wedge\gamma_\tau.
\end{equation}

\begin{definition}
\label{df.main}
We say that an  $\mathbb F$-adapted c\`adl\`ag process $Y$  is a solution 
to the problem RBSDE$^T(\xi,f,L,U)$  if
\begin{enumerate}
\item[(a)] $L_t\le Y_t\le U_t,\, t\in [0,T]$, $Y_T=\xi$,
\item[(b)] for any $\tau\in\mathcal T$ and any $k\ge 1$ process $Y$
is a semimartingale on $[[\tau,\gamma^k_\tau]]$,
\item[(c)] for every $\tau\in\TT,\, k\ge 1$,
\[
\int_{[\tau, {\gamma^k_\tau}]}(Y_{r-}-L_{r-})\,d\Gamma^{v,+}_r(\tau)
=\int_{[\tau,{\gamma^k_\tau}]}(U_{r-}-Y_{r-})\,d\Gamma^{v,-}_r(\tau)=0,
\]
where $\Gamma_t:= -Y_t+Y_0-\int_0^t f(r,Y_r)\,dr,\, t\in [0,T]$, and $\Gamma^v(\tau)$
is the finite variation c\`adl\`ag process coming from the Doob--Meyer decomposition
of $\Gamma$ on $[[\tau,\gamma_\tau^k]]$.
\end{enumerate}
\end{definition}

\end{appendix}


\begin{thebibliography}{10}

\bibitem{ALM}
{\sc Alario-Nazaret, M., Lepeltier, J.-P., and Marchal, B.}
\newblock Dynkin games.
\newblock In {\em Stochastic differential systems ({B}ad {H}onnef, 1982)},
  vol.~43 of {\em Lect. Notes Control Inf. Sci.} Springer, Berlin, 1982,
  pp.~23--32.

\bibitem{BY}
{\sc Bayraktar, E., and Yao, S.}
\newblock Doubly reflected {BSDE}s with integrable parameters and related
  {D}ynkin games.
\newblock {\em Stochastic Process. Appl. 125}, 12 (2015), 4489--4542.

\bibitem{BF}
{\sc Bensoussan, A., and Friedman, A.}
\newblock Nonlinear variational inequalities and differential games with
  stopping times.
\newblock {\em J. Functional Analysis 16\/} (1974), 305--352.

\bibitem{Bismut}
{\sc Bismut, J.-M.}
\newblock Sur un probl\`eme de {D}ynkin.
\newblock {\em Z. Wahrscheinlichkeitstheorie und Verw. Gebiete 39}, 1 (1977),
  31--53.

\bibitem{Bismut1}
{\sc Bismut, J.-M.}
\newblock Contr\^{o}le de processus alternants et applications.
\newblock {\em Z. Wahrsch. Verw. Gebiete 47}, 3 (1979), 241--288.

\bibitem{bdh}
{\sc Briand, P., Delyon, B., Hu, Y., Pardoux, E., and Stoica, L.}
\newblock {$L^p$} solutions of backward stochastic differential equations.
\newblock {\em Stochastic Process. Appl. 108}, 1 (2003), 109--129.

\bibitem{BL}
{\sc Buckdahn, R., and Li, J.}
\newblock Probabilistic interpretation for systems of {I}saacs equations with
  two reflecting barriers.
\newblock {\em NoDEA Nonlinear Differential Equations Appl. 16}, 3 (2009),
  381--420.

\bibitem{CK}
{\sc Cvitani\'{c}, J., and Karatzas, I.}
\newblock Backward stochastic differential equations with reflection and
  {D}ynkin games.
\newblock {\em Ann. Probab. 24}, 4 (1996), 2024--2056.

\bibitem{Davis}
{\sc Davis, M. H.~A.}
\newblock Martingale methods in stochastic control.
\newblock In {\em Stochastic control theory and stochastic differential systems
  ({P}roc. {W}orkshop, {D}eutsch. {F}orschungsgemeinsch., {U}niv. {B}onn, {B}ad
  {H}onnef, 1979)\/} (1979), vol.~16 of {\em Lect. Notes Control Inf. Sci.},
  Springer, Berlin-New York, pp.~85--117.

\bibitem{DV}
{\sc Davis, M. H.~A., and Varaiya, P.}
\newblock Dynamic programming conditions for partially observable stochastic
  systems.
\newblock {\em SIAM J. Control 11\/} (1973), 226--261.

\bibitem{DM}
{\sc Dellacherie, C., and Meyer, P.-A.}
\newblock {\em Probabilities and potential. {B}}, vol.~72 of {\em North-Holland
  Mathematics Studies}.
\newblock North-Holland Publishing Co., Amsterdam, 1982.
\newblock Theory of martingales, Translated from the French by J. P. Wilson.

\bibitem{DQS}
{\sc Dumitrescu, R., Quenez, M.-C., and Sulem, A.}
\newblock Generalized {D}ynkin games and doubly reflected {BSDE}s with jumps.
\newblock {\em Electron. J. Probab. 21\/} (2016), Paper No. 64, 32.

\bibitem{DY}
{\sc Dynkin, E.~B., and Ju\v{s}kevi\v{c}, A.~A.}
\newblock {\em Theorems and problems in Markov processes}.
\newblock Izdat. ``Nauka'', Moscow, 1967.

\bibitem{EP}
{\sc Ekstr\"{o}m, E., and Peskir, G.}
\newblock Optimal stopping games for {M}arkov processes.
\newblock {\em SIAM J. Control Optim. 47}, 2 (2008), 684--702.

\bibitem{EK}
{\sc El~Karoui, N.}
\newblock Les aspects probabilistes du contr\^{o}le stochastique.
\newblock In {\em Ninth {S}aint {F}lour {P}robability {S}ummer {S}chool---1979
  ({S}aint {F}lour, 1979)}, vol.~876 of {\em Lecture Notes in Math.} Springer,
  Berlin, 1981, pp.~73--238.

\bibitem{EKPPQ}
{\sc El~Karoui, N., Kapoudjian, C., Pardoux, E., Peng, S., and Quenez, M.~C.}
\newblock Reflected solutions of backward {SDE}'s, and related obstacle
  problems for {PDE}'s.
\newblock {\em Ann. Probab. 25}, 2 (1997), 702--737.

\bibitem{Elliott}
{\sc Elliott, R.~J.}
\newblock The optimal control of a stochastic system.
\newblock {\em SIAM J. Control Optim. 15}, 5 (1977), 756--778.

\bibitem{GIOQ}
{\sc Grigorova, M., Imkeller, P., Ouknine, Y., and Quenez, M.-C.}
\newblock Doubly reflected {BSDE}s and {$\mathcal{E}^f$}-{D}ynkin games: beyond
  the right-continuous case.
\newblock {\em Electron. J. Probab. 23\/} (2018), Paper No. 122, 38.

\bibitem{HH1}
{\sc Hamad\`ene, S., and Hassani, M.}
\newblock B{SDE}s with two reflecting barriers: the general result.
\newblock {\em Probab. Theory Related Fields 132}, 2 (2005), 237--264.

\bibitem{HH2}
{\sc Hamad\`ene, S., and Hassani, M.}
\newblock B{SDE}s with two reflecting barriers driven by a {B}rownian and a
  {P}oisson noise and related {D}ynkin game.
\newblock {\em Electron. J. Probab. 11\/} (2006), no. 5, 121--145.

\bibitem{HHO}
{\sc Hamad\`ene, S., Hassani, M., and Ouknine, Y.}
\newblock Backward {SDE}s with two {$rcll$} reflecting barriers without
  {M}okobodski's hypothesis.
\newblock {\em Bull. Sci. Math. 134}, 8 (2010), 874--899.

\bibitem{HO2}
{\sc Hamad\`ene, S., and Wang, H.}
\newblock B{SDE}s with two {RCLL} reflecting obstacles driven by {B}rownian
  motion and {P}oisson measure and a related mixed zero-sum game.
\newblock {\em Stochastic Process. Appl. 119}, 9 (2009), 2881--2912.

\bibitem{HZ}
{\sc Hamad\`ene, S., and Zhang, J.}
\newblock The continuous time nonzero-sum {D}ynkin game problem and application
  in game options.
\newblock {\em SIAM J. Control Optim. 48}, 5 (2009/10), 3659--3669.

\bibitem{Hassairi}
{\sc Hassairi, I.}
\newblock Existence and uniqueness for {$\Bbb{D}$}-solutions of reflected
  {BSDE}s with two barriers without {M}okobodzki's condition.
\newblock {\em Commun. Pure Appl. Anal. 15}, 4 (2016), 1139--1156.

\bibitem{KS}
{\sc Karatzas, I., and Shreve, S.~E.}
\newblock {\em Methods of mathematical finance}, vol.~39 of {\em Applications
  of Mathematics (New York)}.
\newblock Springer-Verlag, New York, 1998.

\bibitem{KZ}
{\sc Karatzas, I., and Zamfirescu, I.-M.}
\newblock Martingale approach to stochastic control with discretionary
  stopping.
\newblock {\em Appl. Math. Optim. 53}, 2 (2006), 163--184.

\bibitem{K:EJP}
{\sc Klimsiak, T.}
\newblock Reflected {BSDE}s with monotone generator.
\newblock {\em Electron. J. Probab. 17\/} (2012), no. 107, 25.

\bibitem{K:BSM}
{\sc Klimsiak, T.}
\newblock B{SDE}s with monotone generator and two irregular reflecting
  barriers.
\newblock {\em Bull. Sci. Math. 137}, 3 (2013), 268--321.

\bibitem{K:SPA}
{\sc Klimsiak, T.}
\newblock Non-semimartingale solutions of reflected {BSDE}s and applications to
  {D}ynkin games.
\newblock {\em Stochastic Process. Appl. 134\/} (2021), 208--239.


\bibitem{KRz4}
{\sc Klimsiak, T., and Rzymowski, M.}
\newblock A priori estimates for multidimensional BSDEs with integrable data.
\newblock {\em Electron. Commun. Probab.}
29 (2024) 1--12. doi: 10.1214/24-ECP598


\bibitem{KRz3}
{\sc Klimsiak, T., and Rzymowski, M.}
\newblock Nonlinear {BSDE}s with two optional {D}oob's class barriers
  satisfying weak {M}okobodzki's condition and extended {D}ynkin games.
\newblock {\em Appl. Math. Optim. 88}, 3 (2023), Paper No. 80, 33.

\bibitem{Krylov}
{\sc Krylov, N.~V.}
\newblock Control of {M}arkov processes, and the spaces {$W$}.
\newblock {\em Izv. Akad. Nauk SSSR Ser. Mat. 35\/} (1971), 224--255.

\bibitem{LM}
{\sc Lepeltier, J.-P., and Maingueneau, M.~A.}
\newblock Le jeu de {D}ynkin en th\'{e}orie g\'{e}n\'{e}rale sans l'hypoth\`ese
  de {M}okobodski.
\newblock {\em Stochastics 13}, 1-2 (1984), 25--44.

\bibitem{Morimoto}
{\sc Morimoto, H.}
\newblock Dynkin games and martingale methods.
\newblock {\em Stochastics 13}, 3 (1984), 213--228.

\bibitem{Neveu}
{\sc Neveu, J.}
\newblock {\em Discrete-parameter martingales}, revised~ed.
\newblock North-Holland Mathematical Library, Vol. 10. North-Holland Publishing
  Co., Amsterdam-Oxford; American Elsevier Publishing Co., Inc., New York,
  1975.
\newblock Translated from the French by T. P. Speed.

\bibitem{PP}
{\sc Pardoux, E., and Peng, S.~G.}
\newblock Adapted solution of a backward stochastic differential equation.
\newblock {\em Systems Control Lett. 14}, 1 (1990), 55--61.

\bibitem{Rishel}
{\sc Rishel, R.}
\newblock Necessary and sufficient dynamic programming conditions for
  continuous time stochastic optimal control.
\newblock {\em SIAM J. Control 8\/} (1970), 559--571.

\bibitem{Snell}
{\sc Snell, J.~L.}
\newblock Applications of martingale system theorems.
\newblock {\em Trans. Amer. Math. Soc. 73\/} (1952), 293--312.

\bibitem{Stettner}
{\sc Stettner, L.~u.}
\newblock On closedness of general zero-sum stopping game.
\newblock {\em Bull. Polish Acad. Sci. Math. 32}, 5-6 (1984), 351--361.

\bibitem{Topolewski}
{\sc Topolewski, M.}
\newblock Reflected {BSDE}s with general filtration and two completely
  separated barriers.
\newblock {\em Probab. Math. Statist. 39}, 1 (2019), 199--218.

\bibitem{TV}
{\sc Touzi, N., and Vieille, N.}
\newblock Continuous-time {D}ynkin games with mixed strategies.
\newblock {\em SIAM J. Control Optim. 41}, 4 (2002), 1073--1088.

\bibitem{Zabczyk}
{\sc Zabczyk, J.}
\newblock Stopping games for symmetric {M}arkov processes.
\newblock {\em Probab. Math. Statist. 4}, 2 (1984), 185--196.

\end{thebibliography}
\end{document}